\documentclass[11pt,letterpaper,reqno]{amsart}
\usepackage{amssymb}
\usepackage{amsmath}
\usepackage{amsthm}
\usepackage{amsfonts}
\usepackage{bbm}
\usepackage{enumitem} 
\usepackage{booktabs}
\usepackage{graphicx}
\usepackage{xcolor}
\usepackage[T1]{fontenc}
\usepackage{doi}
\usepackage{comment} 
\usepackage{hyperref}
\hypersetup{pdfstartview={FitH}}
\usepackage{bookmark}
\usepackage[capitalize,noabbrev]{cleveref}

\newtheorem{theorem}{Theorem}[section]
\newtheorem{lemma}[theorem]{Lemma}
\newtheorem{proposition}[theorem]{Proposition}
\newtheorem{corollary}[theorem]{Corollary}

\newtheorem{question}[theorem]{Question}

\theoremstyle{definition}
\newtheorem{example}[theorem]{Example}
\newtheorem{remark}[theorem]{Remark}
\newtheorem{definition}[theorem]{Definition}

\newtheorem*{proposition*}{Proposition}
\newtheorem*{theorem*}{Theorem}
\newtheorem*{corollary*}{Corollary}

\DeclareMathOperator{\tr}{tr}
\DeclareMathOperator{\spec}{spec}

\newcommand{\dd}{\,d}
\newcommand{\one}{\mathbf 1}
\newcommand{\norm}[1]{\left\lVert #1\right\rVert}
\newcommand{\QQ}{\mathbb{Q}}

\newcommand{\ppar}[1]{\par\addvspace{\medskipamount}\noindent{\bfseries #1.}\enspace\ignorespaces}
\newcommand{\cF}{\mathcal{F}}

\begin{document}

\title{Path-Minimality of $p$-Energy for Connected Graphs}
\author[Yinchen Liu]{Yinchen Liu}
\author[Quanyu Tang]{Quanyu Tang}

\address{Institute for Interdisciplinary Information Sciences, Tsinghua University, Beijing 100084, P. R. China}
\email{liuyinch23@mails.tsinghua.edu.cn}

\address{School of Mathematics and Statistics, Xi'an Jiaotong University, Xi'an 710049, P. R. China}
\email{tangquanyu827@gmail.com}

\subjclass[2020]{Primary 05C50; Secondary 05C35, 15A18}

\keywords{graph energy, Schatten norm, spectral graph theory}

\begin{abstract}
Let $G$ be a simple connected graph on $n$ vertices, and let $\lambda_1(G),\lambda_2(G),\ldots,\lambda_n(G)$ be the eigenvalues of its adjacency matrix $A(G)$. For $p>0$, define the $p$-energy of $G$ by $\mathcal E_p(G)=\sum_{i=1}^n |\lambda_i(G)|^p$. We prove that, for every real number $p\ge 2$ and every simple connected graph $G$ on $n$ vertices,
$$
\mathcal E_p(G)\ge \mathcal E_p(P_n),
$$
where $P_n$ denotes the path on $n$ vertices. Moreover, for each fixed $p>2$, equality holds if and only if $G\cong P_n$. Together with the previously known star-minimality results, this completes the solution of two questions of Nikiforov. 

The proof combines two different comparison principles. For $2<p<4$, we use a bipartite reduction, a Mellin representation of fractional powers, and a determinant comparison involving matching generating polynomials and tree shifts. For $p\ge4$, we prove a second-order stop-loss comparison for the squared singular values of bipartite graphs. This comparison is established by rank-one spectral-shift estimates, deletion-minimal counterexamples, and a finite certified analysis of the terminal sparse-sun configurations.
\end{abstract}

\maketitle

\tableofcontents

\section{Introduction}

\subsection{Background and main result}\label{sec:bamr}

Throughout this paper, all graphs are finite, simple, undirected, and unweighted.  Let \(G\) be a graph with vertex set \(V(G)=\{v_1,\ldots,v_n\}\), and let \(A(G)\) be its \(n\times n\) adjacency matrix, whose \((i,j)\)-entry is \(1\) if \(v_iv_j\in E(G)\) and is \(0\) otherwise.  We write
\[
        \lambda_1(G),\lambda_2(G),\ldots,\lambda_n(G)
\]
for the eigenvalues of \(A(G)\), counted with multiplicity. For a real number \(x\), we write \(x_+:=\max\{x,0\}\) for its positive part. For \(p>0\) we define the \emph{\(p\)-energy} of \(G\) by
\[
        \mathcal E_p(G):=\sum_{i=1}^n |\lambda_i(G)|^p .
\]
For \(p\ge1\), the \emph{Schatten \(p\)-norm} of a matrix \(M\) is
\[
        \norm{M}_{S_p}:=
        \left(\sum_j s_j(M)^p\right)^{1/p},
\]
where \(s_j(M)\) are the singular values of \(M\).  Since \(A(G)\) is real symmetric, its singular values are \(|\lambda_1(G)|,\ldots,|\lambda_n(G)|\). Thus, for \(p\ge1\), we have $\mathcal E_p(G)=\norm{A(G)}_{S_p}^p$. Following Nikiforov's notation, one may write \(\norm{G}_p\) for \(\norm{A(G)}_{S_p}\); after taking \(p\)th roots, inequalities for \(\mathcal E_p(G)\) with \(p\ge1\) are exactly inequalities for these graph norms~\cite{Nikiforov2016}.

Let $P_n$ denote the path graph and $S_n$ the star graph on $n$ vertices. Nikiforov asked whether paths and stars remain extremal for Schatten norms in the following sense~\cite{Nikiforov2016}:

\begin{question}[{\cite[Question~4.51]{Nikiforov2016}}]\label{q1}
Let \( G \) be a connected graph of order \( n \).
\begin{itemize}
    \item[(a)] Is it true that \( \|G\|_{p} \geq \|P_n\|_{p} \) for every \( p > 2 \)?
    \item[(b)] Is it true that \( \|G\|_{p} \geq \|S_n\|_{p} \) for every \( 1 < p < 2 \)?
\end{itemize}
\end{question}

\begin{question}[{\cite[Question~4.52]{Nikiforov2016}}]\label{q2}
Let \( T \) be a tree of order \( n \).
\begin{itemize}
    \item[(a)] Is it true that \( \|S_n\|_{p} \geq \|T\|_{p} \geq \|P_n\|_{p} \) for every \( p > 2 \)?
    \item[(b)] Is it true that \( \|P_n\|_{p} \geq \|T\|_{p} \geq \|S_n\|_{p} \) for every \( 1 < p < 2 \)?
\end{itemize}
\end{question}
These questions extend the classical extremal behavior of paths and stars for
ordinary graph energy and for the spectral radius.

Several parts of this picture were known. If $p=2k$ is a positive even integer, then
\[
\mathcal E_{2k}(G)=\tr A(G)^{2k},
\]
which counts closed walks of length $2k$. Csikv\'ari proved that among connected graphs of fixed order the path minimizes the number of closed walks of every length; in Schatten-norm language this gives $\norm{G}_{2k}\ge \norm{P_n}_{2k}$ for every connected $G$~\cite{Csikvari2010,Nikiforov2016}.  On the tree side, Arizmendi and Arizmendi proved the star upper bound $\mathcal E_p(T)\le \mathcal E_p(S_n)$ for $p>2$ by a bipartite Schatten-energy estimate \cite{ArizmendiArizmendi2023}. For the opposite range, Arizmendi and Guerrero proved that for every tree $T$ on $n$ vertices and every $0<p<2$,
\[
\mathcal E_p(S_n)\le \mathcal E_p(T)\le \mathcal E_p(P_n),
\]
using Coulson-type integral formulas and the tree quasi-order~\cite{ArizmendiGuerrero2023}. Tang, Liu and Wang later settled the connected-graph star-minimality problem in the stronger form $\mathcal E_p(G)\ge \mathcal E_p(S_n)$ for every connected $G$ and every $0<p<2$, with equality only for the star \cite{TangLiuWang2025Nikiforov}.

Thus the remaining issue was path-minimality for real exponents $p>2$. In the notation above, the connected-graph version asks whether $\mathcal E_p(G)\ge \mathcal E_p(P_n)$ for every connected $n$-vertex graph $G$, while the tree version asks for the same inequality when $G$ is restricted to be a tree.  The connected-graph version clearly implies the tree version.  We shall refer to these two questions as Remaining Problem A and Remaining Problem B, respectively:

\medskip

\noindent
\textbf{Remaining Problem A.}
Let \(G\) be a connected graph of order \(n\).  Is it true that, for every real number \(p>2\), \(\mathcal E_p(G)\ge \mathcal E_p(P_n)\)?

\medskip

\noindent
\textbf{Remaining Problem B.}
Let \(T\) be a tree of order \(n\).  Is it true that, for every real number \(p>2\), \(\mathcal E_p(T)\ge \mathcal E_p(P_n)\)?

\medskip

The limiting case $p=\infty$ is consistent with this expectation: Lov\'asz and Pelik\'an proved that the path minimizes the spectral radius among trees of fixed order, and the same conclusion among connected graphs follows by taking a spanning tree and using monotonicity of the spectral radius under edge addition~\cite{LovaszPelikan1973}.

The purpose of this paper is to resolve both remaining problems. Our main result is the following theorem.

\begin{theorem}\label{thm:pgeq2}
Let $G$ be a connected graph on $n$ vertices.  Then for every real number
$p\geq 2$,
\[
   \mathcal E_p(G)\ge \mathcal E_p(P_n).
\]
Moreover, for each fixed \(p>2\), equality holds if and only if $G\cong P_n$.
\end{theorem}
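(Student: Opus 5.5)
The plan is to first reduce to bipartite graphs and then to trees, and afterwards to prove the tree inequality by two different comparison principles, split at $p=4$ as the abstract indicates.

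\textbf{Reduction to trees.} For connected $G$ I would pass to the bipartite double cover $G\times K_2$, whose adjacency matrix is $\begin{pmatrix}0&A\\A&0\end{pmatrix}$. Its spectrum is $\spec A\cup(-\spec A)$, so $\mathcal E_p(G\times K_2)=2\mathcal E_p(G)$. Doing the same with $P_n$ puts $G$ and $P_n$ on equal footing as bipartite biadjacency problems. More directly, I would choose a spanning tree $T\subseteq G$ and aim for monotonicity of $\mathcal E_p$ under edge addition for $p\ge2$. For bipartite graphs, the squared singular values of the biadjacency matrix $B$ are the eigenvalues of $BB^{T}$, which is entrywise nonnegative. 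Adding an edge increases $BB^T$ in a way I would control through majorization of the eigenvalues of $A^2$ ($\tr A^2$ increases by $2$, and the Ky Fan partial sums of $A^2$ increase). Since $x\mapsto x^{p/2}$ is convex and increasing for $p\ge2$, weak majorization of the spectrum of $A(T)^2$ by that of $A(G)^2$ would give $\mathcal E_p(G)\ge\mathcal E_p(T)$. Weak majorization may fail for non-bipartite additions, so the fallback is to compare $G$ with the cover of its spanning tree. This reduces the theorem to trees, i.e.\ Remaining Problem B. Strictness for $p>2$ would come from strict convexity once $G\neq T$.

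\textbf{Trees, $p\ge4$.} Write $\mu_i=\lambda_i^2$ for the nonzero squared singular values of a tree $T$ (each pair $\pm\lambda$). It then suffices to show that the $\mu$'s of $T$ dominate those of $P_n$ in the increasing convex order (second-order stop-loss order), that is,
\[
\sum_i(\mu_i(T)-t)_+\ \ge\ \sum_i(\mu_i(P_n)-t)_+\quad\text{for all }t\ge0.
\]
Indeed, for $p\ge4$ the function $x^{p/2}$ is a nonnegative mixture of $(x-t)_+$ and linear terms, and the linear term is handled by $\tr A^2=2(n-1)$, which is equal for all trees. I would prove this stop-loss comparison by contradiction with a deletion-minimal counterexample. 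A rank-one perturbation (deleting a pendant vertex or a leaf edge changes $BB^T$ by rank one) shifts the spectrum in an interlacing way, and interlacing yields inequalities on the stop-loss transform. This forces a minimal counterexample into a very restricted shape, for instance a sparse "sun"-like configuration. The remaining configurations would then be checked by explicit computation.

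\textbf{Trees, $2<p<4$.} Here stop-loss is not enough, because $x^{p/2}$ is concave in $x$. I would instead use the Mellin representation
\[
|\lambda|^p=c_p\int_0^\infty \Bigl(\log(1+\lambda^2/t^2)-\frac{\lambda^2}{t^2}\Bigr)\,t^{p-1}\,dt\quad(2<p<4),
\]
with $c_p<0$ appropriately normalized. Summing over the spectrum gives
\[
\mathcal E_p(T)=c_p\int_0^\infty\bigl(\log m_T(t^2)-(n-1)t^{-2}\bigr)t^{p-1}\,dt,
\]
where $m_T(x)=\sum_k m_k(T)x^{-k}$ comes from the matching polynomial, since $\det(tI-iA)$ relates to the matching generating function for trees. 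Because the sign of $c_p$ flips relative to the $p<2$ Coulson setting, it then suffices that $m_k(T)\le m_k(P_n)$ for all $k$. This is the classical Gutman result that the path maximizes every matching number among trees, and I would prove it via tree shifts (Csikvári's generalized tree shift). Equality would force all $m_k$ to agree, hence $T\cong P_n$.

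\textbf{Main obstacle.} I expect the $p\ge4$ stop-loss comparison to be the hardest step, specifically controlling the terminal configurations after the deletion argument, which may require a certified finite computation. The edge-addition step for non-bipartite $G$ is a secondary risk.
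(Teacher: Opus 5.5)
Your proposal has two genuine gaps, and they are exactly where the paper does most of its work.

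\textbf{The reduction to trees is not available.} Both of your later steps concern trees only. So the entire non-tree case rests on $\mathcal E_p(G)\ge\mathcal E_p(T)$ for a spanning tree $T$. You justify this only by asserting that the Ky Fan partial sums of the spectrum of $A^2$ increase under edge addition. No argument is given, and it does not follow from entrywise domination $A(T)\le A(G)$. For $B\ge0$ and the matrix unit $E_{xy}$,
$\frac{d}{ds}\|B+sE_{xy}\|_{S_3}^3\big|_{s=0}=3\bigl(B(B^\top B)^{1/2}\bigr)_{xy}$.
This derivative is negative for small $\varepsilon>0$ in the following example: take $B$ to be the biadjacency matrix of $P_5$ (right side $y_1,y_2,y_3$), augmented by a pendant vertex $x$ joined to $y_1$ with weight $\varepsilon$, and take $y=y_3$. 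The sign comes from $\bigl((B_{P_5}^\top B_{P_5})^{1/2}\bigr)_{y_1y_3}=\tfrac{\sqrt3}{6}-\tfrac12<0$. So the $S_3$-norm is not entrywise monotone on nonnegative matrices. Any edge-monotonicity for graphs would therefore need a graph-specific proof, and that is essentially the whole non-tree half of the problem.

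Your double-cover fallback does not help, since $T\times K_2=2T\subseteq G\times K_2$ poses the same question. What is actually available is the cut identity $A(H)=\tfrac12\bigl(A(G)-DA(G)D\bigr)$ together with convexity of $M\mapsto\tr F(M)$. This reduces only to a connected \emph{bipartite} spanning subgraph $H$, which is generally not a tree. The paper then handles non-tree bipartite graphs by vertex-deletion induction:
\begin{itemize}
\item for $2<p<4$, the rank-one bound $R_1(G;x)-R_1(G-v;x)\ge r_1(dx)$ at a cycle vertex (where $d\ge q+1$), compared against the star-of-paths deficit $r_1((q+1)x)$;
\item for $p\ge4$, a deletion-minimal counterexample forced into a $3$-separated sparse sun, the even-cycle majorization, and the certified support-neighbor deletion.
\end{itemize}
Your $2<p<4$ tree argument (Mellin representation plus $m_k(T)\le m_k(P_n)$ via tree shifts) is correct and coincides with the paper's tree case, but it covers only trees.

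\textbf{The $p\ge4$ tree comparison is false as stated.} The inequality $\sum_i(\mu_i(T)-t)_+\ge\sum_i(\mu_i(P_n)-t)_+$ is the \emph{first}-order stop-loss order. Since both sides have total mass $n-1$, it is equivalent to majorization, and it fails. Take the comb $P_3\circ K_1$, a path $abc$ with one pendant leaf at each of $a,b,c$. Its squared singular values are $(2+\sqrt3,1,2-\sqrt3)$, while $\mu(P_6)=\bigl(4\cos^2\tfrac{k\pi}{7}\bigr)_{k=1,2,3}\approx(3.247,1.555,0.198)$. At $t=1$ the left side is $1+\sqrt3\approx2.732$ and the right side is $\approx2.802$. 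Note also that if this order held, your mixture argument would give every $p\ge2$ at once, since it only uses convexity of $x^{p/2}$.

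The true statement is the second-order one, $\mathsf S_t=\sum_i(\mu_i-t)_+^2$. It represents only functions with $F'''\ge0$, which is why $p=4$ is the threshold and why $2<p<4$ needs the separate $R_1$ argument. A deletion/sun-graph reduction also cannot serve as the tree argument, since sun graphs contain cycles. The paper proves the tree case by interpolating the one-branch shift with edge weights $1-\theta$ and $\theta$. It then uses Heilmann--Lieb real-rootedness, Bronshtein's Lipschitz root labeling, and a Stieltjes-cone lemma to obtain $\frac{d}{d\theta}\mathsf S_t\le0$.

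Finally, strict convexity cannot give the equality case for $G\ne T$, because adding edges is not an averaging operation. The paper anchors equality at $p=4$ via $\tr A^4$, propagates it through the integral representations, and uses strict convexity of $S_p$ only in the cut step.
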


Theorem~\ref{thm:pgeq2} proves Remaining Problem A, and therefore also proves Remaining Problem B. We emphasize that the proof is not a direct interpolation from even exponents. Instead, it is based on two comparison principles: an integrated resolvent comparison for the range \(2<p<4\), and an admissible-function comparison for the range \(p\ge4\), the latter being proved via a second-order stop-loss comparison for the squared singular values. These two principles are introduced in the proof outline below.

\subsection{Proof outline}
We first provide a standard reduction to the bipartite case, and then give an outline of the two comparison principles and their proofs.
\ppar{Reduction to bipartite graphs}
We reduce the problem from arbitrary connected graphs to connected
bipartite spanning subgraphs.  Given a spanning tree of \(G\), its bipartition
defines a cut.  Keeping only the edges crossing this cut gives a connected
bipartite spanning subgraph \(H\).  At the matrix level, \(A(H)\) is the
arithmetic mean of \(A(G)\) and \(-D A(G)D\) for a signature matrix \(D\).
Hence, for every even convex function \(F\), the trace spectral functional
\(\sum_i F(\lambda_i)\) decreases under this operation.  In particular,
\(\mathcal E_p(H)\le \mathcal E_p(G)\) for \(p\ge1\).  It is therefore enough
to prove the required lower bound for connected bipartite graphs.

For a connected bipartite graph \(G\), write \(\mu_i(G)\) for the squared
singular values of a biadjacency matrix, so that
\(\mathcal E_p(G)=2\sum_i \mu_i(G)^{p/2}\).  The two ranges \(2<p<4\) and
\(p\ge4\) are handled by different comparison principles.

\subsubsection{The case \texorpdfstring{$2<p<4$}{2<p<4}}
We set \(\alpha=p/2\in(1,2)\) and introduce the integrated resolvent
\[
        r_1(y):=\int_0^y\frac{s}{1+s}\,ds=y-\log(1+y),
        \qquad
        R_1(G;x):=\sum_i r_1(x\mu_i(G)).
\]
The Mellin identity in Lemma~\ref{lem:mellin} expresses \(u^\alpha\) as a
positive integral of \(r_1(xu)\) over \(x>0\), so a pointwise comparison
\[
        R_1(G;x)\ge R_1(P_n;x)\qquad(x>0)
\]
implies \(\mathcal{E}_p(G)\ge\mathcal{E}_p(P_n)\) for every \(p\in(2,4)\).
The advantage of passing to \(R_1\) is that it admits a clean determinant
expression.  Given the bipartition \(V(G)=X\sqcup Y\), the
\emph{biadjacency matrix} \(B_G\) is the \(|X|\times|Y|\) 0-1 matrix
whose \((i,j)\)-entry equals~1 if \(x_iy_j\in E(G)\) and 0 otherwise.
With this notation,
\[
        R_1(G;x)=x|E(G)|-\log\det(I+xB_G^{\top}B_G).
\]
For a tree \(T\), it can be shown that the matrix \(I+xB_T^{\top}B_T\) has determinant equal to the
matching generating polynomial \(M_T(x):=\sum_{k\ge0}m_k(T)\,x^k\), where
\(m_k(T)\) counts \(k\)-matchings of \(T\).  Thus the \(R_1\)-comparison for
trees is equivalent to a pointwise comparison of matching generating polynomials:
\[
        M_T(x)\le M_{P_n}(x)\qquad(x\ge0).
\]

\ppar{The one-branch tree shift}
The matching-polynomial comparison for trees is proved by using the inverse
direction of a one-branch special case of Csikv\'ari's generalized tree
shift~\cite{Csikvari2010}.  Csikv\'ari's generalized tree shift induces a
poset on trees of fixed order; in that poset the path is the unique minimal
element and the star is the unique maximal element, and a proper generalized
tree shift increases the number of closed walks of every length.  Here we use
the same local geometry in the opposite, path-directed direction.

For the present argument, the required point is the following coefficientwise matching-polynomial monotonicity.  Suppose that a tree \(T\) contains a vertex \(v\) with two pendant path arms
\[
        v u_1\cdots u_a
        \qquad\text{and}\qquad
        v w_1\cdots w_b,
        \qquad a,b\ge1.
\]
Let \(T'\) be obtained from \(T\) by deleting the edge \(vw_1\) and adding the edge \(u_aw_1\).  Thus the entire arm \(w_1\cdots w_b\) is reattached to the far end \(u_a\) of the first arm, and the two pendant arms are replaced by the single pendant path $v u_1\cdots u_a w_1\cdots w_b$; see Figure~\ref{fig:tree-transform}. Then
\[
        M_{T'}(x)=M_T(x)+x^2Q(x)\ge M_T(x)\qquad(x\ge0),
\]
where \(Q\) is a polynomial with non-negative coefficients.

Every non-path tree admits such a path-directed shift: choose a branching
vertex of maximal distance from a fixed root leaf, and choose two pendant path
components emanating from it away from the root.  This shift reduces the number
of leaves by one.  Iterating the operation therefore terminates at the unique
tree with no branching vertex, namely \(P_n\). This yields \(M_T(x)\le M_{P_n}(x)\), and hence \(R_1(T;x)\ge R_1(P_n;x)\) for every tree \(T\).

\ppar{Extension to general bipartite graphs}
The tree inequality extends to all connected bipartite graphs by induction on
\(|V(G)|\), with trees as the base.  Suppose \(G\) is not a tree; choose a
vertex \(v\) on a cycle and let \(G-v=G_1\sqcup\cdots\sqcup G_q\) with
\(n_i=|V(G_i)|\) and \(n=1+\sum_in_i\).  Since \(v\) lies on a cycle, two of
its neighbors remain connected after its removal, so \(q\le d-1\) where
\(d=d_G(v)\).  Applying the inductive hypothesis to each component and using
additivity of \(R_1\) over disjoint unions gives
\[
        R_1(G-v;x)=\sum_iR_1(G_i;x)\ge\sum_iR_1(P_{n_i};x).
\]
It therefore suffices to show that the \emph{gain},
\[
        \mathrm{gain}(v,G;x):=R_1(G;x)-R_1(G-v;x),
\]
is at least the \emph{path deficit} \(R_1(P_n;x)-\sum_iR_1(P_{n_i};x)\).
Let \(b\in\{0,1\}^{|Y|}\) be the indicator vector of \(N_G(v)\) on the right
side of the bipartition and set \(M=B_{G-v}^{\top}B_{G-v}\).  The matrix
determinant lemma applied to the rank-one update \(B_G^{\top}B_G=M+bb^{\top}\)
yields the gain formula
\[
        \mathrm{gain}(v,G;x)=xd-\log\bigl(1+xb^{\top}(I+xM)^{-1}b\bigr).
\]
Since \(M\succeq0\) implies \((I+xM)^{-1}\preceq I\), we have
\(b^{\top}(I+xM)^{-1}b\le b^{\top}b=d\), so
\(\mathrm{gain}(v,G;x)\ge xd-\log(1+xd)=r_1(dx)\).  Since \(r_1\) is
increasing and \(d\ge q+1\), this yields \(\mathrm{gain}(v,G;x)\ge r_1((q+1)x)\).

For the path deficit: a similar rank-one update argument together with the star-of-paths tree---the
tree formed by connecting \(v\) to one endpoint of each \(P_{n_i}\) as a
hub---shows \(R_1(P_n;x)\le\sum_iR_1(P_{n_i};x)+r_1((q+1)x)\).  Combining
the two estimates gives \(R_1(G;x)\ge R_1(P_n;x)\), completing the induction.

\subsubsection{The case \texorpdfstring{$p\ge4$}{p>=4}}

For \(p\ge4\), we prove a stronger comparison for a whole cone of test
functions, with the power functions as special cases.  More precisely, we work
with functions \(F\colon[0,\infty)\to\mathbb R\) satisfying $F(0)=0$, $F'(0)\ge0$ and $F''(0)\ge0$ in the sense made precise later: \(F\in C^2([0,\infty))\), \(F''\) is locally
absolutely continuous, and \(F'''\ge0\) almost everywhere on \((0,\infty)\).
We call such functions \emph{admissible third-order convex}.  The comparison
proved in this range is
\[
        \sum_{i=1}^n F\left(\lambda_i(G)^2\right)
        \ge
        \sum_{i=1}^n F\left(\lambda_i(P_n)^2\right)
\]
for every connected graph \(G\) on \(n\) vertices and every admissible
third-order convex function \(F\).  Taking \(F(u)=u^{p/2}\) gives $\mathcal E_p(G)\ge \mathcal E_p(P_n)$ for every $p\ge4$.

\ppar{Reduction to second-order stop-loss}
Fix an admissible third-order convex function \(F\).  For bipartite graphs, the comparison for \(F\) is reduced to a second-order stop-loss comparison for squared singular values. If \(\mu_i(G)\) denotes the squared singular values of a biadjacency matrix of \(G\), set
\[
        \mathsf S_t(G):=\sum_i(\mu_i(G)-t)_+^2 .
\]
Taylor's formula with integral remainder gives, for \(u\ge0\),
\[
        F(u)
        =
        F'(0)u+\frac{F''(0)}2u^2
        +\frac12\int_0^\infty F'''(t)(u-t)_+^2\,dt .
\]
Here \(F(0)=0\) has been used, and \(F'''\) is understood in the almost-everywhere sense specified in the definition of admissibility. Applying this identity to the squared singular values of \(G\) and \(P_n\), and then subtracting, gives
\[
\begin{aligned}
        \sum_i F(\mu_i(G))-\sum_i F(\mu_i(P_n))
        &=
        F'(0)\bigl(|E(G)|-(n-1)\bigr)  \\
        &\quad
        +\frac{F''(0)}2\bigl(\mathsf S_0(G)-\mathsf S_0(P_n)\bigr) \\
        &\quad
        +\frac12\int_0^\infty F'''(t)
        \bigl(\mathsf S_t(G)-\mathsf S_t(P_n)\bigr)\,dt .
\end{aligned}
\]
In the first term we used $\sum_i\mu_i(G)=|E(G)|$ and $\sum_i\mu_i(P_n)=n-1$. Since \(G\) is connected, \(|E(G)|\ge n-1\).  Thus the first term is
non-negative because \(F'(0)\ge0\).  If the stop-loss domination
\[
        \mathsf S_t(G)\ge\mathsf S_t(P_n)\qquad(t\ge0)
\]
holds, then the second term is non-negative because \(F''(0)\ge0\), and the
integral term is non-negative because \(F'''\ge0\) a.e.  Therefore the
admissible-function comparison follows from this stop-loss domination.

This domination is proved below first for trees and then, by a deletion
argument, for all connected bipartite graphs.

\ppar{The tree case}
The tree case of the stop-loss comparison again uses the one-branch tree
shift, but the argument is subtler than the corresponding \(R_1\) comparison.
For \(R_1\), coefficientwise monotonicity of the matching generating
polynomial is enough.  For the second-order stop-loss quantities, one has to
control how the roots of the matching polynomial move under the shift.

Let \(T'\) be obtained from \(T\) by deleting the old edge \(vw_1\) and adding
the new edge \(u_aw_1\).  For \(0\le\theta\le1\), let \(G_\theta\) be the
non-negatively weighted graph in which the old edge \(vw_1\) has weight
\(1-\theta\), the new edge \(u_aw_1\) has weight \(\theta\), and all other
edges have weight \(1\).  Let $M_\theta:=M_{G_\theta}$ be its weighted matching generating polynomial.  Thus $M_0=M_T$ and $M_1=M_{T'}$. A direct matching calculation gives $\partial_\theta M_\theta(x)=x^2Q(x)$, where \(Q\) is an explicit polynomial with non-negative coefficients.

The point of introducing \(M_\theta\) is that its roots can be followed
continuously.  By the Heilmann--Lieb theorem~\cite{HeilmannLieb1972}, \(M_\theta\) has only real
negative zeros for every \(0\le\theta\le1\).  Bronshtein's theorem on
hyperbolic polynomial families~\cite{Bronshtein1979} then allows us to label the corresponding
non-negative root parameters by locally Lipschitz functions
\(\mu_j(\theta)\), after adding zero parameters if necessary, so that
\[
        M_\theta(x)=\prod_j(1+x\mu_j(\theta)).
\]
For almost every \(\theta\), differentiating this factorization gives
\[
        \sum_j\frac{x\mu_j'(\theta)}{1+x\mu_j(\theta)}
        =
        \frac{x^2Q(x)}{M_\theta(x)}.
\]
Equivalently, if $\dot\nu_\theta:=\sum_j\mu_j'(\theta)\,\delta_{\mu_j(\theta)}$ is the signed speed measure of the roots, then its Stieltjes transform is $\mathcal S_{\dot\nu_\theta}(x)=xQ(x)/M_\theta(x)$.

It remains to convert the Stieltjes-transform identity into the sign of the
stop-loss derivative.  We use the Stieltjes cones introduced below: briefly,
\(\mathcal C_{\le k}\) is the cone generated by finite non-negative linear
combinations of products of at most \(k\) factors
\[
        (1+\alpha x)^{-1},\qquad \alpha\ge0,
\]
and \(\mathcal C_{\le k}^0\) denotes the subcone with \(O(x^{-1})\) decay at
infinity.  The deletion identities for the tree-shift interpolation give the
key membership
\[
        \frac{Q}{M_\theta}\in\mathcal C_{\le3}^0.
\]

For almost every \(\theta\), set $\dot\nu_\theta := \sum_j\mu_j'(\theta)\,\delta_{\mu_j(\theta)}$. Then $\mathcal S_{\dot\nu_\theta}(x)=xQ(x)/M_\theta(x)$. The analytic cone lemma proved below states that if a finite signed atomic
measure \(\sigma\) satisfies
\[
        \mathcal S_\sigma(x)=xg(x),
        \qquad g\in\mathcal C_{\le3}^0,
\]
then $\int_{[0,\infty)} (\lambda-t)_+\,d\sigma(\lambda)\le0$ for every $t\ge0$. Applying this lemma to \(\sigma=\dot\nu_\theta\) and
\(g=Q/M_\theta\), we obtain
\[
        \frac{d}{d\theta}\mathsf S_t(\theta)
        =
        \frac{d}{d\theta}\sum_j(\mu_j(\theta)-t)_+^2  
        =
        2\int_{[0,\infty)} (\lambda-t)_+\,d\dot\nu_\theta(\lambda)
        \le0 .
\]
Thus \(\mathsf S_t(\theta)\) is non-increasing along the interpolation.

At the endpoints, \(M_0=M_T\) and \(M_1=M_{T'}\) are matching polynomials of
trees, so the forest determinant identity identifies their root parameters
with the squared singular values of \(T\) and \(T'\), respectively.  Therefore $\mathsf S_t(T)\ge \mathsf S_t(T')$ for $t\ge0$. Iterating along the finite sequence of one-branch shifts from \(T\) to \(P_n\) yields $\mathsf S_t(T)\ge \mathsf S_t(P_n)$ for every tree \(T\) on \(n\) vertices.

\ppar{Deletion for general bipartite graphs}
The non-tree case uses a deletion-minimal counterexample argument.  When a
vertex \(v\) is reinserted, the rank-one trace formula and Jensen's inequality
turn the spectral update into a one-dimensional interval integral.  We call
\(v\) \emph{deletable} if the componentwise inductive hypothesis
\[
        \mathsf S_t(G-v)
        =
        \sum_i \mathsf S_t(G_i)
        \ge
        \sum_i \mathsf S_t(P_{n_i}),
        \qquad G-v=G_1\sqcup\cdots\sqcup G_q,
\]
together with the rank-one spectral-shift bound for reinserting \(v\), already
implies $\mathsf S_t(G)\ge\mathsf S_t(P_n)$. Comparing this local gain with the corresponding path-splicing cost shows that
\(v\) is deletable whenever $d_G(v)-\kappa(G-v)\ge2$, where \(\kappa(G-v)\) is the number of components of \(G-v\).  A sharper two-arm
estimate handles the borderline case.

\ppar{Reduction to terminal configurations}
These deletion criteria force any minimal counterexample into a rigid terminal
family.  Every \(2\)-connected block must be an even cycle, reducing the graph
to a \emph{sun graph}, that is, an even cycle with pendant leaves attached to
some cycle vertices.  The sun must moreover be sparse, meaning that at most one
pendant leaf is attached to each cycle vertex.  Its loaded cycle vertices,
namely the cycle vertices carrying pendant leaves, must be \(3\)-separated
along the cycle: any two of them have cyclic distance at least \(3\).  The pure
even-cycle case is excluded by a trigonometric pair comparison and a
weak-majorization argument.

\ppar{Excluding terminal sparse-sun configurations}
It remains to exclude \(3\)-separated sparse suns.  Choose a loaded cycle vertex
and delete one of its unloaded cycle neighbors, namely a neighboring cycle
vertex carrying no pendant leaf.  The rank-one spectral-shift formula reduces
the required deletability check to a comparison between the local interval
contribution of this deletion and the corresponding endpoint contribution for
the path extension \(P_{n-1}\rightsquigarrow P_n\).  This local comparison is
proved in three ranges.  For \(0\le t\le3\), it follows from a Jensen lower
envelope; for \(3\le t\le4\), from a fifth-moment lower envelope and exact
Bernstein certificates recorded in the appendix; and for \(t\ge4\), from the
fact that the path endpoint contribution vanishes.  Hence the chosen vertex is
deletable, contradicting minimality.  This eliminates the terminal sparse-sun
case and completes the stop-loss comparison for connected bipartite graphs.

Combining the \(R_1\)-comparison for \(2<p<4\), the admissible-function comparison for \(p\ge4\), and the identity $\mathcal E_2(G)=2|E(G)|$, we obtain the \(p\)-energy inequality for connected bipartite graphs for every \(p\ge2\).  The bipartite reduction then extends the result to all connected graphs.

\subsubsection{Equality}

The case \(p=4\) is the anchor:
the closed-walk formula for \(\operatorname{tr}A^4\) shows that equality at
\(p=4\) holds only for \(P_n\).

We first discuss equality for connected bipartite graphs. For \(2<p<4\), equality in the Mellin representation forces
\(R_1(G;x)=R_1(P_n;x)\) for all \(x>0\).  Expanding at \(x=0\) then gives
equality at \(p=4\), hence \(G\cong P_n\). For \(p>4\), take \(F(u)=u^{p/2}\) in the stop-loss decomposition above.  Since
\(F'(0)=F''(0)=0\) and \(F'''(t)>0\) for \(t>0\), equality forces $\mathsf S_t(G)=\mathsf S_t(P_n)$ for all \(t>0\), and
letting \(t\downarrow0\) gives equality at \(p=4\).  Thus \(G\cong P_n\) in the
bipartite case.

For a general connected graph, equality in the bipartite reduction forces no
edges to be lost when passing to the bipartite spanning subgraph.  Hence the
graph itself is the bipartite path.  Therefore, for every \(p>2\), equality
holds if and only if \(G\cong P_n\).

\subsection{Scope of the paper}\label{subsec:intro-scope}

The present paper is devoted to the proof of Theorem~\ref{thm:pgeq2}. Some of the comparison principles developed below also have consequences for positive adjacency \(p\)-energies, Laplacian-type spectral functionals, and line graphs. These applications are developed in the companion paper \cite{LiuTangApp}. We do not include them here, so as to keep the present paper focused on Nikiforov's path-minimality problem.

\subsection{Paper organization}

Section~\ref{sec:preliminaries} collects the preliminary material used
throughout the paper.  We first fix the spectral notation for general and
bipartite graphs, prove the reduction from connected graphs to connected
bipartite spanning subgraphs, and then record the matching-polynomial,
tree-shift, and Stieltjes-transform tools needed later.

Section~\ref{sec:R1-comparison} treats the range \(2<p<4\).  The main object
there is the integrated resolvent functional \(R_1\).  A Mellin representation
reduces fractional power sums to a pointwise \(R_1\)-comparison, which is first
proved for trees by matching-polynomial monotonicity and then extended to all
connected bipartite graphs by a vertex-deletion induction.

Sections~\ref{sec:sl2-framework}--\ref{sec:support-neighbor-deletion-clean}
develop the second-order stop-loss comparison needed for the range \(p\ge4\).
Section~\ref{sec:sl2-framework} introduces the stop-loss framework, proves the
tree-shift and path-splicing inputs, and establishes the pure even-cycle
comparison.  Section~\ref{sec:sl2-deletion} develops the deletion theory and
reduces any minimal counterexample to a sparse-sun configuration whose loaded
cycle vertices are \(3\)-separated.  Section~\ref{sec:support-neighbor-deletion-clean}
excludes the remaining sparse-sun case with pendant leaves by a
support-neighbor deletion argument and a finite certified comparison.

Section~\ref{sec:completion} combines the \(R_1\)-comparison for \(2<p<4\), the
second-order stop-loss comparison for \(p\ge4\), and the bipartite reduction to
prove Theorem~\ref{thm:pgeq2}, including the equality case.

The appendices contain the finite certified and elementary verifications used
in the proof.  Appendix~\ref{app:a2sl2-certificates} gives the rigorous interval
certificate for the low-range path-splicing estimate.  Appendix~\ref{app:interval-domination}
proves the interval domination inequalities used in the deletion step, and
Appendix~\ref{app:verification} records the exact polynomial certificates used
in the terminal sparse-sun argument.

\section{Preliminaries}\label{sec:preliminaries}

This section collects the auxiliary material used in the proof.  We first fix
the spectral notation and prove the bipartite reduction.  We then record the
matching-polynomial and tree-shift tools used in the \(R_1\)-comparison, and
finally recall the Stieltjes-transform and cone-closure facts needed for the
second-order stop-loss framework.

We shall use the following elementary notation.  For a real number \(x\), we write \(\lfloor x\rfloor\) for the floor of \(x\), and $\{x\}:=x-\lfloor x\rfloor\in[0,1)$ for its fractional part.

\subsection{Spectral notation and bipartite reduction}
\label{subsec:bipartite-spectral-prelim}

We first record the spectral notation used in the main proof.  For general
background on spectral graph theory, see, for example, \cite{Chung1997}.  We
use the adjacency-eigenvalue and \(p\)-energy notation introduced in
Section~\ref{sec:bamr}.

For real symmetric matrices \(A\) and \(B\) of the same size, we write
\(A\succeq B\) if \(A-B\) is positive semidefinite.  Let \(M\) be a real
symmetric matrix.  By the spectral theorem, we may write
\[
        M=U\operatorname{diag}(\lambda_1,\ldots,\lambda_n)U^{\top},
\]
where \(U\) is orthogonal and \(\lambda_1,\ldots,\lambda_n\) are the eigenvalues
of \(M\), counted with multiplicity.  We write \(\spec(M)\) for the spectrum of
\(M\).  If \(f\) is a real function defined on \(\spec(M)\), we define
\[
        f(M):=
        U\operatorname{diag}\bigl(f(\lambda_1),\ldots,f(\lambda_n)\bigr)U^{\top}.
\]
In particular, for \(t\in\mathbb R\), we write
\[
        (M-tI)_+
        :=
        U\operatorname{diag}\bigl((\lambda_1-t)_+,\ldots,(\lambda_n-t)_+\bigr)
        U^{\top}.
\]

Now assume that \(G\) is bipartite with a fixed bipartition \(V(G)=X\sqcup Y\),
and let \(B_G\) be the \(|X|\times |Y|\) biadjacency matrix.  Then
\[
        A(G)=
        \begin{pmatrix}
                0 & B_G\\
                B_G^{\top} & 0
        \end{pmatrix}.
\]
Thus the adjacency spectrum is symmetric about the origin.  Its nonzero
eigenvalues occur in opposite pairs
\[
        \pm\sigma_1(G),\ldots,\pm\sigma_m(G),
\]
where \(\sigma_1(G),\ldots,\sigma_m(G)\) are the positive singular values of
\(B_G\).  We write
\[
        \mu_i(G):=\sigma_i(G)^2.
\]
Equivalently, the \(\mu_i(G)\) are the nonzero eigenvalues of both
\(B_GB_G^{\top}\) and \(B_G^{\top}B_G\).  Hence, for every \(p>0\),
\begin{equation}
        \mathcal E_p(G)=2\sum_i \mu_i(G)^{p/2},
        \qquad
        \sum_i\mu_i(G)=\tr(B_G^{\top}B_G)=|E(G)|.
        \label{eq:bipartite-energy-mu}
\end{equation}
If \(G=\bigsqcup_{\ell=1}^r G_\ell\) is a disjoint union of bipartite graphs,
then the multiset of squared singular values of \(G\) is the union of the
corresponding multisets for the components.  Hence quantities defined as sums
over the \(\mu_i(G)\) are additive over disjoint unions.

Next, we state a structural reduction that allows us to work entirely in the
bipartite setting for even convex test functions.

\begin{lemma}\label{lem:bipartite-reduction}
Let $G$ be a connected graph on $n$ vertices.  Then there exists a connected bipartite spanning subgraph $H$ of $G$ such that
\[
        \sum_{i=1}^n F(\lambda_i(H))
        \le
        \sum_{i=1}^n F(\lambda_i(G))
\]
for every even convex function $F:\mathbb R\to\mathbb R$.  In particular, for every $p\ge1$, we have $\mathcal E_p(H)\le \mathcal E_p(G)$.
\end{lemma}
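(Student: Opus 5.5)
We take a spanning tree $T$ of $G$ and its bipartition $V(G)=X\sqcup Y$, which is unique because $T$ is connected. Let $H$ be the spanning subgraph of $G$ whose edges are exactly those edges of $G$ with one endpoint in $X$ and one in $Y$. Every edge of $T$ crosses the cut, so $T\subseteq H$. Hence $H$ is spanning, connected and bipartite.

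Next we write the matrix identity. Let $D$ be the diagonal signature matrix with $D_{vv}=1$ for $v\in X$ and $D_{vv}=-1$ for $v\in Y$. Then $(DA(G)D)_{uv}=D_{uu}D_{vv}A(G)_{uv}$. This equals $A(G)_{uv}$ for edges inside $X$ or inside $Y$, and $-A(G)_{uv}$ for crossing edges. Therefore
\[
  A(H)=\tfrac12\bigl(A(G)+(-DA(G)D)\bigr).
\]
Since $D$ is orthogonal, $-DA(G)D$ has spectrum $\{-\lambda_i(G)\}$. For even $F$ this gives $\sum_iF(\lambda_i(-DA(G)D))=\sum_iF(\lambda_i(G))$.

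The main step is convexity of the trace functional $M\mapsto\tr F(M)$ on real symmetric matrices when $F:\mathbb R\to\mathbb R$ is convex. This is the classical statement that trace functions of convex functions are convex. We prove it directly. Write $M=\tfrac12(M_1+M_2)$ and let $\{e_i\}$ be an orthonormal eigenbasis of $M$, with $Me_i=\lambda_ie_i$.
\begin{itemize}
\item By scalar convexity, $F(\lambda_i)\le \tfrac12F(e_i^\top M_1e_i)+\tfrac12F(e_i^\top M_2e_i)$.
\item For a symmetric $N=\sum_j\nu_j f_jf_j^\top$, the numbers $e^\top Ne=\sum_j\nu_j(f_j^\top e)^2$ are convex combinations of the $\nu_j$. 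Jensen's inequality then gives $F(e_i^\top Ne_i)\le\sum_j (f_j^\top e_i)^2F(\nu_j)$.
\item Summing over $i$ and using $\sum_i(f_j^\top e_i)^2=1$ gives $\sum_iF(e_i^\top Ne_i)\le\tr F(N)$. This is the Peierls-type inequality.
\end{itemize}
Combining the three items yields $\tr F(M)\le\tfrac12\tr F(M_1)+\tfrac12\tr F(M_2)$. Applying this with $M_1=A(G)$ and $M_2=-DA(G)D$, and using evenness, gives
\[
\sum_iF(\lambda_i(H))\le\tfrac12\sum_iF(\lambda_i(G))+\tfrac12\sum_iF(-\lambda_i(G))=\sum_iF(\lambda_i(G)).
\]

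Finally, for $p\ge1$ the function $F(x)=|x|^p$ is even and convex, so we obtain $\mathcal E_p(H)\le\mathcal E_p(G)$.

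The only nontrivial step is the trace-convexity (Peierls/Jensen) argument, and it is standard. The rest is bookkeeping, checking that $H$ contains $T$ and that $D$ implements the sign flip on exactly the crossing edges.
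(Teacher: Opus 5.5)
Your proof is correct and follows the paper's argument step for step: spanning-tree cut, $H$ as the crossing edges, $A(H)=\tfrac12\bigl(A(G)+(-DA(G)D)\bigr)$, convexity of $M\mapsto\tr F(M)$, and evenness of $F$. The one difference is that you prove the trace convexity directly through the Peierls--Jensen inequality $\sum_i F(e_i^\top N e_i)\le \tr F(N)$, whereas the paper simply cites Davis's theorem; this makes your version self-contained at no extra cost.
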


\begin{proof}
Let $T$ be a spanning tree of $G$, and let $V(G)=X\sqcup Y$ be the bipartition
of $T$.  Let $H$ be the spanning subgraph of $G$ consisting of all edges of
$G$ that cross the cut $(X,Y)$.  Since $T\subseteq H\subseteq G$, the graph
$H$ is connected and bipartite.

Order the vertices so that the vertices of $X$ come first.  Then
\[
A(G)=
\begin{pmatrix}
A_X & B\\
B^{\top} & A_Y
\end{pmatrix},
\qquad
A(H)=
\begin{pmatrix}
0 & B\\
B^{\top} & 0
\end{pmatrix}.
\]
With $D=\operatorname{diag}(I_{|X|},-I_{|Y|})$, we have
\[
        A(H)=\frac{A(G)+(-D A(G)D)}{2}.
\]

For the given even convex function \(F:\mathbb R\to\mathbb R\), and for any real symmetric matrix \(M\), write
\[
        \Phi_F(M):=\tr F(M)=\sum_{i=1}^n F(\lambda_i(M)).
\]
Since \(F\) is convex, the trace spectral functional $\Phi_F(M)$ is convex on the space of real symmetric matrices. This is a standard consequence of the Davis theorem on convex invariant functions of Hermitian matrices and its real-symmetric corollary; see \cite{Davis1957}. Therefore
\[
\Phi_F(A(H))=\Phi_F\left(\frac{A(G)+(-D A(G)D)}{2}\right) \le \frac{\Phi_F(A(G))+\Phi_F(-D A(G)D)}2.
\]
Since \(D\) is orthogonal and \(D^{-1}=D\), the matrix \(-D A(G)D=D(-A(G))D\) is orthogonally similar to \(-A(G)\). Thus
\[
        \Phi_F(-D A(G)D)=\Phi_F(-A(G)).
\]
Since $F$ is even, $F(-t)=F(t)$ for every $t\in\mathbb R$, and hence
\[
        \Phi_F(-A(G))=\Phi_F(A(G)).
\]
Thus $\Phi_F(A(H))\le\Phi_F(A(G))$, which is exactly the stated inequality. Taking $F(t)=|t|^p$ gives $\mathcal E_p(H)\le\mathcal E_p(G)$.
\end{proof}

Consequently, any lower bound for an even convex spectral functional over connected bipartite graphs of order $n$ immediately extends to all connected graphs of order $n$.  Thus the rest of the proof of Theorem~\ref{thm:pgeq2} may be carried out entirely in the connected bipartite setting.  

\subsection{Matching generating polynomials}
\label{subsec:matching-poly-prelim}

We shall later compare trees by translating a spectral determinant into a matching generating polynomial. This subsection records the matching-polynomial facts needed for that comparison.

\begin{definition}[Weighted matching generating polynomial]\label{def:matching-poly}
Let \(G\) be a finite graph whose edges carry nonnegative weights \(\omega_e\).  The \emph{weighted matching generating polynomial} of \(G\) is
\[
        M_G(x)=\sum_M \left(\prod_{e\in M}\omega_e\right)x^{|M|},
\]
where the sum is over all matchings \(M\) of \(G\). 
\end{definition}
\begin{remark}
If \(G\) is unweighted, we take \(\omega_e=1\) for every edge and recover the usual matching generating polynomial
\[
        M_G(x)=\sum_{j\geq 0} m_j(G)x^j,
\]
where \(m_j(G)\) is the number of \(j\)-matchings in \(G\).
\end{remark}

This is the standard monomer--dimer partition-function normalization; see Heilmann--Lieb~\cite{HeilmannLieb1972}. For general background on matching polynomials, see Godsil--Gutman~\cite{GodsilGutman1981}. We call a ratio of the form \(M_{G-S}(x)/M_G(x)\), where \(S\) is a deleted vertex set, a \emph{deletion ratio}.

The next theorem supplies the real-rootedness input for weighted graphs.

\begin{theorem}[Heilmann--Lieb zero theorem]
\label{thm:heilmann-lieb-zero}
Let \(G\) be a finite graph with nonnegative edge weights \(\omega_e\), and let \(M_G(x)\) be the corresponding weighted matching generating polynomial.  Then every zero of \(M_G(x)\) is real and negative.
\end{theorem}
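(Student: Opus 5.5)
The plan is to prove the stronger statement that \(M_G(x)\neq0\) for every \(x\) in the open upper half-plane \(\mathbb H=\{\operatorname{Im}x>0\}\). Everything else is then formal. The coefficients of \(M_G\) are real, so \(M_G(\bar x)=\overline{M_G(x)}\) and there are no zeros in the lower half-plane either; hence all zeros are real. The coefficients are also non-negative and the constant term is \(M_G(0)=1\) (the empty matching), so \(M_G(x)\ge1>0\) for \(x\ge0\). Therefore every zero is real and negative.

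To prove non-vanishing on \(\mathbb H\), I will induct on \(|V(G)|\) using the vertex recurrence. For a vertex \(v\), split matchings according to whether \(v\) is covered:
\[
        M_G(x)=M_{G-v}(x)+x\sum_{u\sim v}\omega_{uv}\,M_{G-v-u}(x).
\]
Fix \(x\in\mathbb H\) with \(\theta:=\arg x\in(0,\pi)\), and for each graph \(H\) and vertex \(v\) of \(H\) consider the ratio
\[
        f_{H,v}(x):=\frac{M_H(x)}{M_{H-v}(x)} .
\]
The inductive claim is: \(M_H(x)\neq0\), and \(f_{H,v}(x)\) is a nonzero complex number with \(\arg f_{H,v}(x)\in[0,\theta)\). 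Here \(\arg\) is taken in \((-\pi,\pi]\), so this is the closed-open sector \(\Sigma_\theta:=\{re^{i\phi}:r>0,\ 0\le\phi<\theta\}\). The base case is a single vertex or the empty graph, where \(M=1\) and \(f=1\in\Sigma_\theta\).

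Dividing the recurrence by \(M_{G-v}\) gives
\[
        f_{G,v}(x)=1+\sum_{u\sim v}\omega_{uv}\,\frac{x}{f_{G-v,u}(x)} .
\]
This uses the induction hypothesis that \(M_{G-v}(x)\neq0\) and \(f_{G-v,u}(x)=M_{G-v}(x)/M_{G-v-u}(x)\), which is also nonzero.

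The key step, and the only real content, is the sector computation. By hypothesis \(\arg f_{G-v,u}\in[0,\theta)\), so \(\arg(1/f_{G-v,u})\in(-\theta,0]\) and \(\arg(x/f_{G-v,u})\in(0,\theta]\). Each summand \(\omega_{uv}x/f_{G-v,u}\) is therefore zero (when \(\omega_{uv}=0\)) or lies in the closed sector with angles in \([0,\theta]\). Since \(\theta<\pi\), that closed sector is a convex cone. Adding the term \(1\) (angle \(0\)) yields a nonzero point of the same cone. Its argument is strictly less than \(\theta\): a sum containing the positive real \(1\) plus vectors with arguments in \([0,\theta]\) cannot lie on the ray of angle \(\theta\), because its imaginary-to-real proportions are strictly tilted toward the real axis. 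Hence \(f_{G,v}(x)\in\Sigma_\theta\), in particular \(f_{G,v}(x)\neq0\), and so \(M_G(x)=f_{G,v}(x)M_{G-v}(x)\neq0\).

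The main obstacle I expect is choosing the invariant correctly. The naive condition "\(\operatorname{Im} f>0\)" is not preserved, and the half-open endpoint behaviour of \(\Sigma_\theta\) needs care. A summand can reach angle \(\theta\) exactly, namely when \(f_{G-v,u}=1\) (for example when \(u\) is isolated in \(G-v\)). This is why the constant term \(1\) is essential for restoring strictness. Finally, the claim must be stated for every vertex \(v\) of every smaller graph, since the recursion calls \(f_{G-v,u}\) at varying vertices \(u\).
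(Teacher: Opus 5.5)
Your proof is correct, and it takes a genuinely different route from the paper's.

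\textbf{The paper's route.} The paper does not prove zero-freeness itself. It cites Heilmann--Lieb's theorem that the monomer--dimer polynomial $P(G;z)=\sum_M(\prod_{e\in M}\omega_e)z^{|V(G)|-2|M|}$ has only purely imaginary zeros. It then transfers this through the identity $P(G;z)=z^{|V(G)|}M_G(z^{-2})$: any zero $\xi$ of $M_G$ equals $z^{-2}$ for some $z\in i\mathbb R$, hence is negative.

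\textbf{Your route.} You prove directly in the variable $x$ that $M_G$ has no zeros off the real axis. You run the vertex recursion with the sector invariant $\arg(M_H/M_{H-v})\in[0,\arg x)$. In effect this is a self-contained proof of the cited result, moved to the variable $x=z^{-2}$. The standard proof of that result runs the same recursion with the invariant $\operatorname{Re}(P(H;z)/P(H-v;z))>0$ for $\operatorname{Re}z>0$; under $x=z^{-2}$ that corresponds to a wider sector than yours.

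\textbf{What each buys.} The paper's argument is two lines given the citation. Yours removes the external dependence and also yields more. Since $\arg(M_{H-v}/M_H)\in(-\arg x,0]$ on the upper half-plane, the one-vertex deletion ratio is a rational Stieltjes function regular at $0$, and so lies in $\mathcal C_{\le1}$, for every nonnegatively weighted graph $H$. The paper obtains this only for forests, via the determinant identity in Lemma~\ref{lem:one-two-deletion-cones}.

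\textbf{One step to tighten.} You state loosely that $1+\sum_{u\sim v}\omega_{uv}x/f_{G-v,u}(x)$ is nonzero with argument strictly below $\theta$. Your justification via ``imaginary-to-real proportions'' is imprecise, because real parts can be negative when $\theta>\pi/2$. A clean fix: the functional $z\mapsto\operatorname{Im}(e^{-i\theta}z)$ is $\le0$ on the closed sector $\{re^{i\phi}:r\ge0,\ 0\le\phi\le\theta\}$ and strictly negative at $1$. Hence it is strictly negative at the whole sum, which gives nonvanishing and strictness at once.
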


\begin{proof}
Let
\[
        P(G;z):=\sum_{M}\left(\prod_{e\in M}\omega_e\right) z^{|V(G)|-2|M|},
\]
where the sum runs over all matchings \(M\) of \(G\). This is exactly the weighted monomer--dimer partition polynomial considered by Heilmann and Lieb~\cite[Eq.~(2.5a)]{HeilmannLieb1972}. For graphs with nonnegative edge weights, they proved that all zeros of \(P(G;z)\) lie on the imaginary axis~\cite[Remark~4.1]{HeilmannLieb1972}.

By the definition of \(M_G(x)\), for every \(z\neq0\) we have
\[
        P(G;z)
        =\sum_M \left(\prod_{e\in M}\omega_e\right) z^{|V(G)|-2|M|}
        =z^{|V(G)|}\sum_M \left(\prod_{e\in M}\omega_e\right)(z^{-2})^{|M|}
        =z^{|V(G)|}M_G(z^{-2}).
\]
Now let \(\xi\) be a zero of \(M_G(x)\). Since the empty matching contributes the constant term \(1\), we have \(M_G(0)=1\), and hence \(\xi\neq0\). Choose \(z\in\mathbb C\setminus\{0\}\) such that \(z^{-2}=\xi\). Then
\[
        P(G;z)=z^{|V(G)|}M_G(\xi)=0.
\]
Since every zero of \(P(G;z)\) is purely imaginary, we know that \(z\in i\mathbb R\). Therefore
\[
        \xi=z^{-2}\in(-\infty,0).
\]
Thus every zero of \(M_G(x)\) is real and negative.
\end{proof}

\begin{example}[Paths]
For the unweighted path \(P_m\) we write
\[
h_m(x):=M_{P_m}(x)=\sum_j m_j(P_m)x^j\qquad(m\ge0),
\]
and we use the convention $h_{-1}(x)=0$. Thus \(h_0(x)=h_1(x)=1\). The matching recurrence for the last edge gives
\begin{equation}\label{eq:path-matching-rec}
h_m(x)=h_{m-1}(x)+x h_{m-2}(x)\qquad(m\ge1).
\end{equation}
More generally, concatenating two paths at an endpoint and conditioning on whether the connecting edge is used gives the \emph{path-concatenation identity}
\begin{equation}\label{eq:path-concatenation}
h_{a+b}(x)=h_a(x)h_b(x)+x h_{a-1}(x)h_{b-1}(x)\qquad (a,b\ge0),
\end{equation}
again with the convention \(h_{-1}(x)=0\).
\end{example}

The next lemma is the bridge between weighted matching generating polynomials and the determinant identities used later.

\begin{lemma}
\label{lem:forest-det-matching}
Let \(\cF\) be a weighted forest, and fix a bipartition \(V(\cF)=L\sqcup R\).
Define the weighted biadjacency matrix \(B_{\cF}\) by
\[
        B_{\cF}=\bigl((B_{\cF})_{uv}\bigr)_{u\in L,\,v\in R},
        \qquad
        (B_{\cF})_{uv}
        =
        \begin{cases}
                \sqrt{\omega_{uv}}, & uv\in E(\cF),\\
                0, & uv\notin E(\cF).
        \end{cases}
\]
Then
\begin{equation}\label{eq:forest-det-matching-r1}
\det(I+xB_{\cF}B_{\cF}^{\top})=\det(I+xB_{\cF}^{\top}B_{\cF})=M_{\cF}(x).
\end{equation}
\end{lemma}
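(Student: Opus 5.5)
The first equality $\det(I+xB_{\cF}B_{\cF}^{\top})=\det(I+xB_{\cF}^{\top}B_{\cF})$ is Sylvester's determinant identity $\det(I+XY)=\det(I+YX)$ with $X=xB_{\cF}$, $Y=B_{\cF}^{\top}$. For the second equality we plan to expand $\det(I+xB_{\cF}B_{\cF}^{\top})$ into principal minors and then use Cauchy--Binet, which turns every term into a sum over pairs of subsets of $L$ and $R$. The forest hypothesis is what forces each surviving term to be a perfect matching, counted once.

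\textbf{Step 1.} For a symmetric matrix $N$ of size $|L|$ we have $\det(I+xN)=\sum_{S\subseteq L}x^{|S|}\det N[S,S]$. We take $N=B_{\cF}B_{\cF}^{\top}$. Cauchy--Binet then gives
\[
\det N[S,S]=\sum_{U\subseteq R,\ |U|=|S|}\bigl(\det B_{\cF}[S,U]\bigr)^2 .
\]
So it suffices to show that for $|S|=|U|=k$,
\[
\bigl(\det B_{\cF}[S,U]\bigr)^2=\sum_{M}\prod_{e\in M}\omega_e ,
\]
where the sum runs over perfect matchings $M$ of the induced subgraph $\cF[S\cup U]$. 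Summing over all pairs $(S,U)$ lists each $k$-matching of $\cF$ exactly once, via its vertex set. The result is $\sum_k x^k\sum_{|M|=k}\prod_{e\in M}\omega_e=M_{\cF}(x)$.

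\textbf{Step 2 (the main point).} The Leibniz expansion writes $\det B_{\cF}[S,U]$ as a signed sum over bijections $\pi:S\to U$ with $s\pi(s)\in E(\cF)$ for all $s$, that is, over perfect matchings of $\cF[S\cup U]$. The term of $\pi$ is $\pm\prod_s\sqrt{\omega_{s\pi(s)}}$. We claim a forest has at most one perfect matching. If $M_1\ne M_2$ were two of them, their symmetric difference would be a nonempty union of alternating cycles, which is impossible in a forest. Hence the determinant is either $0$ or a single term $\pm\prod_{e\in M}\sqrt{\omega_e}$. Squaring gives exactly $\prod_{e\in M}\omega_e$, or $0$ if there is no perfect matching. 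This matches the right-hand side because the matching is unique.

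The induced subgraph $\cF[S\cup U]$ is again a forest, so the uniqueness argument applies to it. The only real obstacle is this uniqueness, and it is exactly where acyclicity is used. Without it, for example on an even cycle, cancellations and cross terms in the square would break the identity. Everything else is routine bookkeeping with Sylvester, the principal-minor expansion, and Cauchy--Binet.
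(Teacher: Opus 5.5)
Your proposal is correct and follows essentially the same route as the paper: Sylvester's identity, the principal-minor expansion combined with Cauchy--Binet, and uniqueness of perfect matchings in the forest $\mathcal F[S\cup U]$. The only differences are cosmetic: you expand $\det(I+xB_{\mathcal F}B_{\mathcal F}^{\top})$ rather than $\det(I+xB_{\mathcal F}^{\top}B_{\mathcal F})$, and you justify the uniqueness of a perfect matching via the symmetric-difference argument, which the paper simply asserts.
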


\begin{proof}
By expanding $\det(I+xB_{\cF}^{\top}B_{\cF})$ into principal minors and then applying Cauchy--Binet,
\[
\det(I+xB_{\cF}^{\top}B_{\cF})
=
\sum_{J\subseteq R}x^{|J|}\det\bigl((B_{\cF}^{\top}B_{\cF})_{J,J}\bigr)=
\sum_{I\subseteq L,\;J\subseteq R,\;|I|=|J|}
x^{|J|}\det(B_{\cF}[I,J])^2.
\]
Using \(\det(I+xB_{\cF}^{\top}B_{\cF})=\det(I+xB_{\cF}B_{\cF}^{\top})\), we also obtain
\[
\det(I+xB_{\cF}B_{\cF}^{\top})
=
\sum_{I\subseteq L,\;J\subseteq R,\;|I|=|J|}
x^{|J|}\det(B_{\cF}[I,J])^2.
\]

Now fix \(I\subseteq L\) and \(J\subseteq R\) with \(|I|=|J|\). Since \(\cF\) is a forest,
the induced subgraph \(\cF[I\cup J]\) is again a forest, and hence it has at most one
perfect matching. If \(\cF[I\cup J]\) has no perfect matching, then \(\det(B_{\cF}[I,J])=0\).
If \(\cF[I\cup J]\) has a perfect matching, then it is unique, and
\(\det(B_{\cF}[I,J])^2\) is exactly the product of the weights of the edges in that matching.
Therefore the contribution of all pairs \((I,J)\) to the above sum is precisely the
weighted matching generating polynomial \(M_{\cF}(x)\). This proves
\[
\det(I+xB_{\cF}B_{\cF}^{\top})=\det(I+xB_{\cF}^{\top}B_{\cF})=M_{\cF}(x).
\qedhere\]
\end{proof}

\subsection{Tree shifts}
\label{subsec:tree-transform-prelim}

With the matching-polynomial framework in place, we now introduce the tree shift that moves an arbitrary tree toward the path.

The following tree-shift operation is the key to obtaining the main spectral results for trees. It is a one-branch special case of Csikv\'ari's generalized tree shift; see \cite{Csikvari2010}. Figure~\ref{fig:tree-transform} illustrates this operation.

\begin{figure}[htbp]
        \centering
        \includegraphics[width=\textwidth]{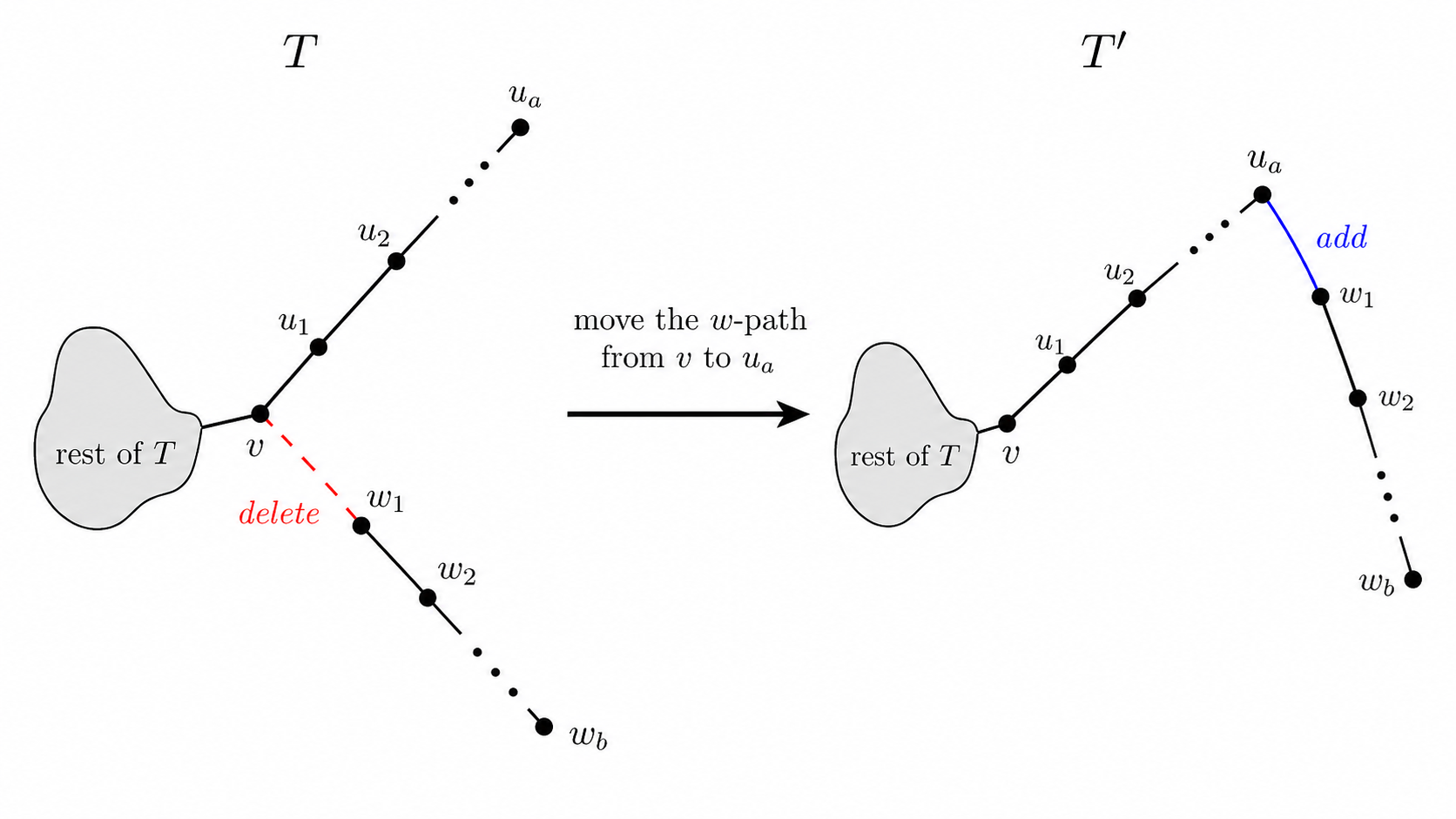}
        \caption{The one-branch tree shift used in Lemma~\ref{lem:one-shift}: delete the edge \(vw_1\) and reattach the pendant path \(w_1\cdots w_b\) at \(u_a\).}
        \label{fig:tree-transform}
\end{figure}

\begin{lemma}
\label{lem:one-shift}
Let \(T\) be a tree containing a vertex \(v\) with two pendant paths
\[
        vu_1u_2\cdots u_a,
        \qquad
        vw_1w_2\cdots w_b,
\]
where \(a,b\geq 1\). Let \(T'\) be the tree obtained from \(T\) by deleting the edge \(vw_1\) and adding the edge \(u_a w_1\).  Then there is a polynomial \(Q(x)\) with nonnegative coefficients such that
\[
        M_{T'}(x)=M_T(x)+x^2Q(x).
\]
\end{lemma}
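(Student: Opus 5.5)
The plan is to isolate the common part of the two trees and compute both matching generating polynomials by edge deletion. Let \(H\) be the tree obtained from \(T\) by deleting the whole arm \(w_1\cdots w_b\). Then \(T\) is \(H\) with a pendant path \(P_b\) glued at \(v\), and \(T'\) is \(H\) with the same \(P_b\) glued at \(u_a\). Conditioning on whether the connecting edge (\(vw_1\), resp.\ \(u_aw_1\)) lies in the matching gives
\[
        M_T=M_H\,h_b+x\,M_{H-v}\,h_{b-1},
        \qquad
        M_{T'}=M_H\,h_b+x\,M_{H-u_a}\,h_{b-1}.
\]
Subtracting these, the whole problem reduces to the claim that
\[
        M_{H-u_a}(x)-M_{H-v}(x)=x\,R(x)
\]
for some polynomial \(R\) with nonnegative coefficients. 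Once this holds, we may take \(Q=R\,h_{b-1}\), which has nonnegative coefficients because \(h_{b-1}\) does.

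To prove the claim, let \(K:=H-\{v,u_1,\dots,u_a\}\); this is the part of \(H\) hanging off \(v\) other than the first arm, and it may be empty. We expand both deletions in terms of \(K\). The graph \(H-v\) is the disjoint union of \(K\) and the path \(u_1\cdots u_a\), so \(M_{H-v}=M_K\,h_a\). The graph \(H-u_a\) is \(K\) with the path \(v u_1\cdots u_{a-1}\) (on \(a\) vertices) attached through \(v\). Conditioning on whether \(v\) is matched into \(K\) gives
\[
        M_{H-u_a}=M_K\,h_a+x\sum_{z\in N_K(v)}M_{K-z}\,h_{a-1}.
\]
Here, when \(v\) is matched to a neighbor \(z\in K\), the remaining path \(u_1\cdots u_{a-1}\) contributes \(h_{a-1}\). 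Subtracting, the \(M_K h_a\) terms cancel and we get \(R=h_{a-1}\sum_{z}M_{K-z}\), which clearly has nonnegative coefficients. Combining this with the first step yields
\[
        M_{T'}-M_T=x^2\,h_{a-1}\,h_{b-1}\sum_{z\in N_K(v)}M_{K-z}.
\]
This expression is coefficientwise nonnegative and is zero exactly when \(v\) has degree \(2\), which is the degenerate case where \(T\) is already a path through \(v\).

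The main obstacle is bookkeeping rather than ideas. We need to keep the conventions \(h_0=h_1=1\) and \(h_{-1}=0\) straight, and to check the edge cases \(a=1\) (so \(u_{a-1}\) is \(v\) itself) and \(K=\emptyset\). In the case \(a=1\), the path on \(v u_1\cdots u_{a-1}\) is just \(\{v\}\), and the formula still reads \(h_{a-1}=h_0=1\), so it remains valid. If \(K\) is empty, the sum is empty and \(Q=0\), which is consistent with the statement. If desired, the derivation can also be phrased through the path-concatenation identity \eqref{eq:path-concatenation}, but the direct edge-conditioning above suffices.
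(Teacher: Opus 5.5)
Your proposal is correct and is essentially the paper's argument. Both use matching recurrences at the attachment vertices and arrive at the same identity \(M_{T'}-M_T=x^2h_{a-1}h_{b-1}\sum_{z\in N_K(v)}M_{K-z}\); your \(K\) is the paper's \(C-v\). The only difference is that you condition on the moved arm's connecting edge first, whereas the paper decomposes both arms relative to \(C\), so you never need the path-concatenation identities for \(h_{a+b}\) and \(h_{a+b-1}\).
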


\begin{proof}
Let \(C\) be the subtree obtained from \(T\) by deleting the vertices
\(u_1,\ldots,u_a,w_1,\ldots,w_b\); thus \(v\in V(C)\).  We shall use the path-concatenation~\eqref{eq:path-concatenation}.

Decomposing matchings according to whether the edge incident to \(v\) on each
displayed path is used, we obtain
\[
        M_T=h_ah_bM_C+x\bigl(h_{a-1}h_b+h_ah_{b-1}\bigr)M_{C-v}.
\]
After the shift, the two pendant paths form a single path on \(a+b\) vertices
attached to \(v\), so
\[
        M_{T'}=h_{a+b}M_C+xh_{a+b-1}M_{C-v}.
\]
Using the two identities
\[
        h_{a+b}=h_ah_b+xh_{a-1}h_{b-1}
\]
and
\[
        h_{a+b-1}=h_{a-1}h_b+h_ah_{b-1}-h_{a-1}h_{b-1},
\]
we get
\[
        M_{T'}-M_T=xh_{a-1}h_{b-1}\bigl(M_C-M_{C-v}\bigr).
\]
Applying the matching recurrence at \(v\) inside \(C\),
\begin{equation}\label{eq:matching-recurrence-v}
M_C-M_{C-v}=x\sum_{z\in C: v\sim z}M_{C-v-z}, 
\end{equation}
where the sum is interpreted as \(0\) if \(v\) is isolated in \(C\).  Hence
\[
        M_{T'}-M_T
        =x^2h_{a-1}h_{b-1}\sum_{z\in C: v\sim z}M_{C-v-z}.
\]
The right-hand side is a polynomial with nonnegative coefficients, which proves the claim.
\end{proof}

\begin{lemma}
\label{lem:reduce-to-path}
Every tree \(T\) on \(n\) vertices can be transformed into \(P_n\) by a
finite sequence of the tree shifts described in Lemma~\ref{lem:one-shift}.
\end{lemma}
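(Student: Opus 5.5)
We argue by induction on the number of leaves of \(T\), using the number of leaves as a potential that each shift decreases by exactly one. If \(T\) has at most two leaves (for \(n\ge2\)), then \(T\) has no vertex of degree at least \(3\), and so \(T\cong P_n\); the cases \(n\le 2\) are trivial. So assume \(T\) has a vertex of degree at least \(3\). We show that \(T\) admits a shift as in Lemma~\ref{lem:one-shift} and that the resulting tree \(T'\) has one fewer leaf. Since the number of leaves is a positive integer, iterating the shift terminates, necessarily at \(P_n\).

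To find the shift, fix a leaf \(r\) of \(T\) and root \(T\) at \(r\). Among the branching vertices (those of degree at least \(3\)), choose \(v\) at maximal distance from \(r\). Note \(v\neq r\), since \(r\) has degree \(1\). Then \(v\) has at least two children, because it has one parent and degree at least \(3\). Consider the subtree hanging below any child \(c\) of \(v\). It contains no branching vertex, since such a vertex would be farther from \(r\) than \(v\). Hence every vertex in it has at most one child, so the subtree is a path \(c=u_1u_2\cdots u_a\) descending from \(v\) and ending at a leaf \(u_a\). Choose two distinct children of \(v\). This gives two pendant paths \(vu_1\cdots u_a\) and \(vw_1\cdots w_b\) with \(a,b\ge1\), in which every internal vertex has degree \(2\) in \(T\) and \(u_a,w_b\) are leaves of \(T\). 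These are exactly the hypotheses of Lemma~\ref{lem:one-shift}.

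Now count leaves in \(T'\), obtained by deleting \(vw_1\) and adding \(u_aw_1\). Connectivity and the edge count are preserved, so \(T'\) is a tree on \(n\) vertices. The degree of \(u_a\) rises from \(1\) to \(2\), so it stops being a leaf. The degree of \(v\) drops by one but stays at least \(2\), so \(v\) does not become a leaf. The degree of \(w_1\) is unchanged, and so are all other degrees. Hence \(T'\) has exactly one fewer leaf than \(T\).

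The only step needing care is checking that the subtrees below the children of \(v\) really are pendant paths attached at \(v\) alone. This is where the maximality of \(v\) and the choice of a leaf as root are used. Rooting at a leaf guarantees that the parent side of \(v\) does not interfere, and that \(v\) has at least two children rather than merely degree at least \(3\).
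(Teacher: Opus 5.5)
Your proof is correct and follows essentially the same route as the paper. Like the paper, you root at a leaf, pick a branching vertex of maximal depth, note that the subtrees below its children are pendant paths, apply one shift, and use the number of leaves (which drops by exactly one, since $u_a$ stops being a leaf, $v$ keeps degree at least $2$, and the degree of $w_1$ is unchanged) as a strictly decreasing potential.
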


\begin{proof}
We call a vertex \emph{branching} if it has degree at least \(3\).  If a
tree has no branching vertex, then its maximum degree is at most \(2\),
and hence, being connected, it is a path.

Assume that the current tree is not a path.  Root it at an arbitrary leaf,
and choose a branching vertex \(v\) of maximal distance from the root.  Let
\(\pi(v)\) be the parent of \(v\).  Every component of \(T-v\) not containing
\(\pi(v)\) contains no branching vertex; otherwise such a branching vertex
would be farther from the root than \(v\).  Hence each such component is a
path, attached to \(v\) at one of its endpoints.  Since \(v\) is branching
and has exactly one parent edge, there are at least two such pendant paths.
Choose two of them,
\[
   vu_1\cdots u_a
   \qquad\text{and}\qquad
   vw_1\cdots w_b .
\]

Now delete the edge \(vw_1\) and add the edge \(u_a w_1\).  This reconnects
the separated \(w\)-path to the rest of the tree, and therefore produces
again a tree on the same vertex set.  Under this operation the leaf
\(u_a\) ceases to be a leaf, while \(w_b\) remains a leaf; the vertex
\(v\) still has degree at least \(2\), since it remains adjacent to
\(\pi(v)\) and \(u_1\).  No other vertex changes its leaf status.  Thus the
number of leaves decreases by exactly \(1\).

Repeating the operation whenever the current tree is not a path must stop
after finitely many steps, because the number of leaves is a positive
integer and strictly decreases at each step.  The terminal tree has no
branching vertex, and hence is a path.  Since the number of vertices is
preserved throughout, the terminal tree is \(P_n\).
\end{proof}

\subsection{Stieltjes transforms and cones}
\label{subsec:stieltjes-prelim}

For the second-order stop-loss part we use Stieltjes transforms of finite signed measures and, in particular, finite atomic measures, the cones \(\mathcal C_{\le k}\), and the auxiliary closure lemma (Lemma~\ref{lem:stieltjes-closure}). The proof of that lemma uses one standard complete-Bernstein argument; see, for example, \cite[Chapter~7]{Schilling2012}.

\begin{definition}[Stieltjes transform]
If \(\tau\) is a finite signed Borel measure on \([0,\infty)\), its
\emph{Stieltjes transform} is
\[
        \mathcal S_\tau(x):=\int_{[0,\infty)}\frac{1}{1+x\lambda}\,d\tau(\lambda) \qquad(x>0).
\]
In the finite atomic case
\[
        \tau=m_0\delta_0+\sum_{\ell=1}^N m_\ell\delta_{s_\ell},
        \qquad m_0,m_\ell\in\mathbb R,\ s_\ell>0,
\]
where \(\delta_s\) denotes the Dirac measure at \(s\), this becomes
\[
        \mathcal S_\tau(x)=m_0+\sum_{\ell=1}^N\frac{m_\ell}{1+s_\ell x}.
\]
\end{definition}

\begin{definition}[Stieltjes cones]
For \(k\ge0\), let \(\mathcal C_{\le k}\) denote the convex cone generated by the functions
\[
        \prod_{j=1}^{m}(1+s_jx)^{-1},
        \qquad
        0\le m\le k,\qquad s_j>0,
\]
where the parameters \(s_j\) are allowed to repeat. In other words, \(\mathcal C_{\le k}\) consists of all finite nonnegative linear combinations of such functions. The case \(m=0\) is interpreted as the constant function \(1\). We also put
\[
        \mathcal C_{\le k}^{0}
        :=
        \{g\in\mathcal C_{\le k}: g(x)=O(x^{-1})\text{ as }x\to\infty\}.
\]
\end{definition}

It is immediate from the definition that \(\mathcal C_{\le k}\cdot\mathcal C_{\le \ell} \subseteq \mathcal C_{\le k+\ell}\). Moreover, \(\mathcal C_{\le k}^{0}\) is the subcone of \(\mathcal C_{\le k}\) consisting of functions that satisfy \(g(x)=O(x^{-1})\) as \(x\to\infty\). This distinction is important: deletion ratios may have a nonzero limit at infinity, and therefore naturally lie in \(\mathcal C_{\le k}\), not necessarily in \(\mathcal C_{\le k}^{0}\).

By definition, every element of \(\mathcal C_{\le1}\) is the Stieltjes
transform of a finite atomic nonnegative measure on \([0,\infty)\), and
\(\mathcal C_{\le1}^{0}\) consists exactly of those such functions with zero
constant term.

We shall also use the following standard terminology. A nonnegative function \(h:(0,\infty)\to[0,\infty)\) is called a \emph{Stieltjes function} if it admits a representation
\[
        h(x)=\frac{a}{x}+b+\int_{(0,\infty)}\frac{1}{x+t}\,d\mu(t),
        \qquad a,b\ge0,
\]
where \(\mu\) is a positive measure satisfying $\int_{(0,\infty)}(1+t)^{-1}\,d\mu(t)<\infty$; see \cite[Definition~2.1]{Schilling2012}. A function is called \emph{rational Stieltjes} if it has the form
\[
        f(x)=\frac{a}{x}+\mathcal S_\tau(x)
        =\frac{a}{x}+m_0+\sum_{\ell=1}^N\frac{m_\ell}{1+s_\ell x},
        \qquad
        a,m_\ell\ge0,
        \quad s_\ell>0,
\]
for some finite atomic nonnegative measure \(\tau=m_0\delta_0+\sum_{\ell=1}^N m_\ell\delta_{s_\ell}\) on \([0,\infty)\). In particular, every element of \(\mathcal C_{\le1}\) is a rational Stieltjes function with no pole at \(0\).

We now recall the standard notion of a \emph{complete Bernstein function}. We shall use the following consequence of \cite[Definition~6.1 and Theorem~6.2(vi)]{Schilling2012}: if \(\varphi\) is a complete Bernstein function, then its restriction to \((0,\infty)\) admits a representation
\[
        \varphi(x)=a+bx+\int_{(0,\infty)}\frac{x}{x+t}\,d\mu(t),
        \qquad a,b\ge0,
\]
where \(\mu\) is a positive measure satisfying $\int_{(0,\infty)}(1+t)^{-1}\,d\mu(t)<\infty$.

We shall use the following standard facts. If \(h\) is a nonzero Stieltjes function, then \(1/h\) is a complete Bernstein function, and conversely the reciprocal of a nonzero complete Bernstein function is a Stieltjes function \cite[Theorem~7.3]{Schilling2012}. If \(h\) is a Stieltjes function, then \(xh(x)\) is a complete Bernstein function \cite[Theorem~6.2(ii)]{Schilling2012}. Finally, complete Bernstein functions are closed under addition, which is immediate from the integral representation in \cite[Theorem~6.2(vi)]{Schilling2012}.

\begin{lemma}\label{lem:stieltjes-closure}
If \(c(x),d(x)\in\mathcal C_{\le1}\), then
\[
        \frac{c(x)}{1+xc(x)d(x)}\in\mathcal C_{\le1}.
\]
\end{lemma}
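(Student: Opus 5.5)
The plan is to pass from the cone \(\mathcal C_{\le1}\) to the classes of Stieltjes and complete Bernstein functions, use the standard closure facts recalled above, and then come back to \(\mathcal C_{\le1}\) using rationality and the behaviour at \(x=0\).

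\textbf{Step 1: reduce to \(c\not\equiv0\) and write the target as a reciprocal.} If \(c\equiv0\) the expression is \(0\in\mathcal C_{\le1}\), so assume \(c\not\equiv0\). Every element of \(\mathcal C_{\le1}\) is a Stieltjes function, since
\[
        m_0+\sum_\ell\frac{m_\ell}{1+s_\ell x}
        = m_0+\sum_\ell\frac{m_\ell/s_\ell}{x+1/s_\ell},
\]
which is the required representation with \(a=0\), \(b=m_0\) and the atomic measure \(\mu=\sum_\ell(m_\ell/s_\ell)\delta_{1/s_\ell}\). Moreover \(c>0\) on \((0,\infty)\). Hence
\[
        \frac{c(x)}{1+xc(x)d(x)}=\frac{1}{\,1/c(x)+x\,d(x)\,}.
\]

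\textbf{Step 2: show that \(h:=c/(1+xcd)\) is a Stieltjes function.} By the cited facts, \(1/c\) is complete Bernstein because \(c\) is a nonzero Stieltjes function. Also \(x\,d(x)\) is complete Bernstein because \(d\) is Stieltjes. Their sum is complete Bernstein, and it is nonzero since \(1/c>0\). Its reciprocal \(h\) is therefore a Stieltjes function:
\[
        h(x)=\frac{a}{x}+b+\int_{(0,\infty)}\frac{d\mu(t)}{x+t},
        \qquad a,b\ge0.
\]

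\textbf{Step 3: identify \(h\) as an element of \(\mathcal C_{\le1}\).} This is the step I expect to need the most care, namely showing that \(\mu\) is finite atomic and that \(a=0\).

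First, \(h\) is a rational function, since \(c\) and \(d\) are rational. I would recover \(\mu\) by Stieltjes inversion: \(\mu\) is the vague limit of \(\tfrac1\pi\,\bigl|\operatorname{Im} h(-t+i\varepsilon)\bigr|\,dt\) as \(\varepsilon\downarrow0\). Because \(h\) is rational, \(h\) is real and analytic on \((-\infty,0)\) away from its finitely many poles. So this limit is a finite sum of point masses located at the poles of \(h\) in \((-\infty,0]\).

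An alternative route to the same conclusion uses the fact that Stieltjes functions map the upper half-plane into the lower half-plane. This forces every pole of \(h\) to be simple, real and nonpositive, with a residue of the correct sign, and a partial-fraction expansion then gives the same atomic form.

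Second, \(h\) has no pole at \(0\). Indeed \(c\) and \(d\) are bounded near \(0^+\), so \(h(x)\to c(0)<\infty\) as \(x\downarrow0\). This forces \(a=0\), and it excludes an atom of \(\mu\) at \(t=0\).

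Consequently
\[
        h(x)=b+\sum_j\frac{r_j}{x+t_j}
        =b+\sum_j\frac{r_j/t_j}{1+x/t_j},
        \qquad b,r_j\ge0,\ t_j>0,
\]
which is exactly an element of \(\mathcal C_{\le1}\) with parameters \(s_j=1/t_j>0\).
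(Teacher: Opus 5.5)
Your proposal is correct and follows essentially the same route as the paper. You view \(c,d\) as Stieltjes functions, note that \(1/c+xd\) is complete Bernstein so its reciprocal is Stieltjes, and then use rationality and regularity at \(0\) to remove the \(a/x\) term and land back in \(\mathcal C_{\le1}\); your Step~3 also spells out (via Stieltjes inversion or the half-plane mapping property) why a Stieltjes function that is rational has a finite atomic representing measure, which the paper only asserts in passing.
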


\begin{proof}
If \(c\equiv0\), then the assertion is immediate. Hence we may assume \(c\not\equiv0\). Since \(c,d\in\mathcal C_{\le1}\), both \(c\) and \(d\) are rational Stieltjes functions with no pole at \(0\). Moreover, \(c(x)>0\) for every \(x>0\).

In particular, \(1/c\) and \(xd\) are complete Bernstein functions, so $1/c(x)+xd(x)$ is again a complete Bernstein function. Its reciprocal is therefore a Stieltjes function; since \(c\) and \(d\) are rational, this reciprocal is rational as well. Thus $\frac{c(x)}{1+xc(x)d(x)}$ is a rational Stieltjes function.

It remains to check that this function lies in the cone \(\mathcal C_{\le1}\). Since \(c\) and \(d\) are regular at \(0\), so is \(\frac{c(x)}{1+xc(x)d(x)}\). Hence its \(a/x\)-term vanishes, and therefore it is a nonnegative linear combination of the constant function \(1\) and \((1+sx)^{-1}\). This completes the proof.
\end{proof}

\section{The \texorpdfstring{$R_1$}{R1}-comparison for \texorpdfstring{$2<p<4$}{2<p<4}}\label{sec:R1-comparison}

In this section we prove the path-minimality inequality throughout the interval \(2<p<4\).  The central device is the functional \(R_1\).  The Mellin representation below expresses the fractional power sums appearing in \(\mathcal E_p\) as positive integrals of \(R_1\).  We then use a determinant identity, together with the bipartite spectral notation from Section~\ref{subsec:bipartite-spectral-prelim}, the matching-polynomial preliminaries from Section~\ref{subsec:matching-poly-prelim}, and the tree-transform discussion in Section~\ref{subsec:tree-transform-prelim}, to prove the tree case.  Finally, a deletion induction gives the pointwise \(R_1\)-comparison for all connected bipartite graphs.

\subsection{From \texorpdfstring{$R_1$}{R1} to \texorpdfstring{$p$}{p}-energy}

Recall from Section~\ref{subsec:bipartite-spectral-prelim} that, for a bipartite graph \(G\), the squared singular values \(\mu_i(G)\) satisfy~\eqref{eq:bipartite-energy-mu}.  Thus a lower bound for $\mathcal E_p(G)$ against a path is equivalent to a lower bound for a fractional power sum of the squared singular values:
\[
        \sum_i \mu_i(G)^\alpha
        \ge
        \sum_i \mu_i(P_n)^\alpha,
        \qquad \alpha=\frac p2.
\]
The point of the resolvent functional below is to replace fractional powers by a positive integral of the single test function $r_1(xt)$.

Define
\[
        r_1(y):=\int_0^y\frac{s}{1+s}\,ds
        =y-\log(1+y),
        \qquad y\ge0.
\]
For a bipartite graph $G$ and $x>0$, define
\[
        R_1(G;x):=\sum_i r_1(x\mu_i(G)).
\]
Equivalently, $R_1(G;x)=\tr r_1\bigl(xB_G^{\top}B_G\bigr)$, where zero eigenvalues cause no contribution because $r_1(0)=0$.

By the disjoint-union observation in Section~\ref{subsec:bipartite-spectral-prelim}, the functional $R_1$ is additive over disjoint unions.

The relevance to fractional powers in the range $1<\alpha<2$ is the
following Mellin identity.

\begin{lemma}\label{lem:mellin}
Let $\alpha\in(1,2)$.  Then for every $t\ge0$,
\begin{equation}\label{eq:Mellin-representation_1}
t^\alpha = c_\alpha\int_0^\infty r_1(xt)x^{-\alpha-1}\,dx,
\end{equation}
where \(c_\alpha=\alpha\sin\left(\pi(\alpha-1)\right)/\pi>0\).
\end{lemma}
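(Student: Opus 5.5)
The case $t=0$ is trivial, since $r_1(0)=0$. For $t>0$ I will substitute $y=xt$, so that $dx=dy/t$ and $x^{-\alpha-1}=t^{\alpha+1}y^{-\alpha-1}$. This gives
\[
\int_0^\infty r_1(xt)x^{-\alpha-1}\,dx = t^{\alpha}\int_0^\infty r_1(y)\,y^{-\alpha-1}\,dy .
\]
The identity therefore reduces to computing the constant $K_\alpha:=\int_0^\infty r_1(y)y^{-\alpha-1}\,dy$ and showing that $K_\alpha=1/c_\alpha$.

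The next step is convergence. Near $0$ we have $r_1(y)=y^2/2+O(y^3)$, so the integrand behaves like $y^{1-\alpha}$, which is integrable at $0$ because $\alpha<2$. Near $\infty$ we have $r_1(y)\le y$, so the integrand is $O(y^{-\alpha})$, which is integrable at $\infty$ because $\alpha>1$. Since $r_1\ge0$, the integral is finite and positive.

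To evaluate it, I will use $r_1(y)=\int_0^y \frac{s}{1+s}\,ds$ and Tonelli's theorem, since everything is nonnegative:
\[
K_\alpha=\int_0^\infty \frac{s}{1+s}\int_s^\infty y^{-\alpha-1}\,dy\,ds=\frac1\alpha\int_0^\infty \frac{s^{1-\alpha}}{1+s}\,ds .
\]
The remaining integral is the classical Beta integral $\int_0^\infty \frac{s^{\beta-1}}{1+s}\,ds=\frac{\pi}{\sin(\pi\beta)}$ for $0<\beta<1$. Here $\beta=2-\alpha\in(0,1)$, and $\sin(\pi(2-\alpha))=\sin(\pi(\alpha-1))$ up to sign, with the sign check $\sin(2\pi-\pi\alpha)=-\sin(\pi\alpha)=\sin(\pi(\alpha-1))$. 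This gives
\[
K_\alpha=\frac{\pi}{\alpha\sin(\pi(\alpha-1))},
\]
which equals $1/c_\alpha$. Positivity of $c_\alpha$ follows because $\pi(\alpha-1)\in(0,\pi)$.

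No step is a real obstacle. The only care needed is the Fubini/Tonelli justification, which holds by nonnegativity, and keeping the sine identity's sign straight. Alternatively, one could integrate by parts directly on $K_\alpha$, using $r_1'(y)=y/(1+y)$; the boundary terms vanish by the asymptotics above.
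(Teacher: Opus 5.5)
Your proof is correct and follows the paper's route. You rescale $u=xt$ to reduce to the constant $\int_0^\infty r_1(u)u^{-\alpha-1}\,du$, reduce that to $\frac1\alpha\int_0^\infty \frac{u^{1-\alpha}}{1+u}\,du$, and evaluate it as $\pi/\sin(\pi(2-\alpha))=\pi/\sin(\pi(\alpha-1))$. The only difference is that you reach the middle step by Tonelli on $r_1(y)=\int_0^y\frac{s}{1+s}\,ds$ rather than by the paper's integration by parts, which removes the need to check that the boundary terms vanish.
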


\begin{proof}
The identity is trivial at $t=0$, so assume $t>0$.  By the change of variables $u=xt$,
\[
        \int_0^\infty r_1(xt)x^{-\alpha-1}\,dx
        =t^\alpha\int_0^\infty r_1(u)u^{-\alpha-1}\,du.
\]
It remains to compute the constant.  The boundary terms in the following integration by parts vanish.  Indeed, as \(u\to0^+\), we have $r_1(u)=\frac{u^2}{2}+O(u^3)$, and hence
\[
        r_1(u)u^{-\alpha}=O(u^{2-\alpha})\to0
        \qquad(\alpha<2).
\]
As \(u\to\infty\), we have \(r_1(u)=O(u)\), and hence
\[
        r_1(u)u^{-\alpha}=O(u^{1-\alpha})\to0
        \qquad(\alpha>1).
\]
The same estimates also show that
\(r_1(u)u^{-\alpha-1}\) is integrable at both endpoints.  Therefore
integration by parts gives
\[
\int_0^\infty r_1(u)u^{-\alpha-1}\,du =\frac1\alpha\int_0^\infty u^{-\alpha}r_1'(u)\,du =\frac1\alpha\int_0^\infty \frac{u^{1-\alpha}}{1+u}\,du.
\]
Put $s=2-\alpha\in(0,1)$.  Then
\[
        \int_0^\infty \frac{u^{1-\alpha}}{1+u}\,du
        =\int_0^\infty \frac{u^{s-1}}{1+u}\,du
        =\frac{\pi}{\sin(\pi s)}
        =\frac{\pi}{\sin(\pi(\alpha-1))}.
\]
Thus
\[
        \int_0^\infty r_1(xt)x^{-\alpha-1}\,dx
        =t^\alpha\frac{\pi}{\alpha\sin(\pi(\alpha-1))},
\]
which proves the formula.
\end{proof}

\begin{theorem}\label{thm:R-implies-p}
Let $G$ and $H$ be bipartite graphs.  Suppose
\[
        R_1(G;x)\ge R_1(H;x)
        \qquad\text{for every }x>0.
\]
Then for every $p\in(2,4)$, we have $\mathcal E_p(G)\ge \mathcal E_p(H)$.
\end{theorem}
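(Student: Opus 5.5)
The plan is to apply the Mellin identity of Lemma~\ref{lem:mellin} to each squared singular value and sum. Fix \(p\in(2,4)\) and put \(\alpha=p/2\in(1,2)\). By \eqref{eq:bipartite-energy-mu},
\[
\mathcal E_p(G)=2\sum_i\mu_i(G)^\alpha .
\]
For each \(i\), Lemma~\ref{lem:mellin} with \(t=\mu_i(G)\ge0\) gives
\[
\mu_i(G)^\alpha=c_\alpha\int_0^\infty r_1(x\mu_i(G))\,x^{-\alpha-1}\,dx .
\]
Since the sum over \(i\) is finite, summing these identities and interchanging the finite sum with the integral yields
\[
\sum_i\mu_i(G)^\alpha=c_\alpha\int_0^\infty R_1(G;x)\,x^{-\alpha-1}\,dx ,
\]
and the same identity holds with \(H\) in place of \(G\). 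No Fubini issue arises, because each integral converges absolutely: \(r_1\ge0\), and the integrability at \(0\) and at \(\infty\) was already checked in the proof of Lemma~\ref{lem:mellin}.

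Subtracting the two identities gives
\[
\frac{\mathcal E_p(G)-\mathcal E_p(H)}{2}
=c_\alpha\int_0^\infty\bigl(R_1(G;x)-R_1(H;x)\bigr)x^{-\alpha-1}\,dx .
\]
By hypothesis the integrand is pointwise nonnegative on \((0,\infty)\), and \(c_\alpha>0\) because \(\sin(\pi(\alpha-1))>0\) for \(\alpha-1\in(0,1)\). Hence the right-hand side is nonnegative, which gives \(\mathcal E_p(G)\ge\mathcal E_p(H)\).

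There is no real obstacle here. The only point requiring care is that both integrals are finite, so that their difference is meaningful and equals the integral of the difference. This follows because each \(R_1(\cdot;x)\) is a finite sum of terms of size \(O(x^2)\) as \(x\to0\) and \(O(x)\) as \(x\to\infty\), and both rates are integrable against \(x^{-\alpha-1}\) when \(1<\alpha<2\). The graphs \(G\) and \(H\) are not required to have the same order, and none is needed, since each side is computed independently.
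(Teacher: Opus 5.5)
Your proposal is correct and follows the paper's proof: you sum the Mellin identity of Lemma~\ref{lem:mellin} over the finitely many squared singular values to obtain $\frac12\mathcal E_p = c_\alpha\int_0^\infty R_1(\cdot\,;x)\,x^{-\alpha-1}\,dx$ for each graph, and then use positivity of the weight $c_\alpha x^{-\alpha-1}$. Your check that both integrals are finite, using the $O(x^2)$ and $O(x)$ endpoint rates, is the same justification the paper takes from the proof of Lemma~\ref{lem:mellin}.
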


\begin{proof}
Set \(\alpha=p/2\), so \(\alpha\in(1,2)\).  Zero squared singular values may be included in the sums below, since both sides of \eqref{eq:Mellin-representation_1} vanish when \(t=0\).  Since each graph is finite, there are only finitely many
squared singular values, and \eqref{eq:Mellin-representation_1} may be summed over them. The resulting integrals are finite by the endpoint estimates in the proof of Lemma~\ref{lem:mellin}.
Hence
\[
\frac12\mathcal E_p(G) = \sum_i\mu_i(G)^\alpha = c_\alpha\int_0^\infty R_1(G;x)x^{-\alpha-1}\,dx.
\]
The same identity holds for \(H\).  The weight
\(c_\alpha x^{-\alpha-1}\) is strictly positive for \(x>0\).  Therefore the
pointwise comparison
\[
        R_1(G;x)\ge R_1(H;x)\qquad(x>0)
\]
implies $\mathcal E_p(G)\ge \mathcal E_p(H)$.
\end{proof}

Thus, by Theorem~\ref{thm:R-implies-p}, in the range $2<p<4$, it remains to prove the pointwise comparison
\[
        R_1(G;x)\ge R_1(P_n;x)
        \qquad(x>0)
\]
for every connected bipartite graph $G$ on $n$ vertices.

\subsection{Determinant identity and the tree case}

Recall that, for a bipartite graph $G$, we write $Q_G:=B_G^{\top}B_G$ and $R_1(G;x)=\tr r_1(xQ_G)$ for $x>0$. Since \(r_1(y)=y-\log(1+y)\) and \(\tr Q_G=|E(G)|\), functional calculus gives the determinant identity
\begin{equation}\label{eq:R1-determinant}
R_1(G;x) =\tr r_1(xQ_G) =x\tr Q_G-\tr\log(I+xQ_G) =x|E(G)|-\log\det(I+xQ_G).
\end{equation}
Here we used the standard identity \(\tr\log M=\log\det M\) for positive definite matrices \(M\), applied to \(M=I+xQ_G\). The same determinant is obtained from $B_GB_G^{\top}$, because $B_GB_G^{\top}$ and $B_G^{\top}B_G$ have the same nonzero eigenvalues.  The functional $R_1$ is additive over disjoint unions.

\begin{corollary}\label{cor:tree-r1}
Let $T$ be a tree on $n$ vertices.  Then, for every $x>0$,
\[
        R_1(T;x)\ge R_1(P_n;x).
\]
\end{corollary}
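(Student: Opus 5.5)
The plan is to turn the comparison into a statement about matching generating polynomials and then follow the one-branch shifts down to the path.

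First, apply the determinant identity \eqref{eq:R1-determinant} to both $T$ and $P_n$. Both are trees on $n$ vertices, so both have exactly $n-1$ edges. Hence
\[
        R_1(T;x)-R_1(P_n;x)=\log\det(I+xQ_{P_n})-\log\det(I+xQ_T).
\]
Next, view each tree as a weighted forest with all weights equal to $1$ and apply Lemma~\ref{lem:forest-det-matching}. This gives $\det(I+xQ_T)=M_T(x)$ and $\det(I+xQ_{P_n})=M_{P_n}(x)=h_n(x)$. Both polynomials have nonnegative coefficients and constant term $1$, so both are positive for $x>0$. The claim is therefore equivalent to
\[
        M_T(x)\le M_{P_n}(x)\qquad(x>0),
\]
because $\log$ is increasing on $(0,\infty)$.

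To prove this, use Lemma~\ref{lem:reduce-to-path} to obtain a finite sequence of trees $T=T_0,T_1,\ldots,T_k=P_n$ on $n$ vertices. Each $T_{j+1}$ arises from $T_j$ by a one-branch shift as in Lemma~\ref{lem:one-shift}. That lemma gives $M_{T_{j+1}}(x)=M_{T_j}(x)+x^2Q_j(x)$, where $Q_j$ has nonnegative coefficients. So $M_{T_{j+1}}(x)\ge M_{T_j}(x)$ for every $x\ge0$. Chaining these inequalities gives $M_T(x)\le M_{P_n}(x)$, and hence $R_1(T;x)\ge R_1(P_n;x)$.

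There is no substantial obstacle, since every ingredient has already been established. The only points to check are the following. The edge counts agree, so the linear terms $x|E|$ cancel. The determinant in \eqref{eq:R1-determinant} does not depend on the choice of bipartition, because $B_GB_G^\top$ and $B_G^\top B_G$ share their nonzero spectra. The case $n\le 2$ is trivial, since then $T\cong P_n$ and the sequence of shifts is empty.
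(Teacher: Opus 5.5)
Your proposal is correct and follows the paper's proof essentially line for line. Like the paper, you cancel the linear terms via $|E(T)|=|E(P_n)|=n-1$, identify the determinants with $M_T$ and $M_{P_n}$ via Lemma~\ref{lem:forest-det-matching}, and chain the coefficientwise monotonicity of Lemma~\ref{lem:one-shift} along the shift sequence from Lemma~\ref{lem:reduce-to-path}.
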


\begin{proof}
Since $|E(T)|=|E(P_n)|=n-1$, Lemma~\ref{lem:forest-det-matching} together with \eqref{eq:R1-determinant} gives
\[
        R_1(T;x)-R_1(P_n;x)
        =
        \log M_{P_n}(x)-\log M_T(x).
\]
By Lemma~\ref{lem:reduce-to-path}, there is a sequence of trees
\[
        T=T_0,T_1,\ldots,T_m=P_n
\]
such that each \(T_{j+1}\) is obtained from \(T_j\) by one operation of
Lemma~\ref{lem:one-shift}.  Since Lemma~\ref{lem:one-shift} gives
\(M_{T_{j+1}}(x)-M_{T_j}(x)=x^2Q_j(x)\) with nonnegative coefficients, we have
\(M_{T_j}(x)\le M_{T_{j+1}}(x)\) for every \(x\ge0\).  Iterating along the
sequence yields
\[
        M_T(x)\le M_{P_n}(x)
        \qquad(x\ge0).
\]
Therefore \(\log M_{P_n}(x)-\log M_T(x)\ge0\), which completes the proof.
\end{proof}

\subsection{Rank-one gain and path-deficit estimates}

We first record a rank-one vertex-addition estimate for \(R_1\). Equivalently, it gives a lower bound for the amount of \(R_1\) lost when a vertex of degree \(d\) is deleted.

\begin{lemma}\label{lem:vertex-gain-r1}
Let $H$ be a bipartite graph and let $G$ be obtained from $H$ by adding one
new vertex $v$ of degree $d$.  Then, for every $x>0$,
\[
        R_1(G;x)-R_1(H;x)\ge r_1(dx).
\]
\end{lemma}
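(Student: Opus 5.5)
We plan to use the determinant identity \eqref{eq:R1-determinant} together with the matrix determinant lemma. First we fix a bipartition of $G$ that extends the one of $H$; the new vertex $v$ has all its neighbours on one side. Place $v$ on the left side $X$, relabelling the sides if necessary. Then $B_G$ is $B_H$ with one extra row $b^{\top}$, where $b\in\{0,1\}^{|Y|}$ is the indicator vector of $N_G(v)$. Hence
\[
        B_G^{\top}B_G=B_H^{\top}B_H+bb^{\top},
        \qquad b^{\top}b=d .
\]
Note that $H$ need not be connected. However, \eqref{eq:R1-determinant} holds for any bipartite graph with any chosen bipartition, because the nonzero spectrum of $B^{\top}B$ is the multiset of the $\mu_i$.

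Write $M:=B_H^{\top}B_H\succeq0$. Since $|E(G)|=|E(H)|+d$, \eqref{eq:R1-determinant} gives
\[
        R_1(G;x)-R_1(H;x)=xd-\log\frac{\det(I+xM+xbb^{\top})}{\det(I+xM)}.
\]
The matrix determinant lemma then turns this into
\[
        R_1(G;x)-R_1(H;x)=xd-\log\bigl(1+x\,b^{\top}(I+xM)^{-1}b\bigr).
\]
It remains to bound the quadratic form. Because $M\succeq0$ and $x>0$, every eigenvalue of $I+xM$ is at least $1$. Therefore $0\prec(I+xM)^{-1}\preceq I$, and so
\[
        b^{\top}(I+xM)^{-1}b\le b^{\top}b=d .
\]
Since $\log$ is increasing, the gain is at least $xd-\log(1+xd)=r_1(dx)$, which is the claim.

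There is no genuine obstacle in this argument. The only point needing care is the bookkeeping in the setup: the bipartition must be chosen so that the new vertex contributes exactly one row to the biadjacency matrix. The degenerate case $d=0$ gives gain $0=r_1(0)$, and the argument covers it with $b=0$.
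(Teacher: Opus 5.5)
Your proposal is correct and follows the paper's proof essentially step for step. Both use the determinant identity \eqref{eq:R1-determinant}, the rank-one update $B_G^{\top}B_G=M+bb^{\top}$ with the matrix determinant lemma, and the bound $(I+xM)^{-1}\preceq I$ to get $b^{\top}(I+xM)^{-1}b\le d$. Your extra remarks on bipartition-independence of $R_1$ and the case $d=0$ are harmless refinements.
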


\begin{proof}
Assume $v$ is added on the left side of the bipartition.  Let $B_H$ be the biadjacency matrix for $H$ and let $b$ be the $0$-$1$ column vector encoding the neighbors of $v$ on the right side. Put $M=B_H^{\top}B_H$. Then $Q_G=M+bb^{\top}$ and $b^{\top}b=d$. By the matrix determinant lemma,
\[
\begin{aligned}
R_1(G;x)-R_1(H;x)
&=xd-
\log\frac{\det(I+x(M+bb^{\top}))}{\det(I+xM)} \\
&=xd-\log\left(1+x b^{\top}(I+xM)^{-1}b\right).
\end{aligned}
\]
Since $M\succeq0$, we have $(I+xM)^{-1}\preceq I$.  Hence
\[
        b^{\top}(I+xM)^{-1}b\le b^{\top}b=d.
\]
Therefore
\[
        R_1(G;x)-R_1(H;x)
        \ge xd-\log(1+xd)
        =r_1(dx).
\]
The case in which $v$ is added on the right side is identical, using
$B_GB_G^{\top}$ instead of $B_G^{\top}B_G$.
\end{proof}

The second estimate controls the path comparator that appears when a vertex
is deleted and several components remain.  It is the matching upper bound for the preceding vertex-gain estimate.

\begin{lemma}\label{lem:path-deficit-r1}
Let $q\ge1$ and let $n_1,\ldots,n_q\ge1$. Put $N=1+n_1+\cdots+n_q$. Then for every $x>0$,
\begin{equation}
        R_1(P_N;x)-\sum_{i=1}^q R_1(P_{n_i};x)
        \le r_1((q+1)x).
        \label{eq:path-deficit-r1}
\end{equation}
\end{lemma}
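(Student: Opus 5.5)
The plan is to pass through the star-of-paths tree, as the outline indicates. Let \(T^\ast\) be the tree on \(N\) vertices obtained by joining a new hub vertex \(v\) to one endpoint of each of the paths \(P_{n_1},\ldots,P_{n_q}\). By Corollary~\ref{cor:tree-r1}, \(R_1(P_N;x)\le R_1(T^\ast;x)\) for every \(x>0\). It therefore suffices to prove
\[
        R_1(T^\ast;x)-\sum_{i=1}^q R_1(P_{n_i};x)\le r_1((q+1)x).
\]
In fact we expect the sharper bound \(r_1(qx)\), which is at most \(r_1((q+1)x)\) because \(r_1\) is increasing.

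The left side is the rank-one gain from adding \(v\) to the forest \(H=P_{n_1}\sqcup\cdots\sqcup P_{n_q}\). Place \(v\) on the left side of the bipartition, so that its neighbours \(e_1,\ldots,e_q\) (one endpoint of each path) all lie on the right; we may choose the bipartition of each path so that this holds. Let \(b\) be the indicator vector of these endpoints and \(M=B_H^{\top}B_H\). As in the proof of Lemma~\ref{lem:vertex-gain-r1},
\[
        R_1(T^\ast;x)-R_1(H;x)=xq-\log\bigl(1+x\,b^{\top}(I+xM)^{-1}b\bigr).
\]
We now need a lower bound for \(\beta:=b^{\top}(I+xM)^{-1}b\), which is the opposite direction to Lemma~\ref{lem:vertex-gain-r1}. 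Since \(M\) is block diagonal over the components, \(\beta=\sum_i\bigl((I+xM_i)^{-1}\bigr)_{e_ie_i}\).

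The key step is to bound each diagonal resolvent entry. By Cramer's rule combined with Lemma~\ref{lem:forest-det-matching}, each entry equals a deletion ratio:
\[
        \bigl((I+xM_i)^{-1}\bigr)_{e_ie_i}=\frac{\det(I+xM_i[\hat e_i])}{\det(I+xM_i)}=\frac{h_{n_i-1}(x)}{h_{n_i}(x)},
\]
since deleting the endpoint column leaves the biadjacency matrix of \(P_{n_i-1}\). The recurrence~\eqref{eq:path-matching-rec} gives \(h_{n_i}=h_{n_i-1}+xh_{n_i-2}\le (1+x)h_{n_i-1}\), because \(h_{n_i-2}\le h_{n_i-1}\) coefficientwise. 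Hence each entry is at least \(1/(1+x)\), and so \(\beta\ge q/(1+x)\). Consequently
\[
        R_1(T^\ast;x)-R_1(H;x)\le xq-\log\Bigl(1+\frac{qx}{1+x}\Bigr).
\]

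The final step is the elementary inequality
\[
        xq-\log\Bigl(1+\frac{qx}{1+x}\Bigr)\le (q+1)x-\log(1+(q+1)x)=r_1((q+1)x).
\]
It is equivalent to \(\log\bigl((1+(q+1)x)(1+x)\bigr)-\log(1+x+qx)\le x\), that is, \(\log(1+x)\le x\), which holds for all \(x>0\).

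The main obstacle is the lower bound on the diagonal resolvent entry, in particular checking the deletion-ratio identity via Lemma~\ref{lem:forest-det-matching}; the \(1/(1+x)\) bound itself follows from the monotonicity \(h_{m-1}\ge h_{m-2}\). The reverse comparison with the bound \(r_1(qx)\) is not needed, since \(r_1((q+1)x)\) leaves exactly enough slack.
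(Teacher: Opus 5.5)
Your proof is correct and follows the paper's argument step for step: the star-of-paths tree with Corollary~\ref{cor:tree-r1}, the rank-one determinant formula, the endpoint bound $h_{n_i-1}/h_{n_i}\ge 1/(1+x)$, and $\log(1+x)\le x$. The only cosmetic difference is that you identify the diagonal resolvent entry by Cramer's rule and Lemma~\ref{lem:forest-det-matching}, whereas the paper gets it from the determinant lemma for attaching a leaf to $P_{n_i}$.

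One caveat: your aside that the sharper bound $r_1(qx)$ should hold is false. For $q=1$ and $n_1=2$ the left side is $r_1(2x)-r_1(x)$, and this exceeds $r_1(x)$ because $r_1(2x)-2r_1(x)=\log\bigl(1+x^2/(1+2x)\bigr)>0$. You never use that remark, so the proof itself is unaffected.
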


\begin{proof}
Construct a tree $T$ as follows.  Take the disjoint union
$P_{n_1}\sqcup\cdots\sqcup P_{n_q}$, choose one endpoint in each path, and
add a new vertex adjacent to all chosen endpoints.  Then $T$ has $N$ vertices.
By Corollary~\ref{cor:tree-r1}, we know that $R_1(P_N;x)\le R_1(T;x)$. Thus it remains to estimate
\[
R_1(T;x)-\sum_{i=1}^qR_1(P_{n_i};x).
\]
Choose the bipartition of each path independently so that the selected endpoint lies on the right side, and place the new center on the left side.  With this choice, adding the new center to the disjoint union of the \(q\) paths is a rank-one update of degree \(q\). Let \(B_i\) be the biadjacency matrix of \(P_{n_i}\) with the chosen endpoint on the right side, and let \(e_i\) be the standard basis vector corresponding to this endpoint.  Put \(M_i=B_i^{\top}B_i\). For the disjoint union \(\cF=P_{n_1}\sqcup\cdots\sqcup P_{n_q}\), the matrix \(B_{\cF}^{\top}B_{\cF}\) is block diagonal with blocks \(M_1,\ldots,M_q\).  Adding the new center contributes the rank-one vector \(b=(e_1,\ldots,e_q)^{\top}\).
Thus
\[
        B_T^{\top}B_T=M+bb^{\top},
        \qquad
        M=\operatorname{diag}(M_1,\ldots,M_q).
\]
Using \eqref{eq:R1-determinant} and \(|E(T)|-|E(\cF)|=q\), we get
\[
R_1(T;x)-R_1(\cF;x)
=
qx-\log
\frac{\det(I+x(M+bb^{\top}))}{\det(I+xM)}.
\]
By the matrix determinant lemma,
\[
\frac{\det(I+x(M+bb^{\top}))}{\det(I+xM)}
=
1+x b^{\top}(I+xM)^{-1}b.
\]
Since \(M\) is block diagonal, \(b^{\top}(I+xM)^{-1}b = \sum_{i=1}^q e_i^{\top}(I+xM_i)^{-1}e_i\). Therefore
\begin{equation}
        R_1(T;x)-\sum_{i=1}^qR_1(P_{n_i};x)
        =
        qx-\log\left(1+x\sum_{i=1}^q e_i^{\top}(I+xM_i)^{-1}e_i\right).
        \label{eq:spider-increment}
\end{equation}
We claim that
\begin{equation}\label{eq:endpoint-resolvent-lower}
e_i^{\top}(I+xM_i)^{-1}e_i \ge \frac1{1+x}.
\end{equation}
Indeed, adding one new leaf to the chosen endpoint of $P_{n_i}$ produces
$P_{n_i+1}$.  Applying the determinant lemma to this operation gives
\[
h_{n_i+1}(x) = \det(I+x(M_i+e_i e_i^\top)) = h_{n_i}(x)(1+x e_i^{\top}(I+xM_i)^{-1}e_i).
\]
Using the path recurrence \eqref{eq:path-matching-rec}, we obtain
\[
        e_i^{\top}(I+xM_i)^{-1}e_i=\frac{h_{n_i-1}(x)}{h_{n_i}(x)}.
\]
If \(n_i=1\), this gives \(e_i^{\top}(I+xM_i)^{-1}e_i=h_0/h_1=1\ge(1+x)^{-1}\).  If
\(n_i\ge2\), then
\[
        h_{n_i}(x)=h_{n_i-1}(x)+xh_{n_i-2}(x)
        \le(1+x)h_{n_i-1}(x),
\]
because the path polynomials have non-negative coefficients and
\(h_{n_i-2}(x)\le h_{n_i-1}(x)\) for \(x>0\).  Hence
\eqref{eq:endpoint-resolvent-lower} follows in all cases.

Therefore
\[
        \sum_{i=1}^q e_i^{\top}(I+xM_i)^{-1}e_i \ge\frac{q}{1+x}.
\]
Substituting this into \eqref{eq:spider-increment},
\[
\begin{aligned}
R_1(T;x)-\sum_{i=1}^qR_1(P_{n_i};x)
&\le qx-\log\left(1+\frac{qx}{1+x}\right) \\
&=qx+\log(1+x)-\log(1+(q+1)x) \\
&\le(q+1)x-\log(1+(q+1)x) \\
&=r_1((q+1)x),
\end{aligned}
\]
where we used $\log(1+x)\le x$.  This proves
\eqref{eq:path-deficit-r1}.
\end{proof}

\subsection{The \texorpdfstring{$R_1$}{R1}-comparison for all connected bipartite graphs}

The preceding deletion estimates now yield the pointwise \(R_1\)-comparison needed for the range \(2<p<4\).

\begin{theorem}\label{thm:r1-main-comparison}
Let $G$ be a connected bipartite graph on $n$ vertices.  Then, for every
$x>0$,
\begin{equation}
        R_1(G;x)\ge R_1(P_n;x).
        \label{eq:r1-main-comparison}
\end{equation}
\end{theorem}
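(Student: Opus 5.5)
We argue by strong induction on \(n=|V(G)|\), with trees as the base case. If \(G\) is a tree, Corollary~\ref{cor:tree-r1} gives \eqref{eq:r1-main-comparison} directly; this covers all \(n\le 3\). Now suppose \(G\) is connected bipartite but not a tree, and that \eqref{eq:r1-main-comparison} holds for every connected bipartite graph on fewer than \(n\) vertices.

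Since \(G\) contains a cycle, we choose a vertex \(v\) lying on a cycle and put \(d=d_G(v)\). Write \(G-v=G_1\sqcup\cdots\sqcup G_q\) with \(n_i=|V(G_i)|\ge1\), so that \(n=1+\sum_i n_i\). Because \(G\) is connected, each component \(G_i\) contains a neighbor of \(v\). Because \(v\) lies on a cycle, its two cycle-neighbors lie in the same component of \(G-v\). These two facts give \(q\le d-1\), that is, \(d\ge q+1\). Each \(G_i\) is connected, bipartite (as an induced subgraph of \(G\)), and has fewer vertices than \(G\). So the induction hypothesis, together with additivity of \(R_1\) over disjoint unions, yields
\[
R_1(G-v;x)=\sum_{i=1}^q R_1(G_i;x)\ge \sum_{i=1}^q R_1(P_{n_i};x).
\]

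Next we restore \(v\). The bipartition of \(G\) restricts to one of \(G-v\), so \(G\) is obtained from the bipartite graph \(H=G-v\) by adding a vertex of degree \(d\). Lemma~\ref{lem:vertex-gain-r1} then gives \(R_1(G;x)-R_1(G-v;x)\ge r_1(dx)\). Since \(r_1'(y)=y/(1+y)\ge0\), the function \(r_1\) is nondecreasing, and \(d\ge q+1\) gives \(r_1(dx)\ge r_1((q+1)x)\). On the comparison side, Lemma~\ref{lem:path-deficit-r1} with \(N=n\) gives
\[
R_1(P_n;x)\le \sum_i R_1(P_{n_i};x)+r_1((q+1)x).
\]
Chaining these inequalities gives
\[
R_1(G;x)\ge \sum_i R_1(P_{n_i};x)+r_1((q+1)x)\ge R_1(P_n;x),
\]
which closes the induction.

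The only step needing care is the combinatorial inequality \(d\ge q+1\), since everything else is supplied by the two lemmas. It rests on two observations: every component of \(G-v\) contains at least one neighbor of \(v\), by connectivity of \(G\); and at least one component contains at least two neighbors, because \(v\) lies on a cycle. This is exactly why we choose \(v\) on a cycle rather than arbitrarily, and it is the place where non-tree structure is used.
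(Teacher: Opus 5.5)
Your proposal is correct and follows essentially the same route as the paper: induction on $n$ with trees handled by Corollary~\ref{cor:tree-r1}, then a vertex $v$ on a cycle giving $q\le d-1$, the rank-one gain bound of Lemma~\ref{lem:vertex-gain-r1}, monotonicity of $r_1$, and the path-deficit bound of Lemma~\ref{lem:path-deficit-r1}. Your extra remarks (that the bipartition of $G$ restricts to $G-v$, and that all connected bipartite graphs on at most $3$ vertices are trees) are accurate details that the paper leaves implicit.
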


\begin{proof}
We argue by induction on $n$.  The cases $n=1,2$ are immediate.

If $G$ is a tree, then \eqref{eq:r1-main-comparison} follows from Corollary~\ref{cor:tree-r1}.

Assume now that $G$ is not a tree.  Then $G$ contains a cycle.  Choose a
vertex $v$ lying on a cycle.  Let
\[
        d=d_G(v),
        \qquad
        G-v=G_1\sqcup\cdots\sqcup G_q,
        \qquad
        n_i=|V(G_i)|.
\]
Since two neighbors of $v$ on the chosen cycle remain connected after $v$ is
removed, the number $q$ of components of $G-v$ is at most $d-1$:
\[
q\le d-1.
\]

By the induction hypothesis applied to each connected component $G_i$,
\[
        R_1(G_i;x)\ge R_1(P_{n_i};x).
\]
Using additivity over disjoint unions,
\begin{equation}\label{eq:induction-components}
        R_1(G-v;x)
        =\sum_{i=1}^qR_1(G_i;x)
        \ge\sum_{i=1}^qR_1(P_{n_i};x).
\end{equation}
By Lemma~\ref{lem:vertex-gain-r1}, adding $v$ back gives $R_1(G;x)-R_1(G-v;x) \ge r_1(dx)$. Since $r_1$ is increasing and $d\ge q+1$,
\begin{equation}\label{eq:gain-q-plus-one}
R_1(G;x)-R_1(G-v;x) \ge r_1((q+1)x).
\end{equation}
On the other hand, Lemma~\ref{lem:path-deficit-r1} gives
\begin{equation}\label{eq:path-comparator-q-plus-one}
        R_1(P_n;x)
        \le
        \sum_{i=1}^qR_1(P_{n_i};x)+r_1((q+1)x),
\end{equation}
where $n=1+\sum_i n_i$. Combining \eqref{eq:induction-components}, \eqref{eq:gain-q-plus-one}, and \eqref{eq:path-comparator-q-plus-one}, we obtain
\[
        R_1(G;x)\ge R_1(P_n;x).
\]
This completes the induction.
\end{proof}

\section{The second-order stop-loss framework for \texorpdfstring{$p\ge4$}{p>=4}}\label{sec:sl2-framework}

Section~\ref{sec:R1-comparison} handled the whole range \(2<p<4\) through the
kernel \(R_1\).  For the range \(p\ge4\), we prove a stronger statement:
not only power functions, but a whole cone of test functions can be compared.
This comparison is based on the second-order stop-loss quantities of the
squared singular values.

For a bipartite graph $G$ define the atomic measure of its squared singular values by
\[
        \nu_G:=\sum_i\delta_{\mu_i(G)}.
\]
The sum may be taken over the positive squared singular values only. Equivalently, one may include additional zero squared singular values.  This convention makes no difference for any of the quantities \(\mathsf S_t\), since
\((0-t)_+^2=0\) for \(t\ge0\).

\begin{definition}[Second-order stop-loss quantities]
For \(t\ge0\), define \emph{the second-order stop-loss quantity of \(G\) at level \(t\)} by
\[
        \mathsf S_t(G):=
        \int_{[0,\infty)}(\lambda-t)_+^2\,d\nu_G(\lambda)
        =
        \sum_i(\mu_i(G)-t)_+^2 .
\]
We say that \emph{\(G\) dominates \(H\) in the second-order stop-loss sense} if
\[
        \mathsf S_t(G)\ge \mathsf S_t(H)\qquad(t\ge0).
\]
\end{definition}

\begin{definition}[Admissible third-order convex functions]
A function \(F:[0,\infty)\to\mathbb R\) is called
\emph{admissible third-order convex} if
\[
        F\in C^2([0,\infty)),\qquad
        F''\in AC_{\mathrm{loc}}([0,\infty)),
\]
where \(AC_{\mathrm{loc}}([0,\infty))\) means absolute continuity on every compact subinterval of \([0,\infty)\), and
\[
        F(0)=0,\qquad F'(0)\ge0,\qquad F''(0)\ge0,
\]
while the a.e.\ derivative \(F'''\) satisfies
\[
        F'''(t)\ge0
        \qquad\text{for a.e. }t>0.
\]
\end{definition}

For every admissible third-order convex function \(F\), Taylor's formula with
integral remainder gives
\[
        F(\lambda)=F'(0)\lambda+\frac{F''(0)}2\lambda^2
        +\frac12\int_0^\infty(\lambda-t)_+^2F'''(t)\,dt
        \qquad(\lambda\ge0).
\]
Indeed, the integral over \([0,\infty)\) is the same as the integral over
\([0,\lambda]\), since \((\lambda-t)_+=0\) for \(t\ge\lambda\).

The next lemma is the basic comparison principle that converts second-order stop-loss domination, together with the first-moment comparison, into domination for admissible third-order convex test functions.

\begin{lemma}\label{lem:third-order-convex-from-stoploss}
Let \(G\) and \(H\) be bipartite graphs. Suppose that $\sum_i\mu_i(G)\ge \sum_i\mu_i(H)$ and that $\mathsf S_t(G)\ge \mathsf S_t(H)$ for every $t\ge0$. Then, for every admissible third-order convex \(F\),
\[
        \sum_i F(\mu_i(G))\ge \sum_iF(\mu_i(H)).
\]
\end{lemma}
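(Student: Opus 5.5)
The plan is to apply the Taylor representation recorded just before the statement to each squared singular value, sum over the finitely many atoms, and subtract. For every \(\lambda\ge0\) we have
\[
        F(\lambda)=F'(0)\lambda+\frac{F''(0)}2\lambda^2
        +\frac12\int_0^\infty(\lambda-t)_+^2F'''(t)\,dt .
\]
Summing over \(\mu_i(G)\) and over \(\mu_i(H)\) is legitimate: only finitely many terms occur, and zero squared singular values contribute nothing because \(F(0)=0\). Since \(\mathsf S_0(G)=\sum_i\mu_i(G)^2\), this gives
\[
        \sum_iF(\mu_i(G))-\sum_iF(\mu_i(H))
        =F'(0)\Bigl(\sum_i\mu_i(G)-\sum_i\mu_i(H)\Bigr)
        +\frac{F''(0)}2\bigl(\mathsf S_0(G)-\mathsf S_0(H)\bigr)
        +\frac12\int_0^\infty F'''(t)\bigl(\mathsf S_t(G)-\mathsf S_t(H)\bigr)\,dt .
\]

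Each of the three terms is then nonnegative.

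\begin{itemize}
\item The first term is a product of \(F'(0)\ge0\) and the first-moment hypothesis.
\item The second term is a product of \(F''(0)\ge0\) and the stop-loss hypothesis at \(t=0\).
\item In the integral, \(F'''\ge0\) almost everywhere and \(\mathsf S_t(G)-\mathsf S_t(H)\ge0\) for all \(t\ge0\), so the integrand is nonnegative almost everywhere.
\end{itemize}

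The only delicate point is justifying the Taylor formula and the exchange of the finite sum with the integral. First, \(F''\in AC_{\mathrm{loc}}\), so \(F''(s)=F''(0)+\int_0^sF'''(t)\,dt\) with \(F'''\) locally integrable. Integrating twice and applying Fubini on the compact interval \([0,\lambda]\) gives the remainder \(\tfrac12\int_0^\lambda(\lambda-t)^2F'''(t)\,dt\), and this equals the integral over \([0,\infty)\) with \((\lambda-t)_+^2\). Second, all atoms lie in a bounded interval \([0,L]\), so every integral is really over \([0,L]\), where \(F'''\) is integrable and the kernels are bounded. The sum and the integral may therefore be interchanged by linearity. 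No genuine obstacle is expected; the main care needed is this integrability bookkeeping for \(F'''\), which has no sign or integrability constraint beyond local integrability.
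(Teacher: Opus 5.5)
Your proposal is correct and follows essentially the same route as the paper: apply the Taylor representation with integral remainder to each atom, sum, subtract, and note that each of the three resulting terms is nonnegative. Your extra justification of the Taylor formula via $F''\in AC_{\mathrm{loc}}$ and Fubini on $[0,\lambda]$, together with truncating the $t$-integral to a bounded interval containing all atoms, supplies details the paper leaves implicit.
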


\begin{proof}
Applying the Taylor representation above to each squared singular value of \(G\) and \(H\), and then subtracting, we obtain
\[
\begin{aligned}
\sum_iF(\mu_i(G))-\sum_iF(\mu_i(H))
&=
F'(0)\left(\sum_i\mu_i(G)-\sum_i\mu_i(H)\right)\\
&\quad+\frac{F''(0)}2\left(\mathsf S_0(G)-\mathsf S_0(H)\right)\\
&\quad+\frac12\int_0^\infty
        \left(\mathsf S_t(G)-\mathsf S_t(H)\right)F'''(t)\,dt .
\end{aligned}
\]
The first term is non-negative by the first-moment hypothesis and
\(F'(0)\ge0\).  The second term is non-negative because the stop-loss
comparison at \(t=0\) gives \(\mathsf S_0(G)\ge\mathsf S_0(H)\), and
\(F''(0)\ge0\).  The integral term is non-negative because
\(\mathsf S_t(G)\ge\mathsf S_t(H)\) for every \(t\ge0\) and
\(F'''(t)\ge0\) almost everywhere.  This completes the proof.
\end{proof}

\begin{remark}\label{rem:powers-admissible}
For \(p\ge4\), the function \(F(\lambda)=\lambda^{p/2}\) is admissible third-order convex. Indeed, with \(\alpha=p/2\ge2\), one has
\(F'(0)\ge0\), \(F''(0)\ge0\), and
\[
        F'''(\lambda)=\alpha(\alpha-1)(\alpha-2)\lambda^{\alpha-3}\ge0
\]
for \(\lambda>0\), with the case \(\alpha=2\) giving \(F'''\equiv0\).
\end{remark}

The main bipartite theorem in the range $p\ge4$ is the following.

\begin{theorem}\label{thm:bipartite-stoploss}
Let \(G\) be a connected bipartite graph on \(n\) vertices. Then
\[
        \mathsf S_t(G)\ge \mathsf S_t(P_n)
        \qquad(t\ge0).
\]
\end{theorem}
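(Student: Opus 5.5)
The proof follows the two-stage architecture of the $R_1$-comparison in Section~\ref{sec:R1-comparison}: first trees via the one-branch shift, then general connected bipartite graphs by a deletion-minimal counterexample argument.

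\emph{Tree case.} By Lemma~\ref{lem:reduce-to-path}, it suffices to show $\mathsf S_t(T)\ge\mathsf S_t(T')$ for a single shift $T\to T'$ of Lemma~\ref{lem:one-shift}. I will interpolate through the weighted trees $G_\theta$, in which the old edge $vw_1$ has weight $1-\theta$ and the new edge $u_aw_1$ has weight $\theta$. Redoing the computation of Lemma~\ref{lem:one-shift} with weights should give $\partial_\theta M_\theta=x^2Q_\theta(x)$, where $Q_\theta$ is built from $h_{a-1}h_{b-1}$ and deletion polynomials of $C$.

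By Theorem~\ref{thm:heilmann-lieb-zero}, $M_\theta$ factors as $\prod_j(1+x\mu_j(\theta))$ with $\mu_j\ge0$. Bronshtein's theorem then gives locally Lipschitz root parameters. Differentiating the factorization shows that the signed speed measure $\dot\nu_\theta$ has Stieltjes transform $xQ_\theta/M_\theta$.

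Next I will show $Q_\theta/M_\theta\in\mathcal C^0_{\le3}$. This means writing the ratio as a product of at most three deletion ratios of forests, such as $M_{F-S}/M_F$, and proving each lies in $\mathcal C_{\le1}$. For that I will use the continued-fraction recursion $M_{F-v}/M_F=1/(1+x\sum_z M_{F-v-z}/M_{F-v})$ together with Lemma~\ref{lem:stieltjes-closure}, by induction on the forest. The $O(x^{-1})$ decay should come from the extra factor of $x$.

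Finally I will prove a cone lemma: if a signed atomic $\sigma$ has $\mathcal S_\sigma(x)=xg(x)$ with $g\in\mathcal C^0_{\le3}$, then $\int(\lambda-t)_+\,d\sigma\le0$ for all $t\ge0$. The route is partial fractions: for each generator $x\prod_{j\le3}(1+s_jx)^{-1}$, compute the corresponding $\sigma$ as divided differences and check the sign of its first stop-loss transform explicitly. Then $\frac{d}{d\theta}\mathsf S_t\le0$, and the endpoints match $T$ and $T'$ by Lemma~\ref{lem:forest-det-matching}.

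\emph{General case.} I will take a counterexample $G$ with $n$ minimal and, among those, deletion-minimal. For a vertex $v$ with $G-v=G_1\sqcup\dots\sqcup G_q$, minimality gives $\sum_i\mathsf S_t(G_i)\ge\sum_i\mathsf S_t(P_{n_i})$. Reinserting $v$ is a rank-one update $M\mapsto M+bb^\top$ of $B^\top B$. Its eigenvalues interlace, and the spectral shift function $\xi\in[0,1]$ gives
\[
\mathsf S_t(G)-\mathsf S_t(G-v)=2\int(\lambda-t)_+\xi(\lambda)\,d\lambda,
\]
with total shift mass $d$. A Jensen-type lower envelope should place this mass as low as possible, giving a gain bounded below by an explicit function of $d$.

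On the path side, I will bound $\mathsf S_t(P_n)-\sum_i\mathsf S_t(P_{n_i})$ using the spider tree, by analogy with Lemma~\ref{lem:path-deficit-r1} and the tree case already proved. Comparing the two bounds should show that $v$ is deletable whenever $d-q\ge2$. A refined two-arm estimate handles $d-q=1$.

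These criteria force every block to be an even cycle, and the counterexample to be a sparse sun whose loaded vertices are $3$-separated. The pure even cycle I will handle by explicit eigenvalues $4\cos^2(\pi k/n)$ versus $4\cos^2(\pi k/(n+1))$ and a weak-majorization argument. For a loaded sun, I will delete an unloaded cycle neighbor of a loaded vertex and compare the local shift integral with the endpoint increment $P_{n-1}\to P_n$ in three ranges of $t$: $t\le3$, $3\le t\le4$, and $t\ge4$ (where the path increment vanishes).

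I expect the main obstacle to be this final certified comparison for $3\le t\le4$. There a crude Jensen envelope is likely too weak, so I would first try a fifth-moment bound on the local spectral measure.
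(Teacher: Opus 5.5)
Your architecture is the paper's: tree-shift interpolation with Heilmann--Lieb, Bronshtein and a $\mathcal C^0_{\le3}$ cone lemma; a minimal counterexample; reduction to $3$-separated sparse suns; majorization for $C_{2m}$; and support-neighbor deletion with a Jensen envelope for $t\le3$ and a fifth-moment envelope for $3\le t\le4$. However, the local-gain step fails as you state it. A gain bounded below by a function of $d$ alone is $2\mathcal I_{[0,d]}(t)$, which is what you get by pushing the shift mass $d$ as low as possible. This is too weak already in the first case the reduction needs: a non-cut vertex of degree $3$, so $(d,q)=(3,1)$, as in $K_{3,3}$ or a bipartite theta graph on at least six vertices. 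This is exactly the case that must be excluded to show that $2$-connected blocks are cycles. There the path cost is $\mathsf S_t(P_n)-\mathsf S_t(P_{n-1})$. At $t=3$ this is strictly positive for $n\ge6$, since $\mu_1(P_n)>3$ and $\mu_1(P_n)>\mu_1(P_{n-1})$, whereas $\mathcal I_{[0,3]}(3)=0$. The borderline case $d=q+1$ fails in the same way.

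The missing idea has two parts. First, apply Jensen inside the rank-one trace formula $\mathsf S_t(G)-\mathsf S_t(G-v)=2\int_0^1 b^\top(M_\theta-t)_+b\,d\theta$, to the $b$-weighted spectral measure of $M_\theta$. This yields the lifted interval $2\mathcal I_{[\gamma/d,\gamma/d+d]}(t)$ with $\gamma=b^\top Mb=\|B_{G-v}b\|_2^2$. Second, bound $\gamma\ge\sum_{x\in N_G(v)}(d_G(x)-1)\ge s$, where $s$ counts the neighbors of $v$ not isolated in $G-v$. Moreover $s\ge d-q+1$ whenever $G-v$ has a non-isolated component. It is this lift, not the length $d$, that beats the path cost: for $(3,1)$ it gives $\mathcal I_{[1,4]}\ge\mathcal I_{[3,4]}$, and in the borderline case $\gamma\ge3$.

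The same quantity drives steps you attribute to ``these criteria''. In a sun graph every vertex has $d-\kappa(G-v)\le1$, and when equality holds $G-v$ has only one non-isolated component. So neither test can enforce sparsity or $3$-separation, and separate deletions are needed:
\begin{enumerate}
\item a leaf at a vertex carrying two leaves, with $d=1$, $\gamma\ge3$, against $\mathcal I_{[3,4]}$;
\item the loaded vertex in a pattern $11$, with $d=3$, $\gamma\ge3$, against $\mathcal I_{[2,4]}$;
\item the middle vertex of a pattern $101$, with $d=2$, $\gamma\ge4$, against $\mathcal I_{[3,4]}$.
\end{enumerate}
With a $d$-only gain none of these works; a leaf, for instance, contributes only $\mathcal I_{[0,1]}$. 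Likewise, the terminal envelope for $t\le3$ is $\mathcal I_{[3/2,7/2]}$ precisely because $\gamma=3$, and $C_4$ with one leaf has to be done by hand.

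Two smaller points. First, for $0<\theta<1$ the graph $G_\theta$ contains the cycle $vu_1\cdots u_aw_1v$, so it is not a tree. Hence $Q/M_\theta$ is not a product of forest deletion ratios. You need $M_\theta=RH+xWD_\theta$, which gives $Q/M_\theta=e\cdot c/(1+xcd_\theta)$ with $e\in\mathcal C_{\le2}$, and the closure lemma then handles the second factor.

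Second, your spider idea for the path side is a genuine and simpler alternative to the paper's path-splicing theorem, which needs a sawtooth bound and certified computation. You do have to supply the mechanism, since the $R_1$ resolvent argument does not transfer directly. The tree case gives $\mathsf S_t(P_n)\le\mathsf S_t(T_{\mathrm{sp}})$ for the spider $T_{\mathrm{sp}}$. Its increment over $\bigsqcup_iP_{n_i}$ is $2J_E$ with $|E|=q$ and $E\subset[0,\mu_1(T_{\mathrm{sp}})]$. For $q\ge2$, the test vector $(q-1)^{-k/2}$ at distance $k$ from the center gives $\mu_1(T_{\mathrm{sp}})<q^2/(q-1)$. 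Packing then bounds the cost by $2\mathcal I_{[q/(q-1),\,q^2/(q-1)]}(t)$; for $q=1$ use the endpoint bound $2\mathcal I_{[3,4]}$. Once the gain carries the $\gamma$-lift, comparing right endpoints and lengths settles both deletion criteria.
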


Combining this stop-loss comparison with the first-moment inequality and the bipartite reduction gives the following comparison, valid for all connected graphs.

\begin{corollary}\label{cor:connected-third-order-convex}
Let \(G\) be a connected graph on \(n\) vertices.  Then, for every admissible third-order convex function \(F\),
\[
        \sum_{i=1}^n F \left(\lambda_i(G)^2\right)
        \ge
        \sum_{i=1}^n F \left(\lambda_i(P_n)^2\right).
\]
\end{corollary}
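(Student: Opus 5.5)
The plan is to deduce Corollary~\ref{cor:connected-third-order-convex} from Theorem~\ref{thm:bipartite-stoploss} in three steps: Lemma~\ref{lem:third-order-convex-from-stoploss} on the bipartite side, then a rewriting of both sides as sums over squared singular values, and finally Lemma~\ref{lem:bipartite-reduction} to pass from $G$ to a bipartite spanning subgraph.

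First, let $H$ be the connected bipartite spanning subgraph of $G$ from Lemma~\ref{lem:bipartite-reduction}. Since $H$ is connected on $n$ vertices, $\sum_i\mu_i(H)=|E(H)|\ge n-1=\sum_i\mu_i(P_n)$. Theorem~\ref{thm:bipartite-stoploss} gives $\mathsf S_t(H)\ge\mathsf S_t(P_n)$ for all $t\ge0$. Lemma~\ref{lem:third-order-convex-from-stoploss} then yields $\sum_iF(\mu_i(H))\ge\sum_iF(\mu_i(P_n))$.

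Second, I convert this into the eigenvalue form. For a bipartite graph the adjacency eigenvalues are $\pm\sigma_i$ together with zeros. Since $F(0)=0$,
\[
\sum_{i=1}^nF(\lambda_i(H)^2)=2\sum_iF(\mu_i(H)),
\]
and the same holds for $P_n$. Hence $\sum_iF(\lambda_i(H)^2)\ge\sum_iF(\lambda_i(P_n)^2)$.

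Third, to pass from $H$ to $G$ I need $\sum_iF(\lambda_i(H)^2)\le\sum_iF(\lambda_i(G)^2)$. Lemma~\ref{lem:bipartite-reduction} gives this provided $\Phi(\lambda):=F(\lambda^2)$ is even and convex on $\mathbb R$. Evenness is clear. For convexity, compute $\Phi''(\lambda)=2F'(\lambda^2)+4\lambda^2F''(\lambda^2)$. Since $F'''\ge0$ a.e. and $F''$ is absolutely continuous, $F''$ is nondecreasing, so $F''(u)\ge F''(0)\ge0$. Consequently $F'$ is nondecreasing and $F'(u)\ge F'(0)\ge0$. Thus $\Phi''\ge0$ on $\mathbb R$, and $\Phi$, being $C^2$, is convex.

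The main obstacle is exactly this last check, namely that the test function $\lambda\mapsto F(\lambda^2)$ qualifies for the bipartite reduction. It is resolved by the monotonicity argument above, which is where all three sign conditions $F'(0)\ge0$, $F''(0)\ge0$ and $F'''\ge0$ are used. Chaining the inequality from the second step with the reduction of the third step completes the proof.
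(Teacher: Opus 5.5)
Your proposal is correct and follows essentially the same route as the paper. You use the stop-loss theorem plus the first-moment bound and Lemma~\ref{lem:third-order-convex-from-stoploss} on the bipartite spanning subgraph, the factor-of-two conversion via $F(0)=0$, and the bipartite reduction. The convexity of $\lambda\mapsto F(\lambda^2)$ comes from $F''\ge F''(0)\ge0$ and $F'\ge F'(0)\ge0$. Your only deviation is applying the bipartite case directly to $H$ rather than first stating it for a general connected bipartite $G$, which is immaterial.
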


\begin{proof}
First suppose that \(G\) is connected and bipartite. By Theorem~\ref{thm:bipartite-stoploss}, $\mathsf S_t(G)\ge \mathsf S_t(P_n)$ for every $t\ge0$. Moreover,
\[
        \sum_i\mu_i(G)=|E(G)|\ge n-1=\sum_i\mu_i(P_n).
\]
Lemma~\ref{lem:third-order-convex-from-stoploss}, applied with \(H=P_n\), gives
\[
        \sum_iF(\mu_i(G))\ge\sum_iF(\mu_i(P_n)).
\]
The adjacency eigenvalues of a bipartite graph are $\pm\sqrt{\mu_i}$, together with possible additional zeros. Since \(F(0)=0\), we have $\sum_{j=1}^n F\left(\lambda_j(G)^2\right) = 2\sum_iF(\mu_i(G))$, and the same identity holds for \(P_n\).  Therefore
\[
        \sum_{j=1}^n F\left(\lambda_j(G)^2\right)
        \ge
        \sum_{j=1}^n F\left(\lambda_j(P_n)^2\right).
\]

Now let \(G\) be an arbitrary connected graph. Since \(F\) is admissible, \(F'''\ge0\) a.e., \(F''(0)\ge0\), and \(F'(0)\ge0\).  Hence \(F''\ge0\) and \(F'\ge0\) on \([0,\infty)\). It follows that $\widetilde F(x):=F(x^2)$ is an even convex function on \(\mathbb R\). By Lemma~\ref{lem:bipartite-reduction}, there exists a connected bipartite spanning subgraph \(H\) of \(G\) such that
\[
        \sum_i F\left(\lambda_i(H)^2\right)
        =
        \sum_i \widetilde F(\lambda_i(H))
        \le
        \sum_i \widetilde F(\lambda_i(G))
        =
        \sum_i F\left(\lambda_i(G)^2\right).
\]
Applying the bipartite case to \(H\) gives
\[
        \sum_i F\left(\lambda_i(H)^2\right)
        \ge
        \sum_i F\left(\lambda_i(P_n)^2\right).
\]
Combining the last two inequalities proves the claim.
\end{proof}

The remainder of this section and the next two sections are devoted to the proof of Theorem~\ref{thm:bipartite-stoploss}.  The argument follows the same deletion-induction strategy as the \(R_1\) comparison, but with the analytic inputs replaced by second-order stop-loss estimates.

\subsection{Tree shifts at the second-order stop-loss level}\label{subsec:tree-shift-sl2}

Recall from Section~\ref{subsec:stieltjes-prelim} the cones \(\mathcal C_{\le k}\), their decaying subcones \(\mathcal C_{\le k}^0\), and the Stieltjes transform \(\mathcal S_\tau\). The deletion ratios used below naturally lie in the cones \(\mathcal C_{\le1}\) and \(\mathcal C_{\le2}\), while the final combined quantity will lie in the decaying cone \(\mathcal C_{\le3}^0\).  The following lemma is the analytic device that converts this cone membership into the needed stop-loss inequality.

\begin{lemma}\label{lem:C3-stoploss}
Let \(g\in\mathcal C_{\le3}^0\). Suppose that a finite signed atomic measure
\[
        \sigma=m_0\delta_0+\sum_{\ell=1}^N m_\ell\delta_{r_\ell},
        \qquad m_0,m_\ell\in\mathbb R,\quad r_\ell>0,
\]
satisfies $\mathcal{S}_\sigma(x)=xg(x)$ for every $x>0$. Then, for every \(t\ge0\),
\[
        \int_{[0,\infty)} (\lambda-t)_+\,d\sigma(\lambda)\le0.
\]
\end{lemma}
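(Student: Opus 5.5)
The plan is to reduce to the generators of $\mathcal C_{\le3}^0$, compute the signed measure of each generator as a divided difference, and read off the sign of $\int(\lambda-t)_+\,d\sigma$ from monotonicity or convexity of a test function. Repeated parameters are then handled by a limiting argument.

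\textbf{Step 1: reduction to generators.} Since every generator of $\mathcal C_{\le3}$ is positive on $(0,\infty)$, a nonnegative combination is $O(x^{-1})$ only if the constant generator $1$ has coefficient zero. Hence
\[
g=\sum_k c_k g_k,\qquad c_k\ge0,\qquad g_k(x)=\prod_{j=1}^{m_k}(1+s_{k,j}x)^{-1},\qquad 1\le m_k\le3 .
\]

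\textbf{Step 2: the case of distinct parameters.} Write $\varphi_x(\lambda):=(1+x\lambda)^{-1}$ and let $[\,\cdot\,]$ denote divided differences in $\lambda$. From
\[
[s_1]\varphi_x=\varphi_x(s_1),\qquad [s_1,s_2]\varphi_x=-\frac{x}{\prod_j(1+xs_j)},\qquad [s_1,s_2,s_3]\varphi_x=\frac{x^2}{\prod_j(1+xs_j)},
\]
together with $\lambda\varphi_x(\lambda)=x^{-1}(1-\varphi_x(\lambda))$, I expect the following representations:
\[
\frac{x}{1+sx}=-[0,s]\varphi_x,\qquad \frac{x}{(1+s_1x)(1+s_2x)}=-[s_1,s_2]\varphi_x,\qquad \frac{x}{\prod_{j=1}^3(1+s_jx)}=-[s_1,s_2,s_3]\bigl(\lambda\varphi_x\bigr).
\]
For distinct parameters each right-hand side is $\int\varphi_x\,d\sigma_k$ for an explicit finite atomic measure $\sigma_k$. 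The identity $\int h\,d\sigma_k=-[\text{nodes}](\text{weight}\cdot h)$ then holds for every function $h$, with weight $1$ when $m_k\le2$ and $\lambda$ when $m_k=3$.

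Apply this with $h=(\lambda-t)_+$, where $t\ge0$.
\begin{itemize}
\item When $m_k=1,2$, first divided differences of the nondecreasing function $(\lambda-t)_+$ are $\ge0$.
\item When $m_k=3$, the function $\lambda(\lambda-t)_+$ is convex on $[0,\infty)$. It vanishes on $[0,t]$, equals $\lambda^2-t\lambda$ beyond $t$, is $C^1$ at $\lambda=t$, and is convex on each piece. Hence its second divided differences are $\ge0$.
\end{itemize}
In every case $\int(\lambda-t)_+\,d\sigma_k\le0$.

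\textbf{Step 3: identification of $\sigma$.} The $\sigma$ in the statement is finite atomic, and its Stieltjes transform determines it: its poles and residues give the atoms and masses, and the value at $\infty$ gives $m_0$. So when all parameters are distinct, $\sigma=\sum_k c_k\sigma_k$ and the lemma follows by summation.

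\textbf{Step 4 (main obstacle): repeated parameters.} A term such as $x(1+sx)^{-2}$ alone is not the transform of an atomic measure, so Step 3 does not apply termwise. I would perturb the parameters to distinct values $s_{k,j}^\varepsilon\to s_{k,j}$ and set $\sigma^\varepsilon:=\sum_k c_k\sigma_k^\varepsilon$. Fix $K$ larger than every atom of $\sigma$ and every parameter.
\begin{itemize}
\item By the Hermite--Genocchi formula, each divided-difference functional $h\mapsto -[\text{nodes}^\varepsilon](\text{weight}\cdot h)$ converges, for every continuous $h$ on $[0,K]$, to a bounded functional $\Lambda_k$. The limit is a confluent divided difference, i.e.\ an integral of the appropriate distributional derivative of $\text{weight}\cdot h$ against a B-spline. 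Set $\Lambda:=\sum_k c_k\Lambda_k$.
\item The inequality $\Lambda((\cdot-t)_+)\le0$ passes to the limit.
\item We have $\Lambda(\varphi_x)=xg(x)=\int\varphi_x\,d\sigma$ for all $x>0$. The linear span of $\{\varphi_x\}_{x>0}\cup\{1\}$ is an algebra, since products of the $\varphi_x$ reduce to combinations of them by partial fractions, and it separates points of $[0,K]$.
\item By Stone--Weierstrass this span is dense in $C[0,K]$. Hence $\Lambda$ coincides with integration against $\sigma$, giving $\int(\lambda-t)_+\,d\sigma\le0$.
\end{itemize}
The care needed is checking that the limit functionals $\Lambda_k$ are bounded on $C[0,K]$ rather than only on smooth functions. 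I would obtain this from uniform boundedness of the total variations of $\sigma_k^\varepsilon$ on $[0,K]$, or else avoid it by smoothing $(\lambda-t)_+$ and passing to the limit.
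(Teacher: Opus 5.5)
Your Steps 1--3 are the paper's argument. You use the same functionals $-[0,s]h$, $-[s_1,s_2]h$ and $-[s_1,s_2,s_3](\lambda h)$, the same sign argument, and the same identification with $\sigma$ by uniqueness of partial fractions. One small slip: for $t>0$ the function $\lambda(\lambda-t)_+$ is not $C^1$ at $\lambda=t$, since its derivative jumps from $0$ to $t$. The jump is upward, so convexity still holds and nothing is lost.

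\textbf{The gap is Step 4.} This is where the content of the lemma lies, and neither of your two mechanisms closes it.

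\emph{The total variation bound fails.} Suppose a generator has a repeated parameter $s$ and you perturb it to $s,s+\varepsilon$. Then $\sigma_k^\varepsilon$ contains $\mp\varepsilon^{-1}(\delta_{s+\varepsilon}-\delta_s)$, so its total variation grows like $\varepsilon^{-1}$, and like $\varepsilon^{-2}$ for a triple parameter. Correspondingly, the perturbed divided differences converge only for $h\in C^1$, respectively $\lambda h\in C^2$. The limit functional $\Lambda_k$ involves $h'(s)$ or $(\lambda h)''(s)$, so it is not bounded on $C[0,K]$. For $h=(\lambda-t)_+$ with $s=t$ the limit is not even well defined: $[t,t+\varepsilon]h=1$ but $[t-\varepsilon,t]h=0$.

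\emph{The smoothing fallback is incomplete.} Smoothing the kernel does give $\Lambda(h_\delta)\le 0$. However, you then need $\Lambda(h_\delta)=\int h_\delta\,d\sigma$. Stone--Weierstrass in $C[0,K]$ cannot give this, because $\Lambda$ is continuous only in the $C^2$ topology. An individual generator with a repeated parameter is not the transform of any measure. Only the combination $\sum_k c_k\Lambda_k$ can be integration against $\sigma$, and showing that its derivative terms cancel is exactly what the hypothesis $\mathcal S_\sigma=xg$ must be used for.

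\textbf{How the paper closes it.} On smooth functions, $\Lambda=b_0\delta_0+\sum_{a}\sum_{j\le 2}b_{a,j}\delta_a^{(j)}$. Since $\partial_\lambda^j k_x(a)=(-1)^j j!\,x^j(1+ax)^{-j-1}$, each nonzero $b_{a,j}$ with $j\ge1$ would give $\Lambda(k_x)$ a pole of order at least $2$ at $x=-1/a$. But $\Lambda(k_x)=\mathcal S_\sigma(x)$ has only simple poles. Arguing from the top order down, all $b_{a,j}$ with $j\ge1$ vanish, and the remaining coefficients equal the masses of $\sigma$. Hence $\Lambda(h)=\int h\,d\sigma$ for smooth $h$, and one lets the smoothing parameter tend to $0$.

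\textbf{A fix inside your framework.} Alternatively, you can keep your structure and prove that $\operatorname{span}(\{1\}\cup\{\varphi_x\}_{x>0})$ is dense in $C^2[0,K]$. Each monomial $\lambda^k$ is a $C^2[0,K]$-limit of forward differences in $x$ of $\varphi_x$ at $x=0$, because $(x,\lambda)\mapsto\varphi_x(\lambda)$ is smooth near $x=0$. Polynomials are dense in $C^2[0,K]$. With this, the identification goes through for the smoothed kernels.
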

\begin{proof}
Fix \(t\ge0\).  For \(\varepsilon>0\), put
\[
        h_{\varepsilon,t}(\lambda)
        :=
        \varepsilon\log\bigl(1+\exp((\lambda-t)/\varepsilon)\bigr),
        \qquad \lambda\ge0 .
\]
Then \(h_{\varepsilon,t}\) is smooth, non-decreasing and convex on \([0,\infty)\), and $h_{\varepsilon,t}(\lambda)\longrightarrow(\lambda-t)_+$ locally uniformly as \(\varepsilon\downarrow0\).  We shall also use $\phi_{\varepsilon,t}(u):=u h_{\varepsilon,t}(u)$. Since
\[
        \phi_{\varepsilon,t}''(u)
        =
        2h_{\varepsilon,t}'(u)+u h_{\varepsilon,t}''(u)\ge0
        \qquad (u\ge0),
\]
the function \(\phi_{\varepsilon,t}\) is convex on \([0,\infty)\).

Since \(g\in\mathcal C_{\le3}^0\), it can be written as a finite nonnegative linear combination of nonconstant generators:
\[
        g(x)=\sum_{\nu=1}^M \alpha_\nu q_\nu(x),
        \qquad \alpha_\nu\ge0,
\]
where
\[
        q_\nu(x)=
        \prod_{j=1}^{m_\nu}\frac1{1+s_{\nu,j}x},
        \qquad
        s_{\nu,j}>0,\quad 1\le m_\nu\le3.
\]
For a smooth function \(F\), the first divided difference \([a,b]F\) is the
slope of the secant line through \((a,F(a))\) and \((b,F(b))\):
\[
        [a,b]F:=\frac{F(b)-F(a)}{b-a}.
\]
The second divided difference measures the change of these secant slopes:
\[
        [a,b,c]F:=\frac{[b,c]F-[a,b]F}{c-a}.
\]
When some nodes coincide, we use the standard confluent extension.  Thus
\[
        [a,a]F=F'(a),
        \qquad
        [a,a,c]F=\frac{[a,c]F-F'(a)}{c-a},
        \qquad
             [a,a,a]F=\frac12F''(a).
\]
For a generator
\[
        q(x)=\prod_{j=1}^{m}\frac1{1+s_jx},
        \qquad 1\le m\le3,
\]
with the available parameters ordered as $s_1\le s_2\le\cdots\le s_m$, define a linear functional on smooth functions \(h\) by
\[
        \Lambda_q(h)
        :=
        \begin{cases}
        \displaystyle
        \frac{h(0)-h(s_1)}{s_1},
        & m=1,\\[2ex]
        \displaystyle
        -[s_1,s_2]h,
        & m=2,\\[1.2ex]
        \displaystyle
        -[s_1,s_2,s_3]\bigl(u\mapsto u h(u)\bigr),
        & m=3.
        \end{cases}
\]
This is well-defined also for repeated parameters by the confluent
interpretation above.

Let
\[
        k_x(\lambda):=\frac1{1+x\lambda},
        \qquad x>0.
\]
A direct divided-difference calculation gives
\begin{equation}\label{eq:Lambda_q_kx}
\Lambda_q(k_x)=xq(x) \qquad (x>0).
\end{equation}
We next prove the sign of each generator on \(h_{\varepsilon,t}\).  If
\(m=1\), then
\[
        \Lambda_q(h_{\varepsilon,t})
        =
        \frac{h_{\varepsilon,t}(0)-h_{\varepsilon,t}(s_1)}{s_1}
        \le0
\]
because \(h_{\varepsilon,t}\) is non-decreasing.  If \(m=2\), then
\[
        \Lambda_q(h_{\varepsilon,t})
        =
        -[s_1,s_2]h_{\varepsilon,t}\le0,
\]
again because first divided differences of a non-decreasing function are
nonnegative, including the confluent case.  If \(m=3\), then
\[
        \Lambda_q(h_{\varepsilon,t})
        =
        -[s_1,s_2,s_3]\phi_{\varepsilon,t}.
\]
If \(s_1<s_2<s_3\), then convexity of \(\phi_{\varepsilon,t}\) means that its
secant slopes are non-decreasing, so
$
        [s_1,s_2]\phi_{\varepsilon,t}
        \le
        [s_2,s_3]\phi_{\varepsilon,t}.
$ Hence
\[
        [s_1,s_2,s_3]\phi_{\varepsilon,t}
        =
        \frac{[s_2,s_3]\phi_{\varepsilon,t}-[s_1,s_2]\phi_{\varepsilon,t}}{s_3-s_1}
        \ge0,
\]
and therefore \(\Lambda_q(h_{\varepsilon,t})\le0\).  For repeated nodes, the result follows from the same confluent interpretation of divided differences. 

Finally, we define
\[
        \Lambda_g(h):=\sum_{\nu=1}^M \alpha_\nu\Lambda_{q_\nu}(h).
\]
Then
\begin{equation}\label{eq:Lambda_g_sign}
\Lambda_g(h_{\varepsilon,t})\le0
\end{equation}
and, by \eqref{eq:Lambda_q_kx},
\begin{equation}\label{eq:Lambda_g_kx}
\Lambda_g(k_x)=xg(x) \qquad (x>0).
\end{equation}

It remains to identify \(\Lambda_g\) with integration against the given
atomic measure \(\sigma\).  This is the only point where one has to be careful
about repeated parameters and higher-order poles.

Let \(A\subset(0,\infty)\) be the finite set of all $s_{\nu,j}$ among the generators \(q_\nu\).  Since \(\Lambda_g\) is built from divided
differences of order at most two at these $s_{\nu,j}$'s, there exist real coefficients \(b_0\) and
\(b_{a,j}\), \(a\in A\), \(0\le j\le2\), such that for every smooth \(h\),
\begin{equation*}
\Lambda_g(h) = b_0h(0)+\sum_{a\in A}\sum_{j=0}^2 b_{a,j}h^{(j)}(a).
\end{equation*}
Applying this to \(k_x\) gives the rational function
\begin{equation}\label{eq:Lambda_g_rational}
\Lambda_g(k_x) = b_0 + \sum_{a\in A}\sum_{j=0}^2 b_{a,j}\, \frac{(-1)^j j! x^j}{(1+ax)^{j+1}}.
\end{equation}
After merging equal support points of \(\sigma\), write
\[
        \sigma=m_0\delta_0+\sum_{\ell = 1}^N m_\ell\delta_{r_\ell},
        \qquad
        r_\ell>0.
\]
Then
\begin{equation}\label{eq:atomic-stieltjes-expansion}
\mathcal S_\sigma(x) = m_0+\sum_{\ell=1}^N\frac{m_\ell}{1+r_\ell x}.
\end{equation}
By assumption and \eqref{eq:Lambda_g_kx}, the rational functions in
\eqref{eq:Lambda_g_rational} and \eqref{eq:atomic-stieltjes-expansion} agree
for all \(x>0\). Comparing their limits as \(x\to\infty\) gives \(b_0=m_0\).
Since the poles of \(\mathcal S_\sigma\) are simple, we also have
\(b_{a,j}=0\) for every \(j\ge1\).

Comparing the remaining simple pole coefficients gives $b_{r_\ell,0}=m_\ell$, and all other coefficients $b_{a,0}$ for $a\in A\setminus\{r_1,\ldots,r_N\}$ vanish.  Thus we obtain for every smooth \(h\),
\[
        \Lambda_g(h)
        =
        m_0h(0)+\sum_{\ell=1}^N m_\ell h(r_\ell)
        =
        \int_{[0,\infty)} h(\lambda)\,d\sigma(\lambda).
\]
Applying this to \(h=h_{\varepsilon,t}\) and using \eqref{eq:Lambda_g_sign}, we get
\[
        \int_{[0,\infty)} h_{\varepsilon,t}(\lambda)\,d\sigma(\lambda)
        =
        \Lambda_g(h_{\varepsilon,t})
        \le0.
\]
Finally let \(\varepsilon\downarrow0\).  Since \(\sigma\) has finite support and \(h_{\varepsilon,t}\to(\lambda-t)_+\) uniformly on finite sets,
\[
        \int_{[0,\infty)} (\lambda-t)_+\,d\sigma(\lambda)
        =
        \lim_{\varepsilon\downarrow0}
        \int_{[0,\infty)} h_{\varepsilon,t}(\lambda)\,d\sigma(\lambda)
        \le0.
\]
This proves the lemma.
\end{proof}

The following lemma gives the cone membership for deletion ratios of weighted forests.  In the applications below we shall only need the cases \(|U|=1\) and \(|U|=2\), but the same argument proves the general statement.

\begin{lemma}\label{lem:one-two-deletion-cones}
Let \(Y\) be a weighted forest with non-negative edge weights, and let
\(U\) be a set of vertices contained in one side of a bipartition of
\(Y\). Then
\[
        \frac{M_{Y-U}(x)}{M_Y(x)}\in\mathcal C_{\le|U|}.
\]
\end{lemma}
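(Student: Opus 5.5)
We argue by induction on \(|U|\), using the Stieltjes-cone closure of Lemma~\ref{lem:stieltjes-closure} and the multiplicativity \(\mathcal C_{\le k}\cdot\mathcal C_{\le \ell}\subseteq\mathcal C_{\le k+\ell}\).

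\textbf{Reduction to one vertex.} For \(U=\{u_1,\dots,u_k\}\), telescope
\[
        \frac{M_{Y-U}}{M_Y}=\prod_{i=1}^{k}\frac{M_{Y-\{u_1,\dots,u_i\}}}{M_{Y-\{u_1,\dots,u_{i-1}\}}}.
\]
Each factor is a single-vertex deletion ratio of a weighted forest (a vertex-deleted subforest of \(Y\)), so the general claim follows once we know that single-vertex deletion ratios lie in \(\mathcal C_{\le1}\). The hypothesis that \(U\) lies on one side of the bipartition is not even needed for this telescoping; it is harmless.

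\textbf{The case \(|U|=1\).} Let \(Y\) be a weighted forest and \(u\in V(Y)\). Because \(M\) is multiplicative over components, we may assume \(Y\) is a tree. We prove \(f_u:=M_{Y-u}/M_Y\in\mathcal C_{\le1}\) by induction on \(|V(Y)|\). If \(u\) is isolated, then \(f_u=1\in\mathcal C_{\le1}\). Otherwise, let \(z_1,\dots,z_r\) be the neighbours of \(u\), with weights \(\omega_i\), and let \(Y_i\) be the component of \(Y-u\) containing \(z_i\). The matching recurrence at \(u\) gives
\[
        M_Y=M_{Y-u}+x\sum_i\omega_iM_{Y-u-z_i}.
\]
Dividing by \(M_{Y-u}=\prod_iM_{Y_i}\) yields
\[
        f_u=\frac{1}{1+x\sum_i\omega_i\,g_i},\qquad g_i:=\frac{M_{Y_i-z_i}}{M_{Y_i}}.
\]
By the induction hypothesis, \(g_i\in\mathcal C_{\le1}\), hence \(d:=\sum_i\omega_ig_i\in\mathcal C_{\le1}\). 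Applying Lemma~\ref{lem:stieltjes-closure} with \(c\equiv1\) then gives \(f_u=c/(1+xcd)\in\mathcal C_{\le1}\).

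\textbf{Main obstacle.} The only delicate point is Lemma~\ref{lem:stieltjes-closure} itself, and it is already available. One must also check that the hypotheses hold: \(d\) is a nonnegative combination, and zero weights simply drop out. Together with the multiplicativity of the cones, the telescoping product finishes the proof.

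An alternative that avoids the induction on tree size uses the determinant route. By Lemma~\ref{lem:forest-det-matching} and Cauchy--Binet/Schur complements, \(M_{Y-U}/M_Y\) is a principal minor of \((I+xB^{\top}B)^{-1}\) or \((I+xBB^{\top})^{-1}\) on the side containing \(U\). Membership in \(\mathcal C_{\le1}\) would then follow from a spectral decomposition of the diagonal entry \(e_u^{\top}(I+xM)^{-1}e_u=\sum_j\frac{|\langle e_u,\phi_j\rangle|^2}{1+x\mu_j}\). This gives \(|U|=1\) directly, but for the \(k\times k\) minor the telescoping product above is cleaner.
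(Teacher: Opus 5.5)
Your proof is correct, but it takes a genuinely different route from the paper.

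The paper proves the lemma in one step by linear algebra. By Lemma~\ref{lem:forest-det-matching}, $M_Y=\det(I+xA)$ with $A=BB^\top$. Because $U$ lies on one side, deleting $U$ deletes rows of $B$, so $M_{Y-U}$ is the complementary principal minor $\det((I+xA)_{U^c,U^c})$. Jacobi's complementary-minor identity turns the ratio into $\det([(I+xA)^{-1}]_{U,U})$. Expanding in $\wedge^{|U|}$ then writes it as a sum of the squares $\langle e_U,q_{\alpha_1}\wedge\cdots\wedge q_{\alpha_r}\rangle^2$ times products of $|U|$ resolvent factors at eigenvalues of $A$. So the $|U|\times|U|$ minor you set aside in your last paragraph is handled directly there. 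The same-side hypothesis is exactly what makes $M_{Y-U}$ a principal minor.

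You instead reduce to $|U|=1$ by telescoping and the product rule $\mathcal C_{\le k}\cdot\mathcal C_{\le\ell}\subseteq\mathcal C_{\le k+\ell}$. You then treat a single vertex via the continued-fraction form $M_{Y-u}/M_Y=1/(1+x\sum_i\omega_i g_i)$ of the matching recurrence, together with Lemma~\ref{lem:stieltjes-closure} at $c\equiv1$. This is legitimate, since $1\in\mathcal C_{\le1}$ and that lemma is proved independently in Section~\ref{subsec:stieltjes-prelim}.

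Your route needs no determinant identity and no same-side hypothesis. In fact it needs no forest structure at all. If you phrase the induction for arbitrary weighted graphs, $M_{Y-u-z}/M_{Y-u}$ is already a one-vertex deletion ratio of the smaller graph $Y-u$. The induction then proves the conclusion for every weighted graph and every vertex set $U$.

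The price is twofold. Your argument rests on the complete Bernstein/Stieltjes theory behind Lemma~\ref{lem:stieltjes-closure} rather than elementary spectral calculus. The representation it yields is also less explicit: its parameters come from the spectra of the intermediate forests, and there is no square-coefficient formula. This is harmless here, because Lemma~\ref{lem:C3-stoploss} uses only cone membership.
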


\begin{proof}
It suffices to consider the case where \(U\) is contained in the left side of the bipartition; the case where \(U\) is contained in the right side is obtained by replacing \(B\) with \(B^\top\).  Let \(L\) be the left vertex set, and let \(B=B_{Y}\) be the weighted biadjacency matrix from Lemma~\ref{lem:forest-det-matching}.  Set
$
        A:=BB^\top \succeq 0.
$
By \eqref{eq:forest-det-matching-r1},
\[
        M_Y(x)=\det(I+xA).
\]
Write \(U^c:=L\setminus U\).  Deleting the vertices in \(U\) corresponds to deleting the corresponding rows of \(B\).  Thus the weighted biadjacency matrix of \(Y-U\), with respect to the bipartition \(U^c\sqcup R\), is \(B[U^c,R]\). Hence
\[
        M_{Y-U}(x)
        =
        \det\bigl(I+xB[U^c,R]B[U^c,R]^\top\bigr)
        =
        \det\bigl((I+xA)_{U^c,U^c}\bigr).
\]
Applying the principal-minor case of Jacobi's identity for complementary minors \cite[Eq.~(12)]{BrualdiSchneider1983} to \(M=I+xA\), we obtain, for \(x>0\),
\[
        \frac{M_{Y-U}(x)}{M_Y(x)}
        =
        \frac{\det((I+xA)_{U^c,U^c})}{\det(I+xA)}
        =
        \det\bigl([(I+xA)^{-1}]_{U,U}\bigr).
\]
It remains to expand this determinant spectrally. Let
\[
        A=\sum_{\alpha=1}^{|L|} s_\alpha q_\alpha q_\alpha^\top,
        \qquad s_\alpha\ge0,
\]
be an orthonormal spectral decomposition of \(A\). Set $T_x:=(I+xA)^{-1}$, so $T_x q_\alpha = \frac{1}{1+s_\alpha x}q_\alpha$.

If \(U=\{u_1,\ldots,u_r\}\), write
\[
        e_U:=e_{u_1}\wedge\cdots\wedge e_{u_r}.
\]
Then the principal minor has the exterior-power interpretation
\[
        \det([T_x]_{U,U})
        =
        \langle e_U,\wedge^r T_x(e_U)\rangle.
\]
Since the vectors
\[
        q_{\alpha_1}\wedge\cdots\wedge q_{\alpha_r},
        \qquad \alpha_1<\cdots<\alpha_r,
\]
form an orthonormal eigenbasis for \(\wedge^r T_x\), with eigenvalues
$
        \prod_{\ell=1}^r \frac{1}{1+s_{\alpha_\ell}x},
$
we get
\[
\begin{aligned}
        \det([T_x]_{U,U})
        &=
        \sum_{\alpha_1<\cdots<\alpha_r}
        \frac{
        \bigl\langle e_U,\,
        q_{\alpha_1}\wedge\cdots\wedge q_{\alpha_r}
        \bigr\rangle^2
        }
        {\prod_{\ell=1}^r(1+s_{\alpha_\ell}x)}.
\end{aligned}
\]
All coefficients are squares, hence non-negative.  Therefore
$
        \frac{M_{Y-U}(x)}{M_Y(x)}
$
is a non-negative linear combination of products of at most \(|U|\)
first-order resolvent factors.  Thus it lies in \(\mathcal C_{\le|U|}\).
\end{proof}

We shall need to differentiate real-rooted polynomial families through their root parameters. The needed root regularity is supplied by Bronshtein's theorem on hyperbolic polynomials; we record the short consequence needed below. Here and below, \emph{locally Lipschitz} means Lipschitz on every compact subinterval.

\begin{lemma}\label{lem:root-velocity-sl2}
Let \(I\subset\mathbb R\) be an open interval and let
\[
P_\theta(x)=\sum_{k=0}^D a_k(\theta)x^k, \qquad a_0(\theta)\equiv1,
\]
be a family of polynomials of degree at most \(D\), where each coefficient function \(a_k:I\to\mathbb R\) is of class \(C^D\). Assume that, for every \(\theta\in I\), all zeros of \(P_\theta\) are real and non-positive. Then one can choose locally Lipschitz functions $\mu_1,\ldots,\mu_D:I\to[0,\infty)$ such that
\[
P_\theta(x)=\prod_{j=1}^D \left(1+x\mu_j(\theta)\right).
\]
Moreover, there is a set \(E\subset I\) such that \(I\setminus E\) has Lebesgue measure zero and, for every \(\theta\in E\), all derivatives \(\mu_j'(\theta)\) exist and, for every \(x>0\),
\[
\frac{\partial_\theta P_\theta(x)}{P_\theta(x)} = \sum_{j=1}^D \frac{x\mu_j'(\theta)}{1+x\mu_j(\theta)}.
\]
\end{lemma}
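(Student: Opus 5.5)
The plan is to reduce the statement to Bronshtein's theorem on hyperbolic polynomials by passing to the reversed polynomial, and then to differentiate the factorization where the roots are differentiable.

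\emph{Step 1: reversal.} Set
\[
Q_\theta(z):=z^DP_\theta(1/z)=\sum_{k=0}^D a_k(\theta)z^{D-k}.
\]
This is monic of degree exactly \(D\), since \(a_0\equiv1\), and its coefficients are \(C^D\) in \(\theta\). If \(P_\theta\) has degree \(d(\theta)\le D\) with roots \(-1/\mu\), \(\mu>0\), then
\[
P_\theta(x)=\prod_{j=1}^{d(\theta)}(1+x\mu_j).
\]
Hence \(Q_\theta(z)=z^{D-d}\prod_{j=1}^{d}(z+\mu_j)\). Because every zero of \(P_\theta\) is real and non-positive, and in fact negative since \(P_\theta(0)=1\), all roots of \(Q_\theta\) are real. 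They are exactly \(-\mu_j(\theta)\le0\), with \(0\) of multiplicity \(D-d(\theta)\). So \(Q_\theta\) is a hyperbolic polynomial family.

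\emph{Step 2: Lipschitz roots.} Bronshtein's theorem says that a monic hyperbolic polynomial of degree \(D\) with \(C^{D}\) coefficients (indeed \(C^{D-1,1}\) suffices) admits a locally Lipschitz choice of its roots. The standard route is to take the increasingly ordered roots, which are continuous, and Bronshtein's estimate makes them locally Lipschitz. Writing these roots as \(-\mu_j(\theta)\), we get locally Lipschitz functions \(\mu_j\ge0\). Reversing back gives
\[
P_\theta(x)=x^DQ_\theta(1/x)=\prod_{j=1}^D(1+x\mu_j(\theta)).
\]
The zero roots of \(Q_\theta\) contribute trivial factors \(1\), which is the ``adding zero parameters'' convention.

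\emph{Step 3: differentiation.} By Rademacher's theorem in one dimension, each locally Lipschitz \(\mu_j\) is differentiable off a null set. Let \(E\) be the intersection of these finitely many full-measure sets. Fix \(\theta\in E\) and \(x>0\). Then every factor \(1+x\mu_j(\theta)\) is at least \(1\), so it is positive. The product rule applied to the factorization, which holds identically in \(\theta\), gives
\[
\partial_\theta P_\theta(x)=\sum_j x\mu_j'(\theta)\prod_{i\ne j}(1+x\mu_i(\theta)).
\]
Here \(\partial_\theta P_\theta\) is the classical derivative from the \(C^D\) coefficients, and it agrees with the derivative of the product at points of \(E\). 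Dividing by \(P_\theta(x)>0\) yields the stated identity.

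\emph{Main obstacle.} The hardest step is invoking Bronshtein correctly. The issues are the regularity hypothesis (\(C^D\) versus \(C^{D-1,1}\)), the fact that the ordered roots themselves are the Lipschitz choice, and the case of varying degree, where roots of \(P_\theta\) escape to \(-\infty\). The reversal to \(Q_\theta\) handles the varying degree, because the escaping roots simply become roots of \(Q_\theta\) tending to \(0\). The rest is routine.
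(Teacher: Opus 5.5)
Your proposal is correct and follows essentially the same route as the paper: reverse the polynomial to get a monic hyperbolic family of degree $D$ with $C^D$ coefficients, apply Bronshtein's theorem to obtain a locally Lipschitz labeling of the roots, then use Rademacher's theorem and the product rule, dividing by the positive product. The differences are cosmetic. The paper uses $y^DP_\theta(-1/y)$, whose roots are $\mu_j\ge0$, while you use $z^DP_\theta(1/z)$, whose roots are $-\mu_j\le0$; and your remark that the increasingly ordered roots themselves are the Lipschitz choice is a correct, slightly more explicit version of the paper's appeal to the Parusi\'nski--Rainer form of Bronshtein's theorem.
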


\begin{proof}
Define the reversed polynomial
\[
        \widetilde P_\theta(y):=y^D P_\theta(-1/y).
\]
Since \(a_0(\theta)\equiv1\), this is a monic polynomial of degree \(D\) whose
coefficients are \(C^D\) functions of \(\theta\).  Its roots are real and
non-negative; possible drops in the degree of \(P_\theta\) contribute additional
zero roots of \(\widetilde P_\theta\).

Recall that a univariate polynomial is called \emph{hyperbolic} if all its roots are real. Here \(C^{m,1}\) means \(C^m\) with Lipschitz \(m\)-th derivative. Since the coefficients of \(\widetilde P_\theta\) are \(C^D\), they are \(C^{D-1,1}\) on every compact subinterval of \(I\). By Bronshtein's theorem for hyperbolic polynomials~\cite{Bronshtein1979}, in the form stated in \cite[Theorem~2.1]{ParusinskiRainer2015}, the roots of \(\widetilde P_\theta\) can be labeled by locally Lipschitz functions \(\mu_1,\ldots,\mu_D:I\to[0,\infty)\). Thus $\widetilde P_\theta(y)=\prod_{j=1}^D(y-\mu_j(\theta))$. Substituting \(y=-1/x\) gives
\[
        P_\theta(x)=\prod_{j=1}^D(1+x\mu_j(\theta)).
\]

By Rademacher's theorem, each locally Lipschitz function \(\mu_j\) is differentiable almost everywhere on \(I\).  Since there are only finitely many functions \(\mu_1,\ldots,\mu_D\), there exists a set \(E\subset I\) such that \(I\setminus E\) has Lebesgue measure zero and all derivatives \(\mu_j'(\theta)\) exist for every \(\theta\in E\). For \(\theta\in E\), differentiating the displayed product identity with respect to \(\theta\) gives
\[
        \partial_\theta P_\theta(x)
        =
        \sum_{j=1}^D
        x\mu_j'(\theta)
        \prod_{\ell\ne j}(1+x\mu_\ell(\theta)).
\]
Since \(x>0\) and \(\mu_j(\theta)\ge0\), all factors are positive, and division by \(P_\theta(x)\) gives the claimed logarithmic derivative identity.
\end{proof}

\begin{corollary}\label{cor:root-stoploss-derivative}
With the notation of Lemma~\ref{lem:root-velocity-sl2}, fix \(t\ge0\) and define
\[
        F_t(\theta):=\sum_{j=1}^D(\mu_j(\theta)-t)_+^2 .
\]
Then \(F_t\) is locally Lipschitz on \(I\), and for almost every
\(\theta\in I\),
\[
        F_t'(\theta)
        =
        2\sum_{j=1}^D(\mu_j(\theta)-t)_+\mu_j'(\theta).
\]
\end{corollary}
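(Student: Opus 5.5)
The plan is to show that $\psi_t(s):=(s-t)_+^2$ is $C^1$ on $[0,\infty)$ with derivative $\psi_t'(s)=2(s-t)_+$. Composing it with the locally Lipschitz root functions $\mu_j$ from Lemma~\ref{lem:root-velocity-sl2} then gives the result.

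First I check the regularity of $\psi_t$. For $s\le t$ we have $\psi_t(s)=0$, and for $s\ge t$ we have $\psi_t(s)=(s-t)^2$. The one-sided derivatives at $s=t$ both vanish. Hence $\psi_t\in C^1(\mathbb R)$ with $\psi_t'(s)=2(s-t)_+$, and $\psi_t'$ is continuous. It follows that $\psi_t$ is Lipschitz on every bounded interval, with constant $2\sup_{|s|\le K}(s-t)_+$ on $[-K,K]$.

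Next I fix a compact subinterval $J\subset I$. Each $\mu_j$ is Lipschitz on $J$ and continuous, so $\mu_j(J)$ is bounded, say contained in $[0,K]$. Then $\psi_t\circ\mu_j$ is a composition of a Lipschitz function on $[0,K]$ with a Lipschitz function on $J$, and is therefore Lipschitz on $J$. Summing over $j=1,\ldots,D$ shows that $F_t$ is locally Lipschitz on $I$.

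For the derivative, I take the full-measure set $E$ from Lemma~\ref{lem:root-velocity-sl2}, on which every $\mu_j'(\theta)$ exists. At such a $\theta$, the ordinary chain rule applies, because $\psi_t$ is differentiable everywhere and $\mu_j$ is differentiable at $\theta$. This gives
\[
(\psi_t\circ\mu_j)'(\theta)=2(\mu_j(\theta)-t)_+\mu_j'(\theta).
\]
Summing over $j$ yields the stated formula for every $\theta\in E$, hence for almost every $\theta\in I$.

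None of these steps is a real obstacle. The one point that needs care is the chain rule at the kink $s=t$. Because $\psi_t$ is genuinely differentiable there, no nonsmooth chain rule is required; this is the reason for using the square $(\cdot)_+^2$ rather than $(\cdot)_+$.
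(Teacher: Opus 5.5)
Your proposal is correct and follows essentially the same route as the paper. It notes that $(\cdot-t)_+^2$ is $C^1$ with derivative $2(\cdot-t)_+$, so its composition with each locally Lipschitz $\mu_j$ is locally Lipschitz, and the chain rule at points where all $\mu_j'$ exist gives the formula after summing over $j$. Your remark that the ordinary pointwise chain rule suffices (because the outer function is differentiable everywhere) is a slightly sharper phrasing of the paper's appeal to the chain rule for Lipschitz functions, and it shows the identity holds at every $\theta$ in the full-measure set $E$.
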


\begin{proof}
The function \(\phi_t(\lambda):=(\lambda-t)_+^2\) is \(C^1\), with $\phi_t'(\lambda)=2(\lambda-t)_+$. Since each \(\mu_j\) is locally Lipschitz, so is \(\phi_t\circ\mu_j\). The chain rule for Lipschitz functions gives
\[
        \frac{d}{d\theta}\phi_t(\mu_j(\theta))
        =
        2(\mu_j(\theta)-t)_+\mu_j'(\theta)
\]
for almost every \(\theta\).  Summing over \(j\) gives the result.
\end{proof}

We now prove that the one-branch tree shift is monotone for the
second-order stop-loss quantities.

\begin{theorem}\label{thm:tree-shift-sl2}
Let \(T\) contain two pendant paths
\[
        vu_1\cdots u_a,
        \qquad
        vw_1\cdots w_b,
        \qquad a,b\ge1,
\]
and let \(T'\) be obtained from \(T\) by deleting \(vw_1\) and adding
\(u_aw_1\), exactly as in Lemma~\ref{lem:one-shift}.  Then
\[
        \mathsf S_t(T)\ge \mathsf S_t(T')
        \qquad(t\ge0).
\]
\end{theorem}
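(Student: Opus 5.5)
The plan follows the interpolation strategy from the proof outline. For \(0\le\theta\le1\), let \(G_\theta\) be the weighted graph on \(V(T)\) in which the old edge \(vw_1\) has weight \(1-\theta\), the new edge \(u_aw_1\) has weight \(\theta\), and every other edge has weight \(1\). Put \(M_\theta:=M_{G_\theta}\).

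\textbf{Affine dependence and the velocity polynomial.} No matching uses both \(vw_1\) and \(u_aw_1\), because both edges contain \(w_1\). Hence \(M_\theta\) is affine in \(\theta\), so \(M_\theta=(1-\theta)M_T+\theta M_{T'}\). The proof of Lemma~\ref{lem:one-shift} then gives
\[
\partial_\theta M_\theta=M_{T'}-M_T=x^2Q(x),
\qquad
Q=h_{a-1}h_{b-1}\sum_{z\in C:\,z\sim v}M_{C-v-z}.
\]
By Theorem~\ref{thm:heilmann-lieb-zero}, every \(M_\theta\) has only real negative zeros. Its coefficients are polynomial in \(\theta\), and \(M_\theta(0)=1\). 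So Lemma~\ref{lem:root-velocity-sl2} applies on an open interval containing \([0,1]\). It gives locally Lipschitz root parameters \(\mu_j(\theta)\) with \(M_\theta=\prod_j(1+x\mu_j(\theta))\). For almost every \(\theta\), the signed measure \(\dot\nu_\theta=\sum_j\mu_j'(\theta)\delta_{\mu_j(\theta)}\) then satisfies \(\mathcal S_{\dot\nu_\theta}(x)=x\,Q(x)/M_\theta(x)\).

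\textbf{The cone membership \(Q/M_\theta\in\mathcal C_{\le3}^0\).} This is the main obstacle. The difficulty is that \(G_\theta\) contains the cycle \(vu_1\cdots u_aw_1v\), so Lemma~\ref{lem:one-two-deletion-cones} cannot be applied to \(G_\theta\) directly. I would instead work with forests. Let \(K\) be the tree formed by \(C\) together with the arm \(u_1\cdots u_a\), and let \(W\) be the path \(w_1\cdots w_b\). Conditioning on the edge at \(w_1\) gives
\[
M_\theta=M_Kh_b+xh_{b-1}\bigl[(1-\theta)M_{K-v}+\theta M_{K-u_a}\bigr].
\]
Dividing by \(M_Kh_b\) yields \(M_\theta/(M_Kh_b)=1+x\,c\,d\), where
\[
c=\frac{h_{b-1}}{h_b},\qquad
d=(1-\theta)\frac{M_{K-v}}{M_K}+\theta\frac{M_{K-u_a}}{M_K}.
\]
Here \(c\) is the one-vertex deletion ratio of the path \(W\). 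The function \(d\) is a convex combination of one-vertex deletion ratios of the forest \(K\). By Lemma~\ref{lem:one-two-deletion-cones}, both \(c\) and \(d\) lie in \(\mathcal C_{\le1}\).

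For the numerator, \(h_{a-1}M_{C-v-z}=M_{K-\{v,u_1,z\}}\), and \(u_1,z\) lie on the same side. I would therefore write
\[
\frac{Q}{M_Kh_b}=c\cdot D,
\qquad
D=\frac{\sum_zM_{K-v-u_1-z}}{M_K}.
\]
Next I would realize \(D\) as a nonnegative combination of products of a one-vertex ratio and a two-vertex ratio, for example \((M_{K-v}/M_K)\cdot(M_{(K-v)-\{u_1,z\}}/M_{K-v})\). The hope is that \(D\in\mathcal C_{\le2}\). If the \(\theta\)-mixing makes this awkward, I would instead group the \(\sum_z\) inside a single two-vertex deletion in an auxiliary forest. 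Granting this,
\[
\frac{Q}{M_\theta}=\frac{c}{1+xcd}\cdot D.
\]
Lemma~\ref{lem:stieltjes-closure} puts the first factor in \(\mathcal C_{\le1}\), so the product lies in \(\mathcal C_{\le3}\). For the decay \(O(x^{-1})\), I would compare degrees. The factor \(x^2\) in \(x^2Q=M_{T'}-M_T\) and \(\deg M_\theta\ge\max(\deg M_T,\deg M_{T'})\) together force \(\deg Q<\deg M_\theta\). The generator expansion then has no constant term, which places \(Q/M_\theta\) in \(\mathcal C_{\le3}^0\).

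\textbf{Conclusion.} With this membership in hand, Lemma~\ref{lem:C3-stoploss} applied to \(\sigma=\dot\nu_\theta\) gives \(\sum_j(\mu_j-t)_+\mu_j'\le0\) for almost every \(\theta\). Corollary~\ref{cor:root-stoploss-derivative} then shows that \(F_t(\theta)=\sum_j(\mu_j(\theta)-t)_+^2\) is locally Lipschitz with \(F_t'\le0\) almost everywhere. Hence \(F_t(0)\ge F_t(1)\). At the endpoints, \(M_0=M_T\) and \(M_1=M_{T'}\), so by Lemma~\ref{lem:forest-det-matching} the nonzero \(\mu_j(0)\) and \(\mu_j(1)\) are exactly the squared singular values of \(T\) and \(T'\). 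Zero parameters do not contribute to \(F_t\), so \(F_t(0)=\mathsf S_t(T)\) and \(F_t(1)=\mathsf S_t(T')\). This gives \(\mathsf S_t(T)\ge\mathsf S_t(T')\).
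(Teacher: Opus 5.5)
Your conditioning at $w_1$ is correct. It gives $Q/M_\theta=\frac{c}{1+xcd}\,D$, and $\frac{c}{1+xcd}\in\mathcal C_{\le1}$ by Lemma~\ref{lem:stieltjes-closure}. But the step that carries all the weight, $D\in\mathcal C_{\le2}$, is left as a hope, and the realization you propose does not give it. The product $\frac{M_{K-v}}{M_K}\cdot\frac{M_{K-v-u_1-z}}{M_{K-v}}$ is a $\mathcal C_{\le1}$ factor times a same-side two-vertex deletion ratio, which is a $\mathcal C_{\le2}$ factor. So it only yields $D\in\mathcal C_{\le3}$ and $Q/M_\theta\in\mathcal C_{\le4}$. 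That is not enough: the conclusion of Lemma~\ref{lem:C3-stoploss} fails for four-factor generators. For such a generator it amounts to non-negativity of third divided differences of $u\mapsto u^2(u-t)_+$, which is false for nodes near $t$. (The worry about ``$\theta$-mixing'' is moot, since $D$ does not depend on $\theta$.)

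The missing step is to peel off $v$ by its own recurrence inside the forest $K-u_1$, where the neighbors of $v$ are exactly the vertices $z$:
\[
D=\frac{M_{K-u_1}-M_{K-u_1-v}}{xM_K}=\frac{M_{K-u_1}}{M_K}\cdot\frac{1-\rho}{x},\qquad \rho:=\frac{M_{K-u_1-v}}{M_{K-u_1}}.
\]
The first factor is a one-vertex deletion ratio of $K$. Also $\rho\in\mathcal C_{\le1}$ with $\rho(0)=1$. Writing $\rho=w_0+\sum_jw_j(1+s_jx)^{-1}$ with $w_0+\sum_jw_j=1$ gives $(1-\rho)/x=\sum_jw_js_j(1+s_jx)^{-1}\in\mathcal C_{\le1}$. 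Thus $D\in\mathcal C_{\le2}$ and $Q/M_\theta\in\mathcal C_{\le3}$, and your route becomes a valid variant of the paper's. The paper instead conditions at $v$. There the $\theta$-dependence sits in the one-vertex factor $d_\theta$, and the degree-two factor is directly the same-side two-vertex deletion ratio $e=h_{a-1}h_{b-1}/h_{a+b+1}$ in $P_{a+b+1}$, so no extra peeling is needed.

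A smaller slip: Lemma~\ref{lem:root-velocity-sl2} cannot be applied on an open interval containing $[0,1]$. For $\theta<0$ the edge $u_aw_1$ has negative weight, and Heilmann--Lieb no longer applies. Indeed, take $a=b=1$ and $C$ a single edge $vz$, so $T=K_{1,3}$ and $T'=P_4$. Then $M_\theta=1+3x+\theta x^2$, which has a positive zero for every $\theta<0$. Work on $(0,1)$ instead, and pass to $\theta=0,1$ using that $F_t$ is a continuous symmetric function of the zeros of the monic reversed polynomial, as the paper does. Your degree argument for the decay is fine; for $0<\theta<1$ it even gives $Q/M_\theta=O(x^{-2})$.
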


\begin{proof}
By Lemma~\ref{lem:forest-det-matching}, for a forest $\cF$, we can identify the matching polynomial $M_{\cF}$ with the determinant $\det(I + xB_{\cF}B_{\cF}^\top)$, where $B_{\cF}$ is the weighted biadjacency matrix of $\cF$.  Hence for trees $T$ and $T'$, if we can factorize $M_T(x) = \prod_{j=1}^D(1+x\mu_j(0))$ and $M_{T'}(x) = \prod_{j=1}^D(1+x\mu_j(1))$, for some $D$, possibly padded with zero $\mu_j$'s, then $\mathsf S_t(T)\ge \mathsf S_t(T')$ is equivalent to $\sum_{j=1}^D(\mu_j(0)-t)_+^2 \ge \sum_{j=1}^D(\mu_j(1)-t)_+^2$ for every $t\ge 0$.  We will construct a family of polynomials $M_\theta$ interpolating between $M_T$ and $M_{T'}$, and apply Lemma~\ref{lem:root-velocity-sl2} to obtain the desired stop-loss inequality.

Use the notation from Lemma~\ref{lem:one-shift}. Let \(C\) be the subtree obtained
from \(T\) by deleting the vertices \(u_1,\ldots,u_a,w_1,\ldots,w_b\), and put
\[
        R:=M_{C-v},
        \qquad
        W:=\frac{M_C-M_{C-v}}{x}.
\]
If \(C_1,\ldots,C_s\) are the connected components of \(C-v\) and
\(r_i\in C_i\) is adjacent to \(v\), then by \eqref{eq:matching-recurrence-v} in the proof of Lemma~\ref{lem:one-shift}, together with multiplicativity of matching polynomials over disjoint unions,
\[
W = \frac{M_C-M_{C-v}}{x} = \sum_{i=1}^s M_{C-v-r_i} = \sum_{i=1}^s M_{C_i-r_i}\prod_{\ell\ne i}M_{C_\ell}.
\]
Hence, we know that
\[
        c:=\frac WR
        =
        \sum_{i=1}^s
        \frac{M_{C_i-r_i}}{M_{C_i}},
\]
with the right-hand side interpreted as \(0\) when \(s=0\).  Each summand is a one-vertex deletion ratio. Hence, by Lemma~\ref{lem:one-two-deletion-cones} and the convexity of the cone \(\mathcal C_{\le1}\), we have $c\in\mathcal C_{\le1}$.

Let \(h_m:=M_{P_m}\), with \(h_0=1\), and set
\[
        H:=h_{a+b+1},
        \qquad
        D_0:=h_ah_b,
        \qquad
        D_1:=h_{a+b},
        \qquad
        E:=h_{a-1}h_{b-1}.
\]
The path-concatenation identity \eqref{eq:path-concatenation} gives $D_1=D_0+xE$. For \(0\le\theta\le1\), put
\[
D_\theta:=(1-\theta)D_0+\theta D_1 = D_0+\theta xE.
\]
Label the vertices of \(P_{a+b+1}\) as $z_0,z_1,\ldots,z_{a+b}$. Deleting \(z_a\) leaves \(P_a\sqcup P_b\), while deleting the endpoint \(z_0\) leaves \(P_{a+b}\).  Hence
\[
        \frac{D_0}{H}=\frac{h_ah_b}{h_{a+b+1}},
        \qquad
        \frac{D_1}{H}=\frac{h_{a+b}}{h_{a+b+1}}
\]
are one-vertex deletion ratios in \(P_{a+b+1}\).  Therefore again by Lemma~\ref{lem:one-two-deletion-cones},
\[
        d_\theta:=\frac{D_\theta}{H}
        =
        (1-\theta)\frac{D_0}{H}
        +
        \theta\frac{D_1}{H}
        \in \mathcal C_{\le1}.
\]
Moreover, deleting \(z_{a-1}\) and \(z_{a+1}\) leaves $P_{a-1}\sqcup P_1\sqcup P_{b-1}$. The vertices \(z_{a-1}\) and \(z_{a+1}\) have the same parity, hence lie on the same side of the bipartition.  Since \(h_1=1\), by Lemma~\ref{lem:one-two-deletion-cones}, the two-vertex same-side deletion ratio is
\[
        e:=\frac EH
        =
        \frac{h_{a-1}h_{b-1}}{h_{a+b+1}} \in\mathcal C_{\le2}.
\]
We now define the interpolation, as illustrated in Figure~\ref{fig:tree-shift-interpolation}.

\begin{figure}[htbp]
        \centering
        \includegraphics[width=\textwidth]{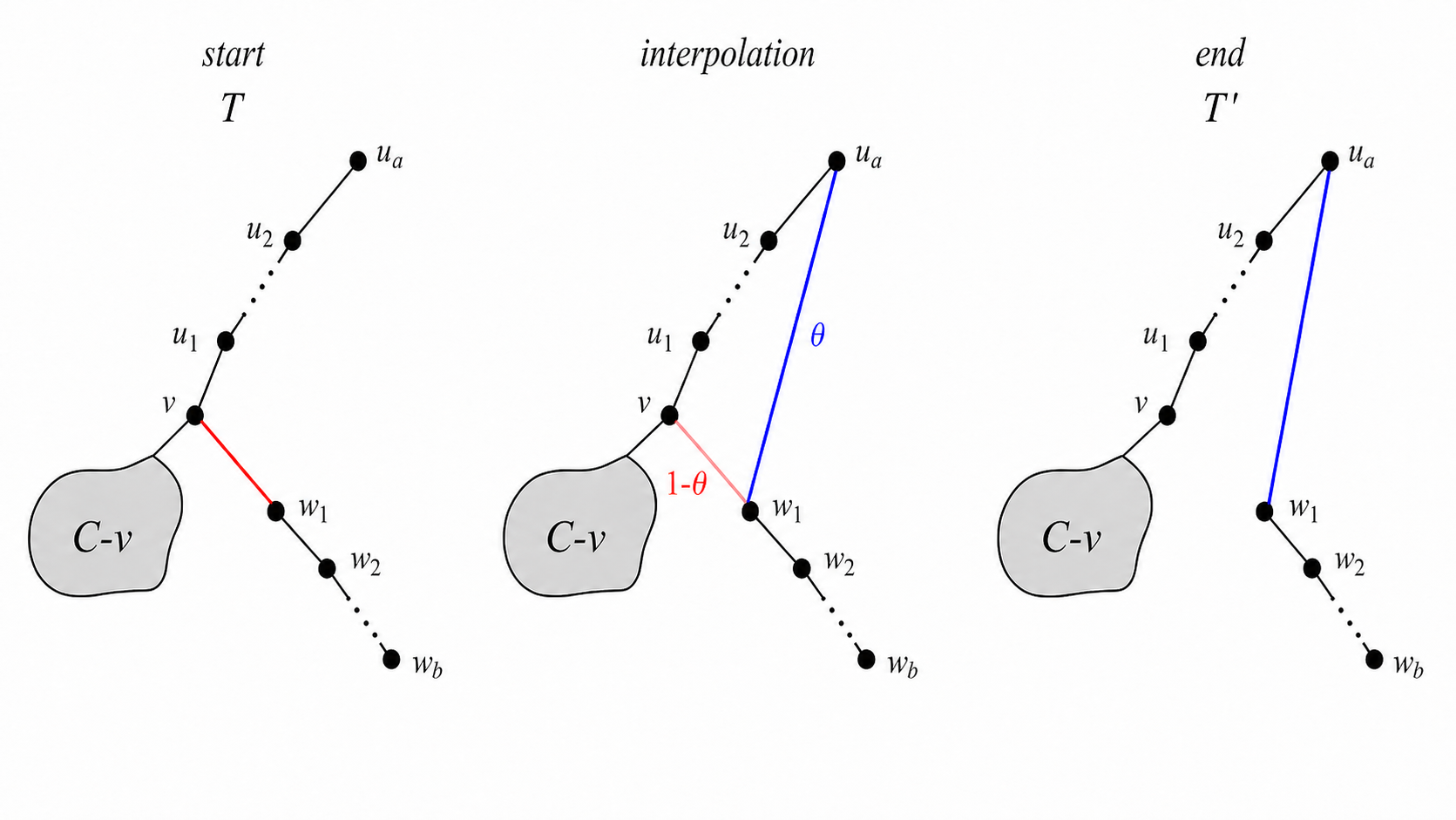}
        \caption{The interpolation from \(T\) to \(T'\): the old edge \(vw_1\) carries weight \(1-\theta\), the new edge \(u_aw_1\) carries weight \(\theta\), and all other edges have weight \(1\).}
        \label{fig:tree-shift-interpolation}
\end{figure}

Let \(G_\theta\) be the non-negatively weighted graph obtained by assigning weight \(1-\theta\) to the old edge \(vw_1\), weight \(\theta\) to the new edge \(u_aw_1\), and weight \(1\) to all other edges.  Let
\[
        M_\theta:=M_{G_\theta}.
\]
Thus \(M_0=M_T\) and \(M_1=M_{T'}\).

Let \(H_\theta\) denote the weighted branch on the vertices
\[
        v,u_1,\ldots,u_a,w_1,\ldots,w_b
\]
with the same edge weights as above.  The two special edges \(vw_1\) and
\(u_aw_1\) share the vertex \(w_1\), so no matching can use both.  Matchings using \(vw_1\) contribute
$
        (1-\theta)x h_a h_{b-1},
$
whereas matchings using \(u_aw_1\) contribute
$
        \theta x h_a h_{b-1}.
$
All matchings using neither special edge are independent of \(\theta\). Therefore
\[
        M_{H_\theta}=M_{H_0}=h_{a+b+1}=H.
\]
On the other hand, after deleting \(v\), the old edge \(vw_1\) disappears, while the new edge \(u_aw_1\) remains with weight \(\theta\). Hence,
\[
        M_{H_\theta-v}
        =
        h_ah_b+\theta x h_{a-1}h_{b-1}
        =
        D_\theta.
\]
Conditioning on whether \(v\) is matched to an edge lying in \(C\), we get
\[
        M_\theta
        =
        R\,M_{H_\theta}
        +
        xW\,M_{H_\theta-v}
        =
        RH+xWD_\theta.
\]
Consequently, $\partial_\theta M_\theta=x^2WE$. 

Set \(Q:=WE\). We next prove that
\[
        \frac{Q}{M_\theta}\in\mathcal C_{\le3}^0.
\]
Indeed, we have
\[
        \frac{Q}{M_\theta}
        =
        \frac{WE}{RH+xWD_\theta}
        =
        \frac{(W/R)(E/H)}
             {1+x(W/R)(D_\theta/H)}
        =
        e\,\frac{c}{1+xcd_\theta}.
\]
We already proved
\[
        c\in\mathcal C_{\le1},
        \qquad
        d_\theta\in\mathcal C_{\le1},
        \qquad
        e\in\mathcal C_{\le2}.
\]
By Lemma~\ref{lem:stieltjes-closure},
\[
        \frac{c}{1+xcd_\theta}\in\mathcal C_{\le1}.
\]
Therefore, using
$
        \mathcal C_{\le2}\cdot\mathcal C_{\le1}
        \subseteq
        \mathcal C_{\le3},
$
we obtain $Q/M_\theta \in\mathcal C_{\le3}$. It remains to check the decay at infinity.  If \(Q=0\), then \(Q/M_\theta=0\in\mathcal C_{\le3}^0\), and there is nothing to prove. Assume \(Q\ne0\).  Then \(W\ne0\), and
\[
        M_\theta=RH+xW(D_0+\theta xE),
        \qquad
        Q=WE.
\]
Since
$
        \deg D_0\ge \deg E,
$
the summand \(xWD_0\) has degree at least
$
        1+\deg W+\deg E
        =
        1+\deg Q.
$
All coefficients are non-negative, so no leading-term cancellation is
possible in \(M_\theta\).  Hence
$
        \deg M_\theta\ge \deg Q+1,
$
and therefore
\[
        \frac{Q}{M_\theta}=O(x^{-1})
        \qquad(x\to\infty).
\]
Thus $Q/M_\theta \in\mathcal C_{\le3}^0$.

Now Theorem~\ref{thm:heilmann-lieb-zero}, applied to \(G_\theta\), shows that \(M_\theta\) has only real negative zeros for
every \(0\le\theta\le1\). Apply Lemma~\ref{lem:root-velocity-sl2} on the open interval \((0,1)\). We write
\[
        M_\theta(x)=\prod_{j=1}^D(1+x\mu_j(\theta)),
\]
where the root parameters are non-negative and zero parameters are allowed in
order to keep the number of factors independent of \(\theta\).  For almost
every \(\theta\in(0,1)\), all derivatives \(\mu_j'(\theta)\) exist and, for
every \(x>0\),
\[
        \sum_{j=1}^D \frac{x\mu_j'(\theta)}{1+x\mu_j(\theta)}
        =
        \frac{\partial_\theta M_\theta(x)}{M_\theta(x)}
        =
        \frac{x^2Q(x)}{M_\theta(x)}.
\]
Thus the finite signed atomic measure $\dot\nu_\theta := \sum_{j=1}^D\mu_j'(\theta)\delta_{\mu_j(\theta)}$ has Stieltjes transform
\[
        \mathcal S_{\dot\nu_\theta}(x)
        =
        \int_{[0,\infty)}\frac1{1+x\lambda}\,d\dot\nu_\theta(\lambda)
        =
        \sum_{j=1}^D \frac{\mu_j'(\theta)}{1+x\mu_j(\theta)}
        =
        x\frac{Q(x)}{M_\theta(x)}.
\]
Since \(Q/M_\theta\in\mathcal C_{\le3}^0\), Lemma~\ref{lem:C3-stoploss} yields
\[
        \int_{[0,\infty)}(\lambda-t)_+\,d\dot\nu_\theta(\lambda)
        \le0
        \qquad(t\ge0)
\]
for almost every \(\theta\in(0,1)\).  By Corollary~\ref{cor:root-stoploss-derivative}, for almost every \(\theta\in(0,1)\),
\[
        \frac{d}{d\theta}
        \sum_{j=1}^D(\mu_j(\theta)-t)_+^2
        =
        2\int_{[0,\infty)}(\lambda-t)_+\,d\dot\nu_\theta(\lambda)
        \le0.
\]
Integrating over \([\varepsilon,1-\varepsilon]\) and then letting
\(\varepsilon\downarrow0\), using continuity of the root parameters at the
endpoints, gives
\[
        \sum_{j=1}^D(\mu_j(0)-t)_+^2
        \ge
        \sum_{j=1}^D(\mu_j(1)-t)_+^2.
\]
Since \(M_0=M_T\), Lemma~\ref{lem:forest-det-matching} gives
\[
        M_0(x)=M_T(x)=\det(I+xB_TB_T^\top).
\]
By the spectral theorem, this determinant factors as $\det(I+xB_TB_T^\top)=\prod_i(1+x\lambda_i)$, where the \(\lambda_i\)'s are the eigenvalues of \(B_TB_T^\top\), equivalently the squared singular values of \(B_T\).  Comparing this factorization with $M_0(x)=\prod_{j=1}^D(1+x\mu_j(0))$, we see that the nonzero parameters \(\mu_j(0)\) are precisely the squared singular values of \(T\), with harmless zero padding.  The same argument applies to \(M_1=M_{T'}\).  Hence the endpoint inequality is exactly
\[
        \mathsf S_t(T)\ge \mathsf S_t(T')
        \qquad(t\ge0).
\qedhere\]
\end{proof}

\begin{corollary}\label{cor:tree-stoploss-path}
For every tree \(T\) on \(n\) vertices,
\[
        \mathsf S_t(T)\ge \mathsf S_t(P_n)
        \qquad(t\ge0).
\]
\end{corollary}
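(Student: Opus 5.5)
The plan is to chain Theorem~\ref{thm:tree-shift-sl2} along the path-directed sequence of one-branch shifts supplied by Lemma~\ref{lem:reduce-to-path}. Fix a tree \(T\) on \(n\) vertices. By Lemma~\ref{lem:reduce-to-path} there are trees
\[
        T=T_0,T_1,\ldots,T_m=P_n
\]
such that each \(T_{j+1}\) is obtained from \(T_j\) by one operation of Lemma~\ref{lem:one-shift}. That is, each \(T_j\) has a vertex \(v\) with two pendant paths \(vu_1\cdots u_a\) and \(vw_1\cdots w_b\), with \(a,b\ge1\), and \(T_{j+1}\) is obtained by deleting \(vw_1\) and adding \(u_aw_1\). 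This is exactly the hypothesis of Theorem~\ref{thm:tree-shift-sl2}. So for each \(j\) and every \(t\ge0\) we get
\[
        \mathsf S_t(T_j)\ge\mathsf S_t(T_{j+1}).
\]

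Next we fix \(t\ge0\) and telescope over \(j=0,\ldots,m-1\), which gives
\[
        \mathsf S_t(T)=\mathsf S_t(T_0)\ge\mathsf S_t(T_m)=\mathsf S_t(P_n).
\]
If \(T\) is already a path, then \(m=0\) and the claim is trivial.

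Two bookkeeping points need a check, and neither is a real obstacle.

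\emph{Bipartition choice.} \(\mathsf S_t(T_j)\) is defined through the squared singular values of a biadjacency matrix, and these depend only on the graph. Changing the bipartition only swaps \(B\) and \(B^\top\), which leaves the nonzero squared singular values unchanged. So consecutive applications of Theorem~\ref{thm:tree-shift-sl2} compare the same quantities.

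\emph{Lemma~\ref{lem:reduce-to-path} produces genuine one-branch shifts.} The two arms chosen there are pendant paths away from the root, attached at one endpoint to the branching vertex \(v\), each with at least one vertex. Hence \(a,b\ge1\) as required.

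The substantive analytic content already sits in Theorem~\ref{thm:tree-shift-sl2}, so the corollary needs nothing beyond this finite induction.
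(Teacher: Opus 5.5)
Your proposal is correct and matches the paper's proof: take the shift sequence $T=T_0,\ldots,T_m=P_n$ from Lemma~\ref{lem:reduce-to-path}, apply Theorem~\ref{thm:tree-shift-sl2} at each step, and telescope. Your two bookkeeping remarks are accurate and only make explicit what the paper leaves implicit: $\mathsf S_t$ does not depend on the choice of bipartition, and the arms chosen in Lemma~\ref{lem:reduce-to-path} satisfy $a,b\ge1$.
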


\begin{proof}
By Lemma~\ref{lem:reduce-to-path}, every tree \(T\) can be transformed into \(P_n\) by a finite sequence of the one-branch tree shifts. Applying Theorem~\ref{thm:tree-shift-sl2} at each step and telescoping gives the result.
\end{proof}

\subsection{Path splicing at the second-order stop-loss level}
\label{subsec:a2-sl2}

We first prove a path-specific splicing estimate at the level of the
second-order stop-loss quantities.  The proof uses the explicit spectrum of
paths and a uniform bound for a sawtooth error term.

The derivative of the threshold kernel will produce the following weight. For \(0\le \rho\le 1/2\), put
\[
        w_\rho(u)
        =
        16\pi\sin(2\pi u)
        \bigl(\cos(2\pi u)-\cos(2\pi\rho)\bigr),
        \qquad 0\le u\le \rho.
\]
We also write $\psi(x):=\{x\}-\frac12$, where \(\{x\}=x-\lfloor x\rfloor\) is the fractional part, and define
\[
        J_n(\rho)=\int_0^\rho \psi(nu)w_\rho(u)\,du .
\]

The following estimate is a standard periodic-Bernoulli integration-by-parts
bound.  We include the proof in order to keep the constant explicit.

\begin{lemma}\label{lem:sawtooth-bound}
For every \(\rho\in[1/6,1/2]\) and every integer \(n\ge1\),
\[
        |J_n(\rho)|\le \frac{17\sqrt3\pi^2}{27n^2}.
\]
\end{lemma}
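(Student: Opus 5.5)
We prove the bound by integrating by parts twice against the periodic Bernoulli functions. Let $B_2$ denote the $1$-periodic antiderivative of $\psi$ with mean zero, $\widetilde B_2(y)=\tfrac12(\{y\}^2-\{y\}+\tfrac16)$, so that $\widetilde B_2'(y)=\psi(y)$ away from integers and $|\widetilde B_2|\le 1/12$. Let $\widetilde B_3$ be the next periodic primitive, with $\widetilde B_3'=\widetilde B_2$, mean zero, $\widetilde B_3(0)=0$, and $\sup|\widetilde B_3|=\sqrt3/216$.

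Then $\psi(nu)=\frac1n\frac{d}{du}\widetilde B_2(nu)$, and integrating by parts gives
\[
J_n(\rho)=\frac1n\Bigl[\widetilde B_2(nu)w_\rho(u)\Bigr]_0^\rho-\frac1n\int_0^\rho \widetilde B_2(nu)w_\rho'(u)\,du .
\]
Both boundary terms vanish, since $w_\rho(0)=0$ (because $\sin 0=0$) and $w_\rho(\rho)=0$ (because $\cos(2\pi\rho)-\cos(2\pi\rho)=0$). This is the key structural feature of the weight.

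A second integration by parts, writing $\widetilde B_2(nu)=\frac1n\frac{d}{du}\widetilde B_3(nu)$, yields
\[
J_n(\rho)=-\frac1{n^2}\Bigl[\widetilde B_3(nu)w_\rho'(u)\Bigr]_0^\rho+\frac1{n^2}\int_0^\rho\widetilde B_3(nu)w_\rho''(u)\,du .
\]
The boundary term at $0$ vanishes because $\widetilde B_3(0)=0$. The term at $\rho$ does not vanish in general, so it must be bounded by $\frac{\sqrt3}{216}|w_\rho'(\rho)|$. Altogether,
\[
n^2|J_n(\rho)|\le \frac{\sqrt3}{216}\Bigl(|w_\rho'(\rho)|+\int_0^\rho|w_\rho''|\Bigr).
\]
Alternatively, one can stop after one integration and use $|\widetilde B_2|\le1/12$, giving the bound $\frac{1}{12}\int|w_\rho'|/n$. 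That is only $O(1/n)$, so the second integration is required.

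It remains to bound the bracket uniformly for $\rho\in[1/6,1/2]$. Write $w_\rho=8\pi\sin(4\pi u)-16\pi\cos(2\pi\rho)\sin(2\pi u)$. Then
\[
w_\rho'(u)=32\pi^2\cos(4\pi u)-32\pi^2\cos(2\pi\rho)\cos(2\pi u),
\]
so $w_\rho'(\rho)=32\pi^2(\cos 4\pi\rho-\cos^2 2\pi\rho)=-32\pi^2\sin^2(2\pi\rho)$, whose absolute value is at most $32\pi^2$. Differentiating once more,
\[
w_\rho''(u)=-128\pi^3\sin(4\pi u)+64\pi^3\cos(2\pi\rho)\sin(2\pi u),
\]
and $\int_0^\rho|w_\rho''|$ is bounded by elementary integration, since $w_\rho''$ changes sign at most once or twice on $[0,\rho]$.

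The main obstacle is matching the exact constant $17\sqrt3\pi^2/27$. That constant equals $\frac{\sqrt3}{216}\cdot 8\cdot17\pi^2$, so the target is $|w_\rho'(\rho)|+\int_0^\rho|w_\rho''|\le 136\pi^2$ uniformly on $[1/6,1/2]$. A crude bound of $32\pi^2+\rho\sup|w''|$ carries a $\pi^3$ factor and needs care. The better route is to note that $\int_0^\rho|w_\rho''|$ is the total variation of $w_\rho'$ on $[0,\rho]$. Since $w_\rho'(u)=32\pi^2(\cos4\pi u-\cos2\pi\rho\cos2\pi u)$ is a trigonometric polynomial bounded by $64\pi^2$ in absolute value, with few monotonicity intervals (at most about two turns), its total variation is at most a small multiple of $32\pi^2$.

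To finish, I would compute the critical points of $w_\rho'$ explicitly. They occur where $\sin(2\pi u)\,(4\cos2\pi u-\cos2\pi\rho)=0$, that is, at $u=0$ or at $\cos 2\pi u=\tfrac14\cos 2\pi\rho$. The total variation then follows from the values of $w_\rho'$ at $0$, at this interior point, and at $\rho$. Maximizing the resulting expression over $\rho\in[1/6,1/2]$ should give the stated constant, with the restriction $\rho\ge1/6$ controlling which critical points lie in $[0,\rho]$.
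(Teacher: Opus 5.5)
Your proposal is correct and is essentially the paper's proof. You use the same two integrations by parts against periodic Bernoulli functions (yours are the paper's $\widetilde B_2,\widetilde B_3$ scaled by $\tfrac12$ and $\tfrac16$), reaching the same bound $\frac{\sqrt3}{216n^2}\bigl(|w_\rho'(\rho)|+\int_0^\rho|w_\rho''|\bigr)$. Your total-variation computation is the paper's substitution $z=\cos(2\pi u)$, since $\int_0^\rho|w_\rho''|\,du=32\pi^2\int_c^1|4z-c|\,dz$ with $c=\cos(2\pi\rho)$.

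Carrying out the step you left as ``should give'': the bracket equals $32\pi^2(3-c-2c^2)\le 96\pi^2$ for $c\ge0$, and $32\pi^2\bigl(3-c+\tfrac{c^2}{4}\bigr)\le136\pi^2$ for $c\le0$, with equality at $\rho=1/2$. One small correction: whether the interior critical point $z=c/4$ lies in $[c,1]$ is decided by the sign of $c$ (i.e.\ whether $\rho\ge1/4$), not by the restriction $\rho\ge1/6$.
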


\begin{proof}
Let
\[
        B_2(x)=x^2-x+\frac16,
        \qquad
        B_3(x)=x^3-\frac32x^2+\frac12x
        \qquad(0\le x\le1),
\]
and let \(\widetilde B_2,\widetilde B_3\) be their \(1\)-periodic extensions to \(\mathbb R\). Thus $\widetilde B_2(x)=B_2(\{x\})$ and $\widetilde B_3(x)=B_3(\{x\})$. Away from the points where \(nu\in\mathbb Z\), we have
\[
        \frac{d}{du}\widetilde B_2(nu)=2n\psi(nu),
        \qquad
        \frac{d}{du}\widetilde B_3(nu)=3n\widetilde B_2(nu).
\]

We partition \([0,\rho]\) at the finitely many points \(u\) for which
\(nu\in\mathbb Z\). On each resulting open subinterval, the functions
\(\widetilde B_2(nu)\) and \(\widetilde B_3(nu)\) are ordinary smooth
functions of \(u\), since there \(\{nu\}=nu-k\) for a fixed integer \(k\). Since \(\widetilde B_2\) and \(\widetilde B_3\) are
continuous, the boundary terms at the interior partition points cancel after
summing over all subintervals. Therefore
\[
\begin{aligned}
J_n(\rho)
&=\frac1{2n}\int_0^\rho w_\rho(u)(\widetilde B_2(nu))'\,du  \\
&=\frac1{2n}[w_\rho(u)\widetilde B_2(nu)]_0^\rho
  -\frac1{2n}\int_0^\rho w_\rho'(u)\widetilde B_2(nu)\,du \\
&=-\frac1{2n}\int_0^\rho w_\rho'(u)\widetilde B_2(nu)\,du \\
&=-\frac1{6n^2}\int_0^\rho w_\rho'(u)(\widetilde B_3(nu))'\,du \\
&=-\frac1{6n^2}[w_\rho'(u)\widetilde B_3(nu)]_0^\rho
  +\frac1{6n^2}\int_0^\rho \widetilde B_3(nu)w_\rho''(u)\,du .
\end{aligned}
\]
The first boundary term vanishes because \(w_\rho(0)=w_\rho(\rho)=0\). Also \(\widetilde B_3(0)=0\), and a direct calculation gives $\|\widetilde B_3\|_\infty = \frac{\sqrt3}{36}$. Hence
\[
        |J_n(\rho)|\le
        \frac{\sqrt3}{216n^2}
        \left(|w_\rho'(\rho)|+\int_0^\rho |w_\rho''(u)|\,du\right).
\]

Since \(\rho\in[1/6,1/2]\), we have
\(\cos(2\pi\rho)\in[-1,1/2]\). Writing \(z=\cos(2\pi u)\), direct differentiation gives
\[
        w_\rho'(u)=32\pi^2\left(2z^2-\cos(2\pi\rho)z-1\right),
\]
and therefore
\[
        |w_\rho'(\rho)|=32\pi^2(1-\cos^2(2\pi\rho)).
\]
Moreover,
\[
        w_\rho''(u)
        =
        64\pi^3\sin(2\pi u)\left(\cos(2\pi\rho)-4\cos(2\pi u)\right).
\]
Since \(0\le u\le\rho\le1/2\), the substitution \(z=\cos(2\pi u)\) gives
\[
        \int_0^\rho |w_\rho''(u)|\,du
        =
        32\pi^2\int_{\cos(2\pi\rho)}^1 |\cos(2\pi\rho)-4z|\,dz.
\]
Thus
\[
        |w_\rho'(\rho)|+\int_0^\rho |w_\rho''(u)|\,du
        =
        32\pi^2
        \left(1-\cos^2(2\pi\rho)+\int_{\cos(2\pi\rho)}^1 |\cos(2\pi\rho)-4z|\,dz\right).
\]
If \(\cos(2\pi\rho)\in[0,1/2]\), then the expression in parentheses is
\[
        3-\cos(2\pi\rho)-2\cos^2(2\pi\rho)\le3.
\]
If \(\cos(2\pi\rho)\in[-1,0]\), then it is
\[
        3-\cos(2\pi\rho)+\frac{\cos^2(2\pi\rho)}{4}\le\frac{17}{4}.
\]
Consequently,
\[
        |w_\rho'(\rho)|+\int_0^\rho |w_\rho''(u)|\,du
        \le 136\pi^2.
\]
Therefore
\[
        |J_n(\rho)|
        \le
        \frac{\sqrt3}{216n^2}\cdot136\pi^2
        =
        \frac{17\sqrt3\pi^2}{27n^2}
        <
        \frac{11}{n^2}.
\qedhere\]
\end{proof}

We now prove the path-splicing estimate needed in the deletion step of the second-order stop-loss induction. If \(X\) is a set, we write \(\one_X\) for its indicator function.

\begin{theorem}\label{thm:a2-sl2}
For all integers \(a,b\ge1\) and all \(t\ge0\),
\begin{equation}\label{eq:ststst}
        \mathsf S_t(P_{a+b})-\mathsf S_t(P_a)-\mathsf S_t(P_b)
        \le (4-t)_+^2-(3-t)_+^2.
\end{equation}
\end{theorem}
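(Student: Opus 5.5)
The plan is to use the explicit spectrum of paths and treat $\mathsf S_t(P_m)$ as a Riemann sum, so that the splicing defect becomes a bulk term plus boundary terms plus a small sawtooth error. The squared singular values of $P_m$ are $4\cos^2(k\pi/(m+1))=2+2\cos(2\pi k/(2m+2))$ for $1\le k\le\lfloor m/2\rfloor$.

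\textbf{Step 1: reduction to a Riemann sum.} Fix $t\in[0,4)$ and let $\rho=\rho(t)\in[0,1/2]$ be defined by $2+2\cos(2\pi\rho)=t$. For $t\ge4$ both sides of \eqref{eq:ststst} vanish, since every $\mu\le4$; so only $t<4$ matters. Put
\[
  f_t(u):=\bigl(2+2\cos(2\pi u)-t\bigr)_+^2 .
\]
This is supported on $|u|\le\rho$ modulo $1$. Writing $N=m+1$, the symmetry $k\leftrightarrow N-k$ gives
\[
  \mathsf S_t(P_m)=\tfrac12\sum_{k=1}^{N-1}f_t(k/N)=\tfrac12\Bigl(\sum_{k=0}^{N-1}f_t(k/N)-f_t(0)\Bigr).
\]
Here $f_t(0)=(4-t)_+^2$. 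A full-period sum equals $N\int_0^1 f_t$ plus a periodic-Bernoulli error.

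I would compute this error by differentiating in $t$. One has $\partial_t f_t=-2(2+2\cos 2\pi u-t)_+$. The Euler--Maclaurin remainder with the sawtooth $\psi$ then produces, after one integration by parts in $u$, exactly the integral $J_N(\rho)$ with the weight $w_\rho$: the factor $\sin(2\pi u)\bigl(\cos 2\pi u-\cos 2\pi\rho\bigr)$ is $u$-derivative $\times$ threshold kernel. So the expansion I expect is
\[
  \mathsf S_t(P_m)=(m+1)I(t)-\tfrac12(4-t)_+^2+E_{m+1}(t),
\]
where $I(t)=\int_0^{1/2}f_t$ and $E_N$ is an integral in $t$ of $J_N(\rho)$-type terms.

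\textbf{Step 2: splice.} Insert the expansion for $m=a+b$, $a$, $b$. The bulk terms combine as $(a+b+1)-(a+1)-(b+1)=-1$, leaving
\[
  \tfrac12(4-t)^2-I(t)+E_{a+b+1}-E_{a+1}-E_{b+1}.
\]
I would then verify analytically the main-term inequality
\[
  \tfrac12(4-t)_+^2-I(t)\le(4-t)_+^2-(3-t)_+^2 ,
\]
which is a one-variable trigonometric integral inequality in $\rho$, with some slack away from the endpoints. The error terms are controlled by Lemma~\ref{lem:sawtooth-bound}: for $\rho\ge1/6$ (i.e.\ $t\le3$) they are $O(1/a^2+1/b^2)$ with the explicit constant $17\sqrt3\pi^2/27$.

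\textbf{Step 3: ranges and small cases.}
\begin{itemize}
\item For $t\in[3,4)$ the right-hand side of \eqref{eq:ststst} is $(4-t)^2$. Here the left side counts only eigenvalues exceeding $3$, of which $P_m$ has few. I would handle this range directly by comparing the largest eigenvalue $4\cos^2(\pi/(m+1))$ of each path, using the concavity of $s\mapsto(4\cos^2(\pi/s)-t)_+^2$-type estimates.
\item For $t\le3$ the argument of Step 2 applies once $a$ and $b$ exceed a threshold at which the $1/n^2$ errors fall below the slack.
\item The finitely many remaining small $(a,b)$, possibly with one of them small and the other arbitrary, are treated by explicit computation. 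This is where I would invoke the rigorous interval certificate of Appendix~\ref{app:a2sl2-certificates}. When only one of $a,b$ is small, I would use the exact expansion for the large one and a finite list of spectra for the small one.
\end{itemize}

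\textbf{Main obstacle.} The hardest step is the low range near $t\approx3$, i.e.\ $\rho\approx1/6$. There the slack in the main-term inequality degenerates and the right-hand side changes form, so the sawtooth bound must be combined with a careful case split. I expect to need the certified interval computation there rather than a clean analytic bound.
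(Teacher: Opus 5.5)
Your treatment of $0\le t\le3$ (that is, $\rho\ge1/6$) is essentially the paper's. The periodic-Bernoulli error is exactly $E_N(t)=-J_N(\rho)$; no integration in $t$ is needed, since $w_\rho=-\partial_u f_t$ on $[0,\rho]$. Your main-term inequality is $M(\rho)\ge0$ with $M(\rho)=\tfrac12(4-t)^2-(3-t)_+^2+I(t)$, Lemma~\ref{lem:sawtooth-bound} settles $a,b\ge6$, and the remaining cases go to the interval certificates.

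\textbf{The gap is the range $3\le t<4$.} Your plan there assumes $P_m$ has few squared singular values above $3$, and that is false. We have $4\cos^2(k\pi/(m+1))>3$ if and only if $k<(m+1)/6$, so there are about $m/6$ of them. For $t$ just above $3$ and large $a,b$, each term of the splicing defect is a sum of many eigenvalue contributions, and no comparison of top eigenvalues controls it.

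Nor does your Riemann-sum route extend to this range. Lemma~\ref{lem:sawtooth-bound} is stated only for $\rho\ge1/6$. For $\rho<1/6$ the main term $M(\rho)=\tfrac12(4-t)^2+I(t)$ tends to $0$ as $t\to4$. For $\rho\le1/n$ one has exactly $J_n(\rho)=nI(t)-\tfrac12(4-t)^2$, which is of the same order as $M$. So no bound on $|J_n|$ that ignores cancellation can close the argument.

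The degeneration you anticipate is therefore not at $t\approx3$: there $M(1/6)=1-3\sqrt3/(4\pi)>0$, and $M$ is increasing on $[1/6,1/2]$. It occurs as $t\to4$, which is exactly where your proposal has no working mechanism.

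\textbf{The missing idea} is an exact, error-free argument for $\rho\le1/6$. Put $p=a+1$, $q=b+1$, $L=p+q-1$. Summation by parts against the nonnegative weight $w_\rho$ turns the defect (right side minus left side of \eqref{eq:ststst}) into
\[
K_{p,q}(\rho)=\int_0^\rho\bigl(\one_{u<1/6}+\lfloor pu\rfloor+\lfloor qu\rfloor-\lfloor Lu\rfloor\bigr)\,w_\rho(u)\,du .
\]
Because $Lu=pu+qu-u$, one has $\lfloor Lu\rfloor-\lfloor pu\rfloor-\lfloor qu\rfloor=\lfloor\{pu\}+\{qu\}-u\rfloor\le1$. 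Hence the integrand is nonnegative on $[0,\rho)$ whenever $\rho\le1/6$. This proves \eqref{eq:ststst} for all $a,b\ge1$ and all $t\in[3,4]$ at once.

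The same representation, after writing $\lfloor nu\rfloor=nu-\tfrac12-\psi(nu)$, is also what rigorously gives your decomposition $K=M+J_L-J_p-J_q$ for $t\le3$.
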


\begin{proof}
We use the standard formula for the nonzero squared singular values of a path:
\[
        \mu_i(P_m)=4\cos^2\frac{\pi i}{m+1},
        \qquad
        i=1,\ldots,\left\lfloor\frac m2\right\rfloor .
\]
Put
\[
        p=a+1,\qquad q=b+1,\qquad L=p+q-1=a+b+1.
\]
If \(t>4\), then all squared singular values of paths are at most \(4\), and
both sides of the desired inequality are zero. Thus we may assume
\(0\le t\le4\), and write
\[
        t=4\cos^2(\pi\rho),
        \qquad 0\le\rho\le\frac12.
\]
Let \(K(t)\) be the right-hand side minus the left-hand side of the inequality~\eqref{eq:ststst}, and set $K_{p,q}(\rho):=K(4\cos^2(\pi\rho))$. Define $g_\rho(u) := \left(4\cos^2(\pi u)-4\cos^2(\pi\rho)\right)_+^2$. Then the path spectrum formula gives
\[
        K_{p,q}(\rho)
        =
        g_\rho(0)-g_\rho(1/6)
        +\sum_{i=1}^{\lfloor p/2\rfloor}g_\rho(i/p)
        +\sum_{j=1}^{\lfloor q/2\rfloor}g_\rho(j/q)        
        -\sum_{k=1}^{\lfloor L/2\rfloor}g_\rho(k/L).
\]

We use the following elementary summation-by-parts identity. Let \(\{(\alpha,\eta_\alpha)\}_{\alpha\in A}\) be a finite set of pairs with
\(\alpha\in[0,\rho]\), and let \(g\) be \(C^1\) on \([0,\rho]\). If
\(g(\rho)=0\) and \(w(u)=-g'(u)\), then we have
\[
        g(\alpha)=\int_\alpha^\rho w(u)\,du
        =
        \int_0^\rho \mathbf 1_{\alpha\le u}w(u)\,du .
\]
Therefore, using that the sum over \(\alpha\) is finite,
\[
\sum_\alpha \eta_\alpha g(\alpha)
=
\int_0^\rho
\left(\sum_\alpha \eta_\alpha \mathbf 1_{\alpha\le u}\right)w(u)\,du =
\int_0^\rho
\left(\sum_{\alpha\le u}\eta_\alpha\right)w(u)\,du .
\]

Discarding the terms with \(\alpha>\rho\) and applying this identity to the remaining signed atoms
\[
        (0,+1),\qquad (1/6,-1),\qquad
        (i/p,+1),\qquad (j/q,+1),\qquad (k/L,-1),
\]
we obtain
\[
        K_{p,q}(\rho)
        =
        \int_0^\rho A_{p,q}(u)w_\rho(u)\,du,
\]
where the values of \(A_{p,q}\) at the finitely many discontinuity points are irrelevant for the integral, and
\[
        A_{p,q}(u)=
        1-\one_{u\geq 1/6}
        +\lfloor pu\rfloor+\lfloor qu\rfloor-\lfloor Lu\rfloor
        =
        \one_{u<1/6}
        +\lfloor pu\rfloor+\lfloor qu\rfloor-\lfloor Lu\rfloor.
\]
Here
\[
        w_\rho(u)=-g_\rho'(u)
        =
        16\pi\sin(2\pi u)
        \bigl(\cos(2\pi u)-\cos(2\pi\rho)\bigr),
        \qquad 0\le u\le\rho.
\]
Since \(0\le u\le\rho\le1/2\), we have $\sin(2\pi u)\ge0$ and $\cos(2\pi u)-\cos(2\pi\rho)\ge0$. Thus \(w_\rho(u)\ge0\) on \([0,\rho]\).

Because \(L=p+q-1\), we have $Lu=pu+qu-u$. Hence
\[
        \lfloor Lu\rfloor-\lfloor pu\rfloor-\lfloor qu\rfloor
        =
        \bigl\lfloor \{pu\}+\{qu\}-u\bigr\rfloor
        \le1.
\]
Therefore, if \(0\le\rho\le1/6\), then \(u<1/6\) for all
\(0\le u<\rho\), and hence
\[
        A_{p,q}(u)\ge0
        \qquad(0\le u\le\rho)
\]
up to the irrelevant endpoint convention. Since \(w_\rho\ge0\), the integral
representation gives
\[
        K_{p,q}(\rho)\ge0
        \qquad(0\le\rho\le1/6).
\]

It remains to consider \(\rho\in[1/6,1/2]\). Using
\[
        \lfloor pu\rfloor+\lfloor qu\rfloor-\lfloor Lu\rfloor
        =
        u+\{Lu\}-\{pu\}-\{qu\},
\]
and the notation \(\psi(x)=\{x\}-1/2\), we obtain the exact decomposition
\[
        K_{p,q}(\rho)
        =
        M(\rho)+J_L(\rho)-J_p(\rho)-J_q(\rho),
\]
where
\[
        M(\rho)
        =
        \int_0^\rho
        \left(\one_{u<1/6}+u-\frac12\right)w_\rho(u)\,du .
\]
With \(R=2\pi\rho\), direct integration gives
\[
        M(\rho)
        =
        \frac{R\cos 2R+2R-\frac32\sin 2R-\pi\cos 2R}{\pi}.
\]
Moreover \(M\) is increasing on \([1/6,1/2]\). Indeed,
\[
        \frac{dM}{dR}
        =
        \frac{2}{\pi}
        \left(1-\cos 2R+(\pi-R)\sin 2R\right)
        \geq 0.
\]
Hence $M(\rho)\ge M(1/6)=m_0$, where $m_0=1-\frac{3\sqrt3}{4\pi}$.

By Lemma~\ref{lem:sawtooth-bound},
\[
        |J_n(\rho)|\le \frac{17\sqrt3\pi^2}{27}\cdot\frac{1}{n^2}.
\]
If \(p,q\ge7\), then \(L\ge13\), and therefore
\[
        K_{p,q}(\rho)
        \ge
        m_0-\frac{17\sqrt3\pi^2}{27}\left(\frac2{7^2}+\frac1{13^2}\right)>0.
\]

It remains to consider the cases in which \(\min(p,q)\le6\). By symmetry we
may assume \(p\le q\). If \(2\le p\le6<q\), set $D_p(\rho):=M(\rho)-J_p(\rho)$. The finite certificate in Lemma~\ref{lem:a2-finite-certificates}(i), proved in
Appendix~\ref{app:a2sl2-certificates}, gives
\[
        D_p(\rho)
        -
        \frac{17\sqrt3\pi^2}{27}\left(\frac1{7^2}+\frac1{(p+6)^2}\right)
        \ge 10^{-4}
        \qquad
        (p=2,\ldots,6,\; 1/6\le\rho\le1/2).
\]
Since \(q\ge7\) and \(p+q-1\ge p+6\), this implies
\[
\begin{aligned}
        K_{p,q}(\rho)
        &\ge
        D_p(\rho)
        -
        \frac{17\sqrt3\pi^2}{27}\left(\frac1{q^2}+\frac1{(p+q-1)^2}\right)\\
        &\ge
        D_p(\rho)
        -
        \frac{17\sqrt3\pi^2}{27}\left(\frac1{7^2}+\frac1{(p+6)^2}\right)
        \ge 10^{-4}>0 .
\end{aligned}
\]
Finally, if \(2\le p\le q\le6\), Lemma~\ref{lem:a2-finite-certificates}(ii)
gives
\[
        K_{p,q}(\rho)\ge 10^{-4}
        \qquad(1/6\le\rho\le1/2).
\]
Thus \(K_{p,q}(\rho)\ge0\) for all \(0\le\rho\le1/2\), which is precisely the
desired inequality.
\end{proof}

\subsection{Pure even cycles}
\label{subsec:cycle-sl2}

We first record a trigonometric pair inequality comparing two adjacent
entries from the squared-singular-value list of an even cycle with the
corresponding two entries from the path.

\begin{lemma}\label{lem:cycle-pair-ineq}
Let \(m\ge2\) and \(1\le h\le\lfloor m/2\rfloor\). Then
\[
\cos^2\frac{(h-1)\pi}{m}
+
\cos^2\frac{h\pi}{m}
\ge
\cos^2\frac{(2h-1)\pi}{2m+1}
+
\cos^2\frac{2h\pi}{2m+1}.
\]
\end{lemma}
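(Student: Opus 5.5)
We would first linearize with $\cos^2 x=\tfrac12(1+\cos 2x)$ and then apply sum-to-product to each side. This turns the claim into
\[
\cos\frac{\pi}{m}\,\cos\frac{(2h-1)\pi}{m}\;\ge\;\cos\frac{\pi}{2m+1}\,\cos\frac{(4h-1)\pi}{2m+1}.
\]
Write $A=(2h-1)/m$ and $B=(4h-1)/(2m+1)$. Since $1\le h\le\lfloor m/2\rfloor$, both $A,B\in[0,1)$. Cross-multiplying gives $m(4h-1)-(2h-1)(2m+1)=m+1-2h\ge1$, so $A<B$. As $\cos$ is decreasing on $[0,\pi]$, we get $\cos(A\pi)\ge\cos(B\pi)$. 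In the other factor the inequality goes the wrong way: $0\le\cos(\pi/m)\le\cos(\pi/(2m+1))$ for $m\ge2$. So we split the difference as
\[
\Delta=\cos\frac{\pi}{m}\bigl(\cos A\pi-\cos B\pi\bigr)-\cos B\pi\Bigl(\cos\frac{\pi}{2m+1}-\cos\frac{\pi}{m}\Bigr).
\]

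\textbf{Case $\cos B\pi\le0$.} Both terms of $\Delta$ are non-negative, so $\Delta\ge0$ immediately. This covers all $h$ with $4h-1\ge m+\tfrac12$, i.e.\ roughly the upper half of the range of $h$.

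\textbf{Case $\cos B\pi>0$, i.e.\ $B<1/2$.} This is the main obstacle. Here we would compare the two increments quantitatively; both angles then lie in $[0,\pi/2)$.

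For the first increment we use the mean value theorem:
\[
\cos A\pi-\cos B\pi\;\ge\;\pi(B-A)\sin(A\pi)=\frac{\pi(m+1-2h)}{m(2m+1)}\sin\frac{(2h-1)\pi}{m}.
\]
For the second increment we use
\[
\cos\frac{\pi}{2m+1}-\cos\frac{\pi}{m}=2\sin\frac{(3m+1)\pi}{2m(2m+1)}\,\sin\frac{(m+1)\pi}{2m(2m+1)}.
\]
This quantity is of order $3\pi^2/(8m^2)$, which we would bound by $\sin x\le x$.

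Using $\sin(A\pi)\ge 2A$ on $[0,1/2]$, the first term is of size at least about $\tfrac{\pi(2h-1)(m+1-2h)}{m^2(2m+1)}\cdot 2\cos(\pi/m)$. For $h\ge2$ and $m$ moderately large this should dominate the second term, which is at most about $\tfrac{3\pi^2}{8m^2}\cos B\pi$. In that regime $m+1-2h\gtrsim m/2$ because $B<1/2$.

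The delicate subcase is $h=1$, where $\sin(A\pi)=\sin(\pi/m)$ is small. For $h=1$ we would instead argue directly in the original form. The left side is $1+\cos^2(\pi/m)$ and the right side is $\cos^2\tfrac{\pi}{2m+1}+\cos^2\tfrac{2\pi}{2m+1}$. By Taylor expansion with explicit remainder, $\sin^2\tfrac{\pi}{2m+1}+\sin^2\tfrac{2\pi}{2m+1}\ge\sin^2\tfrac{\pi}{m}$, since the left side is about $5\pi^2/(4m^2)$ while the right side is about $\pi^2/m^2$. This is made rigorous with $x-x^3/6\le\sin x\le x$.

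Finally, the finitely many small $m$ (say $m\le 6$) where the asymptotic constants are not yet favorable would be verified directly by evaluating the finitely many cosines. In the write-up, the main care goes into choosing the threshold in $m$ so that the elementary bounds close.
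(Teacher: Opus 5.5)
Your proposal is correct. It starts from the same reduction as the paper: the identity $\cos^2x+\cos^2y=1+\cos(x+y)\cos(x-y)$ gives $\cos u\cos\alpha\ge\cos v\cos\beta$, with $\alpha=\pi/m$, $\beta=\pi/(2m+1)$, $u=A\pi$, $v=B\pi$. From there you handle both regimes differently.

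Your additive split disposes of the whole regime $v\ge\pi/2$ in one line. The paper instead treats $u\ge\pi/2$ separately, via a longer angle-addition expansion, from $u<\pi/2\le v$.

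In the main regime $v<\pi/2$, the paper takes logarithms and rewrites the claim as $\int_u^v\tan x\,dx\ge\int_\beta^\alpha\tan x\,dx$. Convexity of $\tan$ and monotonicity of $\tan x/x$ then reduce everything, uniformly in $h$, to $v^2-u^2\ge\alpha^2-\beta^2$, i.e.\ $\Delta_{m,h}\ge0$. Only $(m,h)=(3,1),(4,1)$ are checked by hand.

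Your mean-value, Jordan and $\sin x\le x$ estimates are cruder, but they have ample room.
\begin{itemize}
\item For $h\ge2$, the condition $v<\pi/2$ means $8h\le 2m+2$, which forces $m\ge7$.
\item Concavity of $h\mapsto(2h-1)(m+1-2h)$ then gives $(2h-1)(m+1-2h)\ge\tfrac32(m+1)$.
\item The needed inequality $4(2m+1)(2h-1)(m+1-2h)\cos(\pi/m)\ge\pi(m+1)(3m+1)$ therefore follows from $6(2m+1)\cos(\pi/m)\ge\pi(3m+1)$, which holds for every $m\ge7$. So no small cases arise in this subcase.
\end{itemize}

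For $h=1$, your exact reformulation $\sin^2\beta+\sin^2 2\beta\ge\sin^2\alpha$ is sound. However, your Taylor bounds close only from $m=7$ on; at $m=6$ the loss in $\sin\alpha\le\alpha$ makes the estimate fail narrowly. So the direct check of $m=3,\dots,6$ is genuinely required, not just a convenience; $m=2$ already falls under $v\ge\pi/2$.

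In short, the paper's route yields a single algebraic criterion covering all $h$ at once. Yours avoids logarithms and the tangent-convexity trick, at the cost of a separate $h=1$ argument and a few numerical checks.
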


\begin{proof}
Put $\alpha=\pi/m$, $\beta=\pi/(2m+1)$, $u=(2h-1)\alpha$ and $v=(4h-1)\beta$. Using
\[
        \cos^2 x+\cos^2 y
        =
        1+\cos(x+y)\cos(x-y),
\]
the desired inequality is equivalent to
\begin{equation}\label{eq:cycle-product-ineq}
        \cos u\cos\alpha\ge \cos v\cos\beta .
\end{equation}
We have \(0<\beta<\alpha\), and
\[
        v-u
        =
        \frac{(m+1-2h)\pi}{m(2m+1)}
        >0.
\]
Set $r=v-u$ and $s=\alpha-\beta$. Then \(0<r<s\).

We prove \eqref{eq:cycle-product-ineq} by considering the positions of
\(u\) and \(v\) relative to \(\pi/2\). First suppose that \(u\ge\pi/2\).
Since \(v=u+r\) and \(\alpha=\beta+s\), we have
\[
\begin{aligned}
\cos u\cos\alpha-\cos v\cos\beta
&=
\sin u\cos\beta\sin r
-\cos u\sin\beta\sin s  \\
&\qquad
-\cos u\cos\beta(\cos r-\cos s).
\end{aligned}
\]
Here \(\sin u\ge0\), \(\cos\beta>0\), \(\sin r>0\),
\(-\cos u\ge0\), and \(\cos r-\cos s>0\). Hence the right-hand side is
nonnegative.

Next suppose that \(u<\pi/2\le v\). Then
\(\cos u\cos\alpha>0\), while \(\cos v\cos\beta\le0\), so
\eqref{eq:cycle-product-ineq} is immediate.

It remains to consider the case \(v<\pi/2\). Then all cosines in
\eqref{eq:cycle-product-ineq} are positive. Taking logarithms, the inequality
\eqref{eq:cycle-product-ineq} is equivalent to
\begin{equation}\label{eq:cycle-tan-integral}
        \int_u^v \tan x\,dx
        \ge
        \int_\beta^\alpha \tan x\,dx .
\end{equation}
Let $M=(u+v)/2$. Since \(M>\alpha>\beta\) and \(x\mapsto \tan x/x\) is increasing on \((0,\pi/2)\), we have
\[
        \tan M\ge\frac{M}{\alpha}\tan\alpha,
        \qquad
        \tan\beta\le\frac{\beta}{\alpha}\tan\alpha .
\]
By convexity of \(\tan x\) on \((0,\pi/2)\),
\[
        \int_u^v\tan x\,dx
        \ge r\tan M,
        \qquad
        \int_\beta^\alpha\tan x\,dx
        \le \frac{s}{2}(\tan\alpha+\tan\beta).
\]
Therefore \eqref{eq:cycle-tan-integral} follows once
\begin{equation}\label{eq:cycle-quadratic-reduction}
        rM\ge \frac{s(\alpha+\beta)}2 .
\end{equation}
Since \(2rM=v^2-u^2\) and \(s(\alpha+\beta)=\alpha^2-\beta^2\), inequality
\eqref{eq:cycle-quadratic-reduction} is equivalent to
\[
        \Delta_{m,h}:=
        m^2\bigl((4h-1)^2+1\bigr)
        -(2m+1)^2\bigl((2h-1)^2+1\bigr)
        \ge0 .
\]

Write \(m=2h+q\). A direct expansion gives
\[
\Delta_{m,h}
=
2\left(
8h^2q+2h^2+4hq^2-4hq-6h-3q^2-4q-1
\right).
\]
The assumption \(v<\pi/2\) implies \(m\ge4h-1\), and hence
\(q\ge2h-1\). If \(h\ge2\), the expression in parentheses is increasing in
\(q\) for \(q\ge2h-1\). At \(q=2h-1\), the expression equals $2h(16h^2-21h+3)>0$. Thus \(\Delta_{m,h}\ge0\) for \(h\ge2\).

For \(h=1\), the same expansion reduces to
\[
        \Delta_{m,1}=2(q^2-5),
        \qquad
        q=m-2.
\]
This is nonnegative for \(q\ge3\), equivalently \(m\ge5\). The only remaining cases in the present case \(v<\pi/2\) are \((m,h)=(3,1)\) and \((m,h)=(4,1)\). A direct substitution into~\eqref{eq:cycle-product-ineq} gives a strict inequality in both cases. This proves the lemma.
\end{proof}

The pure even-cycle comparison follows by grouping the squared singular values into pairs and applying weak majorization.

\begin{theorem}\label{thm:cycle-sl2}
For every \(m\ge2\) and every \(t\ge0\),
\[
        \mathsf S_t(C_{2m})\ge \mathsf S_t(P_{2m}).
\]
\end{theorem}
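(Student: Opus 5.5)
Our plan is to write down both spectra explicitly, group them into pairs, and finish with the standard weak-majorization criterion for stop-loss functions.

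\textbf{The two spectra.} The adjacency eigenvalues of $C_{2m}$ are $2\cos(2\pi k/(2m))$. Hence the squared singular values of its $m\times m$ biadjacency matrix are
\[
        \mu_k(C_{2m})=4\cos^2\frac{k\pi}{m},\qquad k=0,1,\ldots,m-1 .
\]
For $P_{2m}$, the formula used in Theorem~\ref{thm:a2-sl2} gives
\[
        \mu_i(P_{2m})=4\cos^2\frac{i\pi}{2m+1},\qquad i=1,\ldots,m .
\]
Both lists have length $m$.

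\textbf{Pairing.} We sort both lists in non-increasing order. For the cycle, the value $4\cos^2(j\pi/m)$ with $1\le j\le\lfloor (m-1)/2\rfloor$ occurs twice, together with $4$ (from $k=0$) and possibly $0$ (from $k=m/2$ when $m$ is even). So the sorted cycle list is $4,\,4c_1,\,4c_1,\,4c_2,\,4c_2,\ldots$ with $c_j=\cos^2(j\pi/m)$. The sorted path list is $4\cos^2(i\pi/(2m+1))$ for $i=1,2,\ldots$.

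We group positions $2h-1$ and $2h$ for $1\le h\le\lfloor m/2\rfloor$, with one leftover entry when $m$ is odd. In the cycle, positions $2h-1,2h$ carry $4\cos^2((h-1)\pi/m)$ and $4\cos^2(h\pi/m)$. In the path they carry $4\cos^2((2h-1)\pi/(2m+1))$ and $4\cos^2(2h\pi/(2m+1))$. Lemma~\ref{lem:cycle-pair-ineq} says exactly that each cycle pair sum is at least the corresponding path pair sum.

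Summing over $h$ gives partial-sum domination at every even position $2h$. At an odd position $2h-1$ we argue as follows. The cycle partial sum equals the sum of the first $h-1$ pairs plus $4\cos^2((h-1)\pi/m)$. The path partial sum equals the sum of its first $h-1$ pairs plus $4\cos^2((2h-1)\pi/(2m+1))$. Since $(h-1)/m\le(2h-1)/(2m+1)\le 1/2$, the cycle's extra term is at least the path's, so domination holds there too.

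If $m$ is odd, the last cycle entry is $4\cos^2(((m-1)/2)\pi/m)$ and the last path entry is $4\cos^2(m\pi/(2m+1))$. The same angle comparison covers this position.

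\textbf{Conclusion.} The sorted cycle vector therefore weakly majorizes the sorted path vector from above. For each $t\ge0$ the function $\lambda\mapsto(\lambda-t)_+^2$ is convex and non-decreasing on $[0,\infty)$. By the Tomić--Weyl (Karamata) inequality for weak majorization, $\sum_k \phi(\mu_k(C_{2m}))\ge\sum_i\phi(\mu_i(P_{2m}))$ for every such $\phi$. This is exactly $\mathsf S_t(C_{2m})\ge\mathsf S_t(P_{2m})$.

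\textbf{Main obstacle.} The main obstacle is the bookkeeping of the sorted order and of the odd positions. Two checks are needed. First, the pairing must align with the sorted order. This holds because both lists are monotone in the index, with angles in $[0,\pi/2]$ and cosine squared decreasing there. Second, the odd-position inequality must also be secured. Here the angle comparison $(h-1)/m\le(2h-1)/(2m+1)$, equivalent to $h\le m+\tfrac12$, settles it. The edge case $m=2$ can also be checked directly: the cycle list is $(4,0)$ and the path list is $(4\cos^2(\pi/5),4\cos^2(2\pi/5))$, whose sum is $3<4$.
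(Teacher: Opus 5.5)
Your proposal is correct and follows essentially the same route as the paper: the explicit spectra, the pairing of sorted positions $2h-1,2h$ controlled by Lemma~\ref{lem:cycle-pair-ineq}, the angle comparison for the odd partial sums, and the increasing-convex characterization of weak submajorization. One harmless arithmetic slip: $(h-1)/m\le(2h-1)/(2m+1)$ is equivalent to $h\le m+1$, not $h\le m+\tfrac12$, and since $2h-1\le m$ this does not affect the argument.
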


\begin{proof}
The squared singular values are
\[
        4\cos^2\frac{\pi j}{m},
        \qquad
        j=0,\ldots,m-1,
\]
for \(C_{2m}\), and
\[
        4\cos^2\frac{j\pi}{2m+1},
        \qquad
        j=1,\ldots,m,
\]
for \(P_{2m}\).

Let \(c^\downarrow\) and \(p^\downarrow\) denote the decreasing
rearrangements of these two squared-singular-value vectors. We claim that
\(c^\downarrow\) weakly majorizes \(p^\downarrow\). Indeed,
\[
        c_i^\downarrow
        =
        4\cos^2\frac{\pi\lfloor i/2\rfloor}{m},
        \qquad
        p_i^\downarrow
        =
        4\cos^2\frac{i\pi}{2m+1},
        \qquad
        1\le i\le m.
\]
For \(1\le h\le \lfloor m/2\rfloor\), put
\[
        A_h=4\cos^2\frac{(h-1)\pi}{m},
        \qquad
        B_h=4\cos^2\frac{h\pi}{m},
\]
and
\[
        X_h=4\cos^2\frac{(2h-1)\pi}{2m+1},
        \qquad
        Y_h=4\cos^2\frac{2h\pi}{2m+1}.
\]
The angle comparison gives \(A_h\ge X_h\). Moreover,
Lemma~\ref{lem:cycle-pair-ineq}, multiplied by \(4\), gives
\[
        A_h+B_h\ge X_h+Y_h .
\]
Thus each pair \((A_h,B_h)\) weakly majorizes the pair \((X_h,Y_h)\). Summing these pair inequalities gives all even partial sums:
\[
        \sum_{i=1}^{2r} c_i^\downarrow 
        =\sum_{h=1}^{r} (A_h+B_h)
        \ge \sum_{h=1}^{r} (X_h+Y_h)
        =\sum_{i=1}^{2r} p_i^\downarrow,
        \qquad
        1\le r\le \left\lfloor\frac m2\right\rfloor .
\]

It remains to check the odd partial sums. Let \(r\ge0\) and
\(2r+1\le m\). Then
\[
        \frac{r}{m}<\frac{2r+1}{2m+1},
\]
and hence, by the monotonicity of \(x\mapsto\cos^2(\pi x)\) on
\([0,1/2]\),
\[
        c_{2r+1}^\downarrow
        =
        4\cos^2\frac{r\pi}{m}
        \ge
        4\cos^2\frac{(2r+1)\pi}{2m+1}
        =
        p_{2r+1}^\downarrow .
\]
Adding this inequality to the even partial-sum inequality up to \(2r\) gives
the odd partial sums:
\[
        \sum_{i=1}^{2r+1} c_i^\downarrow
        \ge
        \sum_{i=1}^{2r+1} p_i^\downarrow .
\]
Therefore \(c^\downarrow\) weakly majorizes \(p^\downarrow\).

For fixed \(t\ge0\), the function $\phi_t(\lambda)=(\lambda-t)_+^2$ is increasing and convex on \([0,\infty)\). By the standard characterization of weak submajorization by increasing convex functions \cite[Section~A.4]{MarshallOlkinArnold2011}, we have
\[
        \sum_i \phi_t(c_i^\downarrow)
        \ge
        \sum_i \phi_t(p_i^\downarrow).
\]
This is exactly $\mathsf S_t(C_{2m})\ge \mathsf S_t(P_{2m})$, and the proof is complete.
\end{proof}

\section{Deletion theory at the second-order stop-loss level}
\label{sec:sl2-deletion}

We first turn the effect of adding back a deleted vertex into a
one-dimensional interval estimate.  The rank-one trace formula is the bridge
from the spectral stop-loss functional to this interval estimate.  The rest of
the section then converts the interval estimate into concrete deletion
criteria for the minimal-counterexample argument.

We first record the rank-one trace formula for the stop-loss kernel.  This is
a standard finite-dimensional trace derivative formula, but we include the
short proof for completeness.

\begin{lemma}
\label{lem:rank-one-trace-formula}
Let \(M\) be a real symmetric matrix, let \(b\) be a vector of the same dimension, and set $M_\theta=M+\theta bb^{\top}$ for $0\le\theta\le1$. Then, for every \(t\ge0\),
\[
        \tr(M_1-tI)_+^2-\tr(M_0-tI)_+^2
        =
        2\int_0^1 b^{\top}(M_\theta-tI)_+b\,d\theta .
\]
\end{lemma}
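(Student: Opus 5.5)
We prove the formula by showing that \(\Phi(\theta):=\tr\phi_t(M_\theta)\), with \(\phi_t(\lambda)=(\lambda-t)_+^2\), is continuously differentiable in \(\theta\) with derivative \(\Phi'(\theta)=2\,b^{\top}(M_\theta-tI)_+b\). The identity then follows from the fundamental theorem of calculus on \([0,1]\).

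The key input is the standard first-order perturbation formula for trace functionals: if \(f\in C^1(\mathbb R)\) and \(A(\theta)\) is a \(C^1\) family of real symmetric matrices, then
\[
\frac{d}{d\theta}\tr f(A(\theta))=\tr\bigl(f'(A(\theta))A'(\theta)\bigr).
\]
Here \(f=\phi_t\) is \(C^1\) with \(\phi_t'(\lambda)=2(\lambda-t)_+\), and \(A'(\theta)=bb^{\top}\). Hence
\[
\tr\bigl(\phi_t'(M_\theta)\,bb^{\top}\bigr)=2\,b^{\top}(M_\theta-tI)_+b.
\]

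To justify the perturbation formula for \(f\in C^1\), I would argue in two steps.

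\textbf{Polynomials.} For \(f(\lambda)=\lambda^k\), the product rule and cyclicity of the trace give
\[
\frac{d}{d\theta}\tr A^k=k\,\tr\bigl(A^{k-1}A'\bigr),
\]
which is the formula since \(f'(\lambda)=k\lambda^{k-1}\). By linearity it holds for every polynomial.

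\textbf{Approximation.} The spectra of \(M_\theta\), \(\theta\in[0,1]\), lie in a fixed compact interval \(K\), since \(\|M_\theta\|\le\|M\|+\|b\|^2\). By Weierstrass, choose polynomials \(p_k\) with \(p_k'\to\phi_t'\) uniformly on \(K\), and normalize the constants so that \(p_k\to\phi_t\) uniformly on \(K\) as well. Functional calculus gives \(\|g(A)\|\le\sup_K|g|\) for symmetric \(A\) with spectrum in \(K\). Therefore
\[
\tr p_k(M_\theta)\to\Phi(\theta),\qquad \tr\bigl(p_k'(M_\theta)bb^{\top}\bigr)\to 2\,b^{\top}(M_\theta-tI)_+b,
\]
both uniformly in \(\theta\). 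A uniform limit of \(C^1\) functions whose derivatives also converge uniformly is \(C^1\), with derivative equal to the limit of the derivatives. This yields the formula for \(\Phi'\).

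The map \(\theta\mapsto(M_\theta-tI)_+\) is continuous, because \(\lambda\mapsto(\lambda-t)_+\) is continuous and continuous functional calculus is continuous in the matrix. So \(\Phi'\) is continuous and
\[
\Phi(1)-\Phi(0)=\int_0^1\Phi'(\theta)\,d\theta,
\]
which is exactly the claimed identity.

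The main point needing care is the regularity: \(\phi_t\) is only \(C^1\), not \(C^2\), and eigenvalues may cross \(t\) or collide along the path. The approximation argument avoids eigenvector perturbation theory entirely, and it needs only uniform convergence of \(p_k'\to\phi_t'\) on the compact set \(K\), which the continuity of \(\phi_t'\) supplies.
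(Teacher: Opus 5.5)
Your proposal is correct and follows essentially the same route as the paper: polynomial approximation of \(\varphi_t'\) on a common spectral interval, the cyclic-trace derivative formula \(\frac{d}{d\theta}\tr p(M_\theta)=b^{\top}p'(M_\theta)b\) for polynomials, and then a uniform limit. The only difference is cosmetic. The paper passes to the limit directly in the integrated identity \(\tr p_j(M_1)-\tr p_j(M_0)=\int_0^1 b^{\top}p_j'(M_\theta)b\,d\theta\), so it does not need the theorem on uniform limits of derivatives or a separate continuity argument for \(\Phi'\).
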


\begin{proof}
Put \(\varphi_t(y)=(y-t)_+^2\).  Then
\(\varphi_t\in C^1(\mathbb R)\) and
\(\varphi_t'(y)=2(y-t)_+\).  Choose \(R>0\) such that
\(\spec(M_\theta)\subset[-R,R]\) for every \(0\le\theta\le1\).
By Weierstrass approximation, there are polynomials \(q_j\) such that
\(q_j\to\varphi_t'\) uniformly on \([-R,R]\). Define
\[
        p_j(y)=\varphi_t(0)+\int_0^y q_j(s)\,ds .
\]
Then \(p_j\to\varphi_t\) and \(p_j'\to\varphi_t'\) uniformly on \([-R,R]\).

For a polynomial \(p\), cyclicity of trace gives
\[
        \frac{d}{d\theta}\tr p(M_\theta)
        =
        \tr\bigl(p'(M_\theta)bb^{\top}\bigr)
        =
        b^{\top}p'(M_\theta)b.
\]
Therefore
\[
        \tr p_j(M_1)-\tr p_j(M_0)
        =
        \int_0^1 b^{\top}p_j'(M_\theta)b\,d\theta .
\]
Letting \(j\to\infty\), using the uniform convergence on the common spectral
interval \([-R,R]\), gives the desired identity.
\end{proof}

We next convert the trace formula into a local lower bound for the effect of adding back a deleted vertex. Throughout, \(N_G(v)\) denotes the open neighborhood of \(v\) in \(G\), that is, the set of vertices adjacent to \(v\). For a vector \(x\), we write \(\|x\|_2\) for its Euclidean norm, so that \(\|x\|_2^2=x^\top x\).

\begin{lemma}
\label{lem:sl2-rank-one}
Let \(G\) be a bipartite graph and let \(v\in V(G)\) be a non-isolated vertex. Choose the bipartition so that \(v\) lies on the left side. Let \(B_{G-v}\) be the corresponding biadjacency matrix of \(G-v\). Let \(b\) be the \(0\)-\(1\) indicator vector of \(N_G(v)\) (indexed by the right-side vertices of \(G-v\)), and set
\[
        M=B_{G-v}^\top B_{G-v},
        \qquad
        d=\|b\|_2^2=d_G(v)>0,
        \qquad
        \gamma=b^\top Mb .
\]
For \(0\le\theta\le1\), put \(M_\theta=M+\theta bb^\top\).  Then, for every
\(t\ge0\),
\[
        \mathsf S_t(G)-\mathsf S_t(G-v)
        \ge
        2\int_{\gamma/d}^{\gamma/d+d}(u-t)_+\,du .
\]
\end{lemma}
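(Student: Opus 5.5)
The plan is to start from Lemma~\ref{lem:rank-one-trace-formula}. Since \(B_G^\top B_G=M+bb^\top=M_1\), and \(\mathsf S_t\) can be computed from the nonzero eigenvalues of \(B^\top B\), we have \(\mathsf S_t(G)=\tr(M_1-tI)_+^2\) and \(\mathsf S_t(G-v)=\tr(M_0-tI)_+^2\). Zero eigenvalues do not matter because \(t\ge0\). Hence
\[
        \mathsf S_t(G)-\mathsf S_t(G-v)=2\int_0^1 b^\top(M_\theta-tI)_+b\,d\theta .
\]
The task then reduces to a pointwise lower bound for the integrand in \(\theta\), followed by a change of variables.

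For fixed \(\theta\), set \(\hat b=b/\sqrt d\) and consider the spectral measure \(\mu_\theta\) of \(M_\theta\) with respect to the unit vector \(\hat b\). This is a probability measure on \([0,\infty)\), since \(M_\theta\succeq0\). Then
\[
        b^\top(M_\theta-tI)_+b=d\int(\lambda-t)_+\,d\mu_\theta(\lambda).
\]
The function \(\lambda\mapsto(\lambda-t)_+\) is convex, so Jensen's inequality gives
\[
        b^\top(M_\theta-tI)_+b\ge d\,\bigl(\hat b^\top M_\theta\hat b-t\bigr)_+ .
\]
The mean is explicit:
\[
        \hat b^\top M_\theta\hat b=\frac{b^\top Mb+\theta (b^\top b)^2}{d}=\frac{\gamma}{d}+\theta d .
\]
Hence the integrand is at least \(d\,(\gamma/d+\theta d-t)_+\).

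It remains to integrate over \(\theta\in[0,1]\) and substitute \(u=\gamma/d+\theta d\), so that \(du=d\,d\theta\). This gives
\[
        2\int_0^1 d\,(\gamma/d+\theta d-t)_+\,d\theta=2\int_{\gamma/d}^{\gamma/d+d}(u-t)_+\,du,
\]
which is the claimed bound.

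The main point to check carefully is the bookkeeping in the first step. One must confirm that \(\mathsf S_t(G)\) equals \(\tr(B_G^\top B_G-tI)_+^2\) with \(B_G^\top B_G=M+bb^\top\) when \(v\) sits on the left side. Adding \(v\) as a new row \(b^\top\) of the biadjacency matrix gives exactly this rank-one update, while the right index set is unchanged; if a neighbor were absent from \(G-v\), it would appear as a zero column. The second thing to check is that Jensen's inequality applies to a probability measure, which is why \(d>0\) (\(v\) non-isolated) is needed. Beyond these, everything is routine.
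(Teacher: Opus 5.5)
Your proposal is correct and follows essentially the same route as the paper: the rank-one trace formula of Lemma~\ref{lem:rank-one-trace-formula}, then Jensen's inequality for $\lambda\mapsto(\lambda-t)_+$ with respect to the spectral weights of $b/\sqrt d$ (the paper writes these as $c_i^2/d$ with $c=U^\top b$), then $b^\top M_\theta b=\gamma+\theta d^2$, and finally the substitution $u=\gamma/d+\theta d$. The bookkeeping you flag is handled the same way in the paper: appending the row $b^\top$ gives $B_G^\top B_G=M+bb^\top$, and zero eigenvalues do not affect $\mathsf S_t$ for $t\ge0$.
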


\begin{proof}
Since \(v\) lies on the left side, adding \(v\) adds the row \(b^\top\) to the biadjacency matrix:
\[
        B_G=
        \begin{pmatrix}
        B_{G-v}\\
        b^\top
        \end{pmatrix}.
\]
Thus the matrix used to compute the squared singular values changes from
\[
        M_0=B_{G-v}^\top B_{G-v}=M
        \qquad\text{to}\qquad
        M_1=B_{G-v}^\top B_{G-v}+bb^\top=M+bb^\top .
\]
Possible additional zero squared singular values do not contribute to
\(\mathsf S_t\). Hence
\[
        \mathsf S_t(G)
        =
        \tr(M_1-t)_+^2,
        \qquad
        \mathsf S_t(G-v)
        =
        \tr(M_0-t)_+^2 .
\]
By Lemma~\ref{lem:rank-one-trace-formula},
\[
        \mathsf S_t(G)-\mathsf S_t(G-v)
        =
        2\int_0^1 b^\top(M_\theta-t)_+b\,d\theta.
\]

Fix \(\theta\in[0,1]\). The matrix \(M_\theta\) is real symmetric and
positive semidefinite.  Hence, by the spectral theorem, there is a real
orthogonal matrix \(U\) and numbers \(\lambda_i\ge0\) such that
\[
        M_\theta
        =
        U\operatorname{diag}(\lambda_1,\ldots,\lambda_m)U^\top .
\]
Put \(c=U^\top b\).  Since \(U\) is orthogonal, we know that \(b=Uc\), \(\sum_i c_i^2=\|b\|_2^2=d\), and
\[
        b^\top(M_\theta-t)_+b
        =
        \sum_{i=1}^m c_i^2(\lambda_i-t)_+ .
\]
The coefficients \(c_i^2/d\) are non-negative and sum to \(1\).  Applying
Jensen's inequality to the convex function \(\lambda\mapsto(\lambda-t)_+\)
gives
\[
        \frac1d b^\top(M_\theta-t)_+b =
        \sum_{i=1}^m \frac{c_i^2}{d}(\lambda_i-t)_+ \ge
        \left(\sum_{i=1}^m \frac{c_i^2}{d}\lambda_i-t\right)_+  =
        \left(\frac{b^\top M_\theta b}{d}-t\right)_+ .
\]
Moreover,
\[
        b^\top M_\theta b
        =
        b^\top Mb+\theta b^\top(bb^\top)b
        =
        \gamma+\theta d^2,
\]
hence
\[
        b^\top(M_\theta-t)_+b
        \ge
        d\left(\frac{\gamma}{d}+\theta d-t\right)_+ .
\]
Substituting this into the trace formula yields
\[
        \mathsf S_t(G)-\mathsf S_t(G-v)
        \ge
        2\int_0^1
        d\left(\frac{\gamma}{d}+\theta d-t\right)_+\,d\theta .
\]
Finally, with \(u=\gamma/d+\theta d\), we have \(du=d\,d\theta\), and therefore
\[
        \mathsf S_t(G)-\mathsf S_t(G-v)
        \ge
        2\int_{\gamma/d}^{\gamma/d+d}(u-t)_+\,du .
\]
This proves the lemma.
\end{proof}

We shall use the following elementary lower bound on \(\gamma\), obtained by counting the edges incident with \(N_G(v)\) after \(v\) is deleted.

\begin{lemma}
\label{lem:gamma-neighbor-count}
Let the notation be as in Lemma~\ref{lem:sl2-rank-one}.  Let \(s\) be the number of neighbors \(x\in N_G(v)\) such that \(d_{G-v}(x)\ge1\).  Then
\[
        \gamma
        \ge
        \sum_{x\in N_G(v)} (d_G(x)-1)
        \ge
        s .
\]
\end{lemma}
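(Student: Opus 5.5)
The plan is to compute \(\gamma=b^\top Mb=\|B_{G-v}b\|_2^2\) directly as a sum over left-side vertices of \(G-v\). Here \(M=B_{G-v}^\top B_{G-v}\), so
\[
        \gamma=\sum_{y}\bigl((B_{G-v}b)_y\bigr)^2,
\]
where \(y\) ranges over the left vertices of \(G-v\). The entry \((B_{G-v}b)_y\) equals \(c_y:=|N_{G-v}(y)\cap N_G(v)|\), the number of common neighbours of \(y\) and \(v\). These are nonnegative integers, so \(c_y^2\ge c_y\) termwise. This gives
\[
        \gamma\ge\sum_y c_y.
\]

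Next I will double count: \(\sum_y c_y\) counts the pairs \((y,x)\) with \(x\in N_G(v)\) and \(xy\in E(G-v)\). Grouping by \(x\), each neighbour \(x\) of \(v\) contributes \(d_{G-v}(x)\). Since \(G\) is bipartite, \(x\) lies on the right side, its neighbours in \(G\) lie on the left, and \(v\) is one of them. Hence \(d_{G-v}(x)=d_G(x)-1\), and
\[
        \gamma\ge\sum_{x\in N_G(v)}(d_G(x)-1),
\]
which is the first inequality.

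For the second inequality, each summand \(d_G(x)-1=d_{G-v}(x)\) is a nonnegative integer, and it is at least \(1\) exactly when \(d_{G-v}(x)\ge1\). So the sum is at least the number \(s\) of such neighbours.

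There is no real obstacle here. The only point needing care is the identification \((B_{G-v}b)_y=c_y\) with the left/right conventions of Lemma~\ref{lem:sl2-rank-one}: the rows of \(B_{G-v}\) are indexed by the left vertices other than \(v\), and \(b\) is indexed by the right vertices. Integrality is what allows replacing \(c_y^2\) by \(c_y\).
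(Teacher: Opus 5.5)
Your proposal is correct and is essentially the paper's own proof: you write \(\gamma=\|B_{G-v}b\|_2^2=\sum_y |N_{G-v}(y)\cap N_G(v)|^2\), use integrality to drop the squares, double count to get \(\sum_{x\in N_G(v)} d_{G-v}(x)=\sum_{x\in N_G(v)}(d_G(x)-1)\), and then bound this sum below by \(s\). The paper runs the same chain of steps in the same order.
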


\begin{proof}
Choose the bipartition with \(v\) on the left side.  Then \(b\) is the
\(0\)-\(1\) indicator vector of \(N_G(v)\), indexed by the right-side vertices,
and $M=B_{G-v}^\top B_{G-v}$. Thus
\[
        \gamma
        =
        b^\top B_{G-v}^\top B_{G-v}b
        =
        \|B_{G-v}b\|_2^2 .
\]
For a left-side vertex \(y\) of \(G-v\), the \(y\)-coordinate of
\(B_{G-v}b\) is
\[
(B_{G-v}b)_y =\sum_{x}(B_{G-v})_{yx} b_x = |N_{G-v}(y)\cap N_G(v)|.
\]
Hence
\[
        \gamma
        =
        \sum_y |N_{G-v}(y)\cap N_G(v)|^2,
\]
where \(y\) runs over the left-side vertices of \(G-v\). Since
\(|N_{G-v}(y)\cap N_G(v)|\) is a non-negative integer, we know that
\[
        \gamma
        \ge
        \sum_y |N_{G-v}(y)\cap N_G(v)|
        =
        \sum_{x\in N_G(v)} d_{G-v}(x)
        =
        \sum_{x\in N_G(v)}(d_G(x)-1).
\]
This proves the first inequality.

For the second inequality, by the definition of \(s\), exactly \(s\) vertices
\(x\in N_G(v)\) satisfy \(d_{G-v}(x)\ge1\).  Therefore
\[
        \sum_{x\in N_G(v)}(d_G(x)-1)
        =
        \sum_{x\in N_G(v)}d_{G-v}(x)
        \ge s .
\]
This completes the proof.
\end{proof}

For an interval \([a,b]\subset[0,\infty)\), write
\[
        \mathcal I_{[a,b]}(t):=\int_a^b (u-t)_+\,du.
\]
For fixed \(\ell\ge0\) and \(t\ge0\), we shall refer to the trivial fact
that $\mathcal I_{[a,a+\ell]}(t)$ is non-decreasing as a function of \(a\) as the
\emph{translation monotonicity} of \(\mathcal I\); indeed, this is immediate
from the monotonicity of \(u\mapsto(u-t)_+\).

The next elementary packing estimate says that, among all interval sets of a
fixed total length inside \([0,4]\), the largest value of the stop-loss
integral is obtained by placing the whole set at the right endpoint.

\begin{lemma}
\label{lem:spectral-shift-packing}
Let \(E\subset[0,4]\) be a finite disjoint union of intervals, and let
\(\ell=|E|\) be its total length.  Define
\[
        J_E(t):=\int_E (y-t)_+\,dy .
\]
Then, for every \(t\ge0\),
\[
        J_E(t)\le \mathcal I_{[4-\ell,4]}(t).
\]
\end{lemma}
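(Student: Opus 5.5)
This is a bathtub (rearrangement) argument: it compares the integrand against an indicator of total mass \(\ell\). Fix \(t\ge0\) and write \(f(y):=(y-t)_+\). This \(f\) is non-negative and non-decreasing on \([0,4]\), and
\[
        J_E(t)=\int_0^4 \one_E(y)f(y)\,dy,
        \qquad
        \mathcal I_{[4-\ell,4]}(t)=\int_0^4 \one_{[4-\ell,4]}(y)f(y)\,dy .
\]
Since \(E\subset[0,4]\), we have \(\ell\le4\), so \([4-\ell,4]\subset[0,4]\) and both sets have Lebesgue measure \(\ell\). Put \(\phi:=\one_{[4-\ell,4]}-\one_E\). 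It suffices to show \(\int_0^4\phi f\,dy\ge0\).

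The function \(\phi\) satisfies \(\int_0^4\phi=0\). Moreover, \(\phi\le0\) on \([0,4-\ell)\), because the first indicator vanishes there. Also \(\phi\ge0\) on \([4-\ell,4]\), because the first indicator equals \(1\) there.

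Set \(c:=f(4-\ell)\). Since \(f\) is non-decreasing, \(f\le c\) on \([0,4-\ell)\) and \(f\ge c\) on \([4-\ell,4]\). In both regions \(\phi\,(f-c)\ge0\) pointwise. Hence
\[
        \int_0^4\phi f\,dy=\int_0^4\phi\,(f-c)\,dy+c\int_0^4\phi\,dy=\int_0^4\phi\,(f-c)\,dy\ge0,
\]
which gives \(J_E(t)\le\mathcal I_{[4-\ell,4]}(t)\).

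Alternatively, the same conclusion follows from a layer-cake argument. Write \(f(y)=\int_0^\infty\one_{\{f(y)>s\}}\,ds\); each superlevel set \(\{f>s\}\cap[0,4]\) is an interval \((\tau,4]\), and
\[
        |E\cap(\tau,4]|\le\min(\ell,4-\tau)=|[4-\ell,4]\cap(\tau,4]|.
\]
Integrating over \(s\) gives the claim.

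There is no real obstacle here. The only points to check are that \(\ell\le4\), so the comparison interval lies inside \([0,4]\), and that the finite-union structure of \(E\) is only used to make \(\one_E\) measurable with \(|E|=\ell\). The lemma in fact holds for any measurable \(E\subset[0,4]\).
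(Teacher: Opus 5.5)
Your proof is correct and is essentially the paper's argument. The paper also pivots at the value $f(4-\ell)$: it splits off $A=E\setminus[4-\ell,4]$ and $B=[4-\ell,4]\setminus E$, which have equal measure, and uses $\int_A f\le |A|\,f(4-\ell)=|B|\,f(4-\ell)\le\int_B f$, which is exactly your pointwise inequality $\phi\,(f-c)\ge0$ combined with $\int\phi=0$.
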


\begin{proof}
Let \(f_t(y)=(y-t)_+\). Then \(f_t\) is non-decreasing. Since \(E\) and \([4-\ell,4]\) have the same measure, the two sets
\[
        A=E\setminus [4-\ell,4],
        \qquad
        B=[4-\ell,4]\setminus E
\]
also have the same measure. Moreover, $A\subset[0,4-\ell]$ and $B\subset[4-\ell,4]$. Therefore
\[
        \int_A f_t(y)\,dy
        \le |A| f_t(4-\ell)
        =
        |B| f_t(4-\ell)
        \le
        \int_B f_t(y)\,dy .
\]
Adding the common contribution from \(E\cap [4-\ell,4]\), we get
\[
        \int_E f_t(y)\,dy
        \le
        \int_{[4-\ell,4]} f_t(y)\,dy=\mathcal I_{[4-\ell,4]}(t).
\]
This is exactly the desired inequality.
\end{proof}

We shall need two path-cost estimates: one for extending a path by one endpoint,
and one for splicing two paths through a new center.

\begin{lemma}
\label{lem:path-costs-sl2}
For every \(n\ge2\) and every \(t\ge0\),
\[
        \mathsf S_t(P_n)-\mathsf S_t(P_{n-1})
        \le
        2\mathcal I_{[3,4]}(t).
\]
Moreover, for every \(a,b\ge1\) and every \(t\ge0\),
\[
        \mathsf S_t(P_{a+b+1})-\mathsf S_t(P_a)-\mathsf S_t(P_b)
        \le
        2\mathcal I_{[2,4]}(t).
\]
\end{lemma}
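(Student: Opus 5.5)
The factor $2$ in front of $\mathcal I$ is what lets both estimates be read off from squared stop-loss differences. Indeed, for every interval $[a,c]$,
\[
        2\mathcal I_{[a,c]}(t)=2\int_a^c(u-t)_+\,du=(c-t)_+^2-(a-t)_+^2 .
\]
In particular $2\mathcal I_{[3,4]}(t)=(4-t)_+^2-(3-t)_+^2$, which is exactly the right-hand side of \eqref{eq:ststst}. I would therefore derive the first inequality directly from Theorem~\ref{thm:a2-sl2}, and the second from a spectral-shift (interlacing) argument combined with the packing Lemma~\ref{lem:spectral-shift-packing}.

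\emph{First inequality.} Apply Theorem~\ref{thm:a2-sl2} with $a=n-1\ge1$ and $b=1$. Since $P_1$ has no edges, $\mathsf S_t(P_1)=0$. This gives $\mathsf S_t(P_n)-\mathsf S_t(P_{n-1})\le(4-t)_+^2-(3-t)_+^2=2\mathcal I_{[3,4]}(t)$.

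\emph{Second inequality.} One might try to telescope two applications of Theorem~\ref{thm:a2-sl2}. That produces $4\mathcal I_{[3,4]}(t)$, which by translation monotonicity is at least, not at most, $2\mathcal I_{[2,4]}(t)$. So I would instead argue spectrally.

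Regard $P_{a+b+1}$ as $P_a\sqcup P_b$ with a new center $v$ joined to one endpoint of each path. Choose the bipartitions, as in the proof of Lemma~\ref{lem:path-deficit-r1}, so that both endpoints lie on the right side and $v$ on the left. Then $M_1=M_0+bb^\top$, where $M_0$ is block diagonal built from $P_a$ and $P_b$, and $\|b\|_2^2=2$.

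Let $\lambda_1^0\le\cdots\le\lambda_m^0$ and $\lambda_1^1\le\cdots\le\lambda_m^1$ be the eigenvalues of $M_0$ and $M_1$. By rank-one interlacing,
\[
        \lambda_i^0\le\lambda_i^1\le\lambda_{i+1}^0 .
\]
Hence the intervals $[\lambda_i^0,\lambda_i^1]$ have pairwise disjoint interiors. Their union $E$ has total length
\[
        \sum_i(\lambda_i^1-\lambda_i^0)=\tr(bb^\top)=2 .
\]
Moreover $E\subset[0,4]$, because every eigenvalue of $M_1$ is a squared singular value of a path and so is at most $4$.

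Writing $f(y)=(y-t)_+^2$, we get
\[
        \mathsf S_t(P_{a+b+1})-\mathsf S_t(P_a)-\mathsf S_t(P_b)
        =\sum_i\bigl(f(\lambda_i^1)-f(\lambda_i^0)\bigr)
        =\int_E 2(y-t)_+\,dy=2J_E(t).
\]
Zero squared singular values contribute nothing here. Lemma~\ref{lem:spectral-shift-packing} with $\ell=2$ then gives $2J_E(t)\le2\mathcal I_{[2,4]}(t)$, as required.

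The same argument with a single pendant vertex, where $\|b\|_2^2=1$, would reprove the first inequality independently of Theorem~\ref{thm:a2-sl2}.

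The only point needing care is the bookkeeping of zero eigenvalues and the disjointness of the intervals $[\lambda_i^0,\lambda_i^1]$. Both follow from standard Cauchy/Weyl interlacing for a positive semidefinite rank-one update, so I expect no serious obstacle.
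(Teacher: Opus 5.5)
Your proof is correct. For the second inequality it is exactly the paper's argument: put the new center on the left so that $M_1=M_0+bb^\top$ with $\|b\|_2^2=2$, use rank-one interlacing to write $\mathsf S_t(P_{a+b+1})-\mathsf S_t(P_a)-\mathsf S_t(P_b)=2J_E(t)$ with $E\subset[0,4]$ and $|E|=2$, and finish with Lemma~\ref{lem:spectral-shift-packing}.

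The only difference is your primary route to the first inequality. The paper proves it by the same interlacing-and-packing argument with $\|b\|_2^2=1$, which is the alternative you mention in your last paragraph. You instead specialize Theorem~\ref{thm:a2-sl2} to $(a,b)=(n-1,1)$. That specialization is valid and not circular: Theorem~\ref{thm:a2-sl2} is proved earlier without this lemma, $\mathsf S_t(P_1)=0$, and $(4-t)_+^2-(3-t)_+^2=2\mathcal I_{[3,4]}(t)$.

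The cost is that an elementary fact now depends on the sawtooth bound of Lemma~\ref{lem:sawtooth-bound} and on the computer-certified Lemma~\ref{lem:a2-finite-certificates}. The paper's uniform treatment keeps the lemma self-contained. It also explains why the endpoint case is easy while Theorem~\ref{thm:a2-sl2} is not. Attaching a pendant vertex is a positive semidefinite rank-one update of $B^\top B$, so interlacing and packing apply. Joining two paths that each have at least two vertices by a single edge instead perturbs $B^\top B$ (and $BB^\top$) by an indefinite rank-two matrix. There the packing argument is unavailable, and the general splicing bound needs the explicit path spectrum.
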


\begin{proof}
We prove both estimates in the same way.  Let \(G_0\) be the graph before the
update and \(G_1\) the graph after the update.  In the first case,
\(G_0=P_{n-1}\) and \(G_1=P_n\).  In the second case,
\(G_0=P_a\sqcup P_b\) and \(G_1=P_{a+b+1}\).

Choose the bipartition so that the newly added vertex lies on the left side.
Thus the old right-side vertices remain the column set.  Let \(B_0\) and
\(B_1\) be the biadjacency matrices before and after the update, with this
common column set. Now we set $M_0:=B_0^\top B_0$ and $M_1:=B_1^\top B_1$. Since
\[
        B_1=
        \begin{pmatrix}
        B_0\\
        b^\top
        \end{pmatrix},
\]
we have $M_1=M_0+bb^\top$.

Let
\[
        \alpha_1\ge\alpha_2\ge\cdots\ge0,
        \qquad
        \beta_1\ge\beta_2\ge\cdots\ge0
\]
be the eigenvalues of \(M_0\) and \(M_1\), respectively, padded with zeros if
necessary. Since \(M_1=M_0+bb^\top\) is a rank-one positive semidefinite
perturbation of the Hermitian matrix \(M_0\), the rank-one interlacing theorem
for Hermitian matrices gives
\[
        \beta_1\ge \alpha_1\ge \beta_2\ge \alpha_2\ge\cdots\ge0;
\]
see, for example, \cite[Corollary~4.3.9]{HornJohnson2012}.
Therefore the intervals \([\alpha_i,\beta_i]\) are pairwise disjoint except possibly for common endpoints.

Let $E:=\bigcup_i[\alpha_i,\beta_i]$. Since endpoint overlaps have measure zero, we have
\[
        |E|
        =
        \sum_i(\beta_i-\alpha_i)
        =
        \tr(M_1-M_0)
        =
        \tr(bb^\top)
        =
        \|b\|_2^2 .
\]
Since these eigenvalues are precisely the squared singular
values involved in \(\mathsf S_t\), we have
\[
\begin{aligned}
        \mathsf S_t(G_1)-\mathsf S_t(G_0)
        &=
        \sum_i\Bigl((\beta_i-t)_+^2-(\alpha_i-t)_+^2\Bigr)  \\
        &=
        2\sum_i\int_{\alpha_i}^{\beta_i}(y-t)_+\,dy
        =
        2\int_E (y-t)_+\,dy
        =
        2J_E(t).
\end{aligned}
\]
All graphs appearing in the two updates are paths or disjoint unions of paths.
Hence all their squared singular values lie in \([0,4]\), and so
\(E\subset[0,4]\).

For the endpoint update \(P_{n-1}\rightsquigarrow P_n\), the vector \(b\) has
exactly one nonzero entry. Thus \(\|b\|_2^2=1\). By
Lemma~\ref{lem:spectral-shift-packing}, we know that $J_E(t)\le \mathcal I_{[3,4]}(t)$. This proves
\[
        \mathsf S_t(P_n)-\mathsf S_t(P_{n-1})
        \le
        2\mathcal I_{[3,4]}(t).
\]

For the two-arm update $P_a\sqcup P_b\rightsquigarrow P_{a+b+1}$, we choose the two endpoints to be on the right side and insert a new left-side center adjacent to both of them.  Then \(b\) has exactly two nonzero entries, so \(\|b\|_2^2=2\). Again by Lemma~\ref{lem:spectral-shift-packing}, we know that $J_E(t)\le \mathcal I_{[2,4]}(t)$, which proves
\[
        \mathsf S_t(P_{a+b+1})-\mathsf S_t(P_a)-\mathsf S_t(P_b)
        \le
        2\mathcal I_{[2,4]}(t).
\qedhere\]
\end{proof}

Repeated applications of the preceding path-cost estimates give the following
two bounds.

\begin{corollary}
\label{cor:repeated-path-costs-sl2}
For \(r\ge1\), \(m_1,\ldots,m_r\ge1\), and \(t\ge0\),
\[
        \mathsf S_t(P_{m_1+\cdots+m_r})
        -
        \sum_{i=1}^r\mathsf S_t(P_{m_i})
        \le
        2(r-1)\mathcal I_{[3,4]}(t).
\]
If \(r\ge2\), then
\[
        \mathsf S_t(P_{1+m_1+\cdots+m_r})
        -
        \sum_{i=1}^r\mathsf S_t(P_{m_i})
        \le
        2\mathcal I_{[2,4]}(t)
        +
        2(r-2)\mathcal I_{[3,4]}(t).
\]
\end{corollary}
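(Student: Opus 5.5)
The plan is to argue by induction on the number of pieces, peeling off one path at a time with the two estimates of Lemma~\ref{lem:path-costs-sl2}.

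\emph{First bound.} Induct on \(r\). For \(r=1\) the left-hand side is zero and so is the right-hand side. For \(r\ge2\), put \(N=m_1+\cdots+m_{r-1}\). Write
\[
\mathsf S_t(P_{N+m_r})-\sum_{i=1}^{r}\mathsf S_t(P_{m_i})
=\bigl[\mathsf S_t(P_{N+m_r})-\mathsf S_t(P_N)-\mathsf S_t(P_{m_r})\bigr]
+\Bigl[\mathsf S_t(P_N)-\sum_{i=1}^{r-1}\mathsf S_t(P_{m_i})\Bigr].
\]
The second bracket is at most \(2(r-2)\mathcal I_{[3,4]}(t)\) by the induction hypothesis. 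So it suffices to prove the two-path concatenation bound
\[
\mathsf S_t(P_{a+b})-\mathsf S_t(P_a)-\mathsf S_t(P_b)\le 2\mathcal I_{[3,4]}(t),\qquad a,b\ge1 .
\]
This follows directly from Theorem~\ref{thm:a2-sl2}, since
\[
(4-t)_+^2-(3-t)_+^2=2\int_3^4(u-t)_+\,du=2\mathcal I_{[3,4]}(t).
\]
This identity is a one-line check: differentiate \((u-t)_+^2\) in \(u\) and integrate from \(3\) to \(4\).

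An alternative that avoids Theorem~\ref{thm:a2-sl2} would be to build \(P_{a+b}\) from \(P_a\sqcup P_b\) as a rank-one update adding one edge. That update is not a single-vertex addition, so I prefer the theorem.

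\emph{Second bound.} Now \(r\ge2\). First splice the first two paths through the new center by the second estimate of Lemma~\ref{lem:path-costs-sl2}:
\[
\mathsf S_t(P_{1+m_1+m_2})-\mathsf S_t(P_{m_1})-\mathsf S_t(P_{m_2})\le2\mathcal I_{[2,4]}(t).
\]
Next, apply the first bound to the \(r-1\) pieces \(1+m_1+m_2,m_3,\ldots,m_r\). This gives
\[
\mathsf S_t(P_{1+m_1+\cdots+m_r})-\mathsf S_t(P_{1+m_1+m_2})-\sum_{i\ge3}\mathsf S_t(P_{m_i})\le 2(r-2)\mathcal I_{[3,4]}(t).
\]
Adding the two inequalities telescopes to the claim. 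When \(r=2\), the second step is vacuous.

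The only nontrivial input is identifying the right-hand side of Theorem~\ref{thm:a2-sl2} with \(2\mathcal I_{[3,4]}(t)\); everything else is bookkeeping with telescoping sums. I therefore expect no real obstacle beyond checking that the index ranges match (\(a,b\ge1\) in each application).
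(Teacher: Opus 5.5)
Your proof is correct and follows the paper's argument. The first bound is an induction on $r$ driven by Theorem~\ref{thm:a2-sl2}, which rests on the identity $(4-t)_+^2-(3-t)_+^2=2\mathcal I_{[3,4]}(t)$; you state this identity, while the paper leaves it implicit. The second bound starts with one center-splice from Lemma~\ref{lem:path-costs-sl2} and then performs repeated concatenations. The only difference there is cosmetic: you invoke the already-established first bound for the pieces $1+m_1+m_2,m_3,\ldots,m_r$, whereas the paper unrolls this into separate applications of Theorem~\ref{thm:a2-sl2} for $k=3,\ldots,r$.
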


\begin{proof}
We first prove the first estimate by induction on \(r\).  The case \(r=1\) is
trivial.  Put \(M=m_1+\cdots+m_{r-1}\).  For \(r\ge2\), Theorem~\ref{thm:a2-sl2}
gives
\[
        \mathsf S_t(P_{M+m_r})
        -
        \mathsf S_t(P_M)
        -
        \mathsf S_t(P_{m_r})
        \le
        2\mathcal I_{[3,4]}(t).
\]
Adding the induction hypothesis applied to \(m_1,\ldots,m_{r-1}\) gives
\[
        \mathsf S_t(P_{m_1+\cdots+m_r})
        -
        \sum_{i=1}^r\mathsf S_t(P_{m_i})
        \le
        2(r-1)\mathcal I_{[3,4]}(t).
\]

For the second estimate, first insert one new center between \(P_{m_1}\) and \(P_{m_2}\). By Lemma~\ref{lem:path-costs-sl2},
\[
        \mathsf S_t(P_{1+m_1+m_2})
        -
        \mathsf S_t(P_{m_1})
        -
        \mathsf S_t(P_{m_2})
        \le
        2\mathcal I_{[2,4]}(t).
\]
For each \(k=3,\ldots,r\), Theorem~\ref{thm:a2-sl2} gives
\[
\mathsf S_t(P_{1+m_1+\cdots+m_k})
        -
        \mathsf S_t(P_{1+m_1+\cdots+m_{k-1}})
        -
        \mathsf S_t(P_{m_k})
        \le
        2\mathcal I_{[3,4]}(t).
\]
Summing these inequalities for \(k=3,\ldots,r\) together with the first-center estimate proves
\[
        \mathsf S_t(P_{1+m_1+\cdots+m_r})
        -
        \sum_{i=1}^r\mathsf S_t(P_{m_i})
        \le
        2\mathcal I_{[2,4]}(t)
        +
        2(r-2)\mathcal I_{[3,4]}(t).
\qedhere\]
\end{proof}

We shall also need the following elementary interval domination estimates. They are used to compare the rank-one local gain with the path costs above.

\begin{lemma}
\label{lem:interval-domination}
Let \(d,r\) be integers with \(d\ge r+2\) and \(r\ge1\). Then, for every \(t\ge0\),
\[
        \mathcal I_{[(d-r+1)/d,(d-r+1)/d+d]}(t)\ge r\,\mathcal I_{[3,4]}(t).
\]
In the borderline case \(d=r+2\), one has the sharper estimate
\[
        \mathcal I_{[3/d,d+3/d]}(t)
        \ge
        \mathcal I_{[2,4]}(t)+(d-3)\mathcal I_{[3,4]}(t).
\]
\end{lemma}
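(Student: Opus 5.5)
The plan is to prove both inequalities by one monotone-tail argument. Represent each side as a double integral of a tail-length function, and observe that the difference of the tail-length functions changes sign in a ``positive, then non-positive, then non-negative'' pattern. With that pattern, everything reduces to the single inequality at \(t=0\).

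\textbf{Step 1 (tail representation).} For a finite multiset of intervals \(\mathcal F\) (a union of intervals counted with multiplicity), write \(\varphi_{\mathcal F}(s)\) for the total length of \(\mathcal F\cap(s,\infty)\). Since \((u-t)_+=\int_t^\infty \one_{u>s}\,ds\), Fubini gives
\[
\sum_{J\in\mathcal F}\mathcal I_J(t)=\int_t^\infty \varphi_{\mathcal F}(s)\,ds .
\]
Let \(E\) be the single interval on the left-hand side and \(\mathcal F\) the multiset on the right-hand side. Put
\[
D(t):=\int_t^\infty\bigl(\varphi_E-\varphi_{\mathcal F}\bigr)(s)\,ds .
\]
Then \(D(t)\to0\) as \(t\to\infty\), and \(D'(t)=\varphi_{\mathcal F}(t)-\varphi_E(t)\) except at finitely many points. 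We must show \(D\ge0\) on \([0,\infty)\).

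\textbf{Step 2 (shape of \(\varphi_E-\varphi_{\mathcal F}\)).} Write \(a\) for the left endpoint of \(E\); thus \(a=(d-r+1)/d\in(0,1]\), respectively \(a=3/d\) in the borderline case. The function \(\varphi_E\) equals \(d\) on \([0,a]\), then decreases with slope \(-1\), and vanishes from \(a+d\ge4\) on.

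In the first claim, \(\varphi_{\mathcal F}(s)=r\min\{1,(4-s)_+\}\). The difference \(\varphi_E-\varphi_{\mathcal F}\) is then \(\ge d-r>0\) on \([0,a]\), decreasing on \([a,3]\), increasing on \([3,4]\) because it has slope \(r-1\ge0\) there, and \(\ge0\) on \([4,\infty)\).

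In the borderline case, the multiset \([2,4]\cup(d-3)\cdot[3,4]\) gives
\[
\varphi_{\mathcal F}=d-2 \text{ on } [0,2],\qquad \text{slope } -1 \text{ on } [2,3],\qquad \text{slope } -(d-2) \text{ on } [3,4].
\]
The difference is therefore \(\ge2>0\) on \([0,a]\), equals \(a+2-s\) on \([a,2]\), is the negative constant \(a-1\) on \([2,3]\) when \(d>3\), is increasing on \([3,4]\), and is \(\ge0\) afterwards. In both cases the set where \(\varphi_E<\varphi_{\mathcal F}\) is a single interval \((s_0,s_1)\) with \(s_1\le4\). (If this set is empty, \(D\ge0\) is immediate.)

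\textbf{Step 3 (reduction to \(t=0\)).} By Step 2, \(D\) is decreasing on \([0,s_0]\), increasing on \([s_0,s_1]\), and decreasing on \([s_1,\infty)\) towards its limit \(0\).
\begin{itemize}
\item On \([s_1,\infty)\) we have \(D\ge0\) directly, since the integrand is non-negative there; in particular \(D(s_1)\ge0\).
\item On \([0,s_0]\), \(D\) decreases, so \(D(t)\ge D(s_0)\) for \(t\le s_0\).
\item On \([s_0,s_1]\), \(D\) increases, so \(D(s_0)\le D(t)\le D(s_1)\).
\end{itemize}
Hence the minimum of \(D\) over \([0,\infty)\) is \(\min\{D(s_0),0\}\), and it suffices to show \(D(s_0)\ge0\). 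Since \(D\) decreases on \([0,s_0]\), we have \(D(s_0)\le D(0)\), so the inequality \(D(0)\ge0\) does not by itself give \(D(s_0)\ge0\). I therefore expect to check \(D(s_0)\ge0\) directly: \(D(s_0)=\int_{s_0}^\infty(\varphi_E-\varphi_{\mathcal F})\,ds\) is an explicit piecewise-quadratic expression in \(d,r\) (respectively \(d\)).

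The \(t=0\) values are at least consistent with this. One has \(D(0)=\int_E u\,du-\sum_{J\in\mathcal F}\int_J u\,du\). In the first claim this is
\[
d\Bigl(a+\frac d2\Bigr)-\frac{7r}{2}=(d-r+1)+\frac{d^2}{2}-\frac{7r}{2}.
\]
Using \(d\ge r+2\) and monotonicity in \(d\), it reduces to \(r^2-3r+10>0\). In the borderline case it is \(3+d^2/2-\bigl(6+\tfrac72(d-3)\bigr)\), which is positive because \(d^2-7d+15\) has negative discriminant.

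\textbf{Main obstacle.} The hard part is the sign bookkeeping in Step 2, which must hold uniformly in \(d,r\): that the negative region of \(\varphi_E-\varphi_{\mathcal F}\) is a single interval inside \([a,4]\), including the degenerate cases \(a=1\) and \(d=3\), where the middle segment collapses. The other delicate point is the direct verification of \(D(s_0)\ge0\); there I would locate \(s_0\) explicitly from the piecewise-linear formulas in Step 2 and then evaluate the resulting integral by cases.
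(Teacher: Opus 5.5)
\textbf{There is a genuine gap: the decisive inequality is never proved.} Your tail representation is correct, and so is the monotonicity analysis in Step~3. It gives $\min_{t\ge0}D(t)=\min\{D(s_0),0\}$, where $s_0$ is the left end of the set where $\varphi_E<\varphi_{\mathcal F}$. But the proposal stops exactly where the content of the lemma lies. Your opening claim that everything reduces to $t=0$ is contradicted by your own Step~3, which shows $D(s_0)\le D(0)$. So the displayed values of $D(0)$ prove nothing, and the inequality $D(s_0)\ge0$ is only announced, never verified. As written, neither inequality of the lemma is established.

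For the first claim the missing check is short. Put $m=d-r$. The negative set is empty unless $m=2$ and $r\ge2$:
\begin{itemize}
\item for $m\ge3$, the difference is $a+m-3>0$ at $s=3$ and does not decrease afterwards;
\item for $m=2$, $r=1$, it vanishes identically on $[3,4]$.
\end{itemize}
When the set is nonempty, $s_0=a+2$ and
\[
D(s_0)=\tfrac12r^2-r\bigl(\tfrac32-a\bigr)=\frac{r^2(r-1)}{2(r+2)}\ge0 .
\]

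\textbf{In the borderline case, the data you would feed into this check is wrong.} On $[0,2]$ the multiset $[2,4]\cup(d-3)\cdot[3,4]$ has tail length $2+(d-3)=d-1$, not $d-2$. Hence $\varphi_E-\varphi_{\mathcal F}$ equals $1$ on $[0,a]$ and $a+1-s$ on $[a,2]$. For $d\ge4$ it therefore turns negative already at $s_0=a+1<2$. Your formulas jump from $a$ to $a-1$ at $s=2$, although both tail functions are continuous; that jump is the symptom of the slip.

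Locating $s_0$ from your description would lead you to test $s=2$, which is not the minimizer. For $d=4$ one has $D(2)=9/32$ but $D(7/4)=1/4$. The correct check is
\[
D(a+1)=\tfrac12(d-1)^2-2(2-a)-(d-3)\bigl(\tfrac52-a\bigr)=\frac{(d-3)(d^2-4d+2)}{2d}\ge0,
\]
and the case $d=3$ is trivial.

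These two values are exactly the critical values $H(a+m)$ and $H_d(a+1)$ in the paper's direct piecewise analysis in $t$. With them supplied, your crossing argument becomes a complete proof. It is also slightly leaner than the paper's: the sign pattern of $D'=\varphi_{\mathcal F}-\varphi_E$ already controls everything to the right of $s_0$. This makes the paper's separate checks at $t=3$ (namely $B-3\ge\sqrt r$ and $B-3\ge\sqrt{d-2}$) and at $t=2$ unnecessary.
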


\begin{proof}
The proof is elementary but slightly lengthy, and is deferred to
Appendix~\ref{app:interval-domination}.
\end{proof}

The next lemma is the deletion criterion used in the minimal-counterexample
argument.  It says that a vertex whose degree is sufficiently large compared
with the number of components created by its deletion can be removed without
losing the desired stop-loss comparison.  The borderline case \(d=q+1\) is
handled by a two-arm path-cost estimate. A \emph{non-isolated component} means a component that is not a single isolated vertex.

\begin{lemma}
\label{lem:sl2-high-redundancy}
Let \(G\) be connected and bipartite, let \(v\in V(G)\), and put $d=d_G(v)$. Let \(G_1,\ldots,G_q\) be the connected components of \(G-v\).  Assume that
\[
        \mathsf S_t(G_i)\ge \mathsf S_t(P_{|V(G_i)|})
        \qquad(i=1,\ldots,q;\ t\ge0).
\]
If \(d\ge q+2\), then
\[
        \mathsf S_t(G)\ge \mathsf S_t(P_{|V(G)|})
        \qquad(t\ge0).
\]
Moreover, if \(d=q+1\), if \(G-v\) has at least two non-isolated components,
and if one component of \(G-v\) contains at least two neighbors of \(v\), then
\[
        \mathsf S_t(G)\ge \mathsf S_t(P_{|V(G)|})
        \qquad(t\ge0).
\]
\end{lemma}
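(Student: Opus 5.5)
The plan is to compare the rank-one gain from reinserting \(v\) with the cost of splicing paths, and to reduce both cases to Lemma~\ref{lem:interval-domination}. Write \(n_i=|V(G_i)|\) and \(n=|V(G)|=1+\sum_i n_i\). The case \(q=0\) is trivial, so assume \(q\ge1\). Additivity and the hypothesis on the components give
\[
\mathsf S_t(G-v)\ge\sum_i\mathsf S_t(P_{n_i}).
\]
Lemma~\ref{lem:sl2-rank-one} gives
\[
\mathsf S_t(G)-\mathsf S_t(G-v)\ge 2\mathcal I_{[\gamma/d,\gamma/d+d]}(t).
\]
It therefore suffices to show that
\[
2\mathcal I_{[\gamma/d,\gamma/d+d]}(t)\ge \mathsf S_t(P_n)-\sum_i\mathsf S_t(P_{n_i}).
\]
By translation monotonicity of \(\mathcal I\), a lower bound on \(\gamma\) may replace \(\gamma\) in the left endpoint.

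\textbf{Lower bound on \(\gamma\).} Let \(i_0\) be the number of isolated components of \(G-v\) and \(k=q-i_0\) the number of non-isolated ones. Every isolated component is a single neighbor of \(v\). Hence exactly \(d-i_0\) neighbors of \(v\) have positive degree in \(G-v\), and Lemma~\ref{lem:gamma-neighbor-count} gives \(\gamma\ge s=d-i_0=d-q+k\).

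\textbf{Case \(d\ge q+2\).} If \(k=0\), then \(G\) is a star centred at \(v\) and \(d=q\), which is impossible. So \(k\ge1\) and \(\gamma\ge d-q+1\). Apply Lemma~\ref{lem:interval-domination} with \(r=q\); this is allowed since \(d\ge r+2\). Together with translation monotonicity it gives
\[
2\mathcal I_{[\gamma/d,\gamma/d+d]}(t)\ge 2q\,\mathcal I_{[3,4]}(t).
\]
On the cost side, suppose first \(q\ge2\). The second estimate of Corollary~\ref{cor:repeated-path-costs-sl2} bounds the cost by \(2\mathcal I_{[2,4]}+2(q-2)\mathcal I_{[3,4]}\). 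Translation monotonicity gives \(\mathcal I_{[2,4]}=\mathcal I_{[2,3]}+\mathcal I_{[3,4]}\le 2\mathcal I_{[3,4]}\), so the cost is at most \(2q\,\mathcal I_{[3,4]}\). If \(q=1\), the cost is \(\mathsf S_t(P_{n_1+1})-\mathsf S_t(P_{n_1})\le 2\mathcal I_{[3,4]}\) by Lemma~\ref{lem:path-costs-sl2}. In both cases the gain dominates the cost.

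\textbf{Borderline case \(d=q+1\).} By assumption \(k\ge2\), so \(q\ge2\). Each of these components contains a neighbor of \(v\), and one of them contains at least two. Every such neighbor has positive degree in \(G-v\), so \(s\ge3\) and hence \(\gamma\ge3\). Set \(r=d-2=q-1\ge1\). The sharper borderline estimate of Lemma~\ref{lem:interval-domination}, combined with translation monotonicity, gives
\[
2\mathcal I_{[\gamma/d,\gamma/d+d]}(t)\ge 2\mathcal I_{[2,4]}(t)+2(d-3)\mathcal I_{[3,4]}(t).
\]
Since \(d-3=q-2\), this is exactly the second bound of Corollary~\ref{cor:repeated-path-costs-sl2}, which therefore covers the cost.

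The main obstacle is the bookkeeping in the \(\gamma\) bound: it must match exactly the left endpoint \((d-r+1)/d\), respectively \(3/d\), required by the interval-domination lemma. That lemma itself is already available, so only this bookkeeping needs care.
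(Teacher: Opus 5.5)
Your proof is correct and follows the paper's argument: the same bound $\gamma\ge s$ obtained by counting isolated components, the rank-one interval gain, and Lemma~\ref{lem:interval-domination} in both the main and borderline cases. The only minor difference is on the cost side when $d\ge q+2$. The paper applies the first estimate of Corollary~\ref{cor:repeated-path-costs-sl2} to the $q+1$ paths $P_1,P_{n_1},\ldots,P_{n_q}$, using $\mathsf S_t(P_1)=0$. You instead use the second estimate together with $\mathcal I_{[2,4]}\le 2\mathcal I_{[3,4]}$, and Lemma~\ref{lem:path-costs-sl2} when $q=1$. Both routes give the same bound $2q\,\mathcal I_{[3,4]}(t)$.
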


\begin{proof}
Choose the bipartition so that \(v\) lies on the left side, and let
\(M,b,\gamma\) be as in Lemma~\ref{lem:sl2-rank-one}.  Put
\[
        n_i=|V(G_i)|,
        \qquad
        n=|V(G)|=1+\sum_{i=1}^q n_i.
\]

We first prove the assertion under the hypothesis \(d\ge q+2\).  Let \(s\) be the number of neighbors \(x\in N_G(v)\) such that \(d_{G-v}(x)\ge1\).

The remaining \(d-s\) neighbors of \(v\) are isolated vertices in \(G-v\), and
hence account for \(d-s\) isolated components.  Since \(G\) is connected, every
component of \(G-v\) contains at least one neighbor of \(v\).  If all components
of \(G-v\) were isolated, then \(q=d\), contradicting \(d\ge q+2\).  Thus
\(G-v\) has at least one non-isolated component, and so $q\ge (d-s)+1$. Hence $s\ge d-q+1$. By Lemma~\ref{lem:gamma-neighbor-count},
\[
\gamma\ge s\ge d-q+1.
\]
Therefore $\gamma/d \ge (d-q+1)/d$. Lemma~\ref{lem:sl2-rank-one} and translation monotonicity give
\[
        \mathsf S_t(G)-\mathsf S_t(G-v)
        \ge
        2\mathcal I_{[\gamma/d,\gamma/d+d]}(t)  
        \ge
        2\mathcal I_{[(d-q+1)/d,(d-q+1)/d+d]}(t).
\]
Applying Lemma~\ref{lem:interval-domination} with \(r=q\), we get
\[
        \mathsf S_t(G)-\mathsf S_t(G-v)
        \ge
        2q\mathcal I_{[3,4]}(t).
\]
On the other hand, Corollary~\ref{cor:repeated-path-costs-sl2}, applied to the \(q+1\) paths \(P_1,P_{n_1},\ldots,P_{n_q}\), gives
\[
        \mathsf S_t(P_n)-\mathsf S_t(P_1)-\sum_{i=1}^q\mathsf S_t(P_{n_i})
        \le
        2q\mathcal I_{[3,4]}(t).
\]
Since \(\mathsf S_t(P_1)=0\), it follows that
\[
        \mathsf S_t(G)-\mathsf S_t(G-v)
        \ge
        2q\mathcal I_{[3,4]}(t)
        \ge
        \mathsf S_t(P_n)-\sum_{i=1}^q\mathsf S_t(P_{n_i}).
\]
Using additivity over the components of \(G-v\) and the component hypotheses,
\[
        \mathsf S_t(G-v)
        =
        \sum_{i=1}^q\mathsf S_t(G_i)
        \ge
        \sum_{i=1}^q\mathsf S_t(P_{n_i}).
\]
Therefore
\[
\begin{aligned}
        \mathsf S_t(G)
        &=
        \mathsf S_t(G-v)
        +
        \bigl(\mathsf S_t(G)-\mathsf S_t(G-v)\bigr)\\
        &\ge
        \sum_{i=1}^q\mathsf S_t(P_{n_i})
        +
        \left(
        \mathsf S_t(P_n)-\sum_{i=1}^q\mathsf S_t(P_{n_i})
        \right)\\
        &=
        \mathsf S_t(P_n).
\end{aligned}
\]
This proves the first assertion.

We now prove the borderline assertion.  Assume that \(d=q+1\), that \(G-v\)
has at least two non-isolated components, and that one component of \(G-v\)
contains at least two neighbors of \(v\).  Since \(G\) is connected, every
non-isolated component of \(G-v\) contains at least one neighbor of \(v\).
Therefore the number \(s\) of neighbors of \(v\) that remain non-isolated in
\(G-v\) satisfies
\[
        s\ge3.
\]
Lemma~\ref{lem:gamma-neighbor-count} gives \(\gamma\ge3\).  Hence, by
Lemma~\ref{lem:sl2-rank-one} and translation monotonicity,
\[
        \mathsf S_t(G)-\mathsf S_t(G-v)
        \ge
        2\mathcal I_{[\gamma/d,\gamma/d+d]}(t) 
        \ge
        2\mathcal I_{[3/d,d+3/d]}(t).
\]
The sharper part of Lemma~\ref{lem:interval-domination} gives
\[
        \mathsf S_t(G)-\mathsf S_t(G-v)
        \ge
        2\mathcal I_{[2,4]}(t)+2(d-3)\mathcal I_{[3,4]}(t).
\]
Since \(q=d-1\), Corollary~\ref{cor:repeated-path-costs-sl2} gives
\[
\begin{aligned}
        \mathsf S_t(P_n)-\sum_{i=1}^q\mathsf S_t(P_{n_i})
        &\le
        2\mathcal I_{[2,4]}(t)+2(q-2)\mathcal I_{[3,4]}(t)\\
        &=
        2\mathcal I_{[2,4]}(t)+2(d-3)\mathcal I_{[3,4]}(t).
\end{aligned}
\]
Thus the local gain again dominates the path deficit. Combining this with $\sum_{i=1}^q\mathsf S_t(G_i) \ge \sum_{i=1}^q\mathsf S_t(P_{n_i})$ proves $\mathsf S_t(G)\ge \mathsf S_t(P_n)$. This proves the borderline assertion and completes the proof.
\end{proof}

We use the following terminology. In this paper, a \emph{sun graph} means a
graph obtained from an even cycle by attaching any finite number of pendant
leaves to its cycle vertices. A cycle vertex is called \emph{loaded} if at
least one pendant leaf is attached to it. Such a sun graph is called
\emph{sparse} if each cycle vertex supports at most one pendant leaf. In a
sparse sun graph, the loaded cycle vertices are called \emph{\(3\)-separated}
if their pairwise cyclic distances along the base cycle are at least \(3\).

The next proposition reduces a minimal counterexample to the terminal sparse-sun configuration.

\begin{proposition}
\label{prop:sl2-reduction-suns}
Let \(G\) be a connected bipartite graph of minimum order among all connected
bipartite graphs for which there exists \(t\ge0\) such that
\[
        \mathsf S_t(G)<\mathsf S_t(P_{|V(G)|}).
\]
Then \(G\) is a sparse sun graph whose loaded cycle vertices are
\(3\)-separated.
\end{proposition}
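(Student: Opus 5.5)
The plan is to exploit minimality through the deletion criteria of Lemma~\ref{lem:sl2-high-redundancy} and the rank-one bound of Lemma~\ref{lem:sl2-rank-one}. By minimality, every connected bipartite graph of smaller order satisfies $\mathsf S_t(\cdot)\ge\mathsf S_t(P_{|\cdot|})$. So for every vertex $v$, the components $G_1,\ldots,G_q$ of $G-v$ satisfy the hypothesis of Lemma~\ref{lem:sl2-high-redundancy}. Hence no vertex of $G$ may satisfy either deletion criterion of that lemma, and no vertex may satisfy any other local gain-versus-cost inequality that we can verify. Also, $G$ is not a tree by Corollary~\ref{cor:tree-stoploss-path}, and $G$ is not an even cycle by Theorem~\ref{thm:cycle-sl2}.

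\emph{Step 1: structure at cycle vertices.} Let $v$ lie on a cycle. Then two neighbours of $v$ lie in one component of $G-v$, so $q\le d-1$. The first criterion of Lemma~\ref{lem:sl2-high-redundancy} forces $d\le q+1$, hence $d=q+1$ exactly. Thus one component $K_v$ of $G-v$ contains exactly two neighbours of $v$, and every other component contains exactly one. In this situation the borderline criterion must fail, so $G-v$ has at most one non-isolated component. Since $K_v$ contains two neighbours of $v$, it is non-isolated, and therefore every other component of $G-v$ is a single vertex. Consequently every neighbour of $v$ outside $K_v$ is a pendant leaf.

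\emph{Step 2: the block structure.} Take a $2$-connected block $B$ of $G$. By Step~1, each vertex of $B$ has exactly two neighbours in $B$, and all its other neighbours are pendant leaves. A non-leaf neighbour outside $B$ would create a second non-isolated component, which Step~1 excludes. Thus $B$ is $2$-regular and $2$-connected, i.e.\ a cycle, which is even by bipartiteness. Moreover $G$ is $B$ together with pendant leaves, because any other block would hang at a cut vertex of $B$ through a non-leaf neighbour. Hence $G$ is a sun graph, and it has at least one leaf since it is not a pure cycle.

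\emph{Step 3: sparseness and $3$-separation.} Here I would delete unloaded degree-$2$ cycle vertices $u$ and use Lemma~\ref{lem:sl2-rank-one} directly. For such $u$, $G-u$ is connected, so the path cost is that of the single extension $P_{n-1}\rightsquigarrow P_n$, which is at most $2\mathcal I_{[3,4]}(t)$ by Lemma~\ref{lem:path-costs-sl2}. On the other hand, Lemma~\ref{lem:gamma-neighbor-count} gives $\gamma\ge (d_G(x_1)-1)+(d_G(x_2)-1)$ for the two cycle neighbours $x_1,x_2$ of $u$.
\begin{itemize}
\item If the neighbours of $u$ together carry at least two leaves, then $\gamma\ge4$. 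The gain is then at least $2\mathcal I_{[\gamma/2,\gamma/2+2]}(t)\ge 2\mathcal I_{[2,4]}(t)\ge 2\mathcal I_{[3,4]}(t)$ by translation monotonicity.
\item Hence $u$ is deletable, and minimality (through the component hypothesis for $G-u$) gives a contradiction.
\end{itemize}
This excludes two loaded vertices at cyclic distance~$2$ whose common neighbour is unloaded. It also excludes a vertex with $\ge2$ leaves that has an unloaded cycle neighbour.

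What remains is the case where loaded vertices are adjacent on the cycle. This includes a multiply loaded vertex whose two cycle neighbours are both loaded.

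\emph{Main obstacle.} The adjacent-loaded case is where I expect the difficulty. Deleting a loaded vertex $w$ with $j$ leaves gives $d=j+2$ and $q=j+1$, with only one non-isolated component, so the borderline lemma does not apply. The naive comparison is then gain $2\mathcal I_{[3/d,d+3/d]}$ (or with a larger $\gamma$) against cost $2(d-1)\mathcal I_{[3,4]}$. This comparison appears to fail near $t\approx3$ for small $d$, because the Jensen step is too lossy there. My first attempt would be to keep the exact value of $\gamma=\|B_{G-w}b\|_2^2$, which counts squares of common neighbourhoods and is larger when neighbours are loaded. I would combine this with a sharper path cost that treats the isolated $P_1$ components jointly, as a single spliced extension, rather than as $j+1$ separate endpoint costs. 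The comparison would then be closed by an elementary interval-domination estimate in the style of Lemma~\ref{lem:interval-domination}. If this still fails near $t=3$, I would switch to deleting a leaf-adjacent edge configuration and use the exact rank-one interlacing intervals in place of the Jensen envelope.
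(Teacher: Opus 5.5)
Your Steps 1--2 are correct and reach the same sun structure as the paper by a slightly more direct route. The paper first uses $d_G(v)-\kappa(G-v)\le1$ to show every block is a cycle, then the borderline criterion to force star attachments. Your exclusion of the pattern $101$, by deleting the unloaded middle vertex with $\gamma\ge4$, is exactly the paper's argument. The gap is the case you flag yourself and leave open: two adjacent loaded cycle vertices, including a vertex with at least two leaves whose cycle neighbours are both loaded. As written, neither sparseness nor the exclusion of cyclic distance $1$ is proved.

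Your diagnosis of the failure is also off. The Jensen gain is fine; the loss comes from the cost bound $2(d-1)\mathcal I_{[3,4]}(t)$ obtained by chaining endpoint extensions. For $d=3$ at $t=3$, the gain bound is $2\mathcal I_{[1,4]}(3)=1$. The chained cost is $4\mathcal I_{[3,4]}(3)=2$, but the two-arm splice cost is only $2\mathcal I_{[2,4]}(3)=1$.

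The paper closes these cases with two short deletions, using only lemmas already available.

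\emph{Sparseness.} Delete one of the $\ge2$ pendant leaves $\ell$ at $v$, not a cycle vertex. Then $G-\ell$ is connected, $d=1$, and Lemma~\ref{lem:gamma-neighbor-count} gives $\gamma\ge d_G(v)-1\ge3$. So the gain is at least $2\mathcal I_{[3,4]}(t)$, matching the endpoint cost in Lemma~\ref{lem:path-costs-sl2}, whether or not the neighbours of $v$ are loaded.

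\emph{Adjacent loaded $x,y$, once $G$ is sparse.} Delete $x$. Here $d=3$ and $\gamma\ge(d_G(y)-1)+(d_G(z)-1)\ge3$, so the gain is at least $2\mathcal I_{[1,4]}(t)$. Since $G-x=K\sqcup\{\ell_x\}$, the comparator $P_{|K|}\sqcup P_1\rightsquigarrow P_n$ is a two-arm splice through a new centre. By the second half of Lemma~\ref{lem:path-costs-sl2} its cost is at most $2\mathcal I_{[2,4]}(t)\le2\mathcal I_{[1,4]}(t)$.

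Your own ``joint splice'' idea also works directly for a vertex with $j\ge1$ leaves and a loaded cycle neighbour. The loaded neighbour still gives $\gamma\ge3$. In the proof of the borderline criterion, the two-non-isolated-components hypothesis is used only to secure $\gamma\ge3$. So the second half of Corollary~\ref{cor:repeated-path-costs-sl2}, with $r=q=j+1\ge2$, together with the moreover clause of Lemma~\ref{lem:interval-domination}, with $d=j+2$, finishes the case. Without one of these arguments the proposal does not establish the proposition.
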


\begin{proof}
By minimality, every connected bipartite graph of order strictly smaller than
\(|V(G)|\) satisfies the desired comparison for all \(t\ge0\). In particular,
every connected component arising after deleting a vertex of \(G\) satisfies
the comparison. Therefore, whenever one of the local hypotheses in
Lemma~\ref{lem:sl2-high-redundancy} is met, the corresponding deletion criterion may
be applied.

By Corollary~\ref{cor:tree-stoploss-path}, \(G\) is not a tree. Since \(G\) is connected, it therefore contains a cycle. A cycle is \(2\)-connected, and hence it is contained in a maximal \(2\)-connected subgraph, that is, in a \emph{\(2\)-connected block} of \(G\).

Here and below, \(\kappa(H)\) denotes the number of connected components of a
graph \(H\). Clearly, every vertex satisfies
\[
        d_G(v)-\kappa(G-v)\le1.
\]
Indeed, if some vertex \(v\) satisfied \(d_G(v)\ge\kappa(G-v)+2\), then the
first assertion of Lemma~\ref{lem:sl2-high-redundancy}, together with the
minimality of \(G\), would prove the comparison for \(G\) at all \(t\ge0\),
contradicting the choice of \(G\).

Let \(B\) be a \(2\)-connected block of \(G\).  If \(v\in V(B)\), then
\(B-v\) is connected.  Let \(r_v\) be the number of neighbors of \(v\) outside
\(B\), and let \(c_v\) be the number of components of \(G-v\) which do not meet
\(B-v\).  Each such component contains at least one neighbor of \(v\) outside
\(B\), so \(c_v\le r_v\).  Since \(B-v\) is connected,
\[
        \kappa(G-v)\le 1+c_v,
        \qquad
        d_G(v)=d_B(v)+r_v.
\]
Therefore
\[
        d_G(v)-\kappa(G-v)
        \ge
        d_B(v)+r_v-(1+c_v)
        \ge
        d_B(v)-1.
\]
Together with \(d_G(v)-\kappa(G-v)\le1\), this gives \(d_B(v)\le2\) for every \(v\in V(B)\).  Since \(B\) is \(2\)-connected, every vertex of \(B\) has degree at least \(2\) inside \(B\), and therefore \(B\) must be a cycle. Thus every \(2\)-connected block of \(G\) is a cycle, so \(G\) is a cactus. We now fix a cycle block \(C\) of \(G\). Using the structure of cacti, after deleting all edges of \(C\), every connected component of \(G-E(C)\) contains exactly one vertex of \(C\).

We next show that \(G\) is a sun graph.  Fix a cycle block \(C\) of \(G\).

We claim first that, for every \(v\in V(C)\), the graph \(G-v\) contains at most one non-isolated component.  Suppose, to the contrary, that for some \(v\in V(C)\), the graph \(G-v\) contains at least two non-isolated components. The component containing \(C-v\) contains the two cycle neighbors of \(v\), and therefore one component of \(G-v\) contains at least two neighbors of \(v\). Since \(G\) is connected, every other component of \(G-v\) contains at least one neighbor of \(v\).  Hence
\[
        d_G(v)\ge \kappa(G-v)+1.
\]
Combining this with \(d_G(v)-\kappa(G-v)\le1\), we get
\[
        d_G(v)=\kappa(G-v)+1.
\]
Thus the borderline part of Lemma~\ref{lem:sl2-high-redundancy} applies to \(v\), because \(G-v\) has at least two non-isolated components and one of its components contains at least two neighbors of \(v\).  This would make \(v\) deletable, contradicting the minimality of \(G\).  The claim follows.

Now fix \(v\in V(C)\), and let \(D_v\) be the connected component of
\(G-E(C)\) that contains \(v\).  Since \(G\) is a cactus, every component of
\(G-E(C)\) contains exactly one vertex of \(C\); in particular,
\[
        V(D_v)\cap V(C)=\{v\}.
\]
We show that \(D_v\) is a star centered at \(v\).  No vertex of \(D_v-v\) lies
in the same component of \(G-v\) as \(C-v\).  Indeed, if such a path existed in
\(G-v\), then by taking the first vertex of \(C-v\) reached by the path, we
would obtain in \(G-E(C)\) a path from \(D_v\) to a second vertex of \(C\),
contradicting \(V(D_v)\cap V(C)=\{v\}\).

The component of \(G-v\) containing \(C-v\) is non-isolated.  By the claim,
\(G-v\) has at most one non-isolated component.  Therefore every component of
\(D_v-v\) is isolated.  Since \(D_v\) is connected, every vertex of \(D_v-v\)
is adjacent only to \(v\).  Hence \(D_v\) is a star centered at \(v\), with
possibly no leaves.

This holds for every \(v\in V(C)\).  Since every component of \(G-E(C)\) contains exactly one vertex of \(C\), the graph \(G\) is obtained from the cycle \(C\) by attaching pendant leaves to cycle vertices. The cycle \(C\) is even because \(G\) is bipartite. Hence \(G\) is a sun graph.

We now strengthen this to sparsity.  Suppose that some cycle vertex \(v\)
supports at least two pendant leaves.  Delete one such leaf \(\ell\), and put
\(H=G-\ell\).  Then \(H\) is connected.  When \(\ell\) is added back to \(H\),
the added vertex has degree \(1\).  Choose the bipartition so that \(\ell\)
lies on the left side.  In the notation of Lemma~\ref{lem:sl2-rank-one}, we
have \(b=\one_{\{v\}}\), and Lemma~\ref{lem:gamma-neighbor-count} gives
\[
        \gamma
        \ge
        \sum_{w\in N_G(\ell)} (d_G(w)-1)
        =
        d_G(v)-1
        \ge
        3,
\]
because \(v\) is incident with its two cycle edges and with at least two pendant leaves.  Hence Lemma~\ref{lem:sl2-rank-one} and translation monotonicity give
\[
        \mathsf S_t(G)-\mathsf S_t(H)
        \ge
        2\mathcal I_{[\gamma,\gamma+1]}(t)
        \ge
        2\mathcal I_{[3,4]}(t).
\]
By minimality, \(H\) satisfies the comparison with \(P_{n-1}\).  On the other
hand, Lemma~\ref{lem:path-costs-sl2} gives
\[
        \mathsf S_t(P_n)-\mathsf S_t(P_{n-1})
        \le
        2\mathcal I_{[3,4]}(t).
\]
Thus adding \(\ell\) back gives at least as much gain as the endpoint cost of
passing from \(P_{n-1}\) to \(P_n\).  Hence \(G\) would satisfy the desired
comparison, contradicting the minimality of \(G\).  Therefore \(G\) is sparse.

It remains to prove that the loaded cycle vertices are \(3\)-separated.  Since
\(G\) is now known to be a sparse sun graph, it suffices to exclude the cyclic
patterns \(11\) and \(101\), where \(1\) denotes a loaded cycle vertex and
\(0\) denotes an unloaded cycle vertex.

First suppose that two adjacent cycle vertices \(x\) and \(y\) are loaded.
Since \(G\) is sparse, \(x\) supports exactly one pendant leaf; call it
\(\ell_x\).  Delete \(x\).  Then
\[
        G-x=K\sqcup\{\ell_x\},
\]
where \(K\) is connected.  Choose the bipartition so that \(x\) lies on the
left side, and let \(z\) be the other cycle neighbor of \(x\).  By
Lemma~\ref{lem:gamma-neighbor-count},
\[
        \gamma
        \ge
        \sum_{w\in N_G(x)} (d_G(w)-1).
\]
The pendant leaf \(\ell_x\) contributes \(d_G(\ell_x)-1=0\).  Since \(y\) is
loaded and \(G\) is sparse, \(d_G(y)=3\), while \(d_G(z)\ge2\).  Hence
\[
        \gamma
        \ge
        (d_G(y)-1)+(d_G(z)-1)
        \ge
        2+1=3.
\]
Also \(d_G(x)=3\).  Therefore Lemma~\ref{lem:sl2-rank-one} and translation
monotonicity give
\[
        \mathsf S_t(G)-\mathsf S_t(G-x)
        \ge
        2\mathcal I_{[\gamma/3,\gamma/3+3]}(t)
        \ge
        2\mathcal I_{[1,4]}(t).
\]
By minimality, \(K\) satisfies the path comparison, and
\(\mathsf S_t(P_1)=0\).  Moreover, Lemma~\ref{lem:path-costs-sl2} gives
\[
        \mathsf S_t(P_n)-\mathsf S_t(P_{|V(K)|})-\mathsf S_t(P_1)
        \le
        2\mathcal I_{[2,4]}(t)
        \le
        2\mathcal I_{[1,4]}(t).
\]
Thus the local gain from adding \(x\) back dominates the corresponding path
deficit.  This would make \(x\) deletable, contradicting the minimality of
\(G\).  Hence the pattern \(11\) cannot occur.

Next suppose that the pattern \(101\) occurs.  Thus there are three consecutive
cycle vertices \(x,u,z\), where \(x\) and \(z\) are loaded and \(u\) is
unloaded.  Delete \(u\), and put \(H=G-u\).  Then \(H\) is connected.  Choose
the bipartition so that \(u\) lies on the left side.  By
Lemma~\ref{lem:gamma-neighbor-count},
\[
        \gamma
        \ge
        \sum_{w\in N_G(u)} (d_G(w)-1)
        =
        (d_G(x)-1)+(d_G(z)-1)
        =
        2+2=4,
\]
because \(x\) and \(z\) are loaded in a sparse sun graph.  Since \(u\) is
unloaded, \(d_G(u)=2\).  Hence Lemma~\ref{lem:sl2-rank-one} and translation
monotonicity give
\[
        \mathsf S_t(G)-\mathsf S_t(H)
        \ge
        2\mathcal I_{[\gamma/2,\gamma/2+2]}(t)
        \ge
        2\mathcal I_{[2,4]}(t).
\]
By minimality, \(H\) satisfies the path comparison with \(P_{n-1}\).  Meanwhile
Lemma~\ref{lem:path-costs-sl2} gives
\[
        \mathsf S_t(P_n)-\mathsf S_t(P_{n-1})
        \le
        2\mathcal I_{[3,4]}(t)
        \le
        2\mathcal I_{[2,4]}(t).
\]
Thus \(u\) would be deletable, contradicting the minimality of \(G\).  Hence
the pattern \(101\) cannot occur.

Therefore no two loaded cycle vertices are at cyclic distance \(1\) or \(2\).
Equivalently, the loaded cycle vertices are \(3\)-separated.
\end{proof}

\section{The terminal \texorpdfstring{$3$}{3}-separated sparse-sun case}
\label{sec:support-neighbor-deletion-clean}

This section treats the terminal case left by Proposition~\ref{prop:sl2-reduction-suns}. Thus \(G\) is a \(3\)-separated sparse sun graph with at least one pendant leaf.  We shall delete an unloaded cycle neighbor of a loaded cycle vertex, rather than deleting the pendant leaf itself. The goal is to show that the rank-one spectral shift produced by adding this vertex back dominates the endpoint increment \(P_{n-1}\rightsquigarrow P_n\).

The proof is reduced to an envelope comparison on \(0\le t\le4\); the finite one-variable inequalities used in that comparison are verified in Appendix~\ref{app:verification}. For \(t\ge4\), the path-endpoint contribution vanishes by spectral support.

\subsection{Spectral-shift notation}

Recall the spectral-shift interval notation from the proof of Lemma~\ref{lem:path-costs-sl2}.  Let \(M\succeq0\) and \(b\ne0\), and consider the
positive rank-one update
\[
        M\rightsquigarrow M+bb^\top .
\]
Let
\[
        \alpha_1\ge\alpha_2\ge\cdots\ge0,
        \qquad
        \beta_1\ge\beta_2\ge\cdots\ge0
\]
be the eigenvalues of \(M\) and \(M+bb^\top\), respectively, with zero padding
if necessary. By the rank-one interlacing theorem, we know that
\[
        \beta_1\ge \alpha_1\ge \beta_2\ge \alpha_2\ge\cdots\ge0 .
\]
We define the associated spectral-shift interval set by
\[
        E(M,b):=\bigcup_i[\alpha_i,\beta_i],
\]
where zero-length intervals and endpoint overlaps are harmless.  Thus
\(E(M,b)\) records the interlacing gaps created by the positive rank-one
update.

For a finite interval set \(E\subset[0,\infty)\), recall that
\[
        J_E(t)=\int_E (y-t)_+\,dy,
        \qquad t\ge0.
\]
When \(E=E(M,b)\), this quantity is half of the increase of the second-order stop-loss functional:
\[
        \tr(M+bb^\top-tI)_+^2-\tr(M-tI)_+^2
        =
        2J_{E(M,b)}(t).
\]
If \(M_\theta=M+\theta bb^\top\), then combining this identity with Lemma~\ref{lem:rank-one-trace-formula} gives
\begin{equation}\label{eq:J-integral-clean}
        J_{E(M,b)}(t)
        =
        \int_0^1 b^\top(M_\theta-t)_+b\,d\theta .
\end{equation}

For the path endpoint increment \(P_{n-1}\rightsquigarrow P_n\), label the
vertices as \(1,\ldots,n\), so that \(P_{n-1}\) is the path
\(1-\cdots-(n-1)\) and the new edge is \((n-1)n\).  Choose the bipartition so
that the new endpoint \(n\) lies on the left side; then the old endpoint
\(n-1\) lies on the right side.  Let \(B_{P_{n-1}}\) be the biadjacency matrix
of \(P_{n-1}\) with this bipartition, and put
\[
        M_0:=B_{P_{n-1}}^\top B_{P_{n-1}}.
\]
Let \(e\) be the standard basis vector corresponding to the right-side vertex
\(n-1\).  Adding the new endpoint appends the row \(e^\top\) to the
biadjacency matrix, and therefore
\[
        B_{P_n}^\top B_{P_n}=M_0+ee^\top.
\]
We write 
\[
        E_n^P:=E(M_0,e)
\]
for the corresponding spectral-shift interval set.

\subsection{The support-neighbor configuration}

Let \(G\) be a \(3\)-separated sparse sun graph with at least one pendant leaf.
Choose a loaded cycle vertex \(a\).  Since the loaded cycle vertices are
\(3\)-separated, both cycle neighbors of \(a\) are unloaded.  Choose one of
them and call it \(u\).  Let \(c\) be the other cycle neighbor of \(u\), and put
\[
        H=G-u,
        \qquad n=|V(G)|.
\]
Deleting \(u\) breaks the base cycle, while all pendant leaves remain attached
to the resulting path.  Hence \(H\) is a tree.

Choose the bipartition so that \(u\) lies on the left side; then \(a\) and
\(c\) lie on the right side.  Let \(B_H\) be the biadjacency matrix of \(H\),
with columns indexed by the right side.  Let \(e_a\) and \(e_c\) denote the
standard basis vectors corresponding to the right-side vertices \(a\) and
\(c\), and set
\[
        M=B_H^\top B_H,
        \qquad
        b=e_a+e_c,
        \qquad
        M_\theta=M+\theta bb^\top \quad(0\le\theta\le1).
\]
Adding \(u\) back to \(H\) appends to the biadjacency matrix one new row corresponding to \(u\). This row has nonzero entries precisely in the columns \(a\) and \(c\). Therefore
\[
        B_G^\top B_G=M+bb^\top .
\]
Thus the rank-one update \(M\rightsquigarrow M+bb^\top\) is exactly the
squared-singular-value update from \(H\) to \(G\).  Define
\[
        E_{G,u}:=E(M,b).
\]

The next result is the final deletion step for the terminal sparse-sun case:
it shows that adding back the unloaded neighbor \(u\) gives at least as much
second-order stop-loss gain as adding an endpoint to the path.

\begin{theorem}\label{thm:SN}
With the notation above,
\[
        J_{E_{G,u}}(t)\ge J_{E_n^P}(t)
        \qquad\text{for every }t\ge0.
\]
\end{theorem}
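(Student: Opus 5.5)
Both sides of the inequality are handled through the integral representation \eqref{eq:J-integral-clean}. The right-hand side is controlled from above, the left-hand side from below, and the range of $t$ is split into three parts. For the path side, the rank-one update has $\|e\|_2^2=1$, and every eigenvalue of a path lies in $[0,4]$. Lemma~\ref{lem:spectral-shift-packing} therefore gives $J_{E_n^P}(t)\le \mathcal I_{[3,4]}(t)=\tfrac12\bigl((4-t)_+^2-(3-t)_+^2\bigr)$. In particular $J_{E_n^P}(t)=0$ for $t\ge4$, while the left side is always nonnegative, so the range $t\ge4$ is immediate.

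For the sun side we use Jensen's inequality, exactly as in the proof of Lemma~\ref{lem:sl2-rank-one}, with $d=\|b\|_2^2=2$. This gives $J_{E_{G,u}}(t)\ge \mathcal I_{[\gamma/2,\gamma/2+2]}(t)$ with $\gamma=b^\top Mb$. Lemma~\ref{lem:gamma-neighbor-count} gives $\gamma\ge (d_G(a)-1)+(d_G(c)-1)= 2+1=3$, because $a$ is loaded and has degree $3$, while $c$ has degree at least $2$. By translation monotonicity, $J_{E_{G,u}}(t)\ge \mathcal I_{[3/2,7/2]}(t)$. We then compare $\mathcal I_{[3/2,7/2]}(t)$ with $\mathcal I_{[3,4]}(t)$ on $[0,3]$. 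For $t\le 3/2$ the first equals $2(5/2-t)$ and the second equals $7/2-t$, and $5-2t\ge 7/2-t$ holds exactly when $t\le 3/2$. On $[3/2,3]$ both functions are explicit piecewise quadratics, so the comparison is elementary. This settles $0\le t\le3$.

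The hard range is $3\le t\le4$. Here $\mathcal I_{[3/2,7/2]}$ loses to $\mathcal I_{[3,4]}$ near $t=4$, because the Jensen interval ends at $7/2<4$. The first-moment envelope is therefore too weak, and we must use that the update $b=e_a+e_c$ actually pushes spectral mass above $7/2$. The plan is to replace Jensen by a higher-moment lower envelope. For each $\theta$, let $\nu_\theta$ be the spectral measure of $M_\theta$ in the vector $b/\sqrt2$. Its moments $m_k(\theta)=b^\top M_\theta^k b/2$ are computable from closed-walk counts in the local neighbourhood of $u$. By $3$-separation, that neighbourhood is fixed up to radius about $2$, so these counts are universal polynomials in $\theta$, at least up to $k=2$ (equivalently, walk length $5$ in $G$). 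All of $\nu_\theta$ is supported in $[0,\lambda_{\max}]$, and we need an explicit bound on $\lambda_{\max}$ for sparse suns; the Gershgorin bound from the degrees of $B_H^\top B_H$ should suffice.

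Given these moments, a lower bound for $\int (\lambda-t)_+\,d\nu_\theta$ follows from a quadratic polynomial minorant $p(\lambda)\le(\lambda-t)_+$ on the support interval: the integral is at least $\sum_k p_k m_k(\theta)$. Integrating over $\theta\in[0,1]$ then yields an explicit lower bound $L(t)$ for $J_{E_{G,u}}(t)$. It remains to check $L(t)\ge \mathcal I_{[3,4]}(t)=(4-t)^2/2$ on $[3,4]$. After substituting an optimal choice of minorant depending on $t$, this becomes a one-variable polynomial inequality. We would verify it by exact Bernstein-basis certificates on subintervals, with any small exceptional cases of the cycle length checked directly. The main obstacle is finding a minorant family that is tight enough near $t=4$, where both sides are tiny. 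If second moments are insufficient, we would pass to the fifth-moment data of $M_\theta$ mentioned in the outline, using a higher-degree minorant built from the same local walk counts.
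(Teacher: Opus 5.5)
Your proposal has a genuine gap. The path-side bound $J_{E_n^P}(t)\le\mathcal I_{[3,4]}(t)$ from the packing lemma is too crude, and the comparison you call elementary on $[3/2,3]$ is false. For $3/2\le t\le3$ you have $\mathcal I_{[3/2,7/2]}(t)=\tfrac12(\tfrac72-t)^2$ and $\mathcal I_{[3,4]}(t)=\tfrac72-t$. With $x=\tfrac72-t\le2$, the inequality $\tfrac12x^2\ge x$ holds only at $t=3/2$. For example, at $t=2$ you get $9/8<3/2$, and at $t=3$ you get $1/8<1/2$. So your Jensen envelope loses on all of $(3/2,3]$, not only near $t=4$, and the range $[0,3]$ is not settled.

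The same issue makes your target on $[3,4]$ unrealistic. At $t=3$ you would need $J_{E_{G,u}}(3)\ge1/2$ from local moment data. The fifth-moment envelope built from $b^\top M_\theta b=3+4\theta$, $b^\top M_\theta^5b\ge q_5(\theta)$ and $\spec(M_\theta)\subset[0,5]$ gives only $L_5(3)\approx0.18$. For small $\theta$ these data are even consistent with no spectral mass above $3$: mass $1$ at $0$ and mass $1$ at $3$ has total mass $2$, first moment $3$, and fifth moment $243\ge174$. Passing to more sun-side moments does not obviously close a factor of almost $3$.

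The missing idea is to sharpen the \emph{path} side instead. Closed walks of length $2m$ in $P_n$ that visit the new endpoint inject into balanced $\pm1$ step sequences: the start vertex is $n$ minus the maximal partial sum. This gives $\int_{E_n^P}y^{m-1}\,dy\le\frac1{2m}\binom{2m}{m}$. Combine this with $(y-t)_+\le\Gamma_r(t)\,y^r$ on $[0,4]$. Taking $r=1,2,4$ gives $J_{E_n^P}(t)\le 3/2,\ 5/(6t),\ 1701/(640t^3)$ on $[0,3/2]$, $[3/2,2]$, $[2,3]$, and your Jensen envelope does beat these. Taking $r=19$ gives roughly $0.177$ at $t=3$, which the fifth-moment envelope ($\approx0.1835$) beats by a narrow margin, certified via Bernstein coefficients.

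Two smaller points. First, $b^\top M_\theta^2 b$ is not universal. It depends on whether the cycle has length $6$ and whether the cycle vertex at distance $3$ from $a$ beyond $c$ is loaded, so you only get lower bounds. The paper obtains them by entrywise monotonicity and reduction to a stable $N=7$ model. Second, the $C_4$-plus-leaf configuration has different local data and has to be handled separately, via its explicit spectrum.
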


\subsection{Local data for the support-neighbor update}

The next lemma records the local first moment, a fifth-moment lower bound, and
the spectral support needed for the envelope comparison.

\begin{lemma}\label{lem:local-data}
Assume that \(G\) is a \(3\)-separated sparse sun graph whose base cycle has length at least \(6\). Then, for every
\(0\le\theta\le1\),
\[
        b^\top M_\theta b=3+4\theta,
\]
\[
        b^\top M_\theta^5 b\ge q_5(\theta):=
        64\theta^5+240\theta^4+440\theta^3+507\theta^2+387\theta+174.
\]
Moreover, 
\[
\spec(M_\theta)\subset[0,5].
\]
\end{lemma}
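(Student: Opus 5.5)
The plan is to treat the three assertions separately, using two devices throughout: write \(M_\theta=B_\theta^\top B_\theta\), and interpret quadratic forms \(b^\top M_\theta^k b\) as weighted walk counts. Here \(B_\theta\) is \(B_H\) with the extra row \(\sqrt\theta\,b^\top\) appended, i.e.\ the biadjacency matrix of \(G\) in which the two edges \(ua\), \(uc\) carry weight \(\sqrt\theta\) as matrix entries. Then \(b^\top M_\theta^k b\) is a sum over alternating walks of length \(2k\) in this weighted graph that start and end at \(a\) or \(c\); every term is non-negative.

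\emph{First moment.} We have \(b^\top M_\theta b=b^\top Mb+\theta(b^\top b)^2=\gamma+4\theta\), so it suffices to show \(\gamma=\|B_Hb\|_2^2=3\). As in the proof of Lemma~\ref{lem:gamma-neighbor-count}, \(\gamma=\sum_y|N_H(y)\cap\{a,c\}|^2\) over left vertices \(y\) of \(H\). In \(H\), the vertex \(a\) has exactly two neighbours, its other cycle neighbour \(a'\) and its leaf \(\ell_a\). The vertex \(c\) lies at cyclic distance \(2\) from \(a\), so \(3\)-separation forces \(c\) to be unloaded, and its only neighbour in \(H\) is its other cycle neighbour \(c'\). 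Since the cycle has length at least \(6\), \(a'\neq c'\), and \(\ell_a\) is not adjacent to \(c\). Hence the vector \(B_Hb\) has exactly three entries equal to \(1\), giving \(\gamma=3\).

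\emph{Fifth moment.} I would obtain the lower bound by restricting to a subfamily of walks. Lift walks of the sun to its universal cover along the cycle direction. This cover is a two-sided infinite path, with a pendant leaf at each lift of a loaded vertex. Lifted walks project injectively to walks in \(G\) with the same weights, and the lifts of \(a\) and \(c\) project to \(a\) and \(c\). This handles short cycles (length \(6\) or \(8\)), where walks could otherwise wrap around.

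Next, discard every leaf except the one at the chosen lift of \(a\). The result is a fixed weighted tree \(\mathcal T\): a path \(\cdots c'' c' c\,u\,a\,a'a''\cdots\) with the leaf \(\ell_a\) at \(a\), where \(ua\) and \(uc\) have matrix weight \(\sqrt\theta\). Only vertices within distance \(5\) of \(\{a,c\}\) matter. Thus
\[
b^\top M_\theta^5 b\ \ge\ \beta^\top \widetilde M_\theta^5\beta ,
\]
where \(\widetilde M_\theta\) is the explicit finite matrix of \(\mathcal T\) truncated at radius \(5\), and \(\beta\) is the indicator of \(a\) and \(c\) in \(\mathcal T\). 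Expanding this finite polynomial in \(\theta\) by direct enumeration (or by a symbolic matrix power) should give exactly \(q_5(\theta)\). As a consistency check, the \(\theta^5\) coefficient \(64=4^3\cdot\) (the \(bb^\top\) part) matches \((b^\top b)^4\cdot\ldots\) bookkeeping, and the constant \(174\) is the walk count in \(H\) alone. I expect this coefficient bookkeeping to be the only real obstacle. The plan is to organize the count by the number of traversals of the \(u\)-edges, and to verify the coefficients mechanically.

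\emph{Spectral support.} Non-negativity of \(\spec(M_\theta)\) is clear from \(M_\theta=B_\theta^\top B_\theta\). For the upper bound, use \(M_\theta\preceq M_1=B_G^\top B_G\), so it suffices to bound the largest eigenvalue of \(B_G^\top B_G\). This matrix is entrywise non-negative, so its spectral radius is at most its maximum row sum. The row sum at a right vertex \(y\) equals \(\sum_{x\sim y}d_G(x)\).
\begin{itemize}
\item If \(y\) is loaded, its neighbours are two cycle vertices, which are unloaded by \(3\)-separation and so have degree \(2\), and one leaf of degree \(1\). The sum is \(5\).
\item If \(y\) is unloaded, its two cycle neighbours lie at cyclic distance \(2\), so at most one of them is loaded. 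The sum is at most \(3+2=5\).
\end{itemize}
Pendant leaves on the right side have a single neighbour of degree \(3\), giving row sum \(3\). Hence \(\spec(M_\theta)\subset[0,5]\).
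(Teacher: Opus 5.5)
Your first-moment computation is the paper's, and your spectral bound is correct. Bounding the row sums of $B_G^\top B_G$ and then using $M_\theta\preceq M_1$ is a slightly cleaner route than the paper's row-sum bound for $M_\theta$ itself.

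The gap is in the fifth moment, for base cycle length $6$. That is precisely the case you claim the universal cover handles.

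In the cover, \emph{every} lift of the edges $ua$ and $uc$ carries weight $\sqrt\theta$, not only the chosen one. Write the base cycle as $u\,a\,x_1\,y\,x_2\,c$. Then the next lift $u^{+}$ of $u$ lies at distance exactly $5$ from the chosen lift of $a$, along $a\,x_1\,y\,x_2\,c^{+}u^{+}$. Symmetrically, a lift $u^{-}$ lies at distance $5$ from the chosen lift of $c$, along $c\,x_2^{-}y^{-}x_1^{-}a^{-}u^{-}$. Both are inside your truncation radius.

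Your fixed tree $\mathcal T$, however, treats $c^{+}u^{+}$ and $a^{-}u^{-}$ as ordinary weight-one path edges, so it is not a weighted subgraph of the cover. Consider the two straight out-and-back walks of length $10$, one from $a$ and one from $c$. Each has weight $1$ in $\mathcal T$ but weight $\theta$ in $G$. Hence $b^\top M_\theta^5b\ge\beta^\top\widetilde M_\theta^5\beta$ does not follow. With the correct weights, the tree value is $q_5(\theta)-2(1-\theta)$, which is strictly below $q_5(\theta)$ for $\theta<1$.

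Counting only return walks at the chosen lifts cannot fix this. Take $G=C_6$ with a single leaf at $a$, so $H\cong P_6$, and set $\theta=0$. These return walks number $(M^5)_{aa}+(M^5)_{cc}=131+42=173$, which is less than $174=q_5(0)$.

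So you must also use walks joining $a$ to $c$ the long way around the cycle. In the cover they end at a lift of $c$ other than the chosen one, so your $\beta$ discards them. They still contribute to $b^\top M_\theta^5b$. One repair is as follows:
\begin{itemize}
\item send the lost $a$-side walk to the weight-one walk $a\,x_1\,y\,x_2\,c\,x_2\,c\,x_2\,c\,x_2\,c$;
\item send the lost $c$-side walk to $c\,x_2\,y\,x_1\,a\,\ell_a\,a\,x_1\,y\,x_2\,c$.
\end{itemize}
Neither walk is the projection of a walk in $\mathcal T$, so injectivity is preserved.

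This is the same phenomenon the paper treats in its special $N=3$ step. There, the return walk $1\to2\to3\to3\to2\to1$ of the $N=7$ model has no counterpart, and it is reassigned to the endpoint-crossing walk $1\to2\to3\to3\to3\to3$. On the $c$ side, the leaf at $a$ absorbs the loss automatically, because the paper never discards it.

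For base cycle length at least $8$, the other lifts of $u$ are at distance at least $7$ from $\{a,c\}$. There your argument is valid as written, given the mechanical check that the tree value equals $q_5(\theta)$.
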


\begin{proof}
We first compute \(b^\top M_\theta b\).  Since \(b=e_a+e_c\), we have \(\|b\|^2=2\), and hence
\[
        b^\top M_\theta b=b^\top Mb+\theta\|b\|^4=b^\top Mb+4\theta .
\]
It remains to compute \(b^\top Mb=\|B_Hb\|_2^2\).  In \(H\), the vertex \(a\) has two remaining left neighbors, namely its remaining cycle neighbor and its pendant leaf.  The vertex \(c\) is unloaded by \(3\)-separation, and after deleting \(u\) it has only one remaining left neighbor.  Since the cycle has length at least \(6\), \(a\) and \(c\) have no common left neighbor in \(H\). Thus
\[
        b^\top Mb=\|B_Hb\|_2^2=2+1=3,
\]
and the claimed identity follows.

Let \(H_0\) be the subgraph of \(H\) induced by the unique \(a\)-\(c\) path in \(H\), together with the pendant leaf attached to \(a\), and with all other pendant leaves removed.
Let \(B_{H_0}\) be its biadjacency matrix, with rows indexed by the left side
and columns indexed by the right side, in the same convention as for \(B_H\).
Let \(N\) be the number of right-side vertices of \(H_0\), ordered so that
\(a\) is the first vertex and \(c\) is the last vertex.  The Gram matrix on
the right side is therefore
\[
        T_N:=B_{H_0}^\top B_{H_0}.
\]
If \(r_1=a, r_2, \ldots,r_N=c\) are the right-side vertices in this order, then \((T_N)_{ij}=|N_{H_0}(r_i)\cap N_{H_0}(r_j)|\). Hence the diagonal entries record the right-side degrees, so they are \(2,2,\ldots,2,1\): the vertex \(a\) has one path neighbor and its pendant leaf, each internal right-side path vertex has two path neighbors, and \(c\) has only one remaining path neighbor after \(u\) is deleted. Moreover, two consecutive right-side vertices along \(H_0\) have exactly one common left neighbor, while non-consecutive right-side vertices have no common left neighbor. Thus
\[
        T_N=
        \begin{pmatrix}
        2&1&0&\cdots&0\\
        1&2&1&\ddots&\vdots\\
        0&1&2&\ddots&0\\
        \vdots&\ddots&\ddots&\ddots&1\\
        0&\cdots&0&1&1
        \end{pmatrix}.
\]
Since \(a=r_1\) and \(c=r_N\), the vector corresponding to \(e_a+e_c\) in this reduced model is $b_N=e_1+e_N$. Put the rank-one interpolation as
\[
        M_{N,\theta}:=T_N+\theta b_Nb_N^\top .
\]

We first show that the reduced local model has its smallest fifth moment at
the stable value \(N=7\):
\begin{equation}\label{eq:smallest-fifth-moment-n=7}
        b_N^\top M_{N,\theta}^5 b_N
        \ge
        b_7^\top M_{7,\theta}^5 b_7
        \qquad(N\ge3,\;0\le\theta\le1).
\end{equation}
Put
\[
        P_N:=b_Nb_N^\top,
        \qquad
        s_N(r):=b_N^\top T_N^r b_N
        \qquad(r\ge0).
\]
We first prove that
\begin{equation}\label{eq:sN-s7}
        s_N(r)\ge s_7(r)
        \qquad(N\ge3,\;0\le r\le5).
\end{equation}

View \(T_N\) as the adjacency matrix of a looped colored path on
\(\{1,\ldots,N\}\): there is one edge between \(i\) and \(i+1\), each of the
vertices \(1,\ldots,N-1\) carries two loop-colors, and the endpoint \(N\)
carries one loop-color.  With this convention, \((T_N^r)_{ij}\) counts
colored walks of length \(r\) from \(i\) to \(j\).  Since
\(b_N=e_1+e_N\), the quantity \(s_N(r)\) counts colored walks of length \(r\)
whose initial and terminal vertices lie in the endpoint set \(\{1,N\}\).

We construct an injection from the colored walks counted by \(s_7(r)\) to
those counted by \(s_N(r)\), for every \(N\ge3\) and \(0\le r\le5\).  In the
\(N=7\) model, the two endpoints are at distance \(6\).  Hence a walk of
length at most \(5\) cannot visit both endpoints.  Every endpoint walk counted
by \(s_7(r)\) is therefore either a return walk at \(1\) or a return walk at
\(7\).

Assume first that \(N\ge4\). A return walk of length \(r\le5\) at an endpoint can reach distance at most \(2\) from that endpoint. Thus a return walk at \(1\) in the \(N=7\) model only uses the vertices \(1,2,3\). These vertices have the same loop-colors and edge structure in the \(N\)-model, so
we map such a walk identically to a return walk at \(1\).  Similarly, a return
walk at \(7\) only uses the vertices \(7,6,5\), and we translate these vertices
to \(N,N-1,N-2\).  This preserves all edge steps and loop-colors.  The image
of a left-end return walk starts at \(1\), while the image of a right-end
return walk starts at \(N\); since \(N\ge4\), these image families are
disjoint.  Hence \(s_N(r)\ge s_7(r)\) for \(N\ge4\).

It remains to handle \(N=3\).  The right-end return walks in the \(N=7\)
model translate exactly to right-end return walks in the \(N=3\) model by
sending \(7,6,5\) to \(3,2,1\).  For the left-end return walks, the identity
map works except for one missing colored walk.  This can occur only when
\(r=5\): the walk reaches vertex \(3\), uses the loop-color at vertex \(3\)
which is absent in the \(N=3\) model, and then returns:
\[
        1\to2\to3\to3\to2\to1 .
\]
We map this single missing walk instead to the endpoint-crossing walk
\[
        1\to2\to3\to3\to3\to3 .
\]
This image is not used by any left-end return walk, since it ends at \(3\), and
it is not used by any right-end return walk, since it starts at \(1\).  Thus
the injection also exists for \(N=3\), proving \eqref{eq:sN-s7}.

We now expand the noncommutative product \((T_N+\theta P_N)^5\).  A word with
exactly \(m\) copies of \(P_N\) and \(5-m\) copies of \(T_N\) can be written
uniquely in the form
\[
        T_N^{r_0}P_NT_N^{r_1}P_N\cdots P_NT_N^{r_m},
        \qquad
        r_0+\cdots+r_m=5-m,
\]
where the exponents \(r_i\) are allowed to be zero.  For such a word, using
\(P_N=b_Nb_N^\top\), its contribution to the quadratic form with \(b_N\) is
\[
b_N^\top T_N^{r_0}P_NT_N^{r_1}P_N\cdots P_NT_N^{r_m}b_N = \prod_{\ell=0}^m b_N^\top T_N^{r_\ell}b_N = \prod_{\ell=0}^m s_N(r_\ell).
\]
Therefore
\[
        b_N^\top M_{N,\theta}^5b_N
        =
        \sum_{m=0}^5
        \theta^m
        \sum_{\substack{r_0,\ldots,r_m\ge0\\
                        r_0+\cdots+r_m=5-m}}
        \prod_{\ell=0}^m s_N(r_\ell).
\]
By~\eqref{eq:sN-s7}, each coefficient of this polynomial in \(\theta\) is at
least the corresponding coefficient for \(N=7\).  Since \(0\le\theta\le1\),
this coefficientwise comparison gives~\eqref{eq:smallest-fifth-moment-n=7}.

It remains to compute the stable \(N=7\) model.  A direct exact multiplication in this \(7\times7\) model gives
\[
        b_7^\top M_{7,\theta}^5b_7
        =
        64\theta^5+240\theta^4+440\theta^3
        +507\theta^2+387\theta+174
        =
        q_5(\theta).
\]
The exact rational verification of this \(7\times7\) computation is recorded in Section~\ref{app:local-moment-check}. Hence, by~\eqref{eq:smallest-fifth-moment-n=7}, the reduced local model satisfies
\[
        b_N^\top M_{N,\theta}^5b_N\ge q_5(\theta).
\]

Restoring the pendant leaves that were removed in the definition of \(H_0\) can only increase the fifth moment seen from \(b\).  Let \(R_0\) be the set of right-side vertices of \(H_0\), ordered as above, and let \(R_1\) be the set of remaining right-side vertices of \(H\).  Order the columns of \(B_H\) as \(R_0\) followed by \(R_1\), and write $B_H=(C_0\ C_1)$. Then
\[
        B_H^\top B_H=
        \begin{pmatrix}
        C_0^\top C_0 & C_0^\top C_1\\
        C_1^\top C_0 & C_1^\top C_1
        \end{pmatrix}.
\]
The rows of \(B_H\) coming from the subgraph \(H_0\) contribute exactly \(T_N\) to the upper-left block. All remaining rows contribute only entrywise non-negative terms. Hence $C_0^\top C_0=T_N+\Delta$ for some entrywise non-negative matrix \(\Delta\). 

Moreover, \(b=e_a+e_c\) is supported on \(R_0\), and in these block coordinates $b=\binom{b_N}{0}$. Since \(M_{N,\theta}=T_N+\theta b_Nb_N^\top\), we obtain
\[
\begin{aligned}
        M_\theta
        &=
        B_H^\top B_H+\theta bb^\top  \\
        &=
        \begin{pmatrix}
        T_N+\Delta & C_0^\top C_1\\
        C_1^\top C_0 & C_1^\top C_1
        \end{pmatrix}
        +
        \begin{pmatrix}
        \theta b_Nb_N^\top & 0\\
        0&0
        \end{pmatrix}  \\
        &=
        \begin{pmatrix}
        M_{N,\theta}+\Delta & X\\
        X^\top & Y
        \end{pmatrix},
\end{aligned}
\]
where \(X=C_0^\top C_1\), \(Y=C_1^\top C_1\), and \(X,Y\) are entrywise non-negative.

Now define
\[
        \widetilde M_{N,\theta}:=
        \begin{pmatrix}
        M_{N,\theta}&0\\
        0&0
        \end{pmatrix}.
\]
The displayed block decomposition shows that $M_\theta\ge \widetilde M_{N,\theta}$ entrywise. Since both matrices are entrywise non-negative, entrywise domination is preserved under multiplication, and hence under taking positive integer powers. Therefore $M_\theta^5\ge \widetilde M_{N,\theta}^{\,5}$ entrywise and
\[
        b^\top M_\theta^5b
        \ge
        b^\top \widetilde M_{N,\theta}^{\,5}b
        =
        b_N^\top M_{N,\theta}^5b_N
        \ge
        q_5(\theta).
\]

It remains to prove the spectral support bound.  We shall show that every row
sum of \(M_\theta\) is at most \(5\).  Since \(M_\theta\) is entrywise
non-negative, the row-sum bound for nonnegative matrices
\cite[Theorem~8.1.22]{HornJohnson2012} gives
\[
        \rho(M_\theta)
        \le
        \max_y\sum_z (M_\theta)_{yz}.
\]
Thus the desired spectral upper bound follows once all row sums are shown to be
at most \(5\).

We first estimate the row sums of \(M=B_H^\top B_H\).  For a right-side vertex
\(y\), with the sum over \(z\) running over right-side vertices, we have
\[
\begin{aligned}
        \sum_z M_{yz}
        &=
        \sum_z\sum_{\ell\in L}
        (B_H)_{\ell y}(B_H)_{\ell z}  \\
        &=
        \sum_{\ell\in N_H(y)\cap L}\sum_z (B_H)_{\ell z}
        =
        \sum_{\ell\in N_H(y)\cap L}d_H(\ell),
\end{aligned}
\]
where \(L\) is the left side of the bipartition.  Thus the row sum at \(y\) is
the sum of the degrees, in \(H\), of the left neighbors of \(y\).

We now check these sums case by case.  If \(y\ne a\) is a loaded cycle vertex on the right side, then its two cycle neighbors are unloaded by \(3\)-separation, and its pendant leaf has degree \(1\).  Hence the row sum is at most
\[
        2+2+1=5.
\]
For \(y=a\), the neighbor \(u\) has been deleted.  Thus the only remaining left neighbors of \(a\) are its other cycle neighbor, of degree at most \(2\), and its pendant leaf, of degree \(1\).  Hence the row sum is at most
\[
        2+1=3.
\]
If \(y\ne c\) is an unloaded cycle vertex on the right side, then at most one of its two left cycle neighbors is loaded. Therefore the row sum is at most
\[
        3+2=5.
\]
For \(y=c\), the deleted vertex \(u\) removes one of the two cycle-neighbor
contributions, so \(c\) has only one left neighbor in \(H\), of degree at most
\(3\).  Hence the row sum is at most \(3\).  Finally, if \(y\) is a right-side
pendant leaf, then \(y\) has a unique left neighbor, and that neighbor has
degree at most \(3\).  Hence the row sum is again at most \(3\).

The rank-one term \(\theta bb^\top\), with \(b=e_a+e_c\), affects only the
rows \(a\) and \(c\).  In each of those two rows it adds \(2\theta\le2\) to the
row sum.  Since the row sums of \(M\) at \(a\) and \(c\) are at most \(3\), and
all other row sums of \(M\) are at most \(5\), every row sum of \(M_\theta\) is
at most \(5\).  Hence \(\rho(M_\theta)\le5\).  Since \(M_\theta\succeq0\), all
eigenvalues of \(M_\theta\) are non-negative, and therefore
\[
        \spec(M_\theta)\subset[0,5].
\qedhere\]
\end{proof}

\subsection{Moment data for the path endpoint increment}

We shall need the following moment bound for the spectral-shift interval set associated with adding one endpoint to a path.

\begin{lemma}\label{lem:path-moments}
For every \(n\ge2\), let \(E_n^P\) be the path endpoint spectral-shift interval
set for \(P_{n-1}\rightsquigarrow P_n\). Then, for every integer \(m\ge1\),
\[
        \int_{E_n^P} y^{m-1}\,dy
        \le
        \frac1{2m}\binom{2m}{m}.
\]
\end{lemma}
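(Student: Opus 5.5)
The plan is to turn the interval integral into a trace moment, using the same device as in \eqref{eq:J-integral-clean}. Since $E_n^P=\bigcup_i[\alpha_i,\beta_i]$ comes from interlacing eigenvalues of $M_0$ and $M_0+ee^\top$, we have
\[
        \int_{E_n^P}y^{m-1}\,dy=\frac1m\sum_i(\beta_i^m-\alpha_i^m)
        =\frac1m\Bigl(\tr(M_0+ee^\top)^m-\tr M_0^m\Bigr).
\]
Both traces count closed walks. Because $B_{P_n}^\top B_{P_n}$ and $M_0$ are the right-side Gram matrices, $\tr(B^\top B)^m=\tfrac12\tr A^{2m}$. So the right-hand side equals
\[
        \frac1{2m}\Bigl(\tr A(P_n)^{2m}-\tr A(P_{n-1})^{2m}\Bigr),
\]
where the factor $\tfrac12$ comes from the symmetric $\pm$ spectrum of $A$, and the left-side Gram matrices contribute the same nonzero spectrum. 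The claim therefore reduces to
\[
        W_{2m}(P_n)-W_{2m}(P_{n-1})\le\binom{2m}{m},
\]
where $W_k$ denotes the number of closed walks of length $k$.

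Next I would count the new closed walks. Every closed walk of length $2m$ in $P_n$ that is not a walk of $P_{n-1}$ must visit the new endpoint $n$. View such walks as lattice paths on $\{1,\dots,n\}$, that is, $\pm1$ steps of length $2m$ that return to their start and stay in $[1,n]$.

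The plan is to inject the walks that touch level $n$ into the set of all $\pm1$ sequences of length $2m$ with sum zero. There are exactly $\binom{2m}{m}$ of these, and a closed walk on $\mathbb Z$ is determined by its step sequence once the starting vertex is fixed. The natural encoding records only the step sequence. The walk is then recovered by placing its maximum at level $n$, which is possible because the walk touches $n$ and $n$ is the top vertex. Given a step sequence, the starting point is forced: it is $n$ minus the maximum of the partial sums. Hence the map from new walks to balanced step sequences is injective. This gives $W_{2m}(P_n)-W_{2m}(P_{n-1})\le\binom{2m}{m}$, and dividing by $2m$ yields the lemma.

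The main obstacle is making the trace bookkeeping rigorous. Two points need checking. First, $\tr(M_0+ee^\top)^m-\tr M_0^m$ must equal half of the difference of the adjacency closed-walk counts even though the sides of the bipartition change size; this holds because zero padding does not affect the power traces for $m\ge1$. Second, the interval integral must telescope correctly as $\sum_i(\beta_i^m-\alpha_i^m)/m$, using the interlacing noted in the proof of Lemma~\ref{lem:path-costs-sl2}. The injectivity step itself is elementary.
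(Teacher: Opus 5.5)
Your proposal is correct and follows essentially the same route as the paper: you telescope the interlacing intervals to $\frac1m\sum_i(\beta_i^m-\alpha_i^m)=\frac1{2m}\bigl(\tr A(P_n)^{2m}-\tr A(P_{n-1})^{2m}\bigr)$, and you inject the closed walks that visit the endpoint $n$ into balanced $\pm1$ sequences, recovering the starting vertex as $n$ minus the maximal partial sum. The size concern in your last paragraph does not arise, since the new vertex is placed on the left and so $M_0$ and $M_0+ee^\top$ share the same column index set.
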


\begin{proof}
Use the notation from Lemma~\ref{lem:path-costs-sl2}.  Let
\[
        \beta_1\ge \alpha_1\ge \beta_2\ge \alpha_2\ge\cdots\ge0
\]
be the interlacing order for the squared singular values in the endpoint update \(P_{n-1}\rightsquigarrow P_n\), after zero padding if necessary. Thus \(\{\beta_i\}_i\) and \(\{\alpha_i\}_i\) are the multisets \(\{\mu_i(P_n)\}_i\) and \(\{\mu_i(P_{n-1})\}_i\), respectively, and $E_n^P=\bigcup_i[\alpha_i,\beta_i]$.

Let \(A_k\) be the adjacency matrix of \(P_k\).  Since \(P_k\) is bipartite, its nonzero adjacency eigenvalues are the pairs \(\pm\sqrt{\mu_i(P_k)}\). Hence $\operatorname{tr} A_k^{2m}=2\sum_i\mu_i(P_k)^m$. Therefore
\[
        \int_{E_n^P}y^{m-1}\,dy
        =
        \sum_i\int_{\alpha_i}^{\beta_i}y^{m-1}\,dy  
        =
        \frac1m\sum_i(\beta_i^m-\alpha_i^m) 
        =
        \frac{\operatorname{tr}A_n^{2m}
        -\operatorname{tr}A_{n-1}^{2m}}{2m}.
\]
It remains to prove
\[
        \operatorname{tr}A_n^{2m}
        -
        \operatorname{tr}A_{n-1}^{2m}
        \le
        \binom{2m}{m}.
\]

The trace \(\operatorname{tr}A_k^{2m}\) counts closed walks of length \(2m\) in \(P_k\). Therefore the difference $\operatorname{tr}A_n^{2m}-\operatorname{tr}A_{n-1}^{2m}$ counts precisely those closed walks of length \(2m\) in \(P_n\) that visit the new endpoint \(n\).

We inject these walks into the set of balanced \(\{\pm1\}\)-sequences of length \(2m\), where balanced means that the sequence contains exactly \(m\) entries equal to \(+1\) and exactly \(m\) entries equal to \(-1\). Let
\[
        W=(v_0,v_1,\ldots,v_{2m}=v_0)
\]
be such a closed walk in \(P_n\), and suppose that \(W\) visits \(n\). Define
\[
        \varepsilon_i:=v_i-v_{i-1}\in\{-1,+1\},
        \qquad i=1,\ldots,2m.
\]
Since \(W\) is closed, \(\sum_{i=1}^{2m}\varepsilon_i=v_{2m}-v_0=0\). Thus the step sequence is balanced.

We claim that the map \(W\mapsto(\varepsilon_1,\ldots,\varepsilon_{2m})\) is injective on the walks counted above. Put
\[
        S_0=0,\qquad
        S_j=\sum_{i=1}^j\varepsilon_i .
\]
Then \(v_j=v_0+S_j\). Since the walk stays in \(P_n\), we have \(S_j\le n-v_0\) for every \(j\). Since the walk visits \(n\), equality holds for at least one \(j\). Hence
\[
        n-v_0=\max_{0\le j\le 2m}S_j.
\]
Thus the initial vertex is determined uniquely by the step sequence:
\[
        v_0=n-\max_{0\le j\le2m}S_j.
\]
Once \(v_0\) and the step sequence are known, the whole walk is determined. This proves injectivity.

There are exactly \(\binom{2m}{m}\) balanced \(\{\pm1\}\)-sequences of length
\(2m\).  Consequently,
\[
        \operatorname{tr}A_n^{2m}
        -
        \operatorname{tr}A_{n-1}^{2m}
        \le
        \binom{2m}{m},
\]
and the displayed moment bound follows.
\end{proof}

\subsection{Envelope comparison on \texorpdfstring{\(0\le t\le4\)}{0 <= t <= 4}}

We first obtain a low-threshold lower bound from the rank-one Jensen estimate.

\begin{lemma}\label{lem:SN-jensen-envelope}
Assume that \(G\) is a \(3\)-separated sparse sun graph whose base cycle has length at least \(6\). For \(0\le t\le3\),
\[
        J_{E_{G,u}}(t)\ge L_{\rm av}(t),
\]
where
\[
        L_{\rm av}(t):=
        2\int_0^1\left(\frac32+2\theta-t\right)_+\,d\theta
        =
        \begin{cases}
        5-2t, &0\le t\le3/2,\\[2mm]
        \dfrac12\left(\dfrac72-t\right)^2,
        &3/2\le t\le3.
        \end{cases}
\]
\end{lemma}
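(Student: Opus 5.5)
We start from the integral representation \eqref{eq:J-integral-clean}:
\[
        J_{E_{G,u}}(t)=\int_0^1 b^\top(M_\theta-t)_+b\,d\theta .
\]
Then we bound the integrand pointwise in \(\theta\) by Jensen's inequality, exactly as in the proof of Lemma~\ref{lem:sl2-rank-one}. Fix \(\theta\in[0,1]\) and diagonalize \(M_\theta=U\operatorname{diag}(\lambda_i)U^\top\); the matrix \(M_\theta\) is positive semidefinite. Put \(c=U^\top b\). Then
\[
        \sum_i c_i^2=\|b\|_2^2=2,
        \qquad
        b^\top(M_\theta-t)_+b=\sum_i c_i^2(\lambda_i-t)_+ .
\]
The weights \(c_i^2/2\) form a probability vector. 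Convexity of \(\lambda\mapsto(\lambda-t)_+\) therefore gives
\[
        b^\top(M_\theta-t)_+b\ \ge\ 2\Bigl(\tfrac{b^\top M_\theta b}{2}-t\Bigr)_+ .
\]

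Next we insert the local first moment. By Lemma~\ref{lem:local-data}, which applies because the base cycle has length at least \(6\), we have \(b^\top M_\theta b=3+4\theta\). The Jensen bound thus becomes \(2(\tfrac32+2\theta-t)_+\). Integrating over \(\theta\in[0,1]\) yields
\[
        J_{E_{G,u}}(t)\ge 2\int_0^1\Bigl(\tfrac32+2\theta-t\Bigr)_+\,d\theta = L_{\rm av}(t).
\]
This bound actually holds for every \(t\ge0\); the restriction \(0\le t\le3\) only reflects where it will be used.

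It remains to evaluate the integral in closed form, which is routine. For \(0\le t\le 3/2\), the integrand is nonnegative on all of \([0,1]\), so the integral equals
\[
        2\Bigl(\tfrac32-t+1\Bigr)=5-2t .
\]
For \(3/2\le t\le 7/2\), the integrand vanishes for \(\theta<(t-\tfrac32)/2\). Substituting \(s=\tfrac32+2\theta-t\), which ranges from \(0\) to \(\tfrac72-t\) with \(d\theta=ds/2\), gives
\[
        2\cdot\tfrac12\int_0^{7/2-t}s\,ds=\tfrac12\Bigl(\tfrac72-t\Bigr)^2 .
\]
The two formulas agree at \(t=3/2\), where both equal \(2\). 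This matches the displayed piecewise formula.

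The argument has no real obstacle. The only point needing care is the first-moment input \(b^\top Mb=3\): it uses that \(a\) and \(c\) have no common left neighbor in \(H\), which is exactly why the cycle length is assumed to be at least \(6\). That fact is already established in Lemma~\ref{lem:local-data}, so the proof is a direct combination of \eqref{eq:J-integral-clean}, Jensen's inequality, and Lemma~\ref{lem:local-data}.
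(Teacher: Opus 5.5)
Your proof is correct and is essentially the paper's argument. The paper invokes Lemma~\ref{lem:sl2-rank-one} with $d=2$, $\gamma=3$ and then identifies $\mathsf S_t(G)-\mathsf S_t(H)=2J_{E_{G,u}}(t)$ via interlacing, whereas you start directly from \eqref{eq:J-integral-clean} and carry out the same pointwise Jensen step (with $b^\top M_\theta b=3+4\theta$) inline; your closed-form evaluation of $L_{\rm av}$ is also right.
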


\begin{proof}
Apply Lemma~\ref{lem:sl2-rank-one} to the update \(H\rightsquigarrow G\), where the
added vertex is \(u\).  In the notation of that lemma,
\[
        d=\|b\|^2=d_G(u)=2.
\]
Moreover, Lemma~\ref{lem:local-data} gives
\[
        \gamma=b^\top Mb=3.
\]
Therefore
\[
        \mathsf S_t(G)-\mathsf S_t(H)
        \ge
        2\int_{\gamma/d}^{\gamma/d+d}(y-t)_+\,dy
        =
        2\int_{3/2}^{7/2}(y-t)_+\,dy.
\]
On the other hand, by the definition of the spectral-shift interval set, if \(\alpha_i\) and \(\beta_i\) are the eigenvalues of \(M\) and \(M+bb^\top\), respectively, paired by rank-one interlacing, then
\[
        E_{G,u}=E(M,b)=\bigcup_i[\alpha_i,\beta_i].
\]
Therefore
\[
\begin{aligned}
        \mathsf S_t(G)-\mathsf S_t(H)
        &=
        \sum_i\left((\beta_i-t)_+^2-(\alpha_i-t)_+^2\right) \\
        &=
        2\sum_i\int_{\alpha_i}^{\beta_i}(y-t)_+\,dy 
        =
        2J_{E_{G,u}}(t).
\end{aligned}
\]
Hence
\[
        J_{E_{G,u}}(t)
        \ge
        \int_{3/2}^{7/2}(y-t)_+\,dy
        =
        2\int_0^1\left(\frac32+2\theta-t\right)_+\,d\theta
        =
        L_{\rm av}(t).
\]
The displayed piecewise formula follows by evaluating this elementary integral.
\end{proof}

We next derive the high-threshold lower envelope from the fifth-moment information in Lemma~\ref{lem:local-data}.

\begin{lemma}\label{lem:SN-fifth-envelope}
Assume that \(G\) is a \(3\)-separated sparse sun graph whose base cycle has length at least \(6\). For \(3\le t\le4\),
\[
        J_{E_{G,u}}(t)\ge L_5(t):=
        \frac1{K(t)}
        \left(
        \frac{A(t)^2}{2B(t)}
        +1000\frac{A(t)^3}{B(t)^3}
        \right),
\]
where
\[
        A(t):=1812-7t^4,
        \qquad
        B(t):=4001-4t^4,
        \qquad
        K(t):=5(t^3+5t^2+25t+125).
\]
\end{lemma}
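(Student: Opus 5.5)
The plan is to bound the integrand in \eqref{eq:J-integral-clean} pointwise in \(\theta\) using only the three pieces of data in Lemma~\ref{lem:local-data}: the first moment, the fifth moment, and the spectral support. Fix \(\theta\in[0,1]\), and let \(\nu_\theta\) be the spectral measure of \(M_\theta\) seen from \(b\). Writing \(M_\theta=U\operatorname{diag}(\lambda_i)U^\top\) and \(c=U^\top b\), this is \(\nu_\theta=\sum_i c_i^2\delta_{\lambda_i}\). Then \(\nu_\theta\) is a nonnegative measure supported in \([0,5]\). It satisfies \(\int\lambda\,d\nu_\theta=3+4\theta\) and \(\int\lambda^5\,d\nu_\theta\ge q_5(\theta)\), and \(b^\top(M_\theta-t)_+b=\int(\lambda-t)_+\,d\nu_\theta\).

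The key pointwise inequality will be a chord bound for \(\lambda^5\) on \([0,5]\) adapted to the threshold \(t\):
\[
\lambda^5\le t^4\lambda+K(t)(\lambda-t)_+\qquad(0\le\lambda\le5,\ 0\le t\le 4).
\]
On \([0,t]\) this is just \(\lambda^4\le t^4\). On \([t,5]\), the function \(\lambda^5-t^4\lambda\) is convex and vanishes at \(\lambda=t\), so it lies below its chord to \(\lambda=5\). That chord has slope \((5^5-5t^4)/(5-t)=5(625-t^4)/(5-t)=5(t^3+5t^2+25t+125)=K(t)\), which explains the form of \(K\). Integrating against \(\nu_\theta\) gives
\[
b^\top(M_\theta-t)_+b\ \ge\ \frac{1}{K(t)}\Bigl(q_5(\theta)-t^4(3+4\theta)\Bigr)_+ .
\]
The positive part is allowed because the left side is nonnegative. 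Integrating over \(\theta\in[0,1]\) via \eqref{eq:J-integral-clean} then yields
\[
J_{E_{G,u}}(t)\ge\frac1{K(t)}\int_0^1 g_t(\theta)_+\,d\theta,\qquad g_t(\theta):=q_5(\theta)-t^4(3+4\theta).
\]

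It remains to show \(\int_0^1 g_t(\theta)_+\,d\theta\ge \frac{A^2}{2B}+1000\frac{A^3}{B^3}\) for \(3\le t\le4\). The positive part genuinely matters: at \(t=4\) we have \(g_t(0)=174-768<0\), while \(g_t(1)=1812-7t^4=A(t)>0\). So the value at \(\theta=1\) is exactly \(A\). Since \(g_t\) is convex and increasing in \(\theta\), I would bound it below near \(\theta=1\) by its tangent line there, namely \(A-(1-\theta)\,g_t'(1)\). Here \(g_t'(1)=320+960+1320+1014+387-4t^4=4001-4t^4=B\). The integral of the positive part of this linear function is \(A^2/(2B)\), provided \(A\le B\), which is easily checked. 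The extra term \(1000A^3/B^3\) would come from the curvature of \(g_t\), using a second-order bound \(g_t(\theta)\ge A-B s+\kappa s^2\) with \(s=1-\theta\) over the relevant range \(s\le A/B\). Expanding the quadratic correction over this range gives a contribution of order \(\kappa A^3/(3B^3)\); a lower bound \(\kappa\ge 3000\) on \(g_t''/2\) along that range would produce the stated term. I expect \(g_t''(\theta)\ge 6000\)-ish near \(\theta=1\) to hold from the coefficients of \(q_5\).

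The main obstacle is this last step: the curvature constant must be certified on the exact interval where the linear minorant is positive, uniformly in \(t\in[3,4]\). I would check \(A(t)/B(t)\le1\) and the convexity constant directly. If needed, I would reduce the final one-variable polynomial inequality in \(t\) to an exact Bernstein-basis certificate on \([3,4]\), in the same spirit as the certificates recorded in Appendix~\ref{app:verification}.
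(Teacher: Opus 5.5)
Your reduction is the paper's. The chord bound $\lambda^5\le t^4\lambda+K(t)(\lambda-t)_+$ on $[0,5]$ is exactly the paper's scalar inequality: you derive it from convexity of $\lambda^5-t^4\lambda$ on $[t,5]$, while the paper uses monotonicity of $y(y+t)(y^2+t^2)$. The pairing with $b$, the use of the first and fifth moments, the positive part, and the expansion about $\theta=1$ with $g_t(1)=A$, $g_t'(1)=B$ all match. A small correction: only convexity of $g_t$ is needed for the tangent bound, and $g_t$ is not increasing on $[0,1]$ for $t$ near $4$, e.g.\ $g_4'(0)=387-1024<0$.

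The gap is in the step you flag: your proposed way of producing the $3000s^2$ term does not work. With $s=1-\theta$ one has exactly
\[
g_t(1-s)=A-Bs+3907s^2-2040s^3+560s^4-64s^5,\qquad \tfrac12\,g_t''(1-s)=3907-6120s+3360s^2-640s^3 .
\]
The ratio $h(t)=A/B$ is decreasing in $t$, since $h'(t)=-83036\,t^3/B(t)^2$. So the range $0\le s\le A/B$ reaches $h(3)=1245/3677\approx0.339$. However, $\tfrac12 g_t''(1-s)$ falls below $3000$ once $s\gtrsim0.162$, and is only about $2195$ at $s=h(3)$. Hence a pointwise bound $\tfrac12 g_t''\ge 3000$ on the relevant range is false for all $t\in[3,\approx 3.68)$. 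At $t=3$ the pointwise route yields only about $731\,A^3/B^3$ instead of the required $1000\,A^3/B^3$.

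What works, and what the paper does, is to bound the averaged curvature $(g_t(1-s)-A+Bs)/s^2$ rather than $g_t''$. For $0\le s\le h:=A/B$, use $s^3\le hs^2$, $s^5\le h^3s^2$ and $560s^4\ge0$ to get
\[
\frac{g_t(1-s)-A+Bs}{s^2}=3907-2040s+560s^2-64s^3\ge 3907-2040h-64h^3 .
\]
Since $h\le 1245/3677$, the right side exceeds $3000$ (it is about $3214$). Then, because $A-Bs\ge0$ on $[0,A/B]$,
\[
\int_0^1[g_t(\theta)]_+\,d\theta\ge\int_0^{A/B}\bigl(A-Bs+3000s^2\bigr)\,ds=\frac{A^2}{2B}+1000\frac{A^3}{B^3},
\]
which is the claimed bound. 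No Bernstein certificate is needed for this step.
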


\begin{proof}
Fix \(3\le t\le4\).  We first prove the scalar inequality
\begin{equation}\label{eq:scalar-fifth-clean}
        (y-t)_+
        \ge
        \frac{y^5-t^4y}{K(t)}
        \qquad(0\le y\le5).
\end{equation}
If \(0\le y\le t\), then \(y^5-t^4y\le0\), so the right-hand side is non-positive.  If \(t<y\le5\), then
\[
        \frac{y^5-t^4y}{y-t}
        =
        y(y+t)(y^2+t^2).
\]
The last expression is increasing in \(y\ge0\), and its maximum on \([t,5]\) is $5(5+t)(25+t^2)=K(t)$. This proves~\eqref{eq:scalar-fifth-clean}.

By Lemma~\ref{lem:local-data}, we know that all eigenvalues of
\(M_\theta\) lie in \([0,5]\).  Hence, by functional calculus,
\[
        (M_\theta-t)_+
        \succeq
        \frac{M_\theta^5-t^4M_\theta}{K(t)}.
\]
Pairing with \(b\) gives
\[
        b^\top(M_\theta-t)_+b
        \ge
        \frac{b^\top M_\theta^5b-t^4b^\top M_\theta b}{K(t)}.
\]
The left-hand side is non-negative, \(K(t)>0\), and \(x\mapsto x_+\) is increasing. Using Lemma~\ref{lem:local-data}, we obtain
\[
        b^\top(M_\theta-t)_+b
        \ge
        \frac{[q_5(\theta)-t^4(3+4\theta)]_+}{K(t)}.
\]
Put
\[
        P_t(\theta):=q_5(\theta)-t^4(3+4\theta).
\]
By~\eqref{eq:J-integral-clean},
\[
        J_{E_{G,u}}(t)
        =
        \int_0^1 b^\top(M_\theta-t)_+b\,d\theta
        \ge
        \frac1{K(t)}\int_0^1 [P_t(\theta)]_+\,d\theta.
\]

It remains to lower-bound the last integral.  Set \(z=1-\theta\).  A direct
expansion gives
\begin{equation}\label{eq:Pt-expansion-clean}
        P_t(1-z)
        =
        A-Bz+3907z^2-2040z^3+560z^4-64z^5,
\end{equation}
where \(A=A(t)\) and \(B=B(t)\). On \(3\le t\le4\),
\[
        A(t)\ge A(4)=20>0,
        \qquad
        B(t)\ge B(4)=2977>0.
\]
Let $h(t):=A(t)/B(t)$. Then \(0<h(t)<1\), and
\[
        h'(t)=\frac{t^3(16A(t)-28B(t))}{B(t)^2}
        =
        -\frac{83036t^3}{B(t)^2}<0.
\]
Thus
\[
        h(t)\le h(3)=\frac{1245}{3677}.
\]
The expansion~\eqref{eq:Pt-expansion-clean} and the derivative computation above are verified exactly in Section~\ref{app:local-moment-check}.

For \(0\le z\le h(t)\), we have
\[
        -2040z^3\ge -2040h(t)z^2,
        \qquad
        -64z^5\ge -64h(t)^3z^2,
        \qquad
        560z^4\ge0.
\]
Therefore
\[
        P_t(1-z)
        \ge
        A-Bz+D(t)z^2,
\]
where
\[
        D(t):=3907-2040h(t)-64h(t)^3.
\]
Since \(h(t)\le1245/3677\), the exact rational check in Section~\ref{app:local-moment-check} gives $D(t)>3000$. Moreover, \(A-Bz\ge0\) for \(0\le z\le h(t)=A/B\).  Hence
\[
\begin{aligned}
        \int_0^1[P_t(\theta)]_+\,d\theta
        &=
        \int_0^1[P_t(1-z)]_+\,dz  \\
        &\ge
        \int_0^{A/B}(A-Bz+3000z^2)\,dz \\
        &=
        \frac{A^2}{2B}+1000\frac{A^3}{B^3}.
\end{aligned}
\]
Combining this with the previous lower bound for \(J_{E_{G,u}}(t)\) gives
\(J_{E_{G,u}}(t)\ge L_5(t)\).
\end{proof}

We shall use the following elementary moment envelope for the path endpoint interval set.

\begin{lemma}\label{lem:path-moment-envelope}
Let \(E\subset[0,4]\) be a finite union of intervals. For every \(r>1\) and every \(0<t<4\),
\[
        J_E(t)
        \le
        \Gamma_r(t)\int_{E}y^r\,dy,
\]
where
\[
        \Gamma_r(t):=
        \max_{t< y\le4}\frac{y-t}{y^r}
        =
        \begin{cases}
        \dfrac{(r-1)^{r-1}}{r^r t^{r-1}},
        &0<t\le \dfrac{4(r-1)}r,\\[3mm]
        \dfrac{4-t}{4^r},
        &\dfrac{4(r-1)}r\le t<4.
        \end{cases}
\]
For \(r=1\), one has the simpler bound
\[
        J_E(t)\le \int_E y\,dy
        \qquad(t\ge0).
\]
\end{lemma}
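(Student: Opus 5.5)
The plan is a pointwise scalar bound followed by integration over \(E\). For \(y\in E\) and \(0<t<4\), I would bound the integrand \((y-t)_+\) by a multiple of \(y^r\), namely
\[
        (y-t)_+\le \Gamma_r(t)\,y^r\qquad(0\le y\le4).
\]
For \(y\le t\) the left side vanishes and the right side is non-negative. For \(t<y\le4\), the inequality is just the definition of \(\Gamma_r(t)\) as the maximum of \((y-t)/y^r\) over \((t,4]\). The maximum exists because the function is continuous on \([t,4]\) and vanishes at \(y=t\). Integrating this inequality over \(E\subset[0,4]\) gives \(J_E(t)\le\Gamma_r(t)\int_E y^r\,dy\) immediately.

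The remaining work is to evaluate \(\Gamma_r(t)\) explicitly. I set \(f(y)=(y-t)y^{-r}\) on \((t,\infty)\) and compute
\[
        f'(y)=y^{-r-1}\bigl(y-r(y-t)\bigr)=y^{-r-1}\bigl(rt-(r-1)y\bigr).
\]
Since \(r>1\), \(f\) increases on \((t,y^*]\) and decreases afterwards, where \(y^*=rt/(r-1)>t\). Two cases follow.

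\emph{Case \(y^*\le4\), i.e.\ \(t\le4(r-1)/r\).} The maximum over \((t,4]\) is attained at \(y^*\). There \(y^*-t=t/(r-1)\), so
\[
        f(y^*)=\frac{t}{r-1}\Bigl(\frac{r-1}{rt}\Bigr)^r=\frac{(r-1)^{r-1}}{r^r t^{r-1}}.
\]

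\emph{Case \(y^*\ge4\).} Here \(f\) is increasing on \((t,4]\), so the maximum is \(f(4)=(4-t)/4^r\).

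At \(t=4(r-1)/r\) the two formulas agree, since then \(y^*=4\). This matches the stated piecewise formula.

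For the case \(r=1\), I would use \((y-t)_+\le y\) for \(y\ge0\) and \(t\ge0\), and integrate over \(E\).

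There is no real obstacle. The only point needing care is confirming that \(y^*>t\), so that the interior critical point actually lies in the range \((t,4]\) exactly when \(t\le4(r-1)/r\).
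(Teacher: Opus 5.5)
Your proposal is correct and follows the paper's proof essentially step for step: the pointwise bound \((y-t)_+\le\Gamma_r(t)\,y^r\) on \([0,4]\) is integrated over \(E\), \(\Gamma_r(t)\) is computed from the critical point \(y_*=rt/(r-1)\) of \((y-t)/y^r\) with the two cases \(y_*\le4\) and \(y_*\ge4\), and the case \(r=1\) follows from \((y-t)_+\le y\). Your extra checks that the maximum exists and that \(y_*>t\) are harmless details the paper leaves implicit.
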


\begin{proof}
By the definition of \(\Gamma_r(t)\), we have
\[
        (y-t)_+\le \Gamma_r(t)y^r
        \qquad(0\le y\le4).
\]
Integrating over \(E\) gives
\[
        J_{E}(t)
        =
        \int_{E}(y-t)_+\,dy
        \le
        \Gamma_r(t)\int_{E}y^r\,dy.
\]

It remains only to compute \(\Gamma_r(t)\).  For \(t<y\le4\), set $\phi(y)=(y-t)/y^r$. Then
\[
        \phi'(y)=\frac{rt-(r-1)y}{y^{r+1}}.
\]
Thus the unconstrained maximizer is $y_*=rt/(r-1)$. If \(y_*\le4\), equivalently \(t\le4(r-1)/r\), then
\[
        \Gamma_r(t)=\phi(y_*)
        =
        \frac{(r-1)^{r-1}}{r^r t^{r-1}}.
\]
If \(y_*>4\), equivalently \(t\ge4(r-1)/r\), then the maximum occurs at \(y=4\), and
\[
        \Gamma_r(t)=\phi(4)=\frac{4-t}{4^r}.
\]
The case \(r=1\) follows immediately from \((y-t)_+\le y\).
\end{proof}

The following scalar certificate completes the comparison between the lower
envelopes and the path-side upper envelopes.

\begin{lemma}\label{lem:SN-scalar-certificate}
Let $C_{19}:=\frac1{20}\binom{39}{19}$. Define
\[
        U_1(t):=
        \begin{cases}
        \dfrac32,
        &0\le t\le3/2,\\[2mm]
        \dfrac5{6t},
        &3/2\le t\le2,\\[2mm]
        \dfrac{1701}{640t^3},
        &2\le t\le3,
        \end{cases}
\qquad
\text{and}
\qquad
        U_2(t):=
        \begin{cases}
        C_{19}\dfrac{18^{18}}{19^{19}t^{18}},
        &3\le t\le72/19,\\[2mm]
        C_{19}\dfrac{4-t}{4^{19}},
        &72/19\le t\le4.
        \end{cases}
\]
With \(L_{\rm av}\) and \(L_5\) as in Lemmas~\ref{lem:SN-jensen-envelope} and~\ref{lem:SN-fifth-envelope}, one has
\[
        L_{\rm av}(t)\ge U_1(t)
        \qquad(0\le t\le3),
\]
and
\[
        L_5(t)\ge U_2(t)
        \qquad(3\le t\le4).
\]
\end{lemma}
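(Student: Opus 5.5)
This is a pair of explicit one-variable inequalities, so the plan is to verify each piece on its own interval by direct elementary estimates. Where a piece is too tight for that, I fall back on exact polynomial (Bernstein) certificates, in the spirit of the paper's appendix.

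\emph{First inequality, $L_{\rm av}\ge U_1$ on $[0,3]$.}

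On $[0,3/2]$ we need $5-2t\ge 3/2$, which holds because $5-2t\ge 2$ there.

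On $[3/2,2]$ we need $\frac12(\frac72-t)^2\ge \frac5{6t}$, that is, $3t(\frac72-t)^2\ge5$. The function $t(7/2-t)^2$ has derivative $(7/2-t)(7/2-3t)<0$ for $t\in(7/6,7/2)$, so it is decreasing on this interval. Its minimum is therefore at $t=2$, giving $6\cdot\frac94=13.5\ge5$.

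On $[2,3]$ we need $320t^3(\frac72-t)^2\ge1701$. Write $f(t)=t^3(7/2-t)^2$. Then $f'(t)=t^2(7/2-t)(21/2-5t)$, which vanishes at $21/10$. So $f$ increases and then decreases on $[2,3]$, and its minimum is at an endpoint. We have $f(2)=8\cdot\frac94=18$ and $f(3)=27/4$. Since $320\cdot 27/4=2160\ge1701$, the inequality holds.

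\emph{Second inequality, $L_5\ge U_2$ on $[3,4]$.}

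This is the delicate part. Recall $L_5=\frac{1}{K}\bigl(\frac{A^2}{2B}+1000\frac{A^3}{B^3}\bigr)$, where $A,K$ are decreasing in $t$ and $B$ is decreasing. I would split at $72/19$ as in the definition of $U_2$.

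Near $t=4$, $U_2$ vanishes linearly while $L_5$ stays bounded below by a positive constant. At $t=4$: $A=20$, $B=2977$, $K=5\cdot 369=1845$. So $L_5(4)\approx (400/5954)/1845\approx 3.6\cdot10^{-5}$. The corresponding $U_2$ coefficient is $C_{19}/4^{19}$, with $C_{19}=\binom{39}{19}/20\approx 3.4\cdot10^{9}$ and $4^{19}\approx2.75\cdot10^{11}$, so $C_{19}/4^{19}\approx 0.0125$. Hence $U_2\le 0.0125(4-t)$, and on $[72/19,4]$ we have $U_2\le 0.0125\cdot 4/19\approx 2.6\cdot10^{-3}$.

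A crude interval-endpoint bound is therefore not enough. I will subdivide $[3,4]$ into finitely many subintervals $[t_k,t_{k+1}]$. On each, I use monotonicity: $L_5$ is decreasing, since $A/B$ is decreasing and $K$ is increasing, while $U_2$ is decreasing too. This gives the sufficient condition $L_5(t_{k+1})\ge U_2(t_k)$ on each piece. The mesh must be fine near $t=4$, where both sides are small. There, $U_2(t_k)\approx0.0125(4-t_k)$ must be at most $3.6\cdot10^{-5}$, so the last interval should be roughly of width $3\cdot10^{-3}$. Very near $4$ this stepping fails, because $L_5(4)>0=U_2(4)$ but $U_2(t_k)$ with $t_k<4$ is positive. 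That is acceptable: the last piece $[t_k,4]$ only needs $L_5(4)\ge U_2(t_k)$, which holds once $t_k\ge 4-0.0028$.

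\emph{Main obstacle.} The hardest step is making the $[3,4]$ comparison rigorous and finite. My first attempt would be to clear denominators, reducing $L_5\ge U_2$ to a polynomial inequality $P(t)\ge0$ separately on $[3,72/19]$ and $[72/19,4]$. On the first piece the $t^{18}$ factor makes the degree large but the polynomial is explicit. I would then expand $P$ in the Bernstein basis on each subinterval, subdividing where coefficients are negative, until all coefficients are positive. These exact rational Bernstein certificates would be recorded in the appendix.
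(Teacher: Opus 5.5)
Your proposal is correct and is essentially the paper's proof. Your three $U_1$ checks reduce to exactly the paper's polynomials $G_0,G_1,G_2$, and your monotonicity minima $G_0(3/2)=1/2$, $G_1(2)=17/2$, $G_2(3)=459$ are precisely the Bernstein lower bounds in the paper's table; for $L_5\ge U_2$ the paper does exactly your fallback, multiplying through by the positive factors $t^{18}B^3K$ resp.\ $B^3K$ and certifying the resulting $F_1$ on $[3,72/19]$ and $F_2$ on four subintervals of $[72/19,4]$ by exact positive Bernstein coefficients. Two small slips do not affect this route: $K$ is increasing, not decreasing, as you yourself use when showing $L_5$ is decreasing; and for the monotone-mesh alternative the binding spots are $t=3$ (where $L_5/U_2\approx1.04$) and $t\approx3.99$ (where steps of about $5\cdot10^{-4}$ are needed), not only the endpoint $t=4$.
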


\begin{proof}
By the piecewise definitions of \(U_1\) and \(U_2\), the assertion is exactly
the following five scalar inequalities:
\[
        \begin{cases}
        L_{\rm av}(t)\ge\dfrac32,
        &0\le t\le3/2,\\[2mm]
        L_{\rm av}(t)\ge\dfrac5{6t},
        &3/2\le t\le2,\\[2mm]
        L_{\rm av}(t)\ge\dfrac{1701}{640t^3},
        &2\le t\le3,\\[2mm]
        L_5(t)\ge C_{19}\dfrac{18^{18}}{19^{19}t^{18}},
        &3\le t\le72/19,\\[2mm]
        L_5(t)\ge C_{19}\dfrac{4-t}{4^{19}},
        &72/19\le t\le4.
        \end{cases}
\]

Using the explicit formula for \(L_{\rm av}\), and clearing positive
denominators, the first three inequalities are equivalent to
\[
        G_0(t):=\frac72-2t\ge0
        \qquad(0\le t\le3/2),
\]
\[
        G_1(t):=3t\left(\frac72-t\right)^2-5\ge0
        \qquad(3/2\le t\le2),
\]
and
\[
        G_2(t):=320t^3\left(\frac72-t\right)^2-1701\ge0
        \qquad(2\le t\le3).
\]

We now clear denominators in the two inequalities involving \(L_5\).  Put
\[
        \widehat L(t):=\frac12A(t)^2B(t)^2+1000A(t)^3.
\]
Then
\[
        L_5(t)=\frac{\widehat L(t)}{B(t)^3K(t)}.
\]
Since \(B(t)>0\), \(K(t)>0\), and \(t>0\) on the relevant intervals, the fourth
inequality is equivalent to
\[
        F_1(t):=
        t^{18}\widehat L(t)
        -
        C_{19}\frac{18^{18}}{19^{19}}B(t)^3K(t)\ge0
        \qquad(3\le t\le72/19).
\]
Similarly, the fifth inequality is equivalent to
\[
        F_2(t):=
        \widehat L(t)
        -
        C_{19}\frac{4-t}{4^{19}}B(t)^3K(t)\ge0
        \qquad(72/19\le t\le4).
\]
Here \(\deg F_1=34\) and \(\deg F_2=16\).

The remaining verification is a finite exact polynomial check.  By the exact
Bernstein verification in Section~\ref{app:scalar-envelope-check}, all Bernstein
coefficients of \(G_0,G_1,G_2\) on their stated intervals are positive; all
Bernstein coefficients of \(F_1\) on \([3,72/19]\) are positive; and all
Bernstein coefficients of \(F_2\) are positive on the four subintervals
\[
        [72/19,74/19],\quad
        [74/19,75/19],\quad
        [75/19,151/38],\quad
        [151/38,4].
\]
Since Bernstein basis functions are non-negative on the underlying interval,
the Bernstein positivity criterion in Section~\ref{app:bernstein-criterion} proves the
non-negativity of these five polynomials on the stated intervals.  Hence all five
scalar inequalities hold.
\end{proof}

We now combine the lower envelopes for the support-neighbor update with the upper envelopes for the path endpoint update on the compact interval \(0\le t\le4\).

\begin{lemma}
\label{lem:SN-compact}
Assume that \(G\) is a \(3\)-separated sparse sun graph whose base cycle has length at least \(6\). Then
\[
        J_{E_{G,u}}(t)\ge J_{E_n^P}(t)
        \qquad(0\le t\le4).
\]
\end{lemma}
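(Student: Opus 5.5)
The plan is to sandwich both sides between the explicit envelopes already prepared. The lower bounds for $J_{E_{G,u}}$ are $L_{\rm av}$ from Lemma~\ref{lem:SN-jensen-envelope} on $[0,3]$ and $L_5$ from Lemma~\ref{lem:SN-fifth-envelope} on $[3,4]$. Lemma~\ref{lem:SN-scalar-certificate} already gives $L_{\rm av}\ge U_1$ and $L_5\ge U_2$. So the whole task is to prove the upper bounds
\[
J_{E_n^P}(t)\le U_1(t)\ (0\le t\le3),\qquad J_{E_n^P}(t)\le U_2(t)\ (3\le t\le4),
\]
by combining the moment envelope of Lemma~\ref{lem:path-moment-envelope} with the moment bounds of Lemma~\ref{lem:path-moments}. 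Note that $E_n^P\subset[0,4]$, since all squared singular values of paths lie in $[0,4]$.

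For $U_1$ I treat the three pieces separately.

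On $0\le t\le 3/2$, the case $r=1$ of Lemma~\ref{lem:path-moment-envelope} gives $J_{E_n^P}(t)\le\int_{E_n^P}y\,dy$. Lemma~\ref{lem:path-moments} with $m=2$ bounds this by $\frac14\binom42=\frac32$.

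On $3/2\le t\le 2$, I take $r=2$. Then $4(r-1)/r=2$, so $\Gamma_2(t)=1/(4t)$. Lemma~\ref{lem:path-moments} with $m=3$ gives $\int y^2\le\frac16\binom63=\frac{10}3$. The product is $\frac{5}{6t}$, exactly $U_1$.

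On $2\le t\le3$, I take $r=4$. Since $t\le 3=4(r-1)/r$, we have $\Gamma_4(t)=3^3/(4^4t^3)$. With $m=5$, $\int y^4\le\frac1{10}\binom{10}5=\frac{126}{5}$. The product is $\frac{27\cdot126}{256\cdot5\,t^3}=\frac{3402}{1280t^3}=\frac{1701}{640t^3}$, again matching $U_1$.

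For $U_2$ on $[3,4]$, I take $r=19$ and $m=20$, so $\int_{E_n^P}y^{19}\,dy\le\frac1{40}\binom{40}{20}$. I need to check this against $C_{19}=\frac1{20}\binom{39}{19}$. Since $\binom{40}{20}=2\binom{39}{19}$, the bound equals $\frac{1}{20}\binom{39}{19}=C_{19}$. The threshold is $4(r-1)/r=72/19$, and the two branches of $\Gamma_{19}$ reproduce the two branches of $U_2$ exactly. At the endpoint $t=4$ the value is trivially zero, so the case $0<t<4$ in Lemma~\ref{lem:path-moment-envelope} causes no trouble.

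The chain is then $J_{E_{G,u}}\ge L\ge U\ge J_{E_n^P}$ on each subinterval. Everything substantive was front-loaded into the certificates of Lemma~\ref{lem:SN-scalar-certificate}, so the only real work here is matching the exponents and constants: choosing $r=1,2,4,19$ and the corresponding $m=r+1$ in Lemma~\ref{lem:path-moments}, and checking that the breakpoints $4(r-1)/r$ coincide with the piece boundaries $2$, $3$ and $72/19$. The step I would double-check most carefully is the $C_{19}$ identity together with the use of the $r=19$ envelope on the branch $3\le t\le72/19$.
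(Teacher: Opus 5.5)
Your proposal is correct and follows the paper's proof: you take the lower envelopes $L_{\rm av}$ and $L_5$, bound the path side by Lemma~\ref{lem:path-moment-envelope} with $r=1,2,4,19$ fed by the moment bounds of Lemma~\ref{lem:path-moments} at $m=2,3,5,20$, and close the chain with Lemma~\ref{lem:SN-scalar-certificate}. Your explicit checks of the constants, namely $\frac{1}{40}\binom{40}{20}=C_{19}$, the breakpoints $2$, $3$, $72/19$, and the product $\frac{1701}{640t^3}$, fill in the matching of $\Gamma_r$ with $U_1$ and $U_2$ that the paper leaves implicit.
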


\begin{proof}
The lower envelopes from Lemmas~\ref{lem:SN-jensen-envelope} and~\ref{lem:SN-fifth-envelope} give
\[
        J_{E_{G,u}}(t)
        \ge
        \begin{cases}
        L_{\rm av}(t), &0\le t\le3,\\[2mm]
        L_5(t), &3\le t\le4.
        \end{cases}
\]
On the path side, Lemma~\ref{lem:path-moments} gives
\[
        \int_{E_n^P}y\dd y\le\frac32,
        \qquad
        \int_{E_n^P}y^2\dd y\le\frac{10}{3},
        \qquad
        \int_{E_n^P}y^4\dd y\le\frac{126}{5},
        \qquad
        \int_{E_n^P}y^{19}\dd y\le C_{19}=\frac1{20}\binom{39}{19}.
\]
Applying Lemma~\ref{lem:path-moment-envelope} yields
\[
        J_{E_n^P}(t)
        \le
        \begin{cases}
        U_1(t), &0\le t\le3,\\[2mm]
        U_2(t), &3\le t\le4,
        \end{cases}
\]
where \(U_1\) and \(U_2\) are the upper envelopes defined in Lemma~\ref{lem:SN-scalar-certificate}. Therefore, by Lemma~\ref{lem:SN-scalar-certificate}, for \(0\le t\le3\),
\[
        J_{E_{G,u}}(t)
        \ge L_{\rm av}(t)
        \ge U_1(t)
        \ge J_{E_n^P}(t),
\]
while for \(3\le t\le4\),
\[
        J_{E_{G,u}}(t)
        \ge L_5(t)
        \ge U_2(t)
        \ge J_{E_n^P}(t).
\]
This proves the desired comparison on the whole interval \(0\le t\le4\).
\end{proof}

\subsection{The exceptional \texorpdfstring{$C_4$}{C4} case}

It remains to handle the unique terminal case not covered by the compact
comparison above, namely the four-cycle with one attached leaf.

\begin{lemma}
\label{lem:C4}
Let \(G\) be obtained from \(C_4\) by attaching one pendant leaf to a cycle vertex. If \(u\) is a cycle neighbor of this loaded vertex, then Theorem~\ref{thm:SN} holds.
\end{lemma}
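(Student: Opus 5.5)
This is the case $n=5$, and the plan is to compute everything explicitly rather than use the envelope machinery. The key observation is that in this configuration $H=G-u$ is itself the path $P_4$. Indeed, if the cycle is $a\,u\,c\,w\,a$ and $\ell$ is the leaf at $a$, then $H$ is the path $c\,w\,a\,\ell$.

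Both spectral-shift sets $E_{G,u}$ and $E_5^P$ arise from rank-one updates starting at the same multiset of squared singular values, namely those of $P_4$. By the identity $\tr(M+bb^\top-tI)_+^2-\tr(M-tI)_+^2=2J_{E(M,b)}(t)$ recorded before \eqref{eq:J-integral-clean}, applied to $H\rightsquigarrow G$ and to $P_4\rightsquigarrow P_5$, we get
\[
2J_{E_{G,u}}(t)-2J_{E_5^P}(t)=\mathsf S_t(G)-\mathsf S_t(P_5).
\]
The $\mathsf S_t(P_4)$ contributions cancel because $H\cong P_4$. So the claim reduces to $\mathsf S_t(G)\ge\mathsf S_t(P_5)$ for all $t\ge0$.

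Next I compute the two spectra. For $P_5$, the path formula gives the squared singular values $4\cos^2(\pi/6)=3$ and $4\cos^2(\pi/3)=1$. For $G$, I take the bipartition with left side $\{u,w,\ell\}$ and right side $\{a,c\}$. The rows of $B_G$ are then $(1,1)$, $(1,1)$, $(1,0)$, so
\[
B_G^\top B_G=\begin{pmatrix}3&2\\2&2\end{pmatrix},
\]
with eigenvalues $(5\pm\sqrt{17})/2\approx 4.56,\ 0.44$.

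Finally I check weak majorization of the decreasing vectors. The top entries satisfy $(5+\sqrt{17})/2\ge 3$, and the full sums satisfy $5=|E(G)|\ge 4=|E(P_5)|$. Since $\lambda\mapsto(\lambda-t)_+^2$ is increasing and convex, the weak-submajorization characterization cited in the proof of Theorem~\ref{thm:cycle-sl2} gives $\mathsf S_t(G)\ge\mathsf S_t(P_5)$ for every $t\ge0$. There is no real obstacle here. The only point needing care is the bookkeeping showing that the two interval sets share the same lower endpoints, which is exactly what lets the $P_4$ terms cancel. One could alternatively check the inequality directly at the finitely many breakpoints of the two piecewise quadratics.
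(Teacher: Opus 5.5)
Your proof is correct and follows the paper's reduction exactly: \(H=G-u\cong P_4\), the \(\mathsf S_t(P_4)\) terms cancel, and the claim becomes \(\mathsf S_t(G)\ge\mathsf S_t(P_5)\), with the same spectra \(\{(5\pm\sqrt{17})/2\}\) and \(\{3,1\}\). The only difference is the last step. The paper checks \(\Phi(t)\ge0\) by explicit case analysis over the breakpoints \((5-\sqrt{17})/2,1,3,(5+\sqrt{17})/2\). You instead note that \(\bigl((5+\sqrt{17})/2,(5-\sqrt{17})/2\bigr)\) weakly submajorizes \((3,1)\) and apply the increasing-convex characterization already used in Theorem~\ref{thm:cycle-sl2}, which is shorter and equally rigorous.
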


\begin{proof}
In this case \(H=G-u\) is isomorphic to \(P_4\).  The nonzero squared singular
values are
\[
        \mu(G)=\left\{\frac{5+\sqrt{17}}2,\frac{5-\sqrt{17}}2\right\},
        \qquad
        \mu(H)=\mu(P_4)=\left\{\frac{3+\sqrt5}2,\frac{3-\sqrt5}2\right\}.
\]
Moreover, $\mu(P_5)=\{3,1\}$. By the definition of the spectral-shift interval set, the update
\(H\rightsquigarrow G\) satisfies
\[
        \mathsf S_t(G)-\mathsf S_t(H)
        =
        2J_{E_{G,u}}(t).
\]
Similarly, for the path endpoint update \(P_4\rightsquigarrow P_5\), we have
\[
        \mathsf S_t(P_5)-\mathsf S_t(P_4)
        =
        2J_{E_5^P}(t).
\]
Therefore
\[
        2\bigl(J_{E_{G,u}}(t)-J_{E_5^P}(t)\bigr)
        =
        \mathsf S_t(G)-\mathsf S_t(H)
        -\mathsf S_t(P_5)+\mathsf S_t(P_4).
\]
Since \(H\cong P_4\), we have
\(\mathsf S_t(H)=\mathsf S_t(P_4)\). Hence 
\[
        2\bigl(J_{E_{G,u}}(t)-J_{E_5^P}(t)\bigr)
        =
        \mathsf S_t(G)-\mathsf S_t(P_5).
\]
Using the squared singular values displayed above, this becomes
\[
        J_{E_{G,u}}(t)-J_{E_5^P}(t)
        =
        \frac12\left[
        \left(\frac{5+\sqrt{17}}2-t\right)_+^2
        +
        \left(\frac{5-\sqrt{17}}2-t\right)_+^2
        -(3-t)_+^2-(1-t)_+^2
        \right].
\]
Set $a:=(5-\sqrt{17})/2$ and $b:=(5+\sqrt{17})/2$. It remains to prove
\[
        \Phi(t):=
        \frac12\left[(b-t)_+^2+(a-t)_+^2-(3-t)_+^2-(1-t)_+^2\right]
        \ge0.
\]
The only breakpoints of this positive-part expression are \(a,1,3,b\), and they satisfy \(0<a<1<3<b\).

If \(0\le t\le a\), then $\Phi(t)=11/2-t>0$. If \(a\le t\le1\), then
\[
        \Phi(t)
        =
        -\frac{t^2}{2}
        +\frac{3-\sqrt{17}}{2}t
        +\frac{1+5\sqrt{17}}{4}.
\]
This quadratic is decreasing on \([a,1]\), and $\Phi(1)=(5+3\sqrt{17})/4>0$. If \(1\le t\le3\), then
\[
        \Phi(t)
        =
        \frac12\left[(b-t)^2-(3-t)^2\right]
        =
        \frac12(b-3)(b+3-2t)\ge0,
\]
because \(b>3\) and \(b+3-2t\ge b-3\) on this interval. If \(3\le t\le b\), then \(\Phi(t)=\frac12(b-t)^2\ge0\), and if
\(t\ge b\), then \(\Phi(t)=0\).  Therefore \(\Phi(t)\ge0\) for every
\(t\ge0\).
\end{proof}

We are now ready to present the following.

\begin{proof}[Proof of Theorem~\ref{thm:SN}]
If the cycle length is \(4\), the result is Lemma~\ref{lem:C4}.  Assume that the cycle length is at least \(6\). By Lemma~\ref{lem:SN-compact},
\[
        J_{E_{G,u}}(t)\ge J_{E_n^P}(t)
        \qquad(0\le t\le4).
\]
For \(t\ge4\), the path endpoint interval set is supported in \([0,4]\), so \(J_{E_n^P}(t)=0\), while \(J_{E_{G,u}}(t)\ge0\).  Hence the inequality holds for every \(t\ge0\).
\end{proof}

\section{Completion of the proof}\label{sec:completion}

We first complete the proof of the bipartite stop-loss comparison.

\begin{proof}[Proof of Theorem~\ref{thm:bipartite-stoploss}]
Suppose, for contradiction, that the second-order stop-loss comparison fails. Choose a connected bipartite counterexample \(G\) of minimum order, and fix a threshold \(t_0\ge0\) witnessing the failure:
\[
        \mathsf S_{t_0}(G)<\mathsf S_{t_0}(P_n),
        \qquad n=|V(G)|.
\]
By the minimality of \(G\), every connected bipartite graph of order strictly smaller than \(n\) satisfies the comparison against the path of the same order for every threshold.  In particular, every connected component produced by any vertex deletion from \(G\) satisfies the comparison for all thresholds.

The tree case is excluded by Corollary~\ref{cor:tree-stoploss-path}.  Hence \(G\) is not a tree, and Proposition~\ref{prop:sl2-reduction-suns} applies: \(G\) is a sparse sun graph whose loaded cycle vertices are \(3\)-separated.

If this sparse sun has no pendant leaves, then it is just an even cycle, which is excluded by Theorem~\ref{thm:cycle-sl2}. Thus \(G\) has at least one pendant leaf. Choose a loaded cycle vertex and an unloaded cycle neighbor \(u\), and put \(H=G-u\).  Deleting \(u\) breaks the base cycle into a path, while all pendant leaves remain attached to this path.  Hence \(H\) is a tree on \(n-1\) vertices. By Corollary~\ref{cor:tree-stoploss-path},
\[
        \mathsf S_{t_0}(H)\ge \mathsf S_{t_0}(P_{n-1}).
\]

By Theorem~\ref{thm:SN},
\[
        J_{E_{G,u}}(s)\ge J_{E_n^P}(s)
        \qquad(s\ge0).
\]
Evaluating at \(s=t_0\) and using the defining relation between spectral-shift intervals and second-order stop-loss increments, we obtain
\[
        \mathsf S_{t_0}(G)-\mathsf S_{t_0}(H)
        =
        2J_{E_{G,u}}(t_0)
        \ge
        2J_{E_n^P}(t_0)
        =
        \mathsf S_{t_0}(P_n)-\mathsf S_{t_0}(P_{n-1}).
\]
Adding this inequality to
\(\mathsf S_{t_0}(H)\ge \mathsf S_{t_0}(P_{n-1})\) gives
\[
        \mathsf S_{t_0}(G)\ge \mathsf S_{t_0}(P_n),
\]
contradicting the choice of \(t_0\).  Therefore the second-order stop-loss
comparison holds for every connected bipartite graph and every threshold
\(t\ge0\).
\end{proof}

We now prove the main \(p\)-energy theorem.

\begin{proof}[Proof of Theorem~\ref{thm:pgeq2}]
If \(n=1\), the assertion is trivial. Hence assume \(n\ge2\). 

The case \(p=2\) follows from
\[
        \mathcal E_2(G)=\operatorname{tr}A(G)^2=2|E(G)|
        \ge 2(n-1)=\mathcal E_2(P_n),
\]
because \(G\) is connected.

We next record the equality case for \(p=4\).  For every simple graph,
\[
        \mathcal E_4(G)=\operatorname{tr}A(G)^4
        =
        2m+4\sum_{v\in V(G)}\binom{d_v}{2}+8C_4(G),
\]
where \(m=|E(G)|\), \(d_v=d_G(v)\), and \(C_4(G)\) is the number of
\(4\)-cycles in \(G\).  For the path,
\[
        \mathcal E_4(P_n)
        =
        2(n-1)+4(n-2)
        =
        6n-10.
\]
If \(m\ge n\), then, since \(G\) is connected and \(n\ge2\), every vertex has
positive degree and
\[
        \sum_v\binom{d_v}{2}\ge \sum_v(d_v-1)=2m-n.
\]
Hence
\[
        \mathcal E_4(G)
        \ge
        2m+4(2m-n)
        =
        10m-4n
        \ge
        6n
        >
        6n-10
        =
        \mathcal E_4(P_n).
\]
If \(m=n-1\), then \(G\) is a tree.  In this case
\[
        \sum_v\binom{d_v}{2}\ge \sum_v(d_v-1)=n-2,
\]
with equality if and only if every vertex has degree at most \(2\).  A
connected tree with maximum degree at most \(2\) is a path.  Thus
\[
        \mathcal E_4(G)=\mathcal E_4(P_n)
        \quad\Longleftrightarrow\quad
        G\cong P_n.
\]
This proves the theorem, including the equality case, when \(p=4\).

Now assume \(p>2\).  We first prove the strict equality statement in the
connected bipartite case.

Let \(G\) be connected and bipartite.  If \(2<p<4\), then by
Theorem~\ref{thm:r1-main-comparison}, we know that $R_1(G;x)\ge R_1(P_n;x)$ for every $x>0$. Theorem~\ref{thm:R-implies-p} gives
\[
        \mathcal E_p(G)\ge \mathcal E_p(P_n).
\]
Suppose equality holds.  Put \(\alpha=p/2\in(1,2)\).  By the Mellin
representation used in Theorem~\ref{thm:R-implies-p},
\[
        0
        =
        \sum_i\mu_i(G)^\alpha-\sum_i\mu_i(P_n)^\alpha
        =
        c_\alpha\int_0^\infty
        \bigl(R_1(G;x)-R_1(P_n;x)\bigr)x^{-\alpha-1}\,dx .
\]
The integrand is non-negative and continuous in \(x>0\), while the weight is
strictly positive.  Therefore
\[
        R_1(G;x)=R_1(P_n;x)\qquad(x>0).
\]
Since
\[
        r_1(y)=y-\log(1+y)=\frac{y^2}{2}+O(y^3)
        \qquad(y\to0),
\]
we have
\[
        R_1(G;x)
        =
        \frac{x^2}{2}\sum_i\mu_i(G)^2+O(x^3),
        \qquad
        R_1(P_n;x)
        =
        \frac{x^2}{2}\sum_i\mu_i(P_n)^2+O(x^3).
\]
Hence $\sum_i\mu_i(G)^2=\sum_i\mu_i(P_n)^2$. Because both graphs are bipartite,
\[
        \mathcal E_4(G)=2\sum_i\mu_i(G)^2,
        \qquad
        \mathcal E_4(P_n)=2\sum_i\mu_i(P_n)^2.
\]
Thus \(\mathcal E_4(G)=\mathcal E_4(P_n)\), and the \(p=4\) equality case
proved above gives \(G\cong P_n\).

If \(p>4\), put \(\alpha=p/2>2\).  By Theorem~\ref{thm:bipartite-stoploss},
\[
        \mathsf S_t(G)\ge \mathsf S_t(P_n)
        \qquad(t\ge0).
\]
Using the integral representation
\[
        u^\alpha
        =
        \frac{\alpha(\alpha-1)(\alpha-2)}2
        \int_0^\infty (u-t)_+^2 t^{\alpha-3}\,dt
        \qquad(u\ge0),
\]
we obtain
\[
\begin{aligned}
\sum_i\mu_i(G)^\alpha-\sum_i\mu_i(P_n)^\alpha
&=
\frac{\alpha(\alpha-1)(\alpha-2)}2
\int_0^\infty
\bigl(\mathsf S_t(G)-\mathsf S_t(P_n)\bigr)t^{\alpha-3}\,dt
\ge0.
\end{aligned}
\]
Therefore
\[
        \mathcal E_p(G)\ge\mathcal E_p(P_n).
\]
If \(G\not\cong P_n\), then the \(p=4\) equality case gives $\mathcal E_4(G)>\mathcal E_4(P_n)$.

Equivalently,
\[
        \mathsf S_0(G)-\mathsf S_0(P_n)
        =
        \sum_i\mu_i(G)^2-\sum_i\mu_i(P_n)^2
        =
        \frac12\bigl(\mathcal E_4(G)-\mathcal E_4(P_n)\bigr)
        >0.
\]
By continuity of \(t\mapsto \mathsf S_t(G)-\mathsf S_t(P_n)\), the difference
is positive on some interval \((0,\varepsilon)\).  Since
\(t^{\alpha-3}>0\) on \((0,\varepsilon)\), the above integral is strictly
positive.  Hence
\[
        \mathcal E_p(G)>\mathcal E_p(P_n)
        \qquad(p>4)
\]
unless \(G\cong P_n\).

Combining the cases \(2<p<4\), \(p=4\), and \(p>4\), we have proved that, for
every connected bipartite graph \(G\) on \(n\) vertices and every \(p>2\),
\[
        \mathcal E_p(G)\ge\mathcal E_p(P_n),
\]
with equality if and only if \(G\cong P_n\).

Finally let \(G\) be an arbitrary connected graph on \(n\) vertices, and let \(p>2\).  By Lemma~\ref{lem:bipartite-reduction}, there exists a connected bipartite spanning subgraph \(H\) of \(G\) such that $\mathcal E_p(H)\le \mathcal E_p(G)$. The connected bipartite case applied to \(H\) gives $\mathcal E_p(H)\ge \mathcal E_p(P_n)$, and therefore
\[
        \mathcal E_p(G)\ge \mathcal E_p(P_n).
\]

It remains to identify equality. Suppose $\mathcal E_p(G)=\mathcal E_p(P_n)$. Then both inequalities above must be equalities.  In particular,
\[
        \mathcal E_p(H)=\mathcal E_p(P_n),
\]
so the bipartite equality case gives \(H\cong P_n\).

We now use equality in the bipartite reduction.  In the proof of
Lemma~\ref{lem:bipartite-reduction}, one has
\[
        A(H)=\frac{A(G)+(-D A(G)D)}2.
\]

By the strict convexity of finite-dimensional Schatten \(p\)-spaces for \(1<p<\infty\)~\cite[Corollary~1]{AziznejadUnser2021}, the Schatten norm has strictly convex unit ball. Consequently, for \(M\ne N\),
\[
        \left\|\frac{M+N}{2}\right\|_{S_p}^p
        <
        \frac{\|M\|_{S_p}^p+\|N\|_{S_p}^p}{2}.
\]
Indeed, if \(\|M\|_{S_p}=\|N\|_{S_p}\), this follows directly from strict
convexity of the unit ball; if \(\|M\|_{S_p}\ne\|N\|_{S_p}\), it follows from
the triangle inequality and the strict convexity of \(x\mapsto x^p\). Hence equality in
\[
        \mathcal E_p(H)
        =
        \left\|
        \frac{A(G)+(-D A(G)D)}2
        \right\|_{S_p}^p
        \le
        \frac{\|A(G)\|_{S_p}^p+\|-D A(G)D\|_{S_p}^p}{2}
        =
        \mathcal E_p(G)
\]
forces $A(G)=-D A(G)D$. Writing \(A(G)\) with respect to the bipartition
\(V(G)=X\sqcup Y\) used in the construction of \(H\), we have
\[
        A(G)=
        \begin{pmatrix}
        A_X&B\\
        B^\top&A_Y
        \end{pmatrix}.
\]
The identity \(A(G)=-DA(G)D\) forces \(A_X=A_Y=0\).  Hence \(G\) has no edges inside \(X\) or inside \(Y\). Therefore the bipartite spanning subgraph \(H\) constructed in Lemma~\ref{lem:bipartite-reduction} contains all edges of \(G\), and so \(G=H\). Since \(H\cong P_n\), we get \(G\cong P_n\).

The reverse implication is immediate: if \(G\cong P_n\), then equality holds for every \(p\).  This completes the proof.
\end{proof}

\appendix

\section{Finite certificates for the path-splicing low range}
\label{app:a2sl2-certificates}

This appendix gives the finite verification used in the low-range part of Theorem~\ref{thm:a2-sl2}.  The verification is not a sampling argument.  It is
a rigorous interval computation with rational subdivision endpoints and outward-rounded ball arithmetic, implemented in Arb~\cite{JohanssonArb}.
The computations are carried out in SageMath~\cite{SageMath10.8}, using its
built-in \texttt{RealBallField} class as the Arb interface to Arb~\cite{JohanssonArb}.

We use the notation from the proof of Theorem~\ref{thm:a2-sl2}. For
\(1/6\le\rho\le1/2\),
\[
        K_{p,q}(\rho)=M(\rho)+J_{p+q-1}(\rho)-J_p(\rho)-J_q(\rho),
\]
and
\[
        D_p(\rho)=M(\rho)-J_p(\rho).
\]

\begin{lemma}\label{lem:a2-finite-certificates}
The following inequalities hold.
\begin{enumerate}[label=(\roman*)]
\item For each \(p=2,\ldots,6\) and every \(1/6\le\rho\le1/2\),
\[
        D_p(\rho)-
        \frac{17\sqrt3\pi^2}{27}\left(\frac1{7^2}+\frac1{(p+6)^2}\right)\ge 10^{-4}.
\]

\item For each \(2\le p\le q\le6\) and every \(1/6\le\rho\le1/2\),
\[
        K_{p,q}(\rho)\ge 10^{-4}.
\]
\end{enumerate}
\end{lemma}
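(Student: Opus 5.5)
The plan is to reduce everything to finitely many one-variable inequalities in $R=2\pi\rho$ on the compact range $\rho\in[1/6,1/2]$, and then certify them with rigorous interval arithmetic; no sampling is involved. The first step is to make $J_n(\rho)$ fully explicit for each fixed $n\le 11$ that can occur: $n=p\in\{2,\dots,6\}$ in (i), and $n\in\{p,q,p+q-1\}\subset\{2,\dots,11\}$ in (ii).

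On each interval $[k/n,(k+1)/n)$ one has $\psi(nu)=nu-k-\tfrac12$, which is linear in $u$. The weight is $w_\rho(u)=8\pi\sin(4\pi u)-16\pi\cos(2\pi\rho)\sin(2\pi u)$. Hence $J_n(\rho)$ is a finite sum over $k=0,\dots,\lfloor n\rho\rfloor$ of integrals of $(\text{linear})\times(\text{trigonometric})$. Each of these has a closed form in $\sin,\cos$ of $2\pi\rho$, $4\pi\rho$, $2\pi k/n$ and $4\pi k/n$, with the upper limit of the last piece equal to $\rho$. Together with the closed form
\[
M(\rho)=\frac{R\cos 2R+2R-\tfrac32\sin 2R-\pi\cos 2R}{\pi},
\]
this gives, on each subinterval $[k/n,(k+1)/n]\cap[1/6,1/2]$, an explicit analytic expression for $D_p$ and for $K_{p,q}$. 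The breakpoints are rational multiples of $1/n$. For $K_{p,q}$ one refines by the union of the breakpoints of $p$, $q$ and $L=p+q-1$, so that on each cell every term is a single smooth trigonometric-polynomial expression.

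The second step handles each cell $[\rho_0,\rho_1]$ with rational endpoints. We evaluate the expression in ball arithmetic (Arb via SageMath's \texttt{RealBallField}) on the ball $[\rho_0,\rho_1]$, with outward rounding. If the resulting lower bound exceeds $10^{-4}$, the cell is certified. Otherwise we bisect and repeat. For (i) the constant $\frac{17\sqrt3\pi^2}{27}\bigl(\frac1{49}+\frac1{(p+6)^2}\bigr)$ is simply a ball subtracted once. To beat naive interval overestimation on wider cells, I would use a first-order (mean value) enclosure. This means $f(\rho)\in f(\rho_m)+f'([\rho_0,\rho_1])\cdot[\rho_0-\rho_m,\rho_1-\rho_m]$, where $f'$ is computed symbolically. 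It is easy here because $\frac{d}{d\rho}J_n(\rho)=\int_0^\rho\psi(nu)\,\partial_\rho w_\rho(u)\,du$; the boundary term vanishes since $w_\rho(\rho)=0$. Thus $f'$ has the same piecewise closed form with $w_\rho$ replaced by $16\pi\cdot2\pi\sin(2\pi\rho)\sin(2\pi u)$.

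The main obstacle is margin, not logic. For small $p,q$ (for example $p=q=2$, where $L=3$), or near $\rho=1/6$ in (i), $K_{p,q}$ and the deficit in (i) may be only modestly above $10^{-4}$. The sawtooth bound subtracted in (i) is about $0.63\,(1/49+1/64)\approx0.023$, against $m_0=1-\frac{3\sqrt3}{4\pi}\approx0.586$, so $D_p$ should be comfortably positive except where $J_p$ is large. I would therefore first tabulate $D_p$ and $K_{p,q}$ on a coarse grid to locate the near-minima, and adapt the subdivision depth there. If a cell refuses to certify because the true minimum is attained at an interior point, the mean-value enclosure combined with bisection still terminates, since the infimum is strictly above $10^{-4}$. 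The only failure mode would be the true minimum dipping below $10^{-4}$. For the small cases, a sanity check is to compare with the exact path spectra, since $K_{p,q}$ is just a finite combination of $g_\rho(i/p)$ terms. Finally, I would record the list of subdivision endpoints and certified lower bounds for each of the $5+15$ cases, which makes the certificate reproducible.
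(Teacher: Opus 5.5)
Your plan is essentially the paper's proof. Both use exact piecewise closed forms on cells with rational breakpoints, enclosed in Arb ball arithmetic with adaptive bisection until the residual minus $10^{-4}$ has positive lower endpoint; the differences are minor. For (ii) the paper evaluates $K_{p,q}$ directly via the finite active-atom sum $4\sum_{\alpha}\omega_\alpha(\cos 2\pi\alpha-\cos 2\pi\rho)^2$, which you mention only as a sanity check, rather than through $M+J_L-J_p-J_q$, and it uses no mean-value enclosure.

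One slip in your margin discussion: $\frac{17\sqrt3\pi^2}{27}\approx 10.76$, not $0.63$. So the constant subtracted in (i) is about $0.29$--$0.39$ rather than $0.023$. The inequality still holds comfortably at the left endpoint, since $D_p(1/6)=1-(p-1)\bigl(\tfrac12-\tfrac{3\sqrt3}{4\pi}\bigr)\ge 0.567$.
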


\begin{proof}
The proof is a finite rigorous interval computation.  We describe the
computation precisely.

For the strip inequalities in (i), we use
\[
        D_p(\rho)=
        \int_0^\rho
        \bigl(\one_{u<1/6}+u-\{pu\}\bigr)w_\rho(u)\,du,
\]
where $w_\rho(u)=16\pi\sin(2\pi u)\bigl(\cos(2\pi u)-\cos(2\pi\rho)\bigr)$. On each interval cut out by the rational breakpoints
\[
        0,\quad \frac16,\quad \frac12,\quad \frac{k}{p}
        \quad(0\le k\le p),
\]
the coefficient $\one_{u<1/6}+u-\{pu\}$ is affine in \(u\).  Therefore \(D_p(\rho)\) is evaluated on each such cell using the elementary antiderivatives of
\[
        (\alpha+\beta u)\sin(2\pi u)\cos(2\pi u),
        \qquad
        (\alpha+\beta u)\sin(2\pi u),
\]
with \(\rho\) treated as an interval variable.

For the small-box inequalities in (ii), the active set is constant between consecutive rational breakpoints
\[
        \frac16,\quad \frac12,\quad \frac{i}{p},\quad \frac{j}{q},
        \quad \frac{k}{p+q-1}.
\]
More explicitly, the signed atoms are
\[
        (0,+1),\qquad (1/6,-1),\qquad
        (i/p,+1),\qquad (j/q,+1),\qquad
        (k/(p+q-1),-1),
\]
restricted to those atoms with \(\alpha\le \rho\).  On any subinterval not containing a breakpoint, this active set is fixed, and one has the exact formula
\[
        K_{p,q}(\rho)
        =
        4\sum_{\alpha\in\mathcal A} \omega_\alpha
        \bigl(\cos(2\pi\alpha)-\cos(2\pi\rho)\bigr)^2,
\]
where \(\mathcal A\) is the fixed active set on that subinterval and \(\omega_\alpha\in\{1,-1\}\) is the corresponding sign.  Endpoint conventions are harmless, since a newly active atom contributes zero at the breakpoint.

The interval certification is implemented in the SageMath script
\[
        \texttt{core/verify\_path\_splicing.sage}
\]
available in the public GitHub repository~\cite{Git}.  The script uses exact
rational subdivision endpoints and Sage's \texttt{RealBallField(256)}, which
is an interface to Arb ball arithmetic.  On each box the program computes a
ball enclosure for the residual
\[
        \text{left-hand side}-\text{right-hand side}-10^{-4}.
\]
The box is accepted only if the lower endpoint of this residual ball is
strictly positive.  Otherwise the box is bisected.  Thus every accepted box
certifies the required inequality on the whole box, not merely at sample
points.  The corresponding output log is
\[
        \texttt{core/verify\_path\_splicing\_output.txt}.
\]
The script was run with SageMath~10.8.
\end{proof}

\section{Elementary interval domination inequalities}
\label{app:interval-domination}

For convenience, we restate Lemma~\ref{lem:interval-domination} here.

\medskip

\noindent\textbf{Restatement of Lemma~\ref{lem:interval-domination}.}
Let \(d,r\) be integers with \(d\ge r+2\) and \(r\ge1\).  Put
\[
        a=\frac{d-r+1}{d},\qquad B=a+d.
\]
Then for every \(t\ge0\),
\[
        \mathcal I_{[a,B]}(t)\ge r\,\mathcal I_{[3,4]}(t).
\]
Moreover, if \(d=r+2\), equivalently \(a=3/d\) and \(B=d+3/d\), then for every
\(t\ge0\),
\[
        \mathcal I_{[a,B]}(t)
        \ge
        \mathcal I_{[2,4]}(t)+(d-3)\mathcal I_{[3,4]}(t).
\]

\begin{proof}[Proof of Lemma~\ref{lem:interval-domination}]

We shall repeatedly use the elementary formula
\[
        \mathcal I_{[u,v]}(t)=
        \begin{cases}
        0, & t\ge v,\\[1mm]
        \dfrac12(v-t)^2, & u\le t\le v,\\[2mm]
        (v-u)\left(\dfrac{u+v}{2}-t\right), & t\le u.
        \end{cases}
\]

We first prove the main estimate and then the moreover clause.

For the main estimate, set \(m=d-r\ge2\). Then \(a=(m+1)/d\) and
\(B=d+a\ge d+\frac{3}{d}\ge4\). Let
\[
        H(t)=\mathcal I_{[a,B]}(t)-r\mathcal I_{[3,4]}(t).
\]
For \(t\ge4\), $\mathcal I_{[3,4]}(t) = 0$ and \(H(t)\ge0\).  For
\(3\le t\le4\),
\[
        H(t)=\frac12\{(B-t)^2-r(4-t)^2\}.
\]
The ratio \((B-t)/(4-t)\) is increasing with respect to $t$ on \([3,4)\). Hence it is enough to check $t=3$ and we need to prove \(B-3 \geq \sqrt r\).  Since \(m\ge2\),
\[
        B-3=r+m-3+\frac{m+1}{r+m}
        \ge r-1+\frac{3}{r+2},
\]
and
\[
        \left(r-1+\frac{3}{r+2}\right)^2-r
        =\frac{(r-1)(r^3+2r^2+r-1)}{(r+2)^2}\ge0 .
\]
Thus \(H\ge0\) on \([3,4]\).

For \(a\le t\le3\),
\[
        H(t)=\frac12(B-t)^2-r\left(\frac72-t\right).
\]
This is a convex quadratic with unique minimizer at \(t=a+m\). If \(m\ge3\), then \(a+m>3\), so the minimum on \([a,3]\) is attained at \(t=3\), already handled. If \(m=2\), then the critical point lies in the interval and
\[
        H(a+m)=r\left(\frac r2-\frac32+\frac{3}{r+2}\right)\ge0,
\]
with equality only at \(r=1\).  Hence \(H\ge0\) on \([a,3]\).

For \(t\le a\), the function \(H\) is affine with slope \(r-d<0\). Therefore its minimum on \(( -\infty,a]\) is attained at \(t=a\), which was covered in the previous paragraph. This proves the main estimate.

For the moreover clause, assume in addition that \(d=r+2\). Then \(a=3/d\leq 1\) and \(B=d+a\), so we may define
\[
        H_d(t)=\mathcal I_{[a,B]}(t)-\mathcal I_{[2,4]}(t)
        -(d-3)\mathcal I_{[3,4]}(t).
\]
For \(t\ge4\), $\mathcal I_{[2,4]}(t)
        +(d-3)\mathcal I_{[3,4]}(t) = 0$, so \(H_d(t)\ge0\).  For
\(3\le t\le4\),
\[
        H_d(t)=\frac12\{(B-t)^2-(d-2)(4-t)^2\}.
\]
As above, \((B-t)/(4-t)\) is increasing on \([3,4)\), so it suffices to check the endpoint \(t=3\), namely \(B-3\ge\sqrt{d-2}\). This follows from
\[
        \left(d-3+\frac3d\right)^2-(d-2)
        =\frac{(d-3)(d^3-4d^2+5d-3)}{d^2}\ge0.
\]
For \(2\le t\le3\), $H_d(t)$ is affine and direct differentiation gives
\[
        H_d'(t)=1-a\ge0.
\]
Thus the minimum on this interval is at \(t=2\), where
\[
        H_d(2)=\frac12(B-2)^2-2-\frac32(d-3)
        =\frac{(d-3)(d^3-4d^2+3d-3)}{2d^2}\ge0.
\]
For \(a\le t\le2\), we have
$
        H_d'(t)=t-a-1.
$
Since \(a=3/d\le1\), the point \(t=a+1\) lies in \([a,2]\) and \(H_d\) attains its minimum on \([a,2]\) at \(t=a+1\). At this point,
\[
        H_d(a+1)=\frac{(d-3)(d^2-4d+2)}{2d}\ge0,
\]
because the expression is \(0\) when \(d=3\), and \(d^2-4d+2>0\) for
\(d\ge4\). Hence \(H_d(t)\ge0\) for all \(t\in[a,2]\).

Finally, for \(t\le a\), \(H_d\) is affine with negative slope, so its minimum
on \((-\infty,a]\) is attained at \(t=a\). Since \(t=a\) belongs to the
previous interval, \(H_d(a)\ge0\). This proves the claim.
\end{proof}

\section{Exact verification of finite auxiliary inequalities}
\label{app:verification}

This appendix records the finite algebraic checks used above.  All computations are over the rational field \(\QQ\).  No floating-point approximation is used.

\subsection{Bernstein positivity criterion}
\label{app:bernstein-criterion}

We use the following elementary Bernstein-basis certificate for polynomial
non-negativity on a rational interval.  Let \(P\in\QQ[t]\), let
\(\alpha<\beta\) be rational, and let \(m\ge\deg P\).  The degree-\(m\)
Bernstein coefficients of \(P\) on \([\alpha,\beta]\) are the rational numbers
\(b_0,\ldots,b_m\) defined by
\[
        P((1-x)\alpha+x\beta)
        =
        \sum_{k=0}^m
        b_k\binom{m}{k}x^k(1-x)^{m-k}.
\]
If \(b_k\ge0\) for every \(k\), then \(P(t)\ge0\) for every
\(t\in[\alpha,\beta]\).  Indeed, for such \(t\) we may write
\(t=(1-x)\alpha+x\beta\) with \(x=(t-\alpha)/(\beta-\alpha)\in[0,1]\), and
each Bernstein basis function
\[
        \binom{m}{k}x^k(1-x)^{m-k}
\]
is non-negative on \([0,1]\).

For reproducibility, we compute the coefficients by the standard conversion formula from the power basis to the Bernstein basis; see, for example, \cite[Eq.~(2)]{CargoShisha1966}. If $P((1-x)\alpha+x\beta)=\sum_{j=0}^m c_jx^j$, where the coefficients are padded by \(c_j=0\) when necessary, then the degree-\(m\) Bernstein coefficients are
\[
        b_k
        =
        \sum_{j=0}^k
        c_j\frac{\binom{k}{j}}{\binom{m}{j}},
        \qquad 0\le k\le m .
\]
This is the formula used in the exact rational checks below.

\subsection{Scalar envelope certificates}
\label{app:scalar-envelope-check}

The polynomials used in Lemma~\ref{lem:SN-scalar-certificate} are
\[
        G_0(t)=\frac72-2t,
\qquad
        G_1(t)=3t\left(\frac72-t\right)^2-5,
\qquad
        G_2(t)=320t^3\left(\frac72-t\right)^2-1701,
\]
\[
        F_1(t)=t^{18}\widehat L(t)
        -C_{19}\frac{18^{18}}{19^{19}}B(t)^3K(t),
\qquad
\text{and}
\qquad
        F_2(t)=\widehat L(t)
        -C_{19}\frac{4-t}{4^{19}}B(t)^3K(t),
\]
where
\[
        C_{19}:=\frac1{20}\binom{39}{19},
\qquad
        A(t)=1812-7t^4,
\qquad
        B(t)=4001-4t^4,
\]
\[
        K(t)=5(t^3+5t^2+25t+125),
\qquad
\text{and}
\qquad
        \widehat L(t)=\frac12A(t)^2B(t)^2+1000A(t)^3.
\]
The following table records exact lower bounds for the Bernstein coefficients on the indicated intervals.

\[
\begin{array}{c|c|c|c}
\text{polynomial} & \text{interval} & \text{degree} &
\text{lower bound for every Bernstein coefficient}\\ \hline
G_0 & [0,3/2] & 1 & 1/2\\
G_1 & [3/2,2] & 3 & 17/2\\
G_2 & [2,3] & 5 & 459\\
F_1 & [3,72/19] & 34 & 10^{20}\\
F_2 & [72/19,74/19] & 16 & 10^{11}\\
F_2 & [74/19,75/19] & 16 & 10^{10}\\
F_2 & [75/19,151/38] & 16 & 10^9\\
F_2 & [151/38,4] & 16 & 10^8
\end{array}
\]
They are verified by exact rational arithmetic in the SageMath script
\[
\texttt{core/verify\_scalar\_envelope.sage}
\]
in the public GitHub repository~\cite{Git}.  The corresponding output log is
\[
\texttt{core/verify\_scalar\_envelope\_output.txt}.
\]
The script computes the Bernstein coefficients over \(\QQ\) using the
conversion formula above and checks that the minimum coefficient in each row
of the table is at least the stated lower bound.

\subsection{Local fifth-moment and envelope algebra checks}
\label{app:local-moment-check}

The exact polynomial and rational identities used in Lemmas~\ref{lem:local-data} and~\ref{lem:SN-fifth-envelope} are verified as follows.  Besides
the stable \(N=7\) moment polynomial \(q_5\), we also check the expansion of
\(P_t(1-z)\), the numerator identity underlying the derivative formula for
\(h(t)=A(t)/B(t)\), the exact value \(h(3)=1245/3677\), and the rational
lower bound used for \(D(t)\).

\medskip
\noindent
\textbf{Exact SageMath algebra check.}
The exact rational verification is implemented in the SageMath script
\[
\texttt{core/verify\_local\_moment.sage}
\]
in the public GitHub repository~\cite{Git}.  The corresponding output log is
\[
\texttt{core/verify\_local\_moment\_output.txt}.
\]
The script checks the stable \(N=7\) moment polynomial \(q_5\), the
coefficientwise non-negativity of the differences for \(3\le N\le6\), the
expansion of \(P_t(1-z)\), the numerator identity for the derivative of
\(h(t)=A(t)/B(t)\), the value \(h(3)=1245/3677\), and the rational lower
bound used for \(D(t)\).  All computations are performed over \(\QQ\), with
no floating-point arithmetic.

\section*{Acknowledgments and AI disclosure}

The authors thank Clive Elphick for helpful comments and suggestions. Q.~Tang was partially supported by the National Key Research and Development Program of China under grant 2023YFA1010203. Y.~Liu was partially supported by Beijing Natural Science Foundation under grant QY26408.

The authors used AI tools during the preparation of this manuscript. In particular, an AI image-generation tool generated Figures~\ref{fig:tree-transform} and~\ref{fig:tree-shift-interpolation}. AI assistance partly drafted the code included in \cite{Git}, and the authors then checked, edited, and verified it. The mathematical ideas, proof strategy, and final arguments are due to the authors. The authors used AI tools only as an aid for drafting, coding, and presentation. The authors are fully responsible for all statements, proofs, computations, and figures in the paper.

\end{document}